\def\serieslogo@{} \def\@setcopyright{} \makeatother
\renewcommand*\env@matrix[1][c]{\hskip -\arraycolsep
	\let\@ifnextchar\new@ifnextchar
	\array{*\c@MaxMatrixCols #1}}
\numberwithin{equation}{section}
\newtheorem{thm}{Theorem}[section]
\newtheorem*{main-thm}{Theorem}
\newtheorem*{Auslander-thm}{Auslander's Theorem}
\newtheorem{cor}[thm]{Corollary}
\newtheorem{lem}[thm]{Lemma}
\newtheorem{prop}[thm]{Proposition}
\newtheorem*{thmA}{Theorem~A}
\newtheorem*{thmB}{Theorem~B}
\newtheorem*{thmC}{Theorem~C}
\newtheorem*{question1}{Question~1}
\newtheorem*{question2}{Question~2}
\newtheorem*{question3}{Question~3}
\theoremstyle{definition}
\newtheorem{defn}[thm]{Definition}
\newtheorem{rem}[thm]{Remark}
\newtheorem{exmp}[thm]{Example}
\newtheorem*{thmD}{Theorem~D}
\newtheorem*{problem}{Problem}
\newtheorem*{note}{Note}
\newcommand{\lxr}{\longrightarrow}
\DeclareMathOperator{\pd}{\mathsf{pdim}}
\DeclareMathOperator*{\gd}{\mathsf{gl.dim}}
\DeclareMathOperator*{\Findim}{\mathsf{Fin.dim}}
\DeclareMathOperator*{\Mod}{\mathsf{Mod}-\!}
\DeclareMathOperator*{\GrMod}{\mathsf{GrMod}-\!}
\DeclareMathOperator*{\Grmod}{\mathsf{Grmod}-\!}
\DeclareMathOperator*{\smod}{\mathsf{mod}-\!}
\DeclareMathOperator*{\lMod}{\!-\mathsf{Mod}}
\DeclareMathOperator*{\GrInj}{\mathsf{GrInj}-\!}
\DeclareMathOperator*{\Inj}{\mathsf{Inj}-\!}
\DeclareMathOperator*{\GrProj}{\mathsf{GrProj}-\!}
\DeclareMathOperator{\Ext}{\mathsf{Ext}}
\DeclareMathOperator*{\Tor}{\mathsf{Tor}}
\newsavebox{\proofbox}
\savebox{\proofbox}{\begin{picture}(7,7)%
	\put(0,0){\framebox(7,7){}}\end{picture}}
\newcommand{\gr}{\selectlanguage{greek}}
\begin{document}

\title{Injective generation for graded rings}

\author[Kostas]{Panagiotis Kostas}
\address{Department of Mathematics, Aristotle University of Thessaloniki, Thessaloniki 54124, Greece}
\email{pkostasg@math.auth.gr}

\author[Psaroudakis]{Chrysostomos Psaroudakis}
\address{Department of Mathematics, Aristotle University of Thessaloniki, Thessaloniki 54124, Greece}
\email{chpsaroud@math.auth.gr}

\date{\today}

\keywords{%
Injective generation, Graded rings, Morita context rings, Covering rings, Cleft extensions of module categories, Twisted tensor products.}

\subjclass[2020]{
16E, 
16E10, 
16E30, 
16E35, 
16W50, 
18G80} 

\begin{abstract}
In this paper we investigate injective generation for graded rings. We first examine the relation between injective generation and graded injective generation for graded rings. We then reduce the study of injective generation for graded rings to the study of injective generation for certain Morita context rings and we provide sufficient conditions for injective generation of the latter. We then provide necessary and sufficient conditions so that injectives generate for tensor rings and for trivial extension rings. We provide two proofs for the class of tensor rings, one uses covering theory and the other uses the framework of cleft extensions of module categories. We finally prove injective generation for twisted tensor products of finite dimensional algebras.
\end{abstract}

\maketitle

\setcounter{tocdepth}{1} \tableofcontents

\section{Introduction and main results}

The finitistic dimension conjecture \textsf{FDC} is one of the leading problems of the so called ``Homological Conjectures" in the representation theory of finite dimensional algebras. It is a difficult problem to decide when a given algebra has finite finitistic dimension. In this direction, Rickard \cite{rickard} introduced the concept  of injective generation and proved that if injectives generate the derived category of a finite dimensional algebra, then the (big) finitistic dimension of the algebra is finite. By injective generation we mean that the smallest localizing subcategory generated by the injective modules coincides with the derived module category. This is always true for projective modules over any ring, but it can generally fail for injective modules (see \cite[Theorem 3.5]{rickard}). 
The concept of injective generation and its connection to the ``Homological Conjectures" was first communicated by Keller in 2001 \cite{keller}.

Rickard developed various techniques for showing that injective modules generate for some particular classes of algebras where the \textsf{FDC} holds. Motivated by Rickard's result, it is natural to investigate further classes of algebras where injective modules generate in order to deduce the \textsf{FDC}. In this paper we introduce and study injective generation for graded rings and graded injective modules, which we call {\em graded injective generation}. Our goal is to detect injective generation for rings with a graded structure and use the graded injective generation theory to provide new examples of algebras where injectives generate and therefore the \textsf{FDC} holds. Using the methods developed in this paper, we prove new reduction techniques for injective generation in the ungraded situation. The motivating question in the graded theory of injective generation can be simply formulated as follows: Let $R=\bigoplus_{i\in \mathbb{Z}} R_i$ be a positively and finitely graded ring over the integers. If injectives generate for $R_0$, does it follow that injectives generate for $R$? Using results of Cummings~\cite{cummings}, answering the above question positively is easy once injectives generate for $R_0$ and the projective dimension of $R_0$ as a left $R$-module is finite.

Based on the graded approach taken in this paper, we investigate injective generation for two well established classes of rings in representation theory: (i) tensor rings and (ii) trivial extension rings. The class of $\theta$-extension rings, denoted by $R\ltimes_{\theta}M$ and introduced by Marmaridis \cite{marmaridis}, provides a common framework for the aforementioned constructions. So, our principal question is when do injectives generate for the $\theta$-extension ring $R\ltimes_{\theta}M$? Our approach in this problem lies in finding concrete assumptions on $_RM_R$ so that the ring $R$ as a left module over $R\ltimes_{\theta}M$ has finite projective dimension.  We now get into the analysis of the main results of this paper. We start below with the concept of graded injective generation. 

Let $R=\oplus_{\gamma\in\Gamma}R_{\gamma}$ be a graded ring where $\Gamma$ is an abelian group. We denote by $\GrMod R$ the category of graded $R$-modules and by $\GrInj R$ the subcategory of graded injective modules. We say that \emph{graded injectives generate} for $R$ if $\mathsf{Loc}(\GrInj R)=\mathsf{D}(\GrMod R)$, 
where $\mathsf{Loc}(\GrInj R)$ denotes the smallest localizing subcategory generated by the graded injective modules (see Definition~\ref{defn_gradedinjgen}) and $\mathsf{D}(\GrMod R)$ denotes the derived category of graded $R$-modules. Rickard proved in \cite[Theorem 3.3]{rickard} that injectives generate for every commutative Noetherian ring. So it is natural to ask if the graded analogue of the latter result holds. In this direction, and using the class of $\epsilon$-commutative graded rings due to Dell'Ambrogio and Stevenson \cite{IvoStevenson}, we prove in Theorem~\ref{injgenepsilongraded} the following result. 

\begin{thmA}
Graded injectives generate for every $\epsilon$-commutative graded Noetherian ring. 
\end{thmA}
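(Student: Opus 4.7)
The plan is to mimic Rickard's proof of injective generation for commutative Noetherian rings in the graded setting, exploiting the fact that the category $\GrMod R$ for an $\epsilon$-commutative graded Noetherian ring $R$ is a symmetric monoidal abelian category in which the standard tools of commutative algebra (homogeneous prime spectrum, graded localization, Koszul and \v{C}ech complexes, Matlis theory) carry over essentially verbatim from the ungraded commutative case. The Dell'Ambrogio--Stevenson framework for $\epsilon$-commutative graded rings is precisely designed to make these transplants possible, since the bicharacter $\epsilon$ provides the symmetry $a\otimes b \mapsto \epsilon(|a|,|b|) b\otimes a$ that replaces ordinary commutativity.

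First, I would set up graded Matlis theory. Since $R$ is graded Noetherian and $\epsilon$-commutative, every graded injective $R$-module decomposes as a direct sum of (shifts of) indecomposable graded injectives, and the indecomposables are precisely the graded injective hulls $E^{\mathsf{gr}}(R/\mathfrak{p})$ indexed by homogeneous prime ideals $\mathfrak{p}\subseteq R$. At this point the class $\GrInj R$ is completely understood in terms of the homogeneous spectrum $\mathsf{Spec}^{\mathsf{gr}}(R)$.

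Second, following Rickard's \v{C}ech-complex strategy from \cite[Theorem 3.3]{rickard}, I would show that for each homogeneous prime $\mathfrak{p}$ the graded residue object
\[
k^{\mathsf{gr}}(\mathfrak{p}) \;=\; R^{\mathsf{gr}}_{\mathfrak{p}}/\mathfrak{p}R^{\mathsf{gr}}_{\mathfrak{p}}
\]
lies in $\mathsf{Loc}(\GrInj R)$. The argument is a Koszul/\v{C}ech induction on generators of $\mathfrak{p}$: graded localization at a set of homogeneous non-zero-divisors $\{s_i\}\subseteq R\setminus\mathfrak{p}$ is realised in the derived category by a telescope construction, and the Koszul-type complex built from these elements fits $k^{\mathsf{gr}}(\mathfrak{p})$ into a finite sequence of exact triangles starting from $E^{\mathsf{gr}}(R/\mathfrak{p})$. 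The $\epsilon$-commutativity is what makes the maps ``multiplication by $s_i$'' honest morphisms of graded modules, and guarantees that the relevant Koszul differentials square to zero.

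Third, I would close the argument by graded Noetherian induction on homogeneous ideals: the set of graded ideals $I$ for which $R/I\in\mathsf{Loc}(\GrInj R)$ is closed under taking suitable short exact sequences and contains every $R/\mathfrak{p}$ for $\mathfrak{p}$ a homogeneous prime (since $R/\mathfrak{p}$ embeds in $E^{\mathsf{gr}}(R/\mathfrak{p})$, and the cokernel is built from $R/\mathfrak{q}$ with $\mathfrak{q}\supsetneq\mathfrak{p}$ by the combination of Steps 1 and 2). Hence every cyclic graded $R$-module lies in $\mathsf{Loc}(\GrInj R)$, and since the cyclic graded modules (equivalently, the shifts of $R$) generate $\mathsf{D}(\GrMod R)$, we conclude $\mathsf{Loc}(\GrInj R)=\mathsf{D}(\GrMod R)$.

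The main obstacle is Step~2: verifying that the classical \v{C}ech/Koszul machinery Rickard uses really does go through in the $\epsilon$-commutative graded setting, with all sign/bicharacter twists correctly tracked so that the relevant complexes are complexes of graded $R$-modules and the telescopic model for graded localization is valid. Once that bookkeeping is done, Steps~1 and~3 are formal transcriptions of standard commutative-algebraic facts into the homogeneous framework supplied by \cite{IvoStevenson}.
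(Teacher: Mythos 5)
Your overall strategy is in the same spirit as the paper's, but the paper's actual proof is a short, purely formal argument that outsources all of the Koszul/\v{C}ech work you propose to redo: it combines the Dell'Ambrogio--Stevenson classification of localizing tensor ideals of $\mathsf{D}(\GrMod R)$ by subsets of $\mathsf{Spec}^hR$ (\cite[Theorem 5.8]{IvoStevenson}) with their Propositions 4.7 and 4.9, which say that $I(R/p)$ lies in $\mathsf{Loc}(k(p))_{\otimes}$ and that $\mathsf{Loc}(k(p))_{\otimes}$ is a minimal localizing tensor ideal, hence is generated by $I(R/p)$. Since $\mathsf{Loc}(\GrInj R)$ is automatically a tensor ideal (graded injectives are closed under twists) and contains every $I(R/p)$, it corresponds to all of $\mathsf{Spec}^hR$ under the bijection and therefore equals $\mathsf{D}(\GrMod R)$. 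Your plan instead attempts to reconstruct the ingredients of that classification by hand, and as written it has two genuine gaps.

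First, your Step~2 --- the telescope/Koszul argument putting $k^{\mathsf{gr}}(\mathfrak p)$ into $\mathsf{Loc}(E^{\mathsf{gr}}(R/\mathfrak p))$ --- is the technical heart of the matter and you explicitly leave it unverified; this is exactly what the cited results of \cite{IvoStevenson} provide, so without either invoking them or carrying out the bookkeeping the proof is incomplete. Second, and more seriously, the mechanism in your Step~3 for concluding is wrong: the cokernel of $R/\mathfrak p\hookrightarrow E^{\mathsf{gr}}(R/\mathfrak p)$ is not built from modules $R/\mathfrak q$ with $\mathfrak q\supsetneq\mathfrak p$. Already for $R=\mathbb{Z}$ (trivially graded) and $\mathfrak p=(p)$ maximal, the cokernel of $\mathbb{Z}/p\hookrightarrow E(\mathbb{Z}/p)=\mathbb{Z}_{p^{\infty}}$ is $\mathbb{Z}_{p^{\infty}}$ again, supported only at $\mathfrak p$, while there are no primes strictly containing $\mathfrak p$; your claimed filtration would force this cokernel to vanish. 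The passage from ``all residue objects $k(\mathfrak p)$ lie in $\mathsf{Loc}(\GrInj R)$'' to ``$\mathsf{Loc}(\GrInj R)=\mathsf{D}(\GrMod R)$'' is not a formal Noetherian induction on cyclic modules: it is the local-to-global direction of the stratification theorem (Neeman's theorem in the ungraded case, Theorem 5.8 of \cite{IvoStevenson} here), and it must be either quoted or proved. Your Step~1 and the observation that $\mathsf{Loc}(\GrInj R)$ is closed under twists are fine.
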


We continue by comparing injective generation with graded injective generation for a graded ring $R$. We first show that if injectives generate for $R$, then graded injectives generate for $R$ (see Proposition~\ref{ungradedimpliesgraded}). We show the converse in case that $R$ is a graded ring over a finite abelian group (see Proposition~\ref{finitegrading}). As a consequence we obtain in Corollary~\ref{injgenepsilongradednoeth} that injectives generate for any $\epsilon$-commutative graded Noetherian ring that is graded over a finite group. We then study injective generation for Morita context rings. We prove a reduction technique regarding Morita context rings with zero bimodule homomorphisms (see Theorem~\ref{injectivegenerationformorita}). In this way  and using also ladders of recollements of module categories \cite{ladder}, we formulate a result of Cummings \cite{cummings} on triangular matrix rings and injective generation.

We move now to the classes of examples that we aim to examine injective generation. It is a well known fact in representation theory that the category of graded modules $\GrMod R$ over a graded ring $R$ is equivalent to the category of modules $\Mod \Hat{R}$, where $\Hat{R}$ is the covering ring of $R$. By viewing $\Hat{R}$ as a Morita context ring (see Lemma \ref{coveringismorita}) and by using the theory built so far, we reduce the question on injective generation of a positively and finitely graded ring over the integers to the analogous question for Morita context rings with zero bimodule homomorphisms (Proposition~\ref{reduction}). Using the approach via coverings, and under certain conditions, we prove in Theorem~\ref{mainthm2} injective generation for tensor rings $T_R(M)$.  
  
\newpage  

\begin{thmB}
Let $R$ be a ring and $M$ a nilpotent $R$-bimodule such that: 
\begin{itemize}
\item[(i)] $\pd_RM<\infty$. 

\item[(ii)] $\mathsf{Tor}_i^R(M,M^{\otimes j})=0$ for all $i,j\geq 1$. 
\end{itemize}
Then injectives generate for $R$ if and only if injectives generate for $T_R(M)$. 
\end{thmB}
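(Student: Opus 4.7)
The plan is to realize $T := T_R(M)$ as a positively, finitely $\mathbb{Z}$-graded ring with $T_0 = R$ --- possible because nilpotency of $M$ gives $T = R \oplus M \oplus \cdots \oplus M^{\otimes n}$ for some $n$ --- and then feed this into the graded machinery of the paper. The essential homological input will be that conditions (i) and (ii) together force $\pd_T R < \infty$, where $R$ is viewed as a right $T$-module via the augmentation $T \twoheadrightarrow R$. With that in hand, the forward direction becomes a one-line application of the Cummings-type transfer recalled in the introduction, while the reverse direction is extracted via the covering-ring/Morita-context route built earlier.

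The key step is the computation of $\pd_T R$. The augmentation ideal $J = \bigoplus_{j \geq 1} M^{\otimes j}$ is isomorphic to $M \otimes_R T$ as a right $T$-module, via the multiplication map. Choosing a finite projective resolution $P_\bullet \to M$ of $M$ as a right $R$-module (provided by (i)), the complex $P_\bullet \otimes_R T$ consists of projective right $T$-modules, and its homology is
$$\Tor^R_i(M,T) \;=\; \bigoplus_{j \geq 0} \Tor^R_i(M, M^{\otimes j}),$$
which vanishes for $i \geq 1$ by (ii). Hence $P_\bullet \otimes_R T \to M \otimes_R T$ is a finite projective resolution, and combining with the short exact sequence $0 \to J \to T \to R \to 0$ yields $\pd_T R \leq \pd_R M + 1 < \infty$.

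For the forward direction, assuming injectives generate for $R$, this places $T$ precisely in the setting of Cummings's transfer result recalled in the introduction: a positively, finitely $\mathbb{Z}$-graded ring whose degree-zero part satisfies injective generation and for which $\pd_T T_0 < \infty$. Hence injectives generate for $T$. (The same conclusion can be routed through Proposition~\ref{reduction} and Theorem~\ref{injectivegenerationformorita} applied to the covering Morita context of $T$ produced by Lemma~\ref{coveringismorita}.) For the reverse direction, assuming injectives generate for $T$, Proposition~\ref{ungradedimpliesgraded} yields graded injective generation for $T$; Proposition~\ref{reduction} then translates this into a statement about the covering Morita context ring of $T$, whose base pieces are copies of $R$, and a detection-style reading of Theorem~\ref{injectivegenerationformorita} reduces to injective generation for $R$.

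The main obstacle will be the reverse direction. Unlike the forward direction, it does not fall out of the Cummings transfer alone; one has to squeeze a necessary-condition statement out of the Morita-context reduction. The nilpotency of $M$ is what should make this tractable: it ensures that the covering ring of $T$ is a finite, upper-triangular Morita context built transparently from copies of $R$ and pieces of $M^{\otimes k}$, so that injective generation on the diagonal blocks can be transferred back to injective generation for $R$ itself.
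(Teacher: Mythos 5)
Your proposal is correct in substance but reorganises the proof around a different pivot than the paper, so let me compare. The paper's hard direction is the \emph{forward} one ($R\Rightarrow T_R(M)$): it is proved by passing to the covering ring of $T_R(M)$ graded over $\mathbb{Z}/2^n\mathbb{Z}$, lifting left perfectness of $M$ to the bimodule $M'$ of the resulting Morita context ring (Proposition~\ref{lifting_perfect_2}, the most technical step), showing $\pd_{\Lambda}(A,0,0,0)<\infty$ via Proposition~\ref{lemmataki}, and concluding with Theorem~\ref{injectivegenerationformorita}(iii). You instead prove the forward direction directly: the computation $\pd_{T}R\le\pd_RM+1$ from the resolution of $J\cong M\otimes_RT$, followed by the Cummings-type transfer along the augmentation $T\twoheadrightarrow R$ (Remark~\ref{bounded_cohomology} plus the finite filtration of $T$ by powers of $J$, whose subquotients are restricted $R$-modules, and closure of localizing subcategories under extensions). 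This bypasses the covering-ring machinery entirely for that direction and is essentially the paper's \emph{second} proof (Section~6: Proposition~\ref{left_derived_of_q_vanishes} and Lemma~\ref{image_of_i}) specialised to tensor rings; what the paper's first proof buys in exchange is the covering/Morita-context technology that it reuses elsewhere (e.g.\ for the Beilinson algebra). Conversely, the direction you flag as ``the main obstacle'' --- $T_R(M)\Rightarrow R$ --- is the easy one in the paper (Proposition~\ref{general_easy_way}, needing only $\pd_RM^{\otimes i}<\infty$, or even just \cite[Lemma 5.2]{cummings} via restriction along $R\hookrightarrow T$), and your sketch of it via Proposition~\ref{reduction} and Theorem~\ref{injectivegenerationformorita}(i)--(ii) is exactly what the paper does.

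One slip to repair in your key computation: you work with \emph{right} modules, but hypothesis (i) gives $\pd_RM<\infty$ on the \emph{left}, and the transfer along $T\twoheadrightarrow R$ (Remark~\ref{bounded_cohomology}) requires finiteness of the flat dimension of $R$ as a \emph{left} $T$-module. So you should use $J\cong T\otimes_RM$ as left $T$-modules, resolve $M$ by projective left $R$-modules, and observe that the homology obstruction is $\Tor_i^R(T,M)=\bigoplus_j\Tor_i^R(M^{\otimes j},M)$, which vanishes because condition (ii) is equivalent to $\Tor_i^R(M^{\otimes j},M)=0$ by Lemma~\ref{basic_properties_of_perfect}. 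As written, your right-module version would need $\pd M_R<\infty$, which is not assumed; the left-module version is what both the hypotheses and the transfer actually require.
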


If a bimodule $_RM_R$ satisfies conditions (i) and (ii) above, we say that $M$ is \emph{left perfect}. We emphasise that the proof of the above result is based on a careful analysis of injective generation using covering theory and therefore using the graded approach taken in this paper. In another approach to systematically study the perfectness condition on the bimodule $M$ and its relation to injective generation, we investigate injective generation in the context of $\theta$-extension rings $R\ltimes_{\theta}M$. Note that for $\theta=0$ we get the well known class of trivial extension rings for which injective generation has been studied by Cummings, see \cite[Example 5.10]{cummings}. The module category of a $\theta$-extension ring gives rise to a cleft extension of module categories in the sense of Beligiannis \cite{beligiannis}. Under suitable conditions, we characterize in Theorem~\ref{injective_generation_for_cleft} injective generation in the framework of cleft extension of module categories. As a consequence, we show in Corollary~\ref{mainthm3} the following result. 

\begin{thmC}
Let $R$ be a ring, $M$ an $R$-bimodule and $\theta\colon M\otimes_RM\rightarrow M$ an associative $R$-bimodule homomorphism. If $M$ is left perfect and nilpotent, then injectives generate for $R$ if and only if injectives generate for $R\ltimes_{\theta}M$. 
\end{thmC}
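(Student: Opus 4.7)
The plan is to invoke Theorem~\ref{injective_generation_for_cleft}, which characterizes injective generation inside a cleft extension of module categories in the sense of Beligiannis. The first step is to realize $\Mod(R\ltimes_{\theta}M)$ as such a cleft extension over $\Mod R$. The canonical ring surjection $\pi\colon R\ltimes_{\theta}M\to R$ and the ring inclusion $\iota\colon R\hookrightarrow R\ltimes_{\theta}M$ satisfy $\pi\iota=\iden_{R}$. Restriction along $\pi$ yields a fully faithful functor $\pi^{*}\colon\Mod R\to\Mod(R\ltimes_{\theta}M)$, while restriction along $\iota$ gives $\iota^{*}\colon\Mod(R\ltimes_{\theta}M)\to\Mod R$ with both a left adjoint $-\otimes_{R}(R\ltimes_{\theta}M)$ and a right adjoint $\Hom_{R}(R\ltimes_{\theta}M,-)$, together with the identity $\iota^{*}\pi^{*}=\iden$. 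This is exactly the adjoint data of a cleft extension of module categories.

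Second, I would verify the hypotheses of Theorem~\ref{injective_generation_for_cleft} for this cleft extension. In line with the reduction theorems appearing earlier in the paper, the decisive input should be a finite upper bound for the projective dimension of $R$ as a left $R\ltimes_{\theta}M$-module. Starting from a finite $R$-projective resolution of $M$ (available since $\pd_{R}M<\infty$) and exploiting the short exact sequence $0\to M\to R\ltimes_{\theta}M\to R\to 0$ of $R\ltimes_{\theta}M$-modules, I would iteratively lift $R$-projective resolutions of the successive pieces in the $M$-adic filtration to $R\ltimes_{\theta}M$-projective resolutions. The Tor-vanishing hypothesis $\Tor^{R}_{i}(M,M^{\otimes j})=0$ guarantees that tensoring any $R$-projective resolution of $M$ over $R$ with $M^{\otimes j}$ remains exact, so that the obstructions encountered in extending these resolutions across each $M$-adic step vanish. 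The nilpotency of $M$ then ensures that the procedure terminates after finitely many stages, yielding $\pd_{R\ltimes_{\theta}M}R<\infty$.

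Third, with $\pd_{R\ltimes_{\theta}M}R<\infty$ in hand, Theorem~\ref{injective_generation_for_cleft} immediately transports injective generation across the cleft extension, delivering the equivalence between injective generation for $R$ and for $R\ltimes_{\theta}M$.

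The main obstacle I expect is the iterative construction of the $R\ltimes_{\theta}M$-projective resolution of $R$: one must keep careful track of how the twisting $\theta$ perturbs the differentials when lifting an $R$-linear resolution to an $R\ltimes_{\theta}M$-linear one, and must invoke the Tor-vanishing assumption at precisely the right bidegrees to ensure that each successive lift remains exact. The nilpotency hypothesis is then the crucial termination mechanism that converts this otherwise unbounded process into a finite projective resolution, so the interplay between these two conditions of left perfectness is where the real work of the proof concentrates.
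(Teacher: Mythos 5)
Your overall architecture coincides with the paper's: realize $\Mod(R\ltimes_{\theta}M)$ as a cleft extension of $\Mod R$ via $\pi$ and $\iota$, and then invoke Theorem~\ref{injective_generation_for_cleft}. The first step is set up correctly. The problem is in the second step: you propose to verify the hypothesis of Theorem~\ref{injective_generation_for_cleft} by proving $\pd_{R\ltimes_{\theta}M}R<\infty$, but that is not the hypothesis of that theorem. Its hypothesis is that the associated endofunctor $\mathsf{F}\simeq-\otimes_RM$ of $\Mod R$ is left perfect and nilpotent in the sense of Definition~\ref{perfect_functor}, i.e.\ (i) $\mathbb{L}_i\mathsf{F}^j(\mathsf{F}(P))=0$ for $P$ projective and $i,j\geq 1$, and (ii) $\mathbb{L}_p\mathsf{F}^q=0$ for $p+q$ large. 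This is exactly what Lemma~\ref{tensor_functor_is_perfect} supplies, and it is an essentially immediate translation of the bimodule conditions: $\mathbb{L}_i\mathsf{F}^j(\mathsf{F}(P))=\Tor_i^R(P\otimes_RM,M^{\otimes j})$ vanishes by $\Tor_i^R(M,M^{\otimes j})=0$, and condition (ii) follows from $\pd_RM^{\otimes q}\leq q\pd_RM$ (Lemma~\ref{basic_properties_of_perfect}) together with nilpotency. So the verification you anticipate as the ``real work'' (lifting resolutions across the $M$-adic filtration, tracking the perturbation by $\theta$) is not the step the theorem asks for; the correct verification is much more direct and takes place entirely over $R$, not over $R\ltimes_{\theta}M$.

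Beyond being the wrong hypothesis to check, $\pd_{R\ltimes_{\theta}M}R<\infty$ on its own would not let you run the argument of Theorem~\ref{injective_generation_for_cleft}. That finiteness (more precisely, the vanishing $\mathbb{L}_n\mathsf{q}=0$ for $n\gg 0$, which is what Proposition~\ref{left_derived_of_q_vanishes} actually extracts from perfectness and nilpotency) only feeds the direction ``injectives generate for $R$ $\Rightarrow$ injectives generate for $R\ltimes_{\theta}M$'', and even there it must be combined with the generation statement $\mathsf{Loc}(\mathsf{Im}\,\mathsf{i})=\mathsf{D}(R\ltimes_{\theta}M)$, which comes from nilpotency of $\mathsf{R}$ (Lemmas~\ref{F_nilpotent_implies_R_nilpotent} and~\ref{image_of_i}) and which your sketch omits. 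The converse direction needs the separate input $\mathbb{L}_n\mathsf{l}=0$ for $n\gg 0$, i.e.\ finiteness of the flat dimension of $R\ltimes_{\theta}M$ as a left $R$-module; this is easy (as a left $R$-module it is $R\oplus M$ and $\pd_RM<\infty$), but it is not a consequence of $\pd_{R\ltimes_{\theta}M}R<\infty$ and is nowhere addressed in your plan. The fix is simply to verify the stated hypothesis of Theorem~\ref{injective_generation_for_cleft}, namely that $-\otimes_RM$ is a left perfect and nilpotent endofunctor; once that is done the theorem applies verbatim and your third step is sound.
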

  
The above result recovers Theorem~B but the techniques for proving Theorem~C are different from those used in Theorem~B. The proof of Theorem~C is based on the homological properties of a cleft extension under the presence of a perfect bimodule. In particular, we introduce the notion of a perfect endofunctor (Definition~\ref{perfect_functor}) which turns out to be the key homological property for proving Theorem~C. Finally, we investigate injective generation for twisted tensor products. Cummings proved in \cite[Proposition 3.2]{cummings} that if injectives generate for two finite dimensional algebras over a field, then the same holds for the tensor product algebra. We generalise this result in Theorem~\ref{injective_generation_for_twisted} by proving injective generation for twisted tensor product of algebras. This result relies on the graded injective generation theory of this paper. 

\begin{thmD}
Let $A$ and $B$ be finite dimensional algebras over a field $k$ that are also graded over abelian groups $\Gamma_1$ and $\Gamma_2$ respectively. If injectives generate for $A$ and $B$ then injectives generate for $A\otimes^tB$, where $t$ denotes a bicharacter $\Gamma_1\times\Gamma_2\rightarrow k^{\times}$.
\end{thmD}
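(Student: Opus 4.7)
The plan is to exploit the natural $\Gamma_1 \times \Gamma_2$-grading on the twisted tensor product and reduce, via a Zhang-type equivalence of graded module categories, to the untwisted tensor product $A \otimes_k B$, where Cummings' result applies. Explicitly, equip $A \otimes^t B$ with the $\Gamma_1 \times \Gamma_2$-grading given by $(A \otimes^t B)_{(\gamma,\delta)} := A_\gamma \otimes_k B_\delta$, and construct a $k$-linear equivalence
\[
F \colon \GrMod(A \otimes_k B) \xrightarrow{\ \sim\ } \GrMod(A \otimes^t B)
\]
that is the identity on underlying $\Gamma_1 \times \Gamma_2$-graded vector spaces and rescales the module action by values of the bicharacter. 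Concretely, for an $A \otimes^t B$-module $M$ with twisted action $\cdot_t$, define a new action of $A \otimes_k B$ on the same graded vector space by
\[
(a \otimes b) \cdot_{\text{new}} m \ :=\ t(\gamma', \delta)^{-1}\,(a \otimes b) \cdot_t m
\]
for homogeneous $(a \otimes b) \in A_\gamma \otimes_k B_\delta$ and $m \in M_{(\gamma',\delta')}$. A direct computation using the bicharacter identities for $t$ shows that this new action is untwisted-associative, and since $t$ takes values in $k^\times$, $F$ is an equivalence. In particular, $F$ sends graded injectives to graded injectives and induces an equivalence of derived categories compatible with localizing subcategories.

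Having the equivalence in hand, I would invoke Cummings' result \cite[Proposition 3.2]{cummings} to obtain injective generation for the untwisted tensor product $A \otimes_k B$. Then Proposition~\ref{ungradedimpliesgraded} promotes this to graded injective generation for $A \otimes_k B$, and transporting along $F$ yields graded injective generation for $A \otimes^t B$.

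Finally, since $A$ and $B$ are finite-dimensional, the grading on $A \otimes^t B$ has finite support, so one may replace $\Gamma_1$ and $\Gamma_2$ by finite abelian quotients that separate the supports and through which the bicharacter $t$ factors. With this finite grading in place, Proposition~\ref{finitegrading} lifts graded injective generation to ungraded injective generation for $A \otimes^t B$, completing the proof. The main obstacle I expect is the construction of the equivalence $F$: one must verify carefully that the bicharacter rescaling defines a well-posed functor (not merely a map on objects), and that the induced equivalence of derived categories matches the localizing subcategory of graded injectives of $A \otimes_k B$ with that of $A \otimes^t B$. The reduction to finite quotients at the end is a secondary technicality, requiring a judicious choice of quotients through which both the gradings and the bicharacter descend.
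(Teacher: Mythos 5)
Your main idea -- realizing $A\otimes^tB$ as a cocycle twist of $A\otimes_kB$ along the $\Gamma_1\times\Gamma_2$-grading and transporting graded injective generation through the resulting equivalence $F\colon\GrMod(A\otimes_kB)\xrightarrow{\sim}\GrMod(A\otimes^tB)$ -- is sound and genuinely different from the paper's argument. Using only the bicharacter identities one checks that your rescaled action is associative, that $F$ is functorial (morphisms of graded modules are degree-preserving, hence commute with the degree-dependent rescaling), and that $F$, being an equivalence of abelian categories, induces a coproduct-preserving triangle equivalence matching graded injectives with graded injectives. The paper instead works directly with the bifunctor $-\otimes^t-$: it shows that $\mathsf{D}A(\gamma)\otimes^t\mathsf{D}B(\gamma')$ is graded injective over $A\otimes^tB$ and runs the two-variable localizing-subcategory argument of Proposition~\ref{graded_injective_generation_for_twisted}, never invoking Cummings' untwisted result. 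Your reduction is more conceptual, at the cost of having to verify the equivalence in detail.

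The genuine gap is in your final step, the passage from graded to ungraded injective generation for $A\otimes^tB$. Proposition~\ref{finitegrading} needs a finite grading group, and the finite quotients you require -- ones through which the bicharacter $t$ factors -- need not exist: for $\Gamma_1=\Gamma_2=\mathbb{Z}$ and $t(m,n)=q^{mn}$ with $q\in k^{\times}$ of infinite multiplicative order, any bicharacter on $\mathbb{Z}/N\mathbb{Z}\times\mathbb{Z}/N\mathbb{Z}$ takes values in the $N$-th roots of unity, so $t$ descends to no finite quotient even though $A$ and $B$ are finite dimensional with finite support. Coarsening only the grading (without descending $t$) does not rescue the step either: you would then have to transfer graded injective generation across the coarsening functor, and for a general graded ring this functor need not preserve graded injectives (graded injective modules need not be injective). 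What is actually needed -- and what the paper supplies in Corollary~\ref{structure_of_graded_injectives} and Corollary~\ref{comparison_for_graded_algebras} -- is that over a finite dimensional graded algebra every graded injective module is a direct summand of a coproduct of twists of $\mathsf{D}(A)=\Hom_k(A,k)$ and is therefore injective as an ungraded module; with this, the forgetful functor sends graded injectives to injectives and the argument of Proposition~\ref{finitegrading} goes through verbatim for an arbitrary abelian grading group. Replacing your finite-quotient reduction by this observation closes the gap.
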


The contents of the paper are organised as follows. Section~\ref{section:injgengraded} is divided into three subsections. In the first one we recall the necessary preliminaries of injective generation used in the sequel of the paper. In the second subsection we introduce graded injective generation and prove Theorem~A. In the third subsection we compare injective generation with graded injective generation. In Section~\ref{moritacontextrings} we first collect some basic facts about Morita context rings. We then continue by studying injective generation of Morita context rings using the theory of ladders of recollements of abelian categories \cite{ladder}. Moreover in Theorem~\ref{injectivegenerationformorita} we provide sufficient conditions for the injective generation of a Morita context ring with zero bimodule homomorphisms. In Section~\ref{section:covering} the reason for viewing covering rings as Morita context rings will be justified. In Lemma~\ref{lem:coveringzerozero} we prove for certain graded rings that the associated covering ring is a Morita context ring with zero bimodule morphisms. Also Proposition~\ref{reduction} offers useful properties of these Morita context rings with respect to injective generation.

Section~\ref{section:perfectbimod} is divided into four subsections. In the first subsection we study perfect bimodules and we provide sufficient conditions in Proposition~\ref{lemmataki} so that the corner algebras embedded as tuples in a Morita context ring have finite projective dimension. This is a crucial observation for Theorem~B and it relies on the perfectness assumption on the bimodule. In the second subsection some useful formulas for tensor products of modules over a Morita context ring are proved. In the third subsection we prove Theorem~B by showing first how to lift perfect bimodules on the associated Morita context ring of the covering ring, see Proposition~\ref{lifting_perfect_2}. In the fourth subsection, and as a byproduct of our results, we show in Corollary~\ref{cor: Beilinson algebra} injective generation for the Beilinson algebra. In Section~\ref{section:cleft extensions} we develop the necessary homological theory in a cleft extension of module categories in order to prove Theorem~C. In particular, we prove that if an endofunctor of a cleft extension of module categories is perfect and nilpotent (Definition~\ref{perfect_functor}) then the derived functor of a certain left adjoint vanishes in higher degrees, see Proposition~\ref{left_derived_of_q_vanishes}. The main result is proved in Corollary~\ref{mainthm3}. The final Section~\ref{section:twisted tensor products} is devoted to prove Theorem~D on injective generation of twisted tensor products. This section contains a subsection on open problems and further questions on injective generation of graded rings.

\medskip

\medskip

\subsection*{Acknowledgments}

The research project is implemented in the framework of H.F.R.I call ``Basic research Financing (Horizontal support of all Sciences)" under the National Recovery and Resilience Plan ``Greece 2.0" funded by the European Union -- NextGenerationEU (H.F.R.I. Project Number: 16785).

The results of this paper were presented by the first named author in the Algebra Seminar of the University of Stuttgart, the University of Bielefeld and the University of Padova (MALGA Seminar). The first author would like to express his gratitude to the members of the groups and especially to Steffen Koenig, Henning Krause and Jorge Vit\'oria, for the warm hospitality and the excellent working conditions. We would also like to thank the anonymous referee for valuable suggestions and comments.

\section{Injective generation for graded rings}
\label{section:injgengraded}

Injectives generate for a ring $R$ if the smallest localizing subcategory of $\mathsf{D}(R)$ that contains the injective $R$-modules coincides with $\mathsf{D}(R)$. In this section we introduce the analogous notion for graded rings, that we call \emph{graded injective generation} and compare the two. 

\subsection{Injective generation for rings} 

Throughout this paper, the triangulated categories that we consider have set indexed coproducts and all the coproducts under consideration are set indexed. 

\begin{defn}
\label{localizing}
Let $\mathcal{T}$ be a triangulated category. A \emph{localizing subcategory} of $\mathcal{T}$ is a triangulated subcategory of $\mathcal{T}$ that is closed under coproducts. Given a class $S$ of objects of $\mathcal{T}$, we denote by $\mathsf{Loc}_{\mathcal{T}}(S)$ the smallest localizing subcategory of $\mathcal{T}$ that contains $S$. In case $\mathsf{Loc}_{\mathcal{T}}(S)=\mathcal{T}$ we say that $S$ \emph{generates} $\mathcal{T}$. 
\end{defn}

We will usually write $\mathsf{Loc}(S)$ instead of $\mathsf{Loc}_{\mathcal{T}}(S)$, as the triangulated category $\mathcal{T}$ will be clear from context. We also recall the following, that will be used frequently.
\begin{defn} Let $\mathsf{F}\colon\mathcal{T}\to\mathcal{T}'$ be a triangle functor of triangulated categories.

\noindent (i) Let $\mathcal{D}$ be a triangulated subcategory of $\mathcal{T}'$. The \emph{preimage} of $\mathcal{D}$ under $\mathsf{F}$ is the smallest triangulated subcategory of $\mathcal{T}$ that consists of the objects $X$ such that $\mathsf{F}(X)$ is in $\mathcal{D}$. In case $\mathcal{D}=\mathcal{T}'$, we write $\mathsf{Preim}(\mathsf{F})$ for the preimage. 

\noindent (ii) Let $\mathcal{C}$ be a triangulated subcategory of $\mathcal{T}$. The \emph{image} of $\mathsf{F}$ applied to $\mathcal{C}$ is the smallest triangulated subcategory of $\mathcal{T}'$ that contains $\mathsf{F}(X)$ for all $X$ in $\mathcal{C}$. In case $\mathcal{C}=\mathcal{T}$, we write $\mathsf{Im}(\mathsf{F})$ for the image. 
\end{defn}

\begin{lem} \label{preimage_is_localizing}
Let $\mathsf{F}\colon\mathcal{T}\rightarrow\mathcal{T}'$ be a coproduct preserving triangle functor between triangulated categories. The preimage of any localizing subcategory of $\mathcal{T}'$ under $\mathsf{F}$ is a localizing subcategory of $\mathcal{T}$. 
\end{lem}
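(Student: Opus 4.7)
The plan is to unpack the definition: write $\mathcal{D}$ for the given localizing subcategory of $\mathcal{T}'$ and $\mathcal{P}$ for its preimage under $\mathsf{F}$. By definition $\mathcal{P}$ is already a triangulated subcategory of $\mathcal{T}$ (being, up to the closure under isomorphisms, the full subcategory of $\mathcal{T}$ on objects $X$ with $\mathsf{F}(X)\in\mathcal{D}$, which is triangulated because $\mathsf{F}$ is a triangle functor and $\mathcal{D}$ is triangulated). Hence the only thing left to verify is closure under set-indexed coproducts.

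So the core step is the following one-line argument. Given a family $\{X_i\}_{i\in I}$ of objects of $\mathcal{P}$, the coproduct $\bigoplus_{i\in I}X_i$ exists in $\mathcal{T}$, and since $\mathsf{F}$ preserves coproducts we have
\[
\mathsf{F}\bigl(\bigoplus_{i\in I}X_i\bigr)\;\cong\;\bigoplus_{i\in I}\mathsf{F}(X_i).
\]
Each $\mathsf{F}(X_i)$ lies in $\mathcal{D}$ by definition of the preimage, and $\mathcal{D}$ is closed under coproducts because it is localizing, so the right-hand side lies in $\mathcal{D}$. Therefore $\mathsf{F}(\bigoplus_{i\in I}X_i)\in\mathcal{D}$, which places $\bigoplus_{i\in I}X_i$ in $\mathcal{P}$.

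Combining the two observations — $\mathcal{P}$ is triangulated and $\mathcal{P}$ is closed under coproducts — gives that $\mathcal{P}$ is a localizing subcategory of $\mathcal{T}$, as required. There is no real obstacle here; the statement is essentially a direct consequence of the two hypotheses on $\mathsf{F}$ (being triangulated and preserving coproducts) together with the two defining closure properties of a localizing subcategory.
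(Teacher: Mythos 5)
Your proof is correct and follows the same route as the paper's: the preimage is triangulated by definition, and closure under coproducts follows from $\mathsf{F}(\bigoplus X_i)\cong\bigoplus\mathsf{F}(X_i)$ together with the coproduct-closure of the target localizing subcategory. If anything, your write-up is slightly more careful than the paper's, which conflates the localizing subcategory $\mathcal{D}$ with the ambient category $\mathcal{T}'$ in its notation.
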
 
\begin{proof}
By definition, the preimage of $\mathsf{F}$ is the smallest triangulated subcategory of $\mathcal{T}$ that contains the objects $X$ of $\mathcal{T}$ such that $\mathsf{F}(X)\in\mathcal{T}'$. If $\{X_i,i\in I\}$ is a set of objects in the preimage, this means that $\mathsf{F}(X_i)\in\mathcal{T}'$ for all $i\in I$, thus $\oplus \mathsf{F}(X_i)\in\mathcal{T}'$. However $\oplus\mathsf{F}(X_i)=\mathsf{F}(\oplus X_i)$, so indeed $\oplus X_i$ belongs in the preimage.
\end{proof}

\begin{prop}
\label{image_is_contained_in_localizing}
Let $\mathsf{F}\colon\mathcal{T}\to\mathcal{T}'$ be a triangle functor of triangulated categories and let $S$ and $S'$ be classes of objects in $\mathcal{T}$ and $\mathcal{T}'$ respectively. Suppose that $S$ generates $\mathcal{T}$, i.e.\ $\mathsf{Loc}(S) =\mathcal{T}$. If $\mathsf{F}$ preserves coproducts and $\mathsf{F}(s)$ lies in $\mathsf{Loc}(S')$ for every $s\in S$, then $\mathsf{Im}(\mathsf{F})$ is a subcategory of $\mathsf{Loc}(S')$. 
\end{prop}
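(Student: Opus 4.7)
The plan is to reduce the statement to a direct application of Lemma~\ref{preimage_is_localizing} together with the minimality characterisation of $\mathsf{Loc}(S)$. The key idea is that, rather than analysing the image directly, I work with the preimage of $\mathsf{Loc}(S')$ under $\mathsf{F}$ and pull the generation hypothesis on $S$ back through $\mathsf{F}$.

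More precisely, I would first set $\mathcal{D}=\mathsf{Loc}(S')$, which is a localizing subcategory of $\mathcal{T}'$ by definition, and consider the preimage $\mathcal{P}$ of $\mathcal{D}$ under $\mathsf{F}$, consisting of those $X\in\mathcal{T}$ with $\mathsf{F}(X)\in\mathcal{D}$. Since $\mathsf{F}$ is assumed to preserve coproducts, Lemma~\ref{preimage_is_localizing} applies and $\mathcal{P}$ is a localizing subcategory of $\mathcal{T}$. The hypothesis that $\mathsf{F}(s)\in\mathsf{Loc}(S')=\mathcal{D}$ for every $s\in S$ says exactly that $S\subseteq\mathcal{P}$.

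Now the minimality of $\mathsf{Loc}(S)$ among localizing subcategories of $\mathcal{T}$ containing $S$ forces $\mathcal{T}=\mathsf{Loc}(S)\subseteq\mathcal{P}$, and hence $\mathcal{P}=\mathcal{T}$. Unwinding, this means $\mathsf{F}(X)\in\mathsf{Loc}(S')$ for every $X\in\mathcal{T}$. Since $\mathsf{Loc}(S')$ is in particular a triangulated subcategory of $\mathcal{T}'$ containing every $\mathsf{F}(X)$, and $\mathsf{Im}(\mathsf{F})$ is by definition the \emph{smallest} such triangulated subcategory, we immediately conclude $\mathsf{Im}(\mathsf{F})\subseteq\mathsf{Loc}(S')$, as required.

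There is no real obstacle here; the only subtle point to keep in mind is that the argument genuinely needs $\mathsf{F}$ to preserve coproducts, as this is what makes $\mathcal{P}$ localizing (and not merely triangulated). Without that assumption one could still conclude that the triangulated hull of $\mathsf{F}(S)$ sits inside $\mathsf{Loc}(S')$, but one could not bootstrap this to all of $\mathsf{Im}(\mathsf{F})$ via the generation property $\mathsf{Loc}(S)=\mathcal{T}$.
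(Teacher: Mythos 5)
Your proposal is correct and follows essentially the same route as the paper: pull $\mathsf{Loc}(S')$ back along $\mathsf{F}$, use Lemma~\ref{preimage_is_localizing} to see the preimage is localizing, and invoke minimality of $\mathsf{Loc}(S)$ to conclude the preimage is all of $\mathcal{T}$. If anything, your write-up is slightly more careful than the paper's in spelling out why $S$ lies in the preimage and in unwinding the definition of $\mathsf{Im}(\mathsf{F})$ at the end.
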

\begin{proof}
The preimage of $\mathsf{Loc}(S')$ under $\mathsf{F}$ contains $S$, since objects of $S$ are mapped to objects of $S'$. Moreover, by Lemma \ref{preimage_is_localizing}, the preimage of $\mathsf{Loc}(S')$ is a localizing subcategory of $\mathcal{T}$, thus contains $\mathsf{Loc}(S)$. In case $\mathsf{Loc}(S)=\mathcal{T}$, we conclude that $\mathsf{Im}(\mathsf{F})$ is a subcategory of $\mathsf{Loc}(S') $.  
\end{proof}

Consider a ring $R$, which, for the rest of this paper, will always be assumed associative and unital and let $\Mod R$ denote the category of right $R$-modules and $\mathsf{D}(R)$ denote the derived category of $\Mod R$. Moreover, $R$-modules will be considered as objects of $\mathsf{D}(R)$, by viewing them as complexes in degree 0. We recall an important property of $\mathsf{D}(R)$ in the following remark. 

\begin{rem} \label{generation_by_projectives} 
    By \cite[Proposition 2.2]{rickard} the projective $R$-modules generate $\mathsf{D}(R)$. Moreover, since every projective module is a direct summand of a coproduct of copies of $R$, it follows that $R$ itself, as a right $R$-module, generates $\mathsf{D}(R)$, i.e $\mathsf{Loc}(R)=\mathsf{D}(R)$. 
\end{rem}

The following notion was considered by Keller \cite{keller} and recently studied by Rickard in \cite{rickard}: 

\begin{defn} \label{injective_generation_defn} 
We say that \emph{injectives generate} for a ring $R$ if $\mathsf{Loc}(\Inj R)=\mathsf{D}(R)$. 
\end{defn}

Let now $\Lambda$ be a finite dimensional algebra over a field. We write 
\[
\Findim\Lambda:=\sup\{\pd{X_{\Lambda}} \ | \ X\in \Mod{\Lambda} \ \text{and} \ \pd{X_{\Lambda}}<\infty \}
\]
for the big finitistic dimension of $\Lambda$. Rickard proved the following important result:

\begin{thm}
\textnormal{(\!\!\!\cite[Theorem~4.3]{rickard})}
Let $\Lambda$ be a finite dimensional algebra over a field. If injectives generate for $\Lambda$, then $\Findim\Lambda<\infty$. 
\end{thm}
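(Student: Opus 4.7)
The plan is to argue by the contrapositive. Assume $\mathsf{Loc}(\Inj\Lambda) = \mathsf{D}(\Lambda)$ and suppose, for contradiction, that $\Findim\Lambda = \infty$. The goal is to construct a nonzero object $T \in \mathsf{D}(\Lambda)$ that is right-orthogonal to every injective module, in the sense that $\Hom_{\mathsf{D}(\Lambda)}(I, T[i]) = 0$ for every injective $I$ and every $i \in \mathbb{Z}$.

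The contradiction mechanism is a standard closure argument. For any such putative $T$, consider
\[
\mathcal{S}_T := \{X \in \mathsf{D}(\Lambda) \mid \Hom_{\mathsf{D}(\Lambda)}(X, T[i]) = 0 \text{ for all } i \in \mathbb{Z}\}.
\]
The subcategory $\mathcal{S}_T$ is triangulated and closed under arbitrary coproducts, because contravariant $\Hom$ sends coproducts to products and a product of zeros is zero; hence $\mathcal{S}_T$ is localizing in the sense of Definition~\ref{localizing}. By the orthogonality assumption $\Inj\Lambda \subseteq \mathcal{S}_T$, and the hypothesis $\mathsf{Loc}(\Inj\Lambda) = \mathsf{D}(\Lambda)$ then forces $\mathcal{S}_T = \mathsf{D}(\Lambda)$. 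Taking $X = T$ yields $\mathsf{id}_T = 0$, whence $T = 0$, contradicting the choice of $T$.

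To produce such a $T$, use $\Findim\Lambda = \infty$ to pick modules $M_n$ with $\pd M_n$ finite but tending to infinity, and assemble them via a homotopy-colimit or iterated-extension construction, with shifts placing them in increasing cohomological degrees. The vanishing $\mathsf{Ext}^{j}_\Lambda(I, N) = 0$ for every injective $I$, every module $N$, and every $j > 0$ handles the positive-degree $\Hom$-groups piece by piece, while the shift/colimit structure is designed to annihilate $\Hom_{\mathsf{D}(\Lambda)}(I, T[i])$ for $i \leq 0$ as well. Crucially, the unbounded projective dimensions of the $M_n$ should ensure that $T$ retains nontrivial cohomology detectable by the compact generator $\Lambda$ (see Remark~\ref{generation_by_projectives}), so that $T \neq 0$ in $\mathsf{D}(\Lambda)$.

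The principal obstacle is exactly this balancing act: the assembly must simultaneously annihilate every $\Hom_{\mathsf{D}(\Lambda)}(I, T[i])$ for every injective $I$ and every integer $i$, yet keep $T$ itself nonzero. It is here that the finite-dimensionality of $\Lambda$ is used essentially --- it provides an injective cogenerator $D(\Lambda)$, so $\mathsf{Loc}(\Inj\Lambda) = \mathsf{Loc}(D(\Lambda))$, and it guarantees the compactness of finitely generated modules in $\mathsf{D}(\Lambda)$, which allows homotopy-colimit computations to be carried out and $\Hom$-vanishing to be transferred between $D(\Lambda)$-tests and $\Lambda$-tests. Engineering the precise homotopy colimit that achieves both conditions at once is the technical heart of the argument, and is the step where the hypothesis $\Findim\Lambda = \infty$ is exploited in a non-formal way.
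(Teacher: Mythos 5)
First, note that the paper does not prove this statement; it is quoted directly from \cite[Theorem~4.3]{rickard}, so your argument has to stand on its own. Your contradiction mechanism is correct: for any $T$ the subcategory $\mathcal{S}_T$ is localizing, so generation by injectives annihilates every object that is right-orthogonal to the injectives in all degrees. The gap lies entirely in the construction of $T$, and the one concrete tool you bring to bear on it is false: you assert $\Ext^{j}_\Lambda(I,N)=0$ for every injective $I$, every module $N$ and every $j>0$. Injectivity of $I$ gives vanishing of $\Ext^{j}_\Lambda(N,I)$, with $I$ in the \emph{second} variable; the vanishing you need, with $I$ in the first variable, would say that every injective module is projective, which fails for most finite dimensional algebras. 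Since this identity is exactly what is supposed to kill the positive-degree groups $\Hom_{\mathsf{D}(\Lambda)}(I,T[i])$, the assembly you sketch cannot deliver the required orthogonality. A second, smaller error: finitely generated modules are not compact in $\mathsf{D}(\Lambda)$ (the compact objects are the perfect complexes, so a finitely generated module is compact precisely when it has finite projective dimension), so the homotopy-colimit manipulations you invoke are not justified as stated.

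Beyond these errors, the reduction itself buys nothing: because $\mathsf{Loc}(\Inj\Lambda)=\mathsf{Loc}(\Hom_k(\Lambda,k))$ is generated by a single object, Brown representability shows that ``injectives do not generate'' is \emph{equivalent} to the existence of a nonzero $T$ with $\Hom_{\mathsf{D}(\Lambda)}(I,T[i])=0$ for all injectives $I$ and all $i\in\mathbb{Z}$; so producing such a $T$ from $\Findim\Lambda=\infty$ is a restatement of the contrapositive of the theorem, and you explicitly defer this construction (``the technical heart'') without carrying it out. Rickard's own proof runs in the direct direction and uses the correctly oriented vanishing: for a module $M$ with $d=\pd{M}<\infty$ one has $\Ext^{i}_\Lambda(M,I)=0$ for all $i>0$ and all injectives $I$, whereas $\Ext^{d}_\Lambda(M,-)$ is nonzero on some free module; combining this with $\Lambda\in\mathsf{Loc}(\Inj\Lambda)$ yields a uniform bound on $d$. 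Either reproduce that argument or, as the paper does, simply cite \cite[Theorem~4.3]{rickard}.
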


We end this subsection with the following remark that will be used throughout. 

\begin{rem} 
\label{bounded_cohomology}
Let $R$, $S$ be two rings and assume that there is an adjoint pair $(\mathsf{F},\mathsf{G})$ of triangle functors between $\mathsf{D}(R)$ and $\mathsf{D}(S)$ as below 
\begin{center}
\begin{tikzcd}
\mathsf{D}(S) \arrow[rr, "\mathsf{G}"] &  & \mathsf{D}(R) \arrow[ll, "\mathsf{F}"', bend right]
\end{tikzcd}
\end{center}
If $\mathsf{F}$ maps complexes bounded in cohomology to complexes bounded in cohomology, it follows (see for instance \cite[Lemma 2.14]{cummings}) that $\mathsf{G}$ maps bounded complexes of injectives to bounded complexes of injectives. In particular, assume that there is a ring homomorphism $R\rightarrow S$ and consider the induced adjoint pair between the derived categories of $R$ and $S$
\[
\begin{tikzcd}
\mathsf{D}(S) \arrow[rr, "\mathsf{res}"] &  & \mathsf{D}(R) \arrow[ll, "-\otimes_{R}^{\mathsf{L}}S"', bend right]
\end{tikzcd}
\]
If $\mathsf{fdim}_RS$ (the flat dimension of $S$ as a left $R$-module) is finite, then for every $R$-module $X$, we have that $\mathsf{Tor}_n^R(X,S)=0$ for $n$ large enough. In other words, $X\otimes_R^{\mathsf{L}}S$ is a complex that is bounded in cohomology. We conclude that the functor $-\otimes_R^{\mathsf{L}}S$ maps complexes bounded in cohomology to complexes bounded in cohomology and therefore the functor $\mathsf{res}\colon\mathsf{D}(S)\rightarrow\mathsf{D}(R)$ maps bounded complexes of injectives to bounded complexes of injectives. 
\end{rem}

\subsection{Injective generation for graded rings} 

Throughout this subsection $\Gamma$ denotes an (additive) abelian group. We recall the following notions and fix the notation that follows: 

(i) A \emph{graded ring} over $\Gamma$ is a ring $R$ together with a decomposition $R=\oplus_{\gamma\in\Gamma}R_{\gamma}$ of abelian groups $R_{\gamma}$ such that $R_{\gamma}\cdot R_{\gamma'}\subseteq R_{\gamma+\gamma'}$ for all $\gamma,\gamma'\in\Gamma$. We note that necessarily $R_0$ is a subring of $R$ (that we will be calling the \emph{initial subring} of $R$) and for every $\gamma$, $R_{\gamma}$ is an $R_0$-bimodule. Moreover, elements that belong in some $R_{\gamma}$ are called \emph{homogeneous}. Lastly, for a graded ring $R$ over $\mathbb{Z}$, we say that $R$ is \emph{positively graded} if $R_i=0$ for all $i<0$ and that $R$ is \emph{finitely graded} if $R_i=0$ for almost all $i\in\mathbb{Z}$.   

(ii) A (right) \emph{graded $R$-module} is a (right) module $M$ together with a decomposition $M=\oplus_{\gamma\in\Gamma}M_{\gamma}$ of abelian groups such that $M_{\gamma}\cdot R_{\gamma'}\subseteq M_{\gamma+\gamma'}$ for all $\gamma,\gamma'\in\Gamma$.   

(iii) Given two (right) graded $R$-modules $M=\oplus_{\gamma\in\Gamma}M_{\gamma}$ and $N=\oplus_{\gamma\in\Gamma}N_{\gamma}$ over $R$, a \emph{graded module homomorphism} $f:M\rightarrow N$ is defined to be a module homomorphism $f\colon M\rightarrow N$ such that $f(M_{\gamma})\subseteq N_{\gamma}$ for all $\gamma\in\Gamma$. 

(iv) We denote by $\GrMod R$ the \emph{category of graded $R$-modules} with graded module homomorphisms. We recall that this is an abelian category with enough projective and injective objects. We denote by $\GrProj R$ the class of projective objects in $\GrMod R$, that we call \emph{graded projective modules} and we also denote by $\GrInj R$ the class of injective objects in $\GrMod R$, that we call \emph{graded injective modules}. 

(v) Given a graded module $M$, one may consider the underlying module without the grading (which we also denote by $M$). It is well-known that a graded module is graded projective if and only if it is projective as a module without the grading, see \cite[Section 2.2]{methods}. However this is not the case for graded injective modules: a graded module that is injective is graded injective but a graded injective module might not be injective (see, for example, \cite[Remark 2.3.3]{methods}). The injective dimension of graded injective modules has been studied in \cite{vandenbergh}.  

(vi) Just like in the ungraded case, we will be treating graded $R$-modules as objects of $\mathsf{D}(\GrMod R)$, by viewing them as complexes concentrated in degree 0.

The following is the analogous (and is proven verbatim) to \cite[Proposition 2.1]{rickard} for graded rings. 

\begin{prop}
\label{basic_properties_of_localizing}
Let $\mathcal{C}$ be a localizing subcategory of $\mathsf{D}(\GrMod R)$. 
\begin{itemize}
\item[(i)] If $0\to X\to Y\to Z\to 0$ is a short exact sequence of complexes and two of them are in $\mathcal{C}$, then so is the third. 

\item[(ii)] If $X$ is in $\mathcal{C}$, then so is $X[t]$ for every $t\in\mathbb{Z}$. 

\item[(iii)] If $X$ and $Y$ are quasi-isomorphic and one of them is in $\mathcal{C}$ then so is the other.
 
\item[(iv)] If $\{X_i,i\in I\}$ is a set of objects in $\mathcal{C}$, then $\oplus_{i\in I}X_i$ is in $\mathcal{C}$.
 
\item[(v)] If $X\oplus Y$ is in $\mathcal{C}$, then both $X$ and $Y$ are in $\mathcal{C}$. 

\item[(vi)] If $X^*$ is a bounded complex and $X^i$ is in $\mathcal{C}$ for every $i$, then $X^*$ is in $\mathcal{C}$.   

\item[(vii)] If $X_0\xrightarrow{a_0}X_1\xrightarrow{a_1}X_2\xrightarrow{a_2}\dots$ is a sequence of cochain maps of complexes where $X_i$ is in $\mathcal{C}$ for all $i$, then $\varinjlim X_i$ is in $\mathcal{C}$.
 
\item[(viii)] If $X^*$ is a bounded above complex where $X^i$ is in $\mathcal{C}$ for every $i$, then $X^*$ is in $\mathcal{C}$.
    
\end{itemize}
\end{prop}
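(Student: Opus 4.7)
The plan is to mirror the standard proof of the ungraded statement \cite[Proposition 2.1]{rickard}, since every step is internal to the triangulated structure of $\mathsf{D}(\GrMod R)$ together with closure under set-indexed coproducts, both of which are available. Items (i)--(iv) are essentially immediate: (i) follows because a short exact sequence of complexes produces a distinguished triangle in the derived category and triangulated subcategories enjoy two-out-of-three; (ii) and (iii) are closure under shifts and under isomorphism in $\mathsf{D}(\GrMod R)$ (quasi-isomorphisms are the isomorphisms there); and (iv) is just the definition of a localizing subcategory.

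For (v), I would invoke the Eilenberg swindle: given $X\oplus Y\in\mathcal{C}$, the countable coproduct $\bigoplus_{n\in\mathbb{N}}(X\oplus Y)$ lies in $\mathcal{C}$ by (iv), and a regrouping exhibits $X$ as a direct summand of an object already in $\mathcal{C}$ that fits, via two-out-of-three applied to a split triangle, to force $X\in\mathcal{C}$. For (vi), I would induct on the length of a bounded complex $X^*$: in length one the statement reduces to (ii), and in general the stupid truncation gives a short exact sequence of complexes
\[
0\longrightarrow \sigma^{\geq n+1}X^*\longrightarrow X^*\longrightarrow \sigma^{\leq n}X^*\longrightarrow 0
\]
with strictly shorter outer terms, whence (i) closes the induction.

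For (vii), I would use the Milnor/telescope triangle
\[
\bigoplus_{i\geq 0} X_i \xrightarrow{1-\mathrm{shift}} \bigoplus_{i\geq 0} X_i \longrightarrow \varinjlim X_i \longrightarrow
\]
which is distinguished in $\mathsf{D}(\GrMod R)$; the two coproducts lie in $\mathcal{C}$ by (iv), and (i) gives $\varinjlim X_i\in\mathcal{C}$. Finally, for (viii), I would write a bounded-above complex $X^*$ as the directed colimit of its brutal truncations $\sigma^{\geq -n} X^*$, which are bounded and hence in $\mathcal{C}$ by (vi); applying (vii) to this sequence produces $X^*\in\mathcal{C}$. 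The only step that requires genuine care is (viii), since one must identify $X^*$ with the derived colimit of its truncations; this is standard but is the place where both (vi) and (vii) are used in tandem, and it is the natural expected obstacle.
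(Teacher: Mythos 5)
Your proof is correct and coincides with the paper's approach: the paper gives no separate argument, stating only that the proposition ``is proven verbatim'' as \cite[Proposition 2.1]{rickard}, and your steps (distinguished triangles from short exact sequences of complexes, the Eilenberg swindle with a split triangle for summands, stupid truncations for bounded complexes, and the telescope triangle for sequential colimits and for bounded-above complexes) are exactly that standard argument transplanted to $\mathsf{D}(\GrMod R)$. Nothing graded-specific intervenes, since every step uses only the triangulated structure and closure under set-indexed coproducts, just as you observe.
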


Recall that the \emph{twist} of a graded module $M$ by $\gamma\in\Gamma$, denoted by $M(\gamma)$, is defined to be a graded module with underlying module $M$ and grading given by $M(\gamma)_{\gamma'}=M_{\gamma+\gamma'}$. Twisting by $\gamma$ defines a functor $(\gamma)\colon\GrMod R\rightarrow \GrMod R$ given by $M\mapsto M(\gamma)$. 
As in the ungraded case and from the above, we have the following result. 

\begin{cor} \label{twists_generate}
The graded projective $R$-modules generate $\mathsf{D}(\GrMod R)$. In particular,
the graded modules $\{R(\gamma),\gamma\in\Gamma\}$ generate $\mathsf{D}(\GrMod R)$. 
\end{cor}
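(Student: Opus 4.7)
Set $\mathcal{C}=\mathsf{Loc}(\GrProj R)$. The plan is to show first that $\mathcal{C}=\mathsf{D}(\GrMod R)$, and then to deduce the ``in particular'' clause by expressing every graded projective as a direct summand of a coproduct of twists of $R$. Throughout, I will use Proposition~\ref{basic_properties_of_localizing} as a toolkit, since it is the graded analogue of \cite[Proposition~2.1]{rickard} and encodes exactly the closure properties that drive the classical argument.

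\textbf{Step 1: Every graded module lies in $\mathcal{C}$.} Since $\GrMod R$ has enough projectives, any graded module $M$ admits a projective resolution $P^{\bullet}\to M$ with each $P^{i}$ graded projective and $P^{i}=0$ for $i>0$. This is a bounded above complex of objects in $\mathcal{C}$, so part~(viii) of Proposition~\ref{basic_properties_of_localizing} places $P^{\bullet}$ in $\mathcal{C}$; since $P^{\bullet}$ is quasi-isomorphic to $M$, part~(iii) gives $M\in\mathcal{C}$.

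\textbf{Step 2: Every complex lies in $\mathcal{C}$.} For an arbitrary $X^{\bullet}\in\mathsf{D}(\GrMod R)$, consider the stupid truncations $\sigma^{\leq n}X^{\bullet}$, which are bounded above complexes whose entries are graded modules, hence in $\mathcal{C}$ by Step~1. Part~(viii) then gives $\sigma^{\leq n}X^{\bullet}\in\mathcal{C}$. The canonical inclusions $\sigma^{\leq n}X^{\bullet}\hookrightarrow\sigma^{\leq n+1}X^{\bullet}$ form a sequence of cochain maps with $\varinjlim_{n}\sigma^{\leq n}X^{\bullet}=X^{\bullet}$, so part~(vii) yields $X^{\bullet}\in\mathcal{C}$. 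This proves $\mathcal{C}=\mathsf{D}(\GrMod R)$.

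\textbf{Step 3: The twists $\{R(\gamma)\}_{\gamma\in\Gamma}$ generate.} Any graded projective $R$-module $P$ is a direct summand of a graded-free module, i.e.\ of some coproduct $F=\bigoplus_{i\in I}R(\gamma_{i})$ with $\gamma_{i}\in\Gamma$; this is standard (see \cite[Section~2.2]{methods}). By closure under coproducts (part~(iv)), $F\in\mathsf{Loc}(\{R(\gamma)\}_{\gamma\in\Gamma})$, and then closure under summands (part~(v)) gives $P\in\mathsf{Loc}(\{R(\gamma)\}_{\gamma\in\Gamma})$. Hence $\GrProj R\subseteq\mathsf{Loc}(\{R(\gamma)\}_{\gamma\in\Gamma})$, and combining with Steps~1--2 we conclude $\mathsf{Loc}(\{R(\gamma)\}_{\gamma\in\Gamma})=\mathsf{D}(\GrMod R)$.

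No serious obstacle is expected; Proposition~\ref{basic_properties_of_localizing} is tailored to supply exactly the direct-limit and truncation closure needed to pass from bounded above resolutions of a single module to arbitrary unbounded complexes, so the only mild subtlety is verifying that the stupid truncations assemble into a directed system whose colimit recovers $X^{\bullet}$ on the nose, which is an elementary fact about cochain complexes.
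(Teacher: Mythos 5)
Your overall route is the same as the paper's: the paper proves the first claim ``as in \cite[Proposition~2.2]{rickard}'' using Proposition~\ref{basic_properties_of_localizing}, and your Steps~1 and~3 reproduce exactly that argument together with the paper's summand-of-twisted-frees argument for the second claim. However, Step~2 contains a genuine error. The stupid (brutal) truncation $\sigma^{\leq n}X^{\bullet}$, which keeps the terms in degrees $\leq n$ and replaces the rest by zero, is a \emph{quotient} complex of $X^{\bullet}$, not a subcomplex: the degreewise maps $\sigma^{\leq n}X^{\bullet}\to\sigma^{\leq n+1}X^{\bullet}$ that are the identity in degrees $\leq n$ and zero above fail to commute with the differential in degree $n$ (one composite is $d^{n}\colon X^{n}\to X^{n+1}$, the other is $0$), so they are not cochain maps unless $d^{n}=0$. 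Hence the ``canonical inclusions'' you invoke do not exist; the stupid truncations from above form an \emph{inverse} system, and part~(vii) of Proposition~\ref{basic_properties_of_localizing} does not apply to them.

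The fix is standard and leaves the rest of your argument intact: use the canonical (good) truncations $\tau^{\leq n}X^{\bullet}$, whose degree-$i$ term is $X^{i}$ for $i<n$, is $\ker d^{n}$ for $i=n$, and is $0$ for $i>n$. These really are subcomplexes of $X^{\bullet}$, they are bounded above, and every term is a graded module, hence lies in $\mathcal{C}$ by your Step~1; part~(viii) then places each $\tau^{\leq n}X^{\bullet}$ in $\mathcal{C}$, and the genuine inclusions $\tau^{\leq n}X^{\bullet}\hookrightarrow\tau^{\leq n+1}X^{\bullet}$ form a directed system of cochain maps with colimit $X^{\bullet}$, so part~(vii) completes Step~2. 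With this substitution your proof coincides with the argument of \cite[Proposition~2.2]{rickard} that the paper cites.
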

\begin{proof}
The first part is proved as in \cite[Proposition~2.2]{rickard} using Proposition~\ref{basic_properties_of_localizing}. For the second claim,  every graded projective $R$-module is a direct summand of direct sum of copies of $R(\gamma)$ for various $\gamma\in \Gamma$. Thus, every graded projective module belongs in $\mathsf{Loc}(R(\gamma),\gamma\in\Gamma)$ and since the graded projective modules generate $\mathsf{D}(\GrMod R)$ the proof is complete. 
\end{proof}

\begin{lem} \label{twists} 
Let $S$ be a class of objects in $\GrMod R$ that is closed under twists. Assume that $X$ is a graded $R$-module with $X\in\mathsf{Loc}(S)$. Then $X(\gamma)\in\mathsf{Loc}(S)$ for every $\gamma\in\Gamma$. 
\end{lem}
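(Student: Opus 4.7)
The plan is to exploit the fact that the twist functor $(\gamma)\colon\GrMod R\rightarrow\GrMod R$ is an exact auto-equivalence of abelian categories (with inverse $(-\gamma)$), and to combine this with Lemma~\ref{preimage_is_localizing} applied to the endofunctor $(\gamma)$ on $\mathsf{D}(\GrMod R)$.

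First I would verify that $(\gamma)$ extends to a triangle endofunctor of $\mathsf{D}(\GrMod R)$ which preserves coproducts. This is immediate: the twist is exact on the underlying abelian category since it merely relabels the grading pieces, it commutes with direct sums (a direct sum of graded modules is twisted by twisting each summand), and as an equivalence it certainly extends to a triangle equivalence of the derived categories.

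Next I would consider the preimage $\mathcal{P}$ of $\mathsf{Loc}(S)$ under the triangle functor $(\gamma)\colon\mathsf{D}(\GrMod R)\to\mathsf{D}(\GrMod R)$. By Lemma~\ref{preimage_is_localizing}, $\mathcal{P}$ is a localizing subcategory of $\mathsf{D}(\GrMod R)$. The key verification is that $S\subseteq\mathcal{P}$: indeed, for any $s\in S$, the hypothesis that $S$ is closed under twists gives $s(\gamma)\in S\subseteq\mathsf{Loc}(S)$, so $s\in\mathcal{P}$. Since $\mathsf{Loc}(S)$ is by definition the smallest localizing subcategory containing $S$, we conclude $\mathsf{Loc}(S)\subseteq\mathcal{P}$. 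Unpacking the definition of $\mathcal{P}$, this means exactly that $X(\gamma)\in\mathsf{Loc}(S)$ for every $X\in\mathsf{Loc}(S)$, which proves the lemma.

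There is no real obstacle here; the statement is essentially a formal consequence of the existence of Lemma~\ref{preimage_is_localizing} together with the observation that twisting is an exact, coproduct-preserving auto-equivalence of $\GrMod R$. The only point one should not gloss over is the closure-under-twists hypothesis on $S$, which is precisely what allows the seed class $S$ to lie in the preimage.
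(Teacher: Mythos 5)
Your proof is correct and follows essentially the same route as the paper: both pass to the preimage of $\mathsf{Loc}(S)$ under the derived twist functor, invoke Lemma~\ref{preimage_is_localizing} to see it is localizing, and use the closure-under-twists hypothesis to check it contains $S$ and hence $\mathsf{Loc}(S)$. No gaps; your version is just slightly more explicit about why the twist extends to a coproduct-preserving triangle functor.
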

\begin{proof}
Consider the twist functor $(\gamma)\colon \GrMod R\to \GrMod R$, which is exact. By assumption, it sends elements of $S$ to elements of $S$. Therefore, by Lemma \ref{preimage_is_localizing}, since $(\gamma)$ preserves coproducts, the preimage of $\mathsf{Loc}(S)$ under the induced triangle functor $(\gamma)\colon\mathsf{D}(\GrMod R)\rightarrow \mathsf{D}(\GrMod R)$ is a localizing subcategory that contains $S$ and thus it also contains $\mathsf{Loc}(S)$. Therefore, $X(\gamma)\in \mathsf{Loc}(S)$ for every graded $R$-module $X\in\mathsf{Loc}(S)$.  
\end{proof}

The following definition is the graded analogue to \cite[Definition 3.1]{rickard} and it is the main notion of study in this paper. 

\begin{defn}
\label{defn_gradedinjgen}
We say that \emph{graded injectives generate} for a graded ring $R$ if 
\begin{center}
	$\mathsf{Loc}(\GrInj R)=\mathsf{D}(\GrMod R)$
\end{center}
\end{defn}

\begin{note}
If graded injectives generate for a graded ring $R$, then graded injectives generate for every twist $R(\gamma)$, as the categories $\GrMod R$ and $\GrMod R(\gamma)$ are equivalent. 
\end{note}

For a commutative Noetherian ring $R$, Neeman proved in \cite{chromatic} that there is a bijection as follows 
\[
\{\text{subsets of }\mathsf{Spec}R\}\stackrel[]{}{\rightleftarrows}\{\text{localizing subcategories of }\mathsf{D}(R)\}
\]
Using this Rickard proved that injectives generate for every commutative Noetherian ring, see \cite[Theorem 3.3]{rickard}. In \cite{IvoStevenson}, Dell'Amrogio and Stevenson generalise the result of Neeman to the context of \emph{$\epsilon$-commutative graded Noetherian rings}, recalled below. Using their result and with similar arguments as the ones of Rickard, we will show that graded injectives generate for any such graded ring. On one hand this is an expected analogue to the aforementioned theorem of Rickard. On the other hand, by a simple comparison between graded injective generation and injective generation, this will in fact imply the ungraded commutative case.

\begin{defn} \textnormal{(\!\!\cite[Definition 2.4]{IvoStevenson})} Let $\Gamma$ be an abelian group, $R$ a graded ring over $\Gamma$ and $\epsilon\colon\Gamma\times \Gamma\rightarrow \mathbb{Z}/2\mathbb{Z}$ a symmetric $\mathbb{Z}$-bilinear map. We say that $R$ is \emph{$\epsilon$-commutative} if $r\cdot s=(-1)^{\epsilon(\mathsf{deg}r,\mathsf{deg}s)}s\cdot r$ for all homogeneous elements $r,s\in R$. 
\end{defn}

\begin{exmp}
If $R$ is commutative or graded-commutative, then it may be viewed as  $\epsilon$-commutative with $\epsilon=0$ or $\epsilon(\gamma_1,\gamma_2)=\gamma_1+\gamma_2$ respectively. 
\end{exmp}

Let us explain some notation that will follow. Consider a class $F$ of objects of $\GrMod R$. Then, we write $\mathsf{Loc}(F)_{\otimes}$ for the localizing subcategory of $\mathsf{D}(\GrMod R)$ that is generated by $\{X(\gamma) \ | \ \gamma\in\Gamma, X\in F\}$ and call it the \emph{localizing  $\otimes$-ideal generated by $F$}. There is a good reason the tensor sign $\otimes$ appears, but we deliberately avoid it, since it is beyond the scope of this text (in fact this is not the definition of a localizing tensor ideal, but from \cite[Lemma 2.21]{IvoStevenson}, it is equivalent to the one given here). The main theorem of \cite{IvoStevenson} states that for an $\epsilon$-commutative graded Noetherian ring $R$ there is a bijection as follows 

\[
\{\text{subsets of }\mathsf{Spec}^hR\}\stackrel[]{}{\rightleftarrows}\{\text{localizing tensor ideals of }\mathsf{D}(\GrMod R)\}
\]
As already advertised, the following is the graded-analogue to \cite[Theorem 3.3]{rickard}.

\begin{thm}
\label{injgenepsilongraded} 
Graded injectives generate for every $\epsilon$-commutative graded Noetherian ring $R$.  
\end{thm}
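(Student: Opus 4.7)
The plan is to mimic Rickard's argument from \cite[Theorem 3.3]{rickard}, with the Dell'Ambrogio--Stevenson classification of localizing tensor ideals of $\mathsf{D}(\GrMod R)$ taken from \cite{IvoStevenson} replacing Neeman's classification of localizing subcategories of $\mathsf{D}(R)$. That is, I will exploit the bijection
\[
\{\text{subsets of }\mathsf{Spec}^hR\}\;\longleftrightarrow\;\{\text{localizing tensor ideals of }\mathsf{D}(\GrMod R)\}
\]
and show that $\mathsf{Loc}(\GrInj R)$ is a localizing $\otimes$-ideal whose corresponding subset is all of $\mathsf{Spec}^h R$, so that the bijection forces equality $\mathsf{Loc}(\GrInj R)=\mathsf{D}(\GrMod R)$.

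First I would check that $\mathsf{Loc}(\GrInj R)$ is a localizing $\otimes$-ideal, i.e.\ is closed under twists by every $\gamma\in\Gamma$. Since the twist of a graded injective module is again graded injective, the class $\GrInj R$ is itself closed under twists, and Lemma~\ref{twists} applied with $S=\GrInj R$ then propagates this closure to the whole localizing subcategory $\mathsf{Loc}(\GrInj R)$. Consequently, under the Dell'Ambrogio--Stevenson bijection, $\mathsf{Loc}(\GrInj R)$ corresponds to a well-defined subset $S\subseteq \mathsf{Spec}^hR$, which can be computed as the union of the (homogeneous) supports of its objects.

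Next, I would show $S=\mathsf{Spec}^hR$ by exhibiting, for each homogeneous prime $\mathfrak{p}$, a graded injective module with $\mathfrak{p}$ in its support. The natural candidate is the graded injective hull $E(R/\mathfrak{p})$, which exists because $R$ is graded Noetherian and $R/\mathfrak{p}$ is a graded $R$-module, being a quotient by a homogeneous ideal. The essential embedding $R/\mathfrak{p}\hookrightarrow E(R/\mathfrak{p})$ survives graded localization at $\mathfrak{p}$ (the localized module still contains the graded residue field at $\mathfrak{p}$ as a nonzero submodule), so $\mathfrak{p}\in\mathsf{supp}(E(R/\mathfrak{p}))$. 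Ranging over all homogeneous primes thus yields $S=\mathsf{Spec}^hR$, and the bijection delivers $\mathsf{Loc}(\GrInj R)=\mathsf{D}(\GrMod R)$.

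The main obstacle I expect is the verification in the third step: one must be careful to match the support function used in \cite{IvoStevenson} with the graded analogue of Matlis theory for $\epsilon$-commutative graded Noetherian rings and confirm that $E(R/\mathfrak{p})$ does not vanish after the graded localization corresponding to $\mathfrak{p}$. Once that Matlis-type input is in place, everything else is a direct translation of Rickard's ungraded argument, with twists and homogeneous primes replacing the roles of modules and ordinary primes.
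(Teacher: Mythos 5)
Your proposal is correct and follows essentially the same route as the paper: both run Rickard's argument through the Dell'Ambrogio--Stevenson bijection between subsets of $\mathsf{Spec}^hR$ and localizing $\otimes$-ideals, and both detect every homogeneous prime $\mathfrak{p}$ via the graded injective hull of $R/\mathfrak{p}$. The only difference is in the final verification, where the paper avoids the graded Matlis-theory computation you flag as the main obstacle by instead citing \cite[Propositions 4.7 and 4.9]{IvoStevenson}: $I(R/\mathfrak{p})$ lies in the minimal $\otimes$-ideal $\mathsf{Loc}(k(\mathfrak{p}))_{\otimes}$ and hence generates it, so $\mathsf{Loc}(\GrInj R)_{\otimes}$ corresponds to all of $\mathsf{Spec}^hR$.
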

\begin{proof}
By \cite[Theorem 5.8]{IvoStevenson}, there is an inclusion preserving bijection between the localizing $\otimes$-ideals of $\mathsf{D}(\GrMod R)$ and the subsets of $\mathsf{Spec}^hR$. By \cite[Proposition 4.7]{IvoStevenson}, for any homogeneous prime ideal $p$ of $R$, the injective hull $I(R/p)$ belongs in $\mathsf{Loc}(k(p))_{\otimes}$ where $k(p)=R_p/pR_p$ and by \cite[Proposition 4.9]{IvoStevenson}, $\mathsf{Loc}(k(p))_{\otimes}$ is minimal, hence generated by $I(R/p)$ (as a localizing tensor ideal). Under the bijection mentioned above, $\mathsf{Loc}(k(p))_{\otimes}$ corresponds to $p$. Therefore the ideal $\mathsf{Loc}( \GrInj R)=\mathsf{Loc}(\GrInj R)_{\otimes}$, which contains all $I(R/p)$ (that is for every $p$), corresponds to $\mathsf{Spec}^hR$, which in turn corresponds to the whole derived category $\mathsf{D}(\GrMod R)$ and so $\mathsf{Loc}(\GrInj R)=\mathsf{D}(\GrMod R)$. 
\end{proof}

\begin{exmp}
\label{example_tensor}
Let $R$ be a commutative Noetherian ring and $M$ a $R$-$R$-bimodule that is nilpotent and finitely generated on both sides. Consider the tensor ring $T_R(M)$, that is $T_{R}(M)=R\oplus M\oplus M^{\otimes 2}\oplus M^{\otimes 3}\oplus\cdots$, with $M^{\otimes 0}=R$ and multiplication induced by $M^{\otimes k}\otimes M^{\otimes l}\rightarrow M^{\otimes k+l}$. Then, $T_R(M)$ is commutative Noetherian, thus by Theorem \ref{injgenepsilongraded} graded injectives generate for it. A non-commutative version of the latter is proved in Theorem \ref{mainthm2}.  
\end{exmp}
\begin{exmp}
\label{example_exterior}
Let $R$ be a commutative Noetherian ring and $M$ an $R$-$R$-bimodule that is nilpotent and finitely generated on both sides. Consider the exterior algebra $\bigwedge_R(M)$, that is $\bigwedge_R(M)=R\oplus \bigwedge^1_R(M)\oplus \bigwedge^2_R(M)\oplus\cdots$, with $\bigwedge^0M=R$ and multiplication induced by $(\bigwedge^k_R(M))\wedge (\bigwedge_R^l(M))\rightarrow \bigwedge_R^{k+l}(M)$. Then, $\bigwedge_R(M)$ is graded commutative Noetherian, thus by Theorem~\ref{injgenepsilongraded} graded injectives generate for it. 
\end{exmp}

We recall the following definition. 
\begin{defn}
Two graded rings $R$ and $S$ are called \emph{graded derived equivalent} if there exists a triangle equivalence between $\mathsf{D}(\GrMod R)$ and $\mathsf{D}(\GrMod S)$.
\end{defn}
As a last part of this section, we will show that graded injective generation is an invariant under graded derived equivalence among the rings that are graded over finite groups. To do this, we rely on the ungraded case, which we recall below.
\begin{thm} \textnormal{(\!\!\cite[Theorem 3.4]{rickard})}
\label{derived_invariance}
If $R$ and $S$ are derived equivalent rings, meaning that there is a triangle equivalence between $\mathsf{D}(R)$ and $\mathsf{D}(S)$, then injectives generate for $R$ if and only if injectives generate for $S$. 
\end{thm}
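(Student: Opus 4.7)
The plan is to transfer injective generation across the equivalence. Let $\mathsf{F}\colon\mathsf{D}(R)\to\mathsf{D}(S)$ be the given triangle equivalence with quasi-inverse $\mathsf{G}$. Since the statement is symmetric in $R$ and $S$, it suffices to prove one implication: assume $\mathsf{Loc}(\Inj R)=\mathsf{D}(R)$ and deduce $\mathsf{Loc}(\Inj S)=\mathsf{D}(S)$.

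The first, purely categorical, step uses that $\mathsf{F}$, being a triangle equivalence, is a left adjoint (to $\mathsf{G}$) and so preserves arbitrary coproducts, and of course also triangles, shifts and retracts. Therefore, for any class $\mathcal{S}$ of objects of $\mathsf{D}(R)$ one has
\[
\mathsf{F}\bigl(\mathsf{Loc}_{\mathsf{D}(R)}(\mathcal{S})\bigr)=\mathsf{Loc}_{\mathsf{D}(S)}\bigl(\mathsf{F}(\mathcal{S})\bigr);
\]
the inclusion $\subseteq$ is Proposition~\ref{image_is_contained_in_localizing}, while the reverse holds because $\mathsf{F}(\mathcal{L})$ coincides with the preimage $\mathsf{G}^{-1}(\mathcal{L})$, which is localizing by Lemma~\ref{preimage_is_localizing}. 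Applied to $\mathcal{S}=\Inj R$ together with the hypothesis, this yields $\mathsf{Loc}(\mathsf{F}(\Inj R))=\mathsf{F}(\mathsf{D}(R))=\mathsf{D}(S)$, so the class $\mathsf{F}(\Inj R)$ already generates $\mathsf{D}(S)$. It therefore suffices to establish
\[
\mathsf{F}(\Inj R)\subseteq\mathsf{Loc}(\Inj S),
\]
since this combined with the preceding display gives $\mathsf{D}(S)=\mathsf{Loc}(\mathsf{F}(\Inj R))\subseteq\mathsf{Loc}(\Inj S)$.

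This last inclusion is the main obstacle: a triangle equivalence has no intrinsic reason to send injective modules to bounded complexes of injectives. To handle it one appeals to Rickard's derived Morita theorem, which asserts that whenever $\mathsf{D}(R)$ and $\mathsf{D}(S)$ are triangle equivalent there exists a standard equivalence $\mathsf{F}\simeq -\otimes_R^{\mathsf{L}}T$ given by a two-sided tilting complex $T$ of bimodules, bounded and of finite flat (equivalently, projective) dimension on each side; since only the conclusion about $S$ matters for the theorem, we may assume $\mathsf{F}$ is of this form. The quasi-inverse is then $\mathsf{G}\simeq -\otimes_S^{\mathsf{L}}T^{\vee}$ with $T^{\vee}$ of the same type, so $\mathsf{G}$ sends complexes bounded in cohomology to complexes bounded in cohomology. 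Applying Remark~\ref{bounded_cohomology} to the adjoint pair $\mathsf{G}\dashv\mathsf{F}$ now yields that $\mathsf{F}$ maps bounded complexes of injective $R$-modules to bounded complexes of injective $S$-modules. For $I\in\Inj R$, viewed as a complex in degree $0$, this shows $\mathsf{F}(I)$ is a bounded complex of injective $S$-modules and hence lies in $\mathsf{Loc}(\Inj S)$ by Proposition~\ref{basic_properties_of_localizing}(vi), completing the proof. The essential work is thus entirely concentrated in the invocation of Rickard's Morita theorem to put $\mathsf{F}$ in standard form; without it, the argument stalls precisely at the displayed inclusion.
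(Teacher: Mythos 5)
This theorem is not proved in the paper at all; it is quoted from \cite[Theorem 3.4]{rickard}, so your attempt has to be measured against Rickard's argument rather than anything in the text. The first half of your proof is fine: reducing, by symmetry and the fact that an equivalence preserves coproducts, to the single inclusion $\mathsf{F}(\Inj R)\subseteq \mathsf{Loc}(\Inj S)$, via Lemma~\ref{preimage_is_localizing} and Proposition~\ref{image_is_contained_in_localizing}, is exactly the right bookkeeping, and you are also right that one is free to replace the given equivalence by any other one. The gap is in the step that does all the work. The version of derived Morita theory you invoke --- ``whenever $\mathsf{D}(R)$ and $\mathsf{D}(S)$ are triangle equivalent there is a standard equivalence $-\otimes_R^{\mathsf{L}}T$ for a two-sided tilting complex of bimodules'' --- is proved by Rickard (and, in refined form, by Keller) only for algebras over a commutative base ring $k$ which are projective (or at least flat) as $k$-modules. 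The statement here is about arbitrary rings, where no such base is available (a ring with additive torsion is not flat over $\mathbb{Z}$), and in that generality the existence of a two-sided tilting complex over $R\otimes_{\mathbb{Z}}S^{\op}$ is not a theorem one can cite; the DG-bimodule substitutes one gets from Keller's machinery do not directly feed into Remark~\ref{bounded_cohomology}, which is formulated for rings and injective modules. So as written the crucial inclusion is established only under an unstated flatness/projectivity hypothesis, which is a genuine gap relative to the statement being proved (and to the way the paper uses it, e.g.\ for covering rings of arbitrary graded rings).

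The gap can be closed without any Morita theory, and this is essentially how the cited proof goes: any triangle equivalence preserves compact objects, hence restricts to the bounded derived categories, since $X\in\mathsf{D}^{\mathsf{b}}$ exactly when $\Hom(C,X[n])=0$ for $|n|\gg 0$ for every compact $C$; and inside $\mathsf{D}^{\mathsf{b}}$ the objects isomorphic to bounded complexes of injectives are exactly those $X$ such that $\Hom(Y,X[n])=0$ for $n\gg 0$ for every $Y\in\mathsf{D}^{\mathsf{b}}$ (testing against direct sums of modules upgrades the bound to a uniform one, and then a suitable truncation of an injective resolution of $X$ has injective cokernel). Both characterizations are purely triangulated, so $\mathsf{F}(I)$ is isomorphic to a bounded complex of injective $S$-modules for every injective $R$-module $I$, and your final step via the ungraded analogue of Proposition~\ref{basic_properties_of_localizing}(vi) then completes the proof for arbitrary rings, with no need to put $\mathsf{F}$ in standard form. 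If you prefer to keep your route, you must either add the hypothesis that $R$ and $S$ are $k$-projective algebras (e.g.\ finite dimensional algebras over a field) or justify the existence of a standard equivalence in the stated generality.
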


We also recall that for every graded ring $R$ that is graded over a finite abelian group, there exists the covering ring $\Hat{R}$ of $R$ such that the categories $\GrMod R$ and $\Mod \Hat{R}$ are equivalent, see \cite{smash, coverings}. An important consequence of the above is that injectives generate for $\Hat{R}$ if and only if graded injectives generate for $R$. The latter ring will be of central importance in Section 4 and we postpone its description until then. For now, just the existence of such a ring, together with Theorem \ref{derived_invariance} suffice for the following. 
\begin{cor}
Let $R$ and $S$ be two graded rings that are graded over finite groups. If $R$ and $S$ are graded derived equivalent, then graded injectives generate for $R$ if and only if graded injectives generate for $S$. 
\end{cor}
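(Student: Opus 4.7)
The plan is to reduce the statement about graded derived equivalence to the ungraded derived invariance of injective generation, Theorem~\ref{derived_invariance}, by passing to the associated covering rings. Since both $R$ and $S$ are graded over \emph{finite} abelian groups, the discussion preceding the corollary provides covering rings $\hat R$ and $\hat S$ together with equivalences of abelian categories
\[
\GrMod R \;\simeq\; \Mod \hat R, \qquad \GrMod S \;\simeq\; \Mod \hat S,
\]
which in turn induce triangle equivalences $\mathsf{D}(\GrMod R)\simeq \mathsf{D}(\hat R)$ and $\mathsf{D}(\GrMod S)\simeq \mathsf{D}(\hat S)$.

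The first step is to combine these with the hypothesized graded derived equivalence. Composing the three triangle equivalences
\[
\mathsf{D}(\hat R) \;\simeq\; \mathsf{D}(\GrMod R) \;\simeq\; \mathsf{D}(\GrMod S) \;\simeq\; \mathsf{D}(\hat S),
\]
shows that $\hat R$ and $\hat S$ are derived equivalent in the ordinary (ungraded) sense.

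The second step is to invoke Theorem~\ref{derived_invariance}, which immediately gives that injectives generate for $\hat R$ if and only if they generate for $\hat S$. Finally, the third step is to apply the observation recorded in the excerpt that, via the equivalence $\GrMod R\simeq \Mod \hat R$ (which matches graded injective modules with injective $\hat R$-modules and $\mathsf{D}(\GrMod R)$ with $\mathsf{D}(\hat R)$), one has
\[
\mathsf{Loc}(\GrInj R)=\mathsf{D}(\GrMod R) \iff \mathsf{Loc}(\Inj \hat R)=\mathsf{D}(\hat R),
\]
and analogously for $S$. Chaining the three biconditionals yields the claim.

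There is no real obstacle here: the only thing to be careful about is that the equivalence $\GrMod R\simeq \Mod \hat R$ genuinely identifies graded injectives with injectives on the covering side (this is precisely the point of the covering construction for gradings over finite groups), so that graded injective generation for $R$ translates to injective generation for $\hat R$. Once this translation is in hand, the corollary is a direct consequence of Rickard's derived invariance result.
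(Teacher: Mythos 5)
Your proposal is correct and follows essentially the same route as the paper: pass to the covering rings $\hat R$ and $\hat S$, compose the equivalences to obtain an ordinary derived equivalence $\mathsf{D}(\hat R)\simeq\mathsf{D}(\hat S)$, apply Theorem~\ref{derived_invariance}, and translate back through $\GrMod R\simeq\Mod\hat R$ and $\GrMod S\simeq\Mod\hat S$. No issues.
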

\begin{proof}
There is a ring $\Hat{R}$ such that $\GrMod R\simeq \Mod\hat{R}$ and there is a ring $\hat{S}$ such that $\GrMod S\simeq \Mod \hat{S}$. From the first equivalence it follows that if graded injectives generate for $R$, then injectives generate for $\hat{R}$. Since $\mathsf{D}(\GrMod R)\simeq \mathsf{D}(\GrMod S)$ it follows that there is a triangle equivalence $\mathsf{D}(\hat{R})\simeq \mathsf{D}(\hat{S})$. Therefore, by Theorem \ref{derived_invariance}, injectives generate for $\hat{S}$. Consequently, by the equivalence $\GrMod S\simeq \Mod \hat{S}$ we derive that graded injectives generate for $S$. 
\end{proof}

\subsection{Comparing the two conditions}
\label{comparingdef}
Here we establish a few elementary propositions that compare injective generation and graded injective generation. Recall that for any ring $R$ and any abelian group $\Gamma$, we may view $R$ as a graded ring over $\Gamma$ in a trivial way, by writing $R=\oplus_{\gamma\in\Gamma}R_{\gamma}$ with $R_0=R$ and $R_{\gamma}=0$ otherwise. In this case we say that $R$ is \emph{trivially graded}. Accordingly, any $R$-module $M$ can be viewed as a \emph{trivially graded module} over the trivially graded ring $R$ by writing $M=\oplus_{\gamma\in \Gamma} M_{\gamma}$ with $M_0=M$ and $M_{\gamma}=0$ otherwise. We begin with the following, that translates Definition \ref{injective_generation_defn} in our context. 

\begin{lem}
Injectives generate for a ring $R$ if and only if graded injectives generate for $R$ as a trivially graded ring over any abelian group $\Gamma$. 
\end{lem}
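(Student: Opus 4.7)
The plan is to exploit a clean structural observation: when $R$ is trivially graded over $\Gamma$, the relation $M_{\gamma}R_{\gamma'}\subseteq M_{\gamma+\gamma'}$ is vacuous for $\gamma'\neq 0$, so a graded $R$-module is just a $\Gamma$-indexed family of ordinary $R$-modules with no cross-degree interaction and a graded homomorphism is just a family of homomorphisms. This produces, for each $\gamma\in\Gamma$, an exact coproduct-preserving insertion functor $\iota_{\gamma}\colon\Mod R\to\GrMod R$ (placing a module in degree $\gamma$ and $0$ elsewhere) together with an exact coproduct-preserving projection $(-)_{\gamma}\colon\GrMod R\to\Mod R$. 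Since $\GrMod R$ is in this way a product of $|\Gamma|$ copies of $\Mod R$, injective objects correspond to componentwise injective tuples; hence $\iota_{\gamma}$ sends $\Inj R$ into $\GrInj R$ and $(-)_{\gamma}$ sends $\GrInj R$ into $\Inj R$.

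For the forward implication, assuming $\mathsf{Loc}(\Inj R)=\mathsf{D}(R)$, I will apply Proposition~\ref{image_is_contained_in_localizing} to the derived functor $\iota_{\gamma}\colon\mathsf{D}(R)\to\mathsf{D}(\GrMod R)$ with $S=\Inj R$ and $S'=\GrInj R$; this yields $\mathsf{Im}(\iota_{\gamma})\subseteq\mathsf{Loc}(\GrInj R)$, so in particular $\iota_{\gamma}(R)\in\mathsf{Loc}(\GrInj R)$ for every $\gamma\in\Gamma$. The key point is that in the trivial grading $\iota_{\gamma}(R)$ is exactly the twist $R(-\gamma)$, and by Corollary~\ref{twists_generate} the family $\{R(\gamma):\gamma\in\Gamma\}$ generates $\mathsf{D}(\GrMod R)$. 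Consequently $\mathsf{Loc}(\GrInj R)=\mathsf{D}(\GrMod R)$.

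For the converse, assuming graded injectives generate for $R$, I will apply Proposition~\ref{image_is_contained_in_localizing} to $(-)_{0}\colon\mathsf{D}(\GrMod R)\to\mathsf{D}(R)$ with $S=\GrInj R$ and $S'=\Inj R$, obtaining $\mathsf{Im}((-)_{0})\subseteq\mathsf{Loc}(\Inj R)$. Since every $X\in\mathsf{D}(R)$ satisfies $X=(\iota_{0}(X))_{0}$, it lies in this image and hence in $\mathsf{Loc}(\Inj R)$, so $\mathsf{Loc}(\Inj R)=\mathsf{D}(R)$.

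The only potentially delicate step is verifying that in the trivially graded setting the degree-$\gamma$ component of a graded injective is an injective $R$-module; I expect this to follow directly from the product-of-categories description, since injectives in a product of abelian categories are precisely componentwise injectives, so no genuine obstacle is anticipated. The rest is a mechanical application of Proposition~\ref{image_is_contained_in_localizing} in each direction, combined with Corollary~\ref{twists_generate} to ensure that the image of $\iota_{\gamma}$ is big enough to generate.
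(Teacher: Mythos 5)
Your proposal is correct, and its overall skeleton matches the paper's: both directions compare injectives through exact, coproduct-preserving functors between $\Mod R$ and $\GrMod R$ and then invoke Proposition~\ref{image_is_contained_in_localizing} together with generation of $\mathsf{D}(\GrMod R)$ by the twists $R(\gamma)$ (Corollary~\ref{twists_generate}). The differences are worth recording. In the forward direction the paper uses only the degree-zero insertion and then Lemma~\ref{twists} to move from $R$ to all twists $R(\gamma)$, whereas you use the whole family of insertions $\iota_{\gamma}$, which produces the twists directly and makes Lemma~\ref{twists} unnecessary; this is a cosmetic difference. In the converse direction the routes genuinely diverge: the paper applies the forgetful functor $\GrMod R\to\Mod R$ and asserts that it carries graded injectives to injectives, while you use the degree-zero projection $(-)_0$, justified by the identification of $\GrMod R$ over a trivially graded ring with the product $\prod_{\gamma\in\Gamma}\Mod R$, in which injectives are exactly the componentwise injective families. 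Your choice is the cleaner one: for infinite $\Gamma$ the underlying module of a graded injective is an arbitrary direct sum of injective $R$-modules, which need not be injective when $R$ is not Noetherian, so the paper's parenthetical justification is delicate (its argument still succeeds because a coproduct of injectives lies in $\mathsf{Loc}(\Inj R)$, which is all that Proposition~\ref{image_is_contained_in_localizing} really requires), whereas the projection $(-)_0$ lands honestly in $\Inj R$ and avoids the issue altogether. The one step you flagged, that injectives in a product of abelian categories are precisely the componentwise injectives, does hold (monomorphisms and Hom-groups are computed componentwise), so there is no gap.
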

\begin{proof} 
View $R$ as trivially graded over any abelian group. Assume that injectives generate for $R$. The functor $\Mod R\rightarrow \GrMod R$ which maps a module $M$ to a trivially graded module maps injective modules to graded injective modules. Consequently, $R$ belongs in $\mathsf{Loc}(\GrInj R)$ and by Lemma \ref{twists} follows that $R(\gamma)\in\mathsf{Loc}(\GrInj R)$ for every $\gamma$, i.e. graded injectives generate for $R$. If now graded injectives generate for $R$, then the forgetful functor $\GrMod R\rightarrow \Mod R$ maps graded injective modules to injective modules (the universal property of injective modules is verified by viewing modules as trivially graded modules) and consequently $R\in\mathsf{Loc}(\Inj R)$, i.e. injectives generate for $R$. 
\end{proof}

Let $R=\oplus_{\gamma\in\Gamma}R_{\gamma}$ be a graded ring over an abelian group $\Gamma$. In order to compare the notions of injective generation and graded injective generation in greater generality we consider the following functors: 
\begin{center}
\begin{tikzcd}
	\Mod R \arrow[rr, "G"', bend right] &  & \GrMod R \arrow[ll, "F"', bend right]
\end{tikzcd}
\end{center}
 
The functor $F\colon \GrMod R\rightarrow\Mod R$ is the usual forgetful functor that maps a graded module to its underlying module (that is the same module but without the grading) and a graded module homomorphism to the underlying module homomorphism. The functor $G\colon\Mod R\rightarrow 
\GrMod R$ is defined as follows. A module $M$ over $R$ is mapped to $\oplus_{\gamma\in \Gamma}M_{\gamma}$ with $M_{\gamma}=M$ and $R$ acts on it in the following way: for $r_{\gamma}\in R_{\gamma}$ and $m_{\gamma'}\in M_{\gamma'}$, $m_{\gamma'}r_{\gamma}$ is the element of $M=M_{\gamma+\gamma'}$. Moreover, an $R$-module homomorphism $f\colon M\rightarrow N$ is mapped to $\oplus_{\gamma\in\Gamma}f$. It is evident that both $F$ and $G$ are exact. Moreover, the following holds.

\begin{prop} \textnormal{(\!\!\cite[Theorem 2.5.1]{methods})}
\label{adjoint}
The pair $(F,G)$ is an adjoint pair. If the abelian group $\Gamma$ is finite, then $(F,G,F)$ is an adjoint triple.
\end{prop}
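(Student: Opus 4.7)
The plan is to produce both adjunctions by constructing explicit natural bijections on hom sets, using the definition of the $R$-action on $G(N)$ given in the statement.

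For the first adjunction $(F,G)$, I would define a natural bijection
\[
\Phi\colon \Hom_R(F(X),N)\longrightarrow \Hom_{\GrMod R}(X,G(N))
\]
by sending $f$ to the graded map whose $\gamma$-component $X_\gamma\to G(N)_\gamma=N$ is $x\mapsto f(x)$. The verification that $\Phi(f)$ is a graded module homomorphism amounts to unwinding the definition of the $R$-action on $G(N)$: for $x\in X_\gamma$ and $r\in R_{\gamma'}$, one checks that $\Phi(f)(x)\cdot r$ and $\Phi(f)(xr)$ both equal the element $f(x)r\in N$ placed in degree $\gamma+\gamma'$. The inverse takes $\tilde f\colon X\to G(N)$ to $x=\sum_\gamma x_\gamma\mapsto \sum_\gamma \tilde f_\gamma(x_\gamma)$, which is a finite sum because $x\in\bigoplus_\gamma X_\gamma$; naturality and the mutual inverse property are then formal. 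Note that this part does not require $\Gamma$ to be finite.

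For the second claim, when $\Gamma$ is finite I would construct a natural bijection
\[
\Psi\colon\Hom_{\GrMod R}(G(M),Y)\longrightarrow\Hom_R(M,F(Y))
\]
as follows. A graded homomorphism $\phi\colon G(M)\to Y$ amounts to a family of abelian group maps $\phi_\gamma\colon M\to Y_\gamma$ satisfying the compatibility $\phi_{\gamma+\alpha}(mr)=\phi_\gamma(m)\cdot r$ for all $r\in R_\alpha$, $m\in M$, which unpacks the $R$-linearity of $\phi$ with respect to the twisted action on $G(M)$. Setting $\Psi(\phi)(m)=\sum_{\gamma\in\Gamma}\phi_\gamma(m)$ is a well-defined element of $F(Y)=\bigoplus_\gamma Y_\gamma$ precisely because $\Gamma$ is finite; a short reindexing argument using the compatibility condition then shows that $\Psi(\phi)$ is $R$-linear. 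The inverse assigns to an $R$-linear map $\psi\colon M\to F(Y)$ the graded map whose components are $\phi_\gamma(m)=\psi(m)_\gamma$, the $\gamma$-th graded component of $\psi(m)$; the required compatibility is immediate from $R$-linearity of $\psi$.

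The main point requiring care is the bookkeeping of degrees in the $R$-linearity checks, since the action of $R$ on $G(N)$ and $G(M)$ genuinely mixes the different copies of $N$ and $M$ indexed by $\Gamma$. Once this is handled the mutual inverse property and naturality are routine. Finiteness of $\Gamma$ enters only once, in ensuring that $\sum_{\gamma\in\Gamma}\phi_\gamma(m)$ has finite support and hence lies in the direct sum $F(Y)$; over an infinite group one would only land in the corresponding direct product.
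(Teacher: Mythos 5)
Your proof is correct. The paper does not actually prove this proposition -- it simply cites \cite[Theorem 2.5.1]{methods} -- and your explicit hom-set bijections are precisely the standard argument behind that reference: the degree bookkeeping in both directions checks out, the reindexing $\gamma\mapsto\gamma-\alpha$ used for $R$-linearity of $\Psi(\phi)$ is legitimate because $\Gamma$ is a group, and you correctly isolate finiteness of $\Gamma$ as the only point where the second adjunction $G\dashv F$ can fail (landing in the product rather than the coproduct otherwise).
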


We thus obtain the next result.
\begin{prop} 
\label{ungradedimpliesgraded}
Let $R$ be a graded ring over an abelian group $\Gamma$. If injectives generate for $R$, then graded injectives generate for $R$. 
\end{prop}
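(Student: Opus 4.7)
The plan is to reduce the problem to showing that the graded module $R$ itself lies in $\mathsf{Loc}(\GrInj R)$, and then push the ungraded hypothesis through the adjunction $(F,G)$ of Proposition~\ref{adjoint}. For the reduction, I would combine Corollary~\ref{twists_generate}, which says that $\{R(\gamma) : \gamma\in\Gamma\}$ generates $\mathsf{D}(\GrMod R)$, with Lemma~\ref{twists} applied to the twist-closed class $\GrInj R$: once $R\in\mathsf{Loc}(\GrInj R)$, each $R(\gamma)$ follows automatically, and hence $\mathsf{Loc}(\GrInj R)=\mathsf{D}(\GrMod R)$.

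Next I would analyse the functor $G\colon\Mod R\to\GrMod R$. Since $F$ is exact, its right adjoint $G$ preserves injectives, so $G(\Inj R)\subseteq\GrInj R$. A direct inspection of the definition shows that $G$ is also exact and preserves arbitrary coproducts (both operations act degreewise, and in each degree $G$ is essentially the identity on underlying abelian groups). Hence $G$ descends to a coproduct-preserving triangle functor $\mathsf{D}(R)\to\mathsf{D}(\GrMod R)$ carrying $\Inj R$ into $\GrInj R$. Since the hypothesis that injectives generate for $R$ says exactly $\mathsf{Loc}(\Inj R)=\mathsf{D}(R)$, Proposition~\ref{image_is_contained_in_localizing} gives $\mathsf{Im}(G)\subseteq\mathsf{Loc}(\GrInj R)$; in particular $G(R)\in\mathsf{Loc}(\GrInj R)$.

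The last step is to identify $G(R)$ as a graded module. Unwinding the definition, in each degree $\delta$ one has $G(R)_\delta=R=\bigoplus_{\epsilon}R_\epsilon$, and the assignment sending $x\in R_\epsilon\subseteq G(R)_\delta$ to the same $x$ regarded as an element of $R(\epsilon-\delta)_\delta=R_\epsilon$ assembles into a graded $R$-linear isomorphism $G(R)\cong\bigoplus_{\gamma\in\Gamma}R(\gamma)$. Consequently $R=R(0)$ is a direct summand of $G(R)$, and Proposition~\ref{basic_properties_of_localizing}(v) then yields $R\in\mathsf{Loc}(\GrInj R)$, as needed. I do not anticipate any serious obstacle: the only slightly technical point is the identification $G(R)\cong\bigoplus_{\gamma}R(\gamma)$, which is just an unwinding of the definition of $G$ together with a reindexing of summands.
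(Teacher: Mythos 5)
Your proof is correct and follows essentially the same route as the paper: both push injectives through the right adjoint $G$ via Proposition~\ref{image_is_contained_in_localizing}, identify $G(R)\cong\bigoplus_{\gamma\in\Gamma}R(\gamma)$, extract twists of $R$ as direct summands, and conclude via Corollary~\ref{twists_generate}. The only cosmetic difference is that you extract just $R=R(0)$ and then invoke Lemma~\ref{twists}, whereas the paper reads off every $R(\gamma)$ directly from the summand decomposition of $G(R)$.
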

\begin{proof}
Consider the functor $G\colon\Mod R\to \GrMod R$, as defined before. By Proposition \ref{adjoint}, $G$ is right adjoint to $F$ which is exact, thus it maps injective modules to graded injective modules. Moreover $G$ is exact and preserves coproducts. Therefore, since injectives generate for $R$, by Proposition \ref{image_is_contained_in_localizing}, the image of $G\colon\mathsf{D}(\Mod R)\rightarrow\mathsf{D}(\GrMod R)$ is a subcategory of $\mathsf{Loc}( \GrInj R)$. We have $G(R)=\oplus_{\gamma\in\Gamma}R=R(\gamma)\oplus (\oplus_{\gamma'\in\Gamma\setminus\{\gamma\}}R(\gamma'))$ and since $\mathsf{Loc} (\GrInj R)$ is closed under summands, by Lemma \ref{twists}, $R(\gamma)$ is in $\mathsf{Loc} (\GrInj R)$ for every $\gamma\in\Gamma$, so by Lemma \ref{twists_generate} injectives generate for $R$ as a graded ring. 
\end{proof}

The inverse of the above holds in the case of finite gradings.  
\begin{prop}
\label{finitegrading} 
Let $R$ be a graded ring over an abelian group $\Gamma$. If $\Gamma$ is finite and graded injectives generate for $R$, then injectives generate for $R$.
\end{prop}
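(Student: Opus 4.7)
The plan is to run the argument of Proposition~\ref{ungradedimpliesgraded} ``in reverse,'' exploiting the extra piece of structure in Proposition~\ref{adjoint} available in the finite case, namely that the forgetful functor $F\colon \GrMod R\to \Mod R$ is not only left adjoint to $G$ but also right adjoint to it. First, I would observe that since $G\colon\Mod R\to \GrMod R$ is exact (this was noted just before Proposition~\ref{adjoint}) and $F$ is right adjoint to $G$, the functor $F$ carries graded injective modules to injective modules, i.e.\ $F(\GrInj R)\subseteq \Inj R$.

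Next, I would lift $F$ to a triangle functor $F\colon \mathsf{D}(\GrMod R)\rightarrow \mathsf{D}(\Mod R)$. Because $F$ is exact, no derivation is required, and because $F$ admits both a left and a right adjoint it preserves coproducts. Hence $F$ satisfies the hypotheses of Proposition~\ref{image_is_contained_in_localizing} with the two classes $S=\GrInj R$ in $\mathsf{D}(\GrMod R)$ and $S'=\Inj R$ in $\mathsf{D}(\Mod R)$. Assuming graded injectives generate for $R$, we have $\mathsf{Loc}(\GrInj R)=\mathsf{D}(\GrMod R)$, and the proposition yields
\[
F\bigl(\mathsf{D}(\GrMod R)\bigr)\ =\ \mathsf{Im}(F)\ \subseteq\ \mathsf{Loc}(\Inj R).
\]

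Finally, applying this to the graded $R$-module $R$ itself gives $R=F(R)\in \mathsf{Loc}(\Inj R)$, and since $R$ generates $\mathsf{D}(\Mod R)$ as a right module over itself (Remark~\ref{generation_by_projectives}), we conclude $\mathsf{Loc}(\Inj R)=\mathsf{D}(\Mod R)$, that is, injectives generate for $R$.

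There is no genuine obstacle here; the one point that needs care is recognising that the adjoint triple of Proposition~\ref{adjoint} provides exactly the missing direction in Proposition~\ref{ungradedimpliesgraded}: in the infinite case $F$ need not preserve injectives, but when $\Gamma$ is finite the additional right adjointness of $F$ together with the exactness of $G$ makes $F(\GrInj R)\subseteq \Inj R$, which is precisely what is needed to apply Proposition~\ref{image_is_contained_in_localizing} in the opposite direction.
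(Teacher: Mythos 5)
Your proof is correct and follows essentially the same route as the paper: both use the fact that in the finite case $F$ is right adjoint to the exact functor $G$ (Proposition~\ref{adjoint}), hence sends graded injectives to injectives, and then apply Proposition~\ref{image_is_contained_in_localizing} together with $F(R)=R$ and Remark~\ref{generation_by_projectives}. No differences worth noting.
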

\begin{proof}
Consider the forgetful functor $F\colon\GrMod R\lxr \Mod R$. Since $\Gamma$ is assumed to be finite, it follows by Proposition \ref{adjoint} that $F$ is right adjoint to $G$. Since $G$ is exact, we derive that $F$ maps graded injective modules to injective modules. Moreover, $F$ is exact and preserves coproducts. Therefore, since graded injectives generate for $R$, it follows by Proposition \ref{image_is_contained_in_localizing} that the image of $F\colon\mathsf{D}(\GrMod R)\lxr \mathsf{D}(\Mod R)$ is a subcategory of $\mathsf{Loc} (\Inj R)$ and since $F(R)=R$, it follows that $R\in\mathsf{Loc}(\Inj R)$, thus by Remark \ref{generation_by_projectives} injectives generate for $R$. 
\end{proof}

We do not know an example of a graded ring over an infinite group for which graded injectives generate but injectives don't.
By combining the above together with Theorem~\ref{injgenepsilongraded}, we deduce the following.

\begin{cor}
\label{injgenepsilongradednoeth}
    Injectives generate for every $\epsilon$-commutative graded Noetherian ring that is graded over a finite abelian group. 
\end{cor}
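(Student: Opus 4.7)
The plan is to combine the two most recently established results, since the hypotheses of the corollary precisely match the hypotheses needed to chain them together. Let $R$ be an $\epsilon$-commutative graded Noetherian ring that is graded over a finite abelian group $\Gamma$.

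First, I would invoke Theorem~\ref{injgenepsilongraded}, which says that for any $\epsilon$-commutative graded Noetherian ring, graded injectives generate, i.e.\ $\mathsf{Loc}(\GrInj R) = \mathsf{D}(\GrMod R)$. The hypothesis that $\Gamma$ is finite is not needed for this step, only $\epsilon$-commutativity and the Noetherian condition. So from Theorem~\ref{injgenepsilongraded} alone, we get graded injective generation for $R$.

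Next, I would apply Proposition~\ref{finitegrading}, which requires precisely that $\Gamma$ be finite and that graded injectives generate for $R$. Both hypotheses are in hand from the previous step, so the conclusion is that injectives generate for $R$ in the ungraded sense, i.e.\ $\mathsf{Loc}(\Inj R) = \mathsf{D}(R)$. This completes the proof.

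There is no real obstacle here; the corollary is a pure consequence of stringing Theorem~\ref{injgenepsilongraded} and Proposition~\ref{finitegrading} together. The only subtlety worth flagging is that finiteness of $\Gamma$ is genuinely necessary for the second step (as the paper remarks, no example is known where graded injective generation over an infinite group fails to imply ungraded injective generation, but the proof via the adjoint triple $(F,G,F)$ of Proposition~\ref{adjoint} does require finite $\Gamma$ to ensure that $F$ is right adjoint to $G$, hence preserves injectives).
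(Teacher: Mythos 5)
Your proposal is correct and is exactly the paper's argument: the corollary is stated there as an immediate consequence of combining Theorem~\ref{injgenepsilongraded} with Proposition~\ref{finitegrading}. Your remark on why finiteness of $\Gamma$ enters (via the adjoint triple of Proposition~\ref{adjoint}) matches the paper's discussion as well.
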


We close this section with the next example.

\begin{exmp}
    Let $R$ be a commutative Noetherian ring and $M$ a nilpotent $R$-$R$-bimodule $M$ that is finitely generated on both sides. Let $k$ be such that $M^{\otimes k}=0$ and $n$ be such that $n\geq 2k$. Then $\bigwedge_R(M)$ can be viewed as the underlying ring of the graded ring $R'=\oplus_{\mathbb{Z}/n\mathbb{Z}}R'_i$ with $R'_i=\bigwedge_R^i(M)$. By Corollary~\ref{injgenepsilongradednoeth} injectives generate for $\bigwedge_R(M)$. 
\end{exmp}

\section{Morita context rings and ladders}
\label{moritacontextrings}

Given a graded ring $R$ over a finite group $\Gamma$, we consider its \emph{covering ring}, which is a ring $\Hat{R}$ such that $\GrMod R\simeq \Mod \Hat{R}$. Given the latter equivalence and the results of the previous section, it follows that injectives generate for $R$ if and only if injectives generate for $\Hat{R}$. In this section we study injective generation for the covering ring $\Hat{R}$ by viewing it as a Morita context ring.

\subsection{Morita context rings} 

In this subsection we recall the definition of a Morita context ring and the basic recollement structure needed in the sequel. 

\begin{defn}
\label{defn_of_Morita}
Let $A$ and $B$ be rings, $N$ an $A$-$B$-bimodule, $M$ a $B$-$A$-bimodule, $\phi\colon M\otimes_AN\lxr B$ a $B$-$B$-bimodule homomorphism and $\psi\colon N\otimes_BM\lxr A$ a $A$-$A$-bimodule homomorphism. We assume further that $\phi(m\otimes n)m'=m\psi(n\otimes m')$ and $n\phi(m\otimes n')=\psi(n\otimes m)n'$ for all $m,m'\in M$ and $n,n'\in N$. We define the \emph{Morita context ring} associated to the \emph{Morita context} $\mathcal{M}=(A,N,B,M,\phi,\psi)$ to be the ring 
\begin{center}
	$\Lambda_{(\phi,\psi)}(\mathcal{M})=\begin{pmatrix}
		A & N \\
		M & B
	\end{pmatrix}$
\end{center}
where the addition of elements is componentwise and multiplication is given by 
\begin{center}
	$\begin{pmatrix}
		a & n \\
		m & b 
	\end{pmatrix} 
 \cdot
	\begin{pmatrix}
		a' & n'\\
		m' & b' 
	\end{pmatrix}=
	\begin{pmatrix}
		aa'+\psi(n\otimes m') & an'+nb' \\
		ma'+bm' & bb'+\phi(m\otimes n') 
	\end{pmatrix}$
\end{center}
\end{defn}
We simply write $\Lambda_{(\phi,\psi)}$ for $\Lambda_{(\phi,\psi)}(\mathcal{M})$. We recall a useful description of (left) modules over a Morita context ring $\Lambda_{\phi,\psi}$ in terms of modules over $A$ and $B$. 

\begin{defn}
Define a category $\mathcal{M}(\Lambda)$ with objects tuples $(X,Y,f,g)$ where $X\in A\lMod $, $Y\in B\lMod$, $f\in \mathsf{Hom}_B(M\otimes_AX,Y)$ and $g\in \mathsf{Hom}_{A}(N\otimes_BY,X)$ such that the following diagrams commute
\begin{center}
\begin{tikzcd}
N\otimes_BM\otimes_AX \arrow[r, "N\otimes f"] \arrow[d, "\psi\otimes \mathsf{id}_X"'] & N\otimes_BY \arrow[d, "g"] & M\otimes_AN\otimes_BY \arrow[r, "M\otimes g"] \arrow[d, "\phi\otimes \mathsf{id}_Y"'] & M\otimes_AX \arrow[d, "f"] \\
A\otimes_AX \arrow[r, "\cong"]                                               & X                          & B\otimes_BY \arrow[r, "\cong"]                                               & Y                         
\end{tikzcd}
\end{center}
A morphism $(a,b)\colon (X,Y,f,g)\lxr (X',Y',f',g')$ is given by a pair $(a,b)$ where $a\colon X\lxr X'$ is a morphism in $ A\lMod $ and $b\colon Y\lxr Y'$ is a morphism in $B\lMod $ such that the following diagrams commute 
\begin{center}
    \begin{tikzcd}
M\otimes_AX \arrow[r, "f"] \arrow[d, "M\otimes a"'] & Y \arrow[d, "b"] & N\otimes_BY \arrow[d, "N\otimes b"'] \arrow[r, "g"] & X \arrow[d, "a"] \\
M\otimes_AX' \arrow[r, "f'"]                        & Y'               & N\otimes_BY' \arrow[r, "g'"]                        & X'              
\end{tikzcd}
\end{center}
\end{defn}

We then have the next result.

\begin{thm} \textnormal{(\!\!\cite{moritagreen})}
    The categories $\Lambda\lMod $ and $\mathcal{M}(\Lambda)$ are equivalent.
\end{thm}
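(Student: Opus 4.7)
The plan is to construct a pair of quasi-inverse functors between $\Lambda\lMod$ and $\mathcal{M}(\Lambda)$ using the Peirce decomposition coming from the two orthogonal idempotents $e_1 = \bigl(\begin{smallmatrix} 1_A & 0 \\ 0 & 0 \end{smallmatrix}\bigr)$ and $e_2 = \bigl(\begin{smallmatrix} 0 & 0 \\ 0 & 1_B \end{smallmatrix}\bigr)$ of $\Lambda$, which satisfy $e_1 + e_2 = 1_{\Lambda}$ and yield ring and bimodule identifications $e_1\Lambda e_1 \cong A$, $e_2\Lambda e_2 \cong B$, $e_1\Lambda e_2 \cong N$, and $e_2\Lambda e_1 \cong M$. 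Under these identifications, the products of the off-diagonal pieces inside $\Lambda$ are computed precisely by $\phi$ and $\psi$.

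First, I would define $\mathsf{F}\colon \Lambda\lMod \lxr \mathcal{M}(\Lambda)$ on a left $\Lambda$-module $V$ by setting $X := e_1 V$ and $Y := e_2 V$, which are naturally left $A$- and $B$-modules respectively. The $\Lambda$-action restricted to the subgroup $M \subseteq e_2\Lambda e_1$ carries $X$ into $Y$ and is $A$-balanced, so it factors through a $B$-linear map $f\colon M \otimes_A X \lxr Y$; similarly one obtains $g\colon N \otimes_B Y \lxr X$. The two square diagrams in the definition of $\mathcal{M}(\Lambda)$ then follow from associativity of the $\Lambda$-action together with the fact that the products $NM \to A$ and $MN \to B$ inside $\Lambda$ are given by $\psi$ and $\phi$. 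A $\Lambda$-homomorphism $V \to V'$ automatically preserves the decomposition (since it commutes with $e_1$ and $e_2$) and restricts to a morphism in $\mathcal{M}(\Lambda)$.

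In the reverse direction, I would define $\mathsf{G}\colon \mathcal{M}(\Lambda) \lxr \Lambda\lMod$ by sending $(X,Y,f,g)$ to the abelian group $X \oplus Y$ endowed with the $\Lambda$-action
\[
\begin{pmatrix} a & n \\ m & b \end{pmatrix} \cdot \begin{pmatrix} x \\ y \end{pmatrix} \;=\; \begin{pmatrix} ax + g(n \otimes y) \\ f(m \otimes x) + by \end{pmatrix}.
\]
Associativity of this action has to be checked on all nine block products, and here the compatibility diagrams for $(X,Y,f,g)$ together with the Morita context identities $\phi(m\otimes n)m' = m\psi(n\otimes m')$ and $n\phi(m\otimes n') = \psi(n\otimes m)n'$ enter: for instance, the equality arising from multiplying an element of $N$ against the product of an element of $M$ and a vector in $X$ is precisely the left-hand square of $\mathcal{M}(\Lambda)$, and the identity $\psi(n\otimes m)n'$ is what reconciles the two ways of evaluating a triple product against a vector in $Y$.

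Finally I would verify that $\mathsf{F}$ and $\mathsf{G}$ are quasi-inverse. The isomorphism $\mathsf{G}\mathsf{F}(V) \cong V$ is the canonical $V = e_1V \oplus e_2V$ coming from $1 = e_1 + e_2$, and $\mathsf{F}\mathsf{G}(X,Y,f,g) \cong (X,Y,f,g)$ is immediate from the definition of the action. Both isomorphisms are visibly natural. The main (and really only) obstacle is the bookkeeping step of checking that associativity of the action in $\mathsf{G}$ precisely corresponds to the commutativity of the two pentagonal diagrams plus the Morita context axioms; this is a routine but mildly tedious nine-case verification, and no deeper ingredient is required.
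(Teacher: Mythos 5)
Your proposal is correct and is the standard Peirce-decomposition argument; the paper itself gives no proof of this theorem, citing it to Green's paper, and your construction of the quasi-inverse functors $V\mapsto(e_1V,e_2V,f,g)$ and $(X,Y,f,g)\mapsto X\oplus Y$ is exactly the argument found there. (One tiny remark: in checking that $\lambda(\lambda' v)=(\lambda\lambda')v$ only the two commutative squares and the bilinearity of $f,g$ are used; the Morita-context identities for $\phi$ and $\psi$ are what make $\Lambda$ itself an associative ring, a prerequisite rather than a step of the module-axiom verification.)
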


From now on we identify modules over a Morita context ring $\Lambda_{(\phi,\psi)}$ with objects in $\mathcal{M}(\Lambda)$. Let $\Psi_X$ and $\Phi_Y$ denote the following compositions 
\begin{center}
   \begin{tikzcd}
N\otimes_BM\otimes_AX \arrow[r, "\psi\otimes \mathsf{id}_X"] \arrow[rr, "\Psi_X", bend left] & A\otimes_AX \arrow[r, "\simeq"] & X & \!\!\!\! \! \! \! \! \! \! \! \! M\otimes_AN\otimes_BY \arrow[r, "\phi\otimes \mathsf{id}_Y"] \arrow[rr, "\Phi_Y", bend left] & B\otimes_BY \arrow[r, "\simeq"] & Y
\end{tikzcd}
\end{center}
We define the following functors between the categories $A\lMod, B\lMod$ and $\Lambda_{(\phi,\psi)}\lMod$. 

\begin{itemize}
\item[(i)] The functor $\mathsf{T}_A\colon A\lMod\rightarrow \Lambda_{(\phi,\psi)}\lMod$ is defined on objects $X\in A\lMod$ by $\mathsf{T}_A(X)=(X,M\otimes_AX,\mathsf{id_{M\otimes X}},\Psi_X)$ and $\mathsf{T}_A(a)=(a,M\otimes a)$ for a morphism $a\colon X\rightarrow X'$ in $A\lMod$. 

\item[(ii)] The functor $\mathsf{U}_A\colon\Lambda\lMod\rightarrow A\lMod$ is defined by $\mathsf{U}_A(X,Y,f,g)=X$ for a tuple $(X,Y,f,g)$ in $\Lambda_{(\phi,\psi)}\lMod$ and $\mathsf{U}_A(a,b)=a$ for a morphism $(a,b)\colon(X,Y,f,g)\rightarrow (X',Y',f',g')$ in $\Lambda_{(\phi,\psi)}\lMod$.

\item[(iii)] The functor $\mathsf{T}_B\colon B\lMod\rightarrow \Lambda_{(\phi,\psi)}\lMod$ is defined on objects $Y\in B\lMod$ by $\mathsf{T}_B(Y)=(N\otimes_BY,Y,\Phi_Y,\mathsf{id}_{N\otimes Y})$ and $\mathsf{T}_B(b)=(N\otimes b,b)$ for a morphism $b\colon Y\rightarrow Y'$ in $B\lMod$.

\item[(iv)] The functor $\mathsf{U}_B\colon\Lambda\lMod\rightarrow B\lMod$ is defined by $\mathsf{U}_B(X,Y,f,g)=Y$ for a tuple $(X,Y,f,g)$ in $\Lambda_{(\phi,\psi)}\lMod$ and $\mathsf{U}_B(a,b)=b$ for a morphism $(a,b)\colon(X,Y,f,g)\rightarrow (X',Y',f',g')$ in $\Lambda_{(\phi,\psi)}\lMod$.
\end{itemize}

By \cite[Proposition~2.4]{morita} we have the following recollement diagrams: 
\begin{center}
\begin{tikzcd}
A\lMod/\mathsf{Im\psi} \arrow[rr, "\mathsf{inc}"] &  & {\Lambda_{(\phi,\psi)}\lMod} \arrow[rr, "\mathsf{U}_B"] \arrow[ll, bend right] \arrow[ll, bend left] &  & B\lMod \arrow[ll, "\mathsf{T}_B"', bend right] \arrow[ll, bend left]
\end{tikzcd}
\begin{tikzcd}
B\lMod/\mathsf{Im\phi} \arrow[rr, "\mathsf{inc}"] &  & {\Lambda_{(\phi,\psi)}\lMod} \arrow[rr, "\mathsf{U}_A"] \arrow[ll, bend right] \arrow[ll, bend left] &  & A\lMod \arrow[ll, "\mathsf{T}_A"', bend right] \arrow[ll, bend left]
\end{tikzcd}
\end{center}
In case that $\phi=\psi=0$, which is the case that we are mostly interested in this paper, we get the following recollements of module categories. 

\begin{center}
\begin{tikzcd}
A\lMod \arrow[rr, "\mathsf{Z}_A"] &  & {\Lambda_{(0,0)}\lMod} \arrow[rr, "\mathsf{U}_B"] \arrow[ll, bend right] \arrow[ll, bend left] &  & B\lMod \arrow[ll, "\mathsf{T}_B"', bend right] \arrow[ll, bend left]
\end{tikzcd}
\begin{tikzcd}
B\lMod \arrow[rr, "\mathsf{Z}_B"] &  & {\Lambda_{(0,0)}\lMod} \arrow[rr, "\mathsf{U}_A"] \arrow[ll, bend right] \arrow[ll, bend left] &  & A\lMod \arrow[ll, "\mathsf{T}_A"', bend right] \arrow[ll, bend left]
\end{tikzcd}
\end{center}
where $\mathsf{Z}_A$ and $\mathsf{Z}_B$ are defined as follows: 
\begin{itemize}
    \item[(v)] The functor $\mathsf{Z}_A\colon A\lMod\to \Lambda\lMod$ is defined by $\mathsf{Z}_A(X)=(X,0,0,0)$ for an $A$-module $X$ and $\mathsf{Z}_A(a)=(a,0)$ for a homomorphism $a\colon X\to X'$ of $A$-modules. 
    \item[(vi)] The functor $\mathsf{Z}_B\colon B\lMod\to \Lambda\lMod$ is defined by $\mathsf{Z}_B(X)=(0,Y,0,0)$ for an $B$-module $Y$ and $\mathsf{Z}_B(b)=(0,b)$ for a homomorphism $b\colon Y\to Y'$ of $B$-modules. 
\end{itemize}

For a detailed exposition of the above we refer the reader to \cite{morita}. We are also interested in the case where $M$ or $N$ is $0$. This means that we deal with  triangular matrix rings. In that case modules (left or right) over a triangular matrix ring can be expressed as triples $(X,Y,f)$ similarly as above. We state below a characterization of modules of finite projective dimension over triangular matrix rings, see \cite[Lemma~2.4]{smalo} and \cite{trivial}. We provide a proof for convenience of the reader, for which we recall that every left projective module over a triangular matrix ring $\big(\begin{smallmatrix}
  C & N\\
  0 & D
\end{smallmatrix}\big)$ is of the form $(P,0,0)\oplus (N\otimes_DQ,Q,1)$ for some left projective $C$-module $P$ and some left projective $D$-module $Q$.  

\begin{lem}
\label{projective_dimension_over_triangular_matrix_rings}
    Consider a triangular matrix ring $\Lambda=\big(\begin{smallmatrix}
  C & N\\
  0 & D
\end{smallmatrix}\big)$. The following hold.
\begin{itemize}
\item[(i)]  Assume that $\pd_CN<\infty$. Then for any left $\Lambda$-module $(X,Y,f)$ we have $\pd_{\Lambda}(X,Y,f)<\infty$ if and only if $\pd_CX<\infty$ and $\pd_DY<\infty$.

\item[(ii)]  Assume that $\pd N_D<\infty$. Then for any right $\Lambda$-module $(X,Y,f)$ we have $\pd_{\Lambda}(X,Y,f)<\infty$ if and only if $\pd X_C<\infty$ and $\pd Y_D<\infty$. 
\end{itemize}
\end{lem}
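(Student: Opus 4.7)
The plan is to combine the recalled description of the projective left $\Lambda$-modules with the recollement structure established in the previous subsection. Since the triangular case has $M=0$, and hence $\phi=\psi=0$, the inclusions $X\mapsto (X,0,0)$ from $C\lMod$ and $Y\mapsto (0,Y,0)$ from $D\lMod$, as well as the two component functors $(X,Y,f)\mapsto X$ and $(X,Y,f)\mapsto Y$, are all exact. For the forward direction of (i), I would take a finite projective resolution $\mathcal{P}_\bullet\to(X,Y,f)$ over $\Lambda$ with each $\mathcal{P}_i\cong (P_i,0,0)\oplus(N\otimes_D Q_i,Q_i,1)$ where $P_i$ is projective over $C$ and $Q_i$ is projective over $D$. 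Applying the exact projection onto the $D$-component yields a finite projective resolution of $Y$ over $D$, so $\pd_D Y<\infty$. Applying the exact projection onto the $C$-component produces a bounded exact sequence of $C$-modules whose terms $P_i\oplus(N\otimes_D Q_i)$ each have $C$-projective dimension at most $\pd_C N<\infty$ (since, as $Q_i$ is $D$-projective, the summand $N\otimes_D Q_i$ is a direct summand of a direct sum of copies of $N$); a standard dimension-shifting argument along this sequence then forces $\pd_C X<\infty$.

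For the converse of (i), I would reduce the task to bounding the $\Lambda$-projective dimensions of the two outer terms in the natural short exact sequence
\[
0\lxr (X,0,0)\lxr (X,Y,f)\lxr (0,Y,0)\lxr 0,
\]
given by $(1_X,0)$ and $(0,1_Y)$. Exactness of the inclusion $X\mapsto(X,0,0)$ together with its preservation of projectives (by the recalled form of projective left $\Lambda$-modules) yields $\pd_\Lambda(X,0,0)\leq \pd_C X$. For $(0,Y,0)$, I would first bound $\pd_\Lambda(0,Q,0)$ for $Q$ projective over $D$ via the short exact sequence
\[
0\lxr (N\otimes_D Q,0,0)\lxr (N\otimes_D Q,Q,1)\lxr (0,Q,0)\lxr 0,
\]
whose middle term is $\Lambda$-projective and whose left term has $\Lambda$-projective dimension equal to $\pd_C(N\otimes_D Q)\leq \pd_C N$; this gives $\pd_\Lambda(0,Q,0)\leq \pd_C N+1$. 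Applying the exact functor $Y\mapsto(0,Y,0)$ to a finite projective resolution of $Y$ over $D$ then yields $\pd_\Lambda(0,Y,0)\leq \pd_D Y+\pd_C N+1<\infty$.

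Part (ii) follows by exactly the same strategy applied to right $\Lambda$-modules, whose projectives take the dual form $(P,P\otimes_C N,1)\oplus(0,Q,0)$; the relevant natural short exact sequence becomes $0\to(0,Y,0)\to(X,Y,f)\to(X,0,0)\to 0$, and the hypothesis $\pd N_D<\infty$ now plays the role previously played by $\pd_C N<\infty$, entering via the analogous SES $0\to(0,P\otimes_C N,0)\to(P,P\otimes_C N,1)\to(P,0,0)\to 0$ used to bound $\pd_\Lambda(P,0,0)$ for $P$ projective over $C$. The main obstacle is essentially bookkeeping: correctly identifying the short exact sequences and the form of the projectives on each side while keeping track of the asymmetric roles of $C$ and $D$; the homological content reduces to exactness of the component/inclusion functors together with the standard bound $\pd B\leq\max\{\pd A,\pd C\}$ for a short exact sequence $0\to A\to B\to C\to 0$.
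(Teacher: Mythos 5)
Your proof is correct and follows essentially the same route as the paper's: the canonical short exact sequence $0\to (X,0,0)\to (X,Y,f)\to (0,Y,0)\to 0$, the explicit form of the projective $\Lambda$-modules, and the auxiliary sequence $0\to (N\otimes_D Q,0,0)\to (N\otimes_D Q,Q,1)\to (0,Q,0)\to 0$ to bring in the hypothesis $\pd_C N<\infty$. The only (harmless) differences are that you obtain the finiteness of $\pd_\Lambda(0,Y,0)$ by applying the exact functor $Y\mapsto(0,Y,0)$ to a resolution and dimension-shifting, where the paper inducts on $\pd_D Y$, and that you establish the forward implication by projecting a finite resolution of $(X,Y,f)$ onto both components directly, which if anything makes that step slightly more transparent than the paper's reduction to the outer terms.
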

\begin{proof}
We prove (i) since case (ii) can be proved similarly. For a $\Lambda$-module $(X,Y,f)$ there is a short exact sequence as below 
    \begin{center}
        $0\rightarrow (X,0,0)\rightarrow (X,Y,f)\rightarrow (0,Y,0)\rightarrow 0$
    \end{center}
    If we show that $\pd_{\Lambda}(X,0,0)<\infty$ if and only if $\pd_CX<\infty$ and $\pd_{\Lambda}(0,Y,0)<\infty$ if and only if $\pd_{D}Y<\infty$, then by the above short exact sequence the proof will be complete. To begin with, it is easily seen that $\pd_{\Lambda}(X,0,0)=\pd_CX$.  We will now show that $\pd_{\Lambda}(0,Y,0)<\infty$ if and only if $\pd_DY<\infty$. Assume that $\pd_{\Lambda}(0,Y,0)<\infty$ and consider a projective resolution of $(0,Y,0)$ as below 
    \[
    0\rightarrow (P_n,0,0)\oplus (N\otimes_D Q_n, Q_n,1)\rightarrow \cdots\rightarrow (P_0,0,0)\oplus (N\otimes_DQ_0,Q_0,1)\rightarrow (0,Y,0)
    \]
    Further, consider the functor $\mathsf{U}_2\colon\Mod\Lambda\rightarrow \Mod D$ given by $(X,Y,f)\mapsto Y$. Then, applying $\mathsf{U}_2$ to the above resolution gives a resolution of $Y$ of finite length and each term is projective. Consequently, it follows that $\pd_DY<\infty$. We will show that $\pd_{D}Y<\infty$ implies $\pd_{\Lambda}(0,Y,0)<\infty$. We proceed by induction on the projective dimension of $Y$. If $\pd_DY=0$, this means that $Y$ is projective. There is a short exact sequence as below 
    \begin{center}
        $0\rightarrow (N\otimes_D Y,0,0)\rightarrow (N\otimes_D Y,Y,1)\rightarrow (0,Y,0)\rightarrow 0$
    \end{center}
    The middle term is a projective $\Lambda$-module and for the left term we have that $\pd_{C}N\otimes_D Y\leq \pd_CN<\infty$. Hence, $\pd_{\Lambda}(N\otimes_D Y,0,0)<\infty$. By the short exact sequence we conclude that $\pd_{\Lambda}(0,Y,0)<\infty$. Assume now that the claim holds for all $D$-modules of projective dimension $\leq n-1$ and assume moreover that $\pd_DY=n$. Then there is a short exact sequence of $D$-modules
    \begin{center}
        $0\rightarrow Y'\rightarrow P\rightarrow Y\rightarrow 0$
    \end{center}
    with $\pd_DY'=n-1$ and $P$ projective, which gives rise to a short exact sequence 
    \begin{center}
        $0\rightarrow (0,Y',0)\rightarrow (0,P,0)\rightarrow (0,Y,0)\rightarrow 0$
    \end{center}
    By the induction hypothesis, the middle and the left terms have finite projective dimension as left $\Lambda$-modules, thus so does $(0,Y,0)$. 
\end{proof}

\subsection{Injective generation for Morita context rings} 
In this subsection we present several useful lemmata regarding injective generation of Morita context rings. Some of them can be formulated in terms of ladders of recollements of abelian categories in the sense of \cite{ladder}. 

\begin{defn}  (\!\!\cite[Definition~1.1]{ladder}) 
    Let $\mathcal{B}$ and $\mathcal{C}$ be two abelian categories with an adjoint triple between them as below
    \begin{center}
        \begin{tikzcd}
\mathcal{B} \arrow[rr, "\mathsf{e}"] &  & \mathcal{C} \arrow[ll, "\mathsf{r}^0", bend left] \arrow[ll, "\mathsf{l}^0"', bend right]
\end{tikzcd}
\end{center}
A \emph{ladder} is a diagram of additive functors as follows:
\[ 
\begin{minipage}{0.4\textwidth}
\xymatrix@C=0.5cm{
 \ \ \ \ \ \ \ \ \ \ \ \ \ \ \ \ \ \ \ \ \  \ \ \  \ \  \ \ \ \ \ \ \ \ \ \ \ \ \vdots &&& \\
 &&&  &&& \\
 \mathcal{B} \ar[rrr]^{\mathsf{e}} \ar @/^3.0pc/[rrr]^{\mathsf{l}^1}   \ar @/_3.0pc/[rrr]_{\mathsf{r}^1}  &&& \mathcal{C} \ar @/_4.5pc/[lll]_{\mathsf{l}^2} \ar @/^4.5pc/[lll]^{\mathsf{r}^2}
\ar @/_1.5pc/[lll]_{\mathsf{l}^0} \ar
 @/^1.5pc/[lll]^{\mathsf{r}^0} \\
 &&& &&& \\
 \ \ \ \ \ \ \ \ \ \ \ \ \ \ \ \ \ \ \ \ \  \ \ \  \ \  \ \ \ \ \ \ \ \ \ \ \ \ \ \vdots &&& \\
 }
\end{minipage}
\]
such that $(\mathsf{r}^{i},\mathsf{r}^{i+1})$ is an adjoint pair and $(\mathsf{l}^{i+1},\mathsf{l}^{i})$ is an adjoint pair for all $i\geq 0$. We say that the ladder has \emph{$l$-height} $n$ if there is a tuple $(\mathsf{l}^{n-1},\dots,\mathsf{l}^2,\mathsf{l}^1,\mathsf{l}^0)$ of consecutive left-adjoints. The \emph{$r$-height} of a ladder is defined similarly. The \emph{height} of the ladder is defined to be the sum of the $r$-height and the $l$-height. We say that a recollement of abelian categories 
    \begin{center}
        \begin{tikzcd}
\mathcal{A} \arrow[rr, "\mathsf{i}"] &  & \mathcal{B} \arrow[rr, "\mathsf{e}"] \arrow[ll, "\mathsf{q}"', bend right] \arrow[ll, "\mathsf{p}", bend left] &  & \mathcal{C} \arrow[ll, "\mathsf{r}^0", bend left] \arrow[ll, "\mathsf{l}^0"', bend right]
\end{tikzcd}
    \end{center}
    \emph{admits a ladder} if there is a ladder for the adjoint triple $(\mathsf{l}^0,\mathsf{e},\mathsf{r}^0)$. 
\end{defn}

We also recall the definition of a ladder of recollements of triangulated categories. 

\begin{defn}
Let $\mathcal{T}'$, $\mathcal{T}$ and $\mathcal{T}''$ be triangulated categories. A \emph{ladder of recollements} between them is a diagram as below 
\begin{center}
    \begin{tikzcd}
\mathcal{T}' \arrow[rr] \arrow[rr, "\vdots", bend left=60] \arrow[rr, "\vdots"', bend right=60] &  & \mathcal{T} \arrow[rr] \arrow[ll, bend right] \arrow[ll, bend left] \arrow[rr, "\vdots", bend left=60] \arrow[rr, "\vdots"', bend right=60] &  & \mathcal{T}'' \arrow[ll, bend right] \arrow[ll, bend left]
\end{tikzcd}
\end{center}
such that any three consecutive rows form a recollement of triangulated categories. The number of recollements formed in a ladder of recollements of triangulated categories is called the \emph{height} of the ladder. 
\end{defn}

We want to lift recollements of abelian categories that admit a ladder to ladders of recollements of their derived categories, for which we need the following definition. 

\begin{defn} (\!\!\cite[Definition 3.6]{homological}) 
 An exact functor $\mathsf{i}\colon \mathcal{A}\to \mathcal{B}$ of abelian categories is called a \emph{homological embedding} if the induced map $\mathsf{i}^n_{X,Y}\colon \Ext^n_{\mathcal{A}}(X,Y)\rightarrow \Ext_{\mathcal{B}}^n(\mathsf{i}(X),\mathsf{i}(Y))$ is a group isomorphism for all $n\geq 0$ and all $X,Y\in\mathcal{A}$. 
\end{defn}

We then have the following result.

\begin{prop} \label{lifting_ladders}
    Let $(\Mod A,\Mod B,\Mod C)$ be a recollement with a ladder of height $n\geq 2$ such that the functor $\mathsf{i}\colon \Mod A\to \Mod B$ is a homological embedding. Then there is a recollement  $(\mathsf{D}(A),\mathsf{D}(B),\mathsf{D}(C))$ of derived module categories with  a ladder of height at least $n-1$. 
\end{prop}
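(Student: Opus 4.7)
The plan is to proceed in two stages: first lift the base recollement of module categories to a recollement of derived categories, then extend the ladder by deriving the additional adjoint pairs one at a time.

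For the first stage, I would apply the standard criterion (due essentially to Psaroudakis--Vit\'oria) that a recollement of abelian categories with homologically embedded image lifts to a recollement of the corresponding derived categories. Since the hypothesis precisely provides that $\mathsf{i}$ is a homological embedding, this produces a recollement $(\mathsf{D}(A),\mathsf{D}(B),\mathsf{D}(C))$ of triangulated categories, in which the six structure functors are either trivial extensions of exact functors or total derived functors of the remaining, only one-sided exact ones. It is at this step that the homological embedding hypothesis plays its essential role: it guarantees that the lifted functor $\mathsf{i}$ remains fully faithful at the derived level and that the other five derived functors assemble into an honest recollement.

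For the second stage, I would observe that every functor $\mathsf{l}^j$ (respectively $\mathsf{r}^j$) sitting in the interior of the ladder is simultaneously a left adjoint and a right adjoint of (at least) exact functors appearing in the ladder, and is therefore itself exact. An exact functor between module categories passes termwise to a triangle functor between derived categories, and an adjoint pair of exact functors yields an adjoint pair of triangle functors at the derived level by the standard formal adjunction argument. Applying this inductively, working outward from the base recollement triple $(\mathsf{L}\mathsf{l}^0,\mathsf{e},\mathsf{R}\mathsf{r}^0)$ already produced in Stage 1, yields derived adjoint pairs $(\mathsf{l}^{j+1},\mathsf{l}^j)$ and $(\mathsf{r}^j,\mathsf{r}^{j+1})$ for every interior rung of the abelian ladder.

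The drop from height $n$ to at least $n-1$ is accounted for at the boundary: the outermost functor at the very top (or bottom) of the abelian ladder is known only to be a one-sided adjoint, hence not a priori exact, and its naive derivation need not form a genuine triangle adjoint with its neighbor in $\mathsf{D}(B)$. Consequently only the innermost $n-1$ rungs are guaranteed to transport to the derived setting. The main obstacle is Stage 1: one has to verify that the homological embedding hypothesis is simultaneously strong enough to preserve fully-faithfulness of the embedding and to produce all the required total derived adjoints; once this is established, Stage 2 is a routine application of derived adjunction for exact functors.
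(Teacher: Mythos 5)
Your two-stage strategy is the one the paper follows: lift the base recollement to the derived level using the homological embedding hypothesis (the paper cites \cite{dalezios} for this step), and then extend the resulting derived recollement rung by rung using the remaining adjoints of the abelian ladder. One genuine gap in Stage 2: producing the extra derived adjoint pairs is not by itself the end of the argument, because a ladder of recollements of triangulated categories requires every three consecutive rows to form an actual recollement, not merely a chain of adjunctions. The paper closes this with \cite[Proposition 3.2]{Angeleri}, which says the derived recollement extends one step upwards (resp.\ downwards) precisely when $\mathbb{L}\mathsf{l}^0$ admits a left adjoint (resp.\ $\mathbb{R}\mathsf{r}^0$ a right adjoint); you need this criterion, or an equivalent verification of the recollement axioms for each new triple of rows.

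Your explanation of the drop from $n$ to $n-1$ is also incorrect, even though the final bound is right. The outermost abelian adjunction, say $(\mathsf{l}^{m-1},\mathsf{l}^{m-2})$, \emph{does} transport: its right-hand member $\mathsf{l}^{m-2}$ is exact (being both a left and a right adjoint inside the ladder), so $(\mathbb{L}\mathsf{l}^{m-1},\mathsf{l}^{m-2})$ is a genuine adjoint pair of triangle functors by the standard fact that $\mathbb{L}F$ is left adjoint to $\mathbb{R}G$ whenever $F$ is left adjoint to $G$ between module categories; no exactness of $\mathsf{l}^{m-1}$ is needed for this. What is true is that $\mathbb{L}\mathsf{l}^{m-1}$ need not admit a \emph{further} left adjoint, which caps the upward extensions at $m-1$ and the downward ones at $p-1$, where $m$ and $p$ are the $l$- and $r$-heights with $m+p=n$. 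The residual ``$-1$'' is then a bookkeeping artifact of the two height conventions: a bare abelian recollement has ladder height $1+1=2$, whereas a bare triangulated recollement has ladder height $1$, giving $1+(m-1)+(p-1)=n-1$. Taken literally, your claim that only the innermost $n-1$ rungs transport would undercount the derived ladder by one.
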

\begin{proof}
    Let $(\Mod A,\Mod B,\Mod C)$ be a recollement of module categories as below 
    \begin{center}
        \begin{tikzcd}
\Mod A \arrow[rr, "\mathsf{i}"] &  & \Mod B \arrow[rr, "\mathsf{e}"] \arrow[ll, "\mathsf{q}"', bend right] \arrow[ll, "\mathsf{p}", bend left] &  & \Mod C \arrow[ll, "\mathsf{l}^0"', bend right] \arrow[ll, "\mathsf{r^0}", bend left]
\end{tikzcd}
    \end{center}
    Then, if we assume the above to be homological, it gives rise to a recollement of derived categories (see for example \cite{dalezios}): 
    \begin{center}
        \begin{tikzcd}
\mathsf{D}(A) \arrow[rr, "\mathsf{i}"] &  & \mathsf{D}(B) \arrow[rr, "\mathsf{e}"] \arrow[ll, "\mathbb{L}\mathsf{q}"', bend right] \arrow[ll, "\mathbb{R}\mathsf{p}", bend left] &  & \mathsf{D}(C) \arrow[ll, "\mathbb{L}\mathsf{l}^0"', bend right] \arrow[ll, "\mathbb{R}\mathsf{r^0}", bend left]
\end{tikzcd}
    \end{center}
    and by \cite[Proposition 3.2]{Angeleri}, it can be extended one step upwards if and only if $\mathbb{L}\mathsf{l^0}$ admits a left adjoint and it can be extended one step downwards if and only if $\mathbb{R}\mathsf{r}^0$ admits a right adjoint. Therefore, if the given recollement $(\Mod A,\Mod B,\Mod C)$ admits a ladder of $l$-height $m$ and $r$-height $n$ (in which case the height of the ladder is $m+n$), then it can be lifted to a recollement $(\mathsf{D}(A),\mathsf{D}(B),\mathsf{D}(C))$ which can be extended $m-1$ times upwards and $n-1$ times downwards, thus admits a ladder of height $m+n-1$. 
\end{proof}

We also need the following result of Cummings.
 
\begin{prop} \textnormal{(\!\!\cite[Proposition 6.10]{cummings})} \label{cummings_recollement}
    Let $(R)=(\mathsf{D}(A),\mathsf{D}(B),\mathsf{D}(C))$ be a recollement of derived categories of rings that belongs in a ladder of height 2. The following hold:
\begin{itemize} 

\item[(i)] If $(R)$ is on the bottom of the ladder and injectives generate for $B$, then injectives generate for $C$.

\item[(ii)] If $(R)$ is on top of the ladder and injectives generate for $B$, then injectives generate for $A$. 

\item[(iii)] If injectives generate for $A$ and $C$, then injectives generate for $B$. 
\end{itemize}
\end{prop}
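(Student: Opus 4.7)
The proof is a case-by-case argument that combines the recollement triangles, applied to a canonical generator, with the extra adjoint provided by the height-$2$ ladder. Write the recollement as
$\mathsf{D}(A)\xrightarrow{\mathsf{i}}\mathsf{D}(B)\xrightarrow{\mathsf{e}}\mathsf{D}(C)$
with adjoint triples $(\mathsf{q},\mathsf{i},\mathsf{p})$ and $(\mathsf{l}^0,\mathsf{e},\mathsf{r}^0)$. The height-$2$ hypothesis supplies one additional adjoint on each side of the recollement: if $(R)$ sits at the bottom of the ladder one gains $\mathsf{l}^1\dashv\mathsf{l}^0$ (so $\mathsf{l}^0$ is also a right adjoint) together with an extra left adjoint to $\mathsf{q}$; if $(R)$ sits at the top one gets $\mathsf{r}^0\dashv\mathsf{r}^1$ together with an extra right adjoint to $\mathsf{p}$. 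In each case the standard recollement identities $\mathsf{e}\mathsf{l}^0\cong\mathsf{id}_{\mathsf{D}(C)}$, $\mathsf{e}\mathsf{r}^0\cong\mathsf{id}_{\mathsf{D}(C)}$, $\mathsf{q}\mathsf{i}\cong\mathsf{id}_{\mathsf{D}(A)}$ and $\mathsf{p}\mathsf{i}\cong\mathsf{id}_{\mathsf{D}(A)}$ remain available.

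For part (i), apply $\mathsf{e}\mathsf{l}^0\cong\mathsf{id}_{\mathsf{D}(C)}$ to the generator $C$ of $\mathsf{D}(C)$ (Remark~\ref{generation_by_projectives}), obtaining $C\cong\mathsf{e}\mathsf{l}^0(C)$. Since injectives generate for $B$, we have $\mathsf{l}^0(C)\in\mathsf{Loc}(\Inj B)$; as $\mathsf{e}$ is a coproduct-preserving triangle functor (being a left adjoint to $\mathsf{r}^0$), Proposition~\ref{image_is_contained_in_localizing} reduces $C\in\mathsf{Loc}(\Inj C)$ to the inclusion $\mathsf{e}(\Inj B)\subseteq\mathsf{Loc}(\Inj C)$. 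Applying Remark~\ref{bounded_cohomology} to the pair $(\mathsf{l}^0,\mathsf{e})$, this last inclusion follows as soon as $\mathsf{l}^0$ preserves bounded-cohomology complexes, and this is precisely what the extra adjoint $\mathsf{l}^1$ enforces, via Proposition~\ref{lifting_ladders}, by translating into finiteness of the flat dimension of the underlying bimodule.

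Part (ii) is the symmetric dual: now $(R)$ is on top, the ladder supplies a right adjoint $\mathsf{p}_1$ to $\mathsf{p}$, making $\mathsf{p}$ coproduct-preserving, so from $\mathsf{p}\mathsf{i}\cong\mathsf{id}_{\mathsf{D}(A)}$ applied to the generator $A$ of $\mathsf{D}(A)$ and Remark~\ref{bounded_cohomology} applied to $(\mathsf{i},\mathsf{p})$, together with the hypothesis that injectives generate for $B$, one concludes $A\in\mathsf{Loc}(\Inj A)$. For part (iii), apply the recollement triangle $\mathsf{i}\mathsf{q}(B)\to B\to\mathsf{r}^0\mathsf{e}(B)\to$ to the generator $B$ of $\mathsf{D}(B)$. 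By Proposition~\ref{basic_properties_of_localizing}(i) it is enough to place both outer terms in $\mathsf{Loc}(\Inj B)$; the two hypotheses injectives-generate-for-$A$ and injectives-generate-for-$C$ yield $\mathsf{q}(B)\in\mathsf{Loc}(\Inj A)$ and $\mathsf{e}(B)\in\mathsf{Loc}(\Inj C)$, whereupon Proposition~\ref{image_is_contained_in_localizing} together with Remark~\ref{bounded_cohomology} applied respectively to $(\mathsf{q},\mathsf{i})$ and, when the triangle is replaced by its dual $\mathsf{l}^0\mathsf{e}(B)\to B\to\mathsf{i}\mathsf{p}(B)\to$, to $(\mathsf{l}^0,\mathsf{e})$, finishes the argument. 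The height-$2$ assumption is used to pick, according to whether $(R)$ is on top or at the bottom, the adjoint responsible for the needed bounded-cohomology preservation.

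The main obstacle is the implication \emph{``extra adjoint $\Longrightarrow$ preservation of bounded cohomology''}. This is not formal at the level of abstract triangulated categories; it must be extracted from the fact that the ladder lifts from the abelian level via Proposition~\ref{lifting_ladders}, where the extra adjoint encodes finiteness of a concrete flat or projective dimension of the module mediating the recollement, and from this one invokes exactly the mechanism explained at the end of Remark~\ref{bounded_cohomology}.
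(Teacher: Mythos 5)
The paper does not actually prove this statement: it is quoted from Cummings \cite[Proposition 6.10]{cummings} and used as a black box, so there is no internal proof to measure yours against. On its own terms, your architecture is the right one — push the canonical generators $A$, $B$, $C$ through the coproduct-preserving recollement functors via Proposition~\ref{image_is_contained_in_localizing}, and reduce everything to showing that the relevant right adjoints send injectives into bounded complexes of injectives via Remark~\ref{bounded_cohomology} — and you correctly isolate the crux: in each case some left adjoint ($\mathsf{l}^0$ in (i), $\mathsf{i}$ in (ii), $\mathsf{q}$ and $\mathsf{l}^1$ or $\mathsf{e}$ in (iii)) must preserve complexes with bounded cohomology.

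That crux, however, is exactly where your argument has a genuine gap, and the proposed fix does not work. Proposition~\ref{lifting_ladders} goes in the opposite direction: it lifts a ladder of recollements of \emph{module} categories to one of derived categories. The hypothesis here is an abstract ladder of recollements of derived categories, with no abelian-level origin and no ``underlying bimodule'' given, so there is nothing for Proposition~\ref{lifting_ladders} to act on and no flat dimension to ``translate into''. What is actually required — and what Cummings establishes in the lemmas preceding his Proposition 6.10 — is the structural fact that the extra adjoint supplied by the ladder forces compactness: a right adjoint such as $\mathsf{p}$ or $\mathsf{r}^0$ preserves coproducts if and only if its left adjoint preserves compact objects, so the ladder hypothesis makes $\mathsf{i}(A)$, respectively $\mathsf{l}^0(C)$, a perfect complex on the appropriate side, and it is from this perfection that the needed boundedness (hence preservation of bounded complexes of injectives by the adjoint) is deduced. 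Without that input, the implications ``$\mathsf{l}^1$ exists $\Rightarrow$ $\mathsf{l}^0$ preserves bounded cohomology'' in (i), and the unstated assumption in (ii) that $\mathsf{i}$ preserves bounded cohomology (which Remark~\ref{bounded_cohomology} needs before it says anything about $\mathsf{p}$), remain unproved assertions. A smaller slip: in (iii) the sequence $\mathsf{i}\mathsf{q}(B)\to B\to\mathsf{r}^0\mathsf{e}(B)\to$ is not a recollement triangle; the canonical ones are $\mathsf{l}^0\mathsf{e}(B)\to B\to\mathsf{i}\mathsf{q}(B)\to$ and $\mathsf{i}\mathsf{p}(B)\to B\to\mathsf{r}^0\mathsf{e}(B)\to$, and your argument should be run on these.
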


By the above we obtain the next result.

\begin{prop}
\label{ladder}
Let $(\Mod A,\Mod B, \Mod C)$ be a recollement such that the functor $\mathsf{i}\colon \Mod A\to \Mod B$ is a homological embedding.
\begin{itemize} 
        \item[(i)] If the recollement admits a ladder with $l$-height at least 2 and injectives generate for $B$, then injectives generate for $C$.
        \item[(ii)] If the recollement admits a ladder with $r$-height at least 2 and injectives generate for $B$, then injectives generate for $A$. 
        \item[(iii)] If the recollement admits a ladder of height at least 3 and injectives generate for $A$ and $C$, then injectives generate for $B$. 
    \end{itemize}
\end{prop}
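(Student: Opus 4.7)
The plan is to combine Propositions \ref{lifting_ladders} and \ref{cummings_recollement} in a direct way. The homological embedding hypothesis on $\mathsf{i}$ is exactly what Proposition \ref{lifting_ladders} requires in order to lift the abelian ladder to a ladder of recollements of the derived categories $(\mathsf{D}(A),\mathsf{D}(B),\mathsf{D}(C))$, and Proposition \ref{cummings_recollement} then transfers injective generation along that lifted ladder. The first step, common to all three parts, is therefore to invoke Proposition \ref{lifting_ladders} to produce the relevant lifted ladder.

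More concretely, for each of the three items I would proceed as follows. For (i), $l$-height at least $2$ means that $\mathsf{l}^0$ has a further left adjoint $\mathsf{l}^1$ in the abelian ladder. By Proposition \ref{lifting_ladders} the derived recollement can be extended at least once upwards, producing a ladder of recollements of height at least $2$ in which the given derived recollement sits on the bottom. Since injectives generate for $B$, Proposition \ref{cummings_recollement}(i) immediately yields injective generation for $C$. For (ii) the same reasoning with $r$-height at least $2$ gives a derived ladder of recollements of height at least $2$ in which the basic recollement sits on the top, and Proposition \ref{cummings_recollement}(ii) yields injective generation for $A$. For (iii), total abelian height at least $3$ means $m+n\geq 3$ with $m$ the $l$-height and $n$ the $r$-height; Proposition \ref{lifting_ladders} then gives a derived ladder of recollements of height at least $m+n-1\geq 2$, regardless of whether the extra adjoint sits on the $l$-side or on the $r$-side, and Proposition \ref{cummings_recollement}(iii) finishes the argument.

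The only potentially delicate point, and hence what I would label as the ``main obstacle'', is the bookkeeping of directionality: one has to check that an extra left adjoint in the abelian ladder lifts to an upward extension in the derived ladder of recollements (placing the basic recollement on the bottom), and symmetrically that an extra right adjoint corresponds to a downward extension (placing the basic recollement on top). This matching is already recorded inside the proof of Proposition \ref{lifting_ladders}, where the lifted recollement is extended ``$m-1$ times upwards and $n-1$ times downwards''; hence no real obstacle arises and the proof reduces to a dictionary translation between the two cited propositions.
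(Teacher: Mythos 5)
Your proposal is correct and follows exactly the paper's argument, which is the one-line combination of Proposition~\ref{lifting_ladders} with Proposition~\ref{cummings_recollement}; your extra care with the upward/downward directionality is precisely the bookkeeping already recorded in the proof of Proposition~\ref{lifting_ladders}.
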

\begin{proof}
This follows by combining Proposition \ref{lifting_ladders} and Proposition \ref{cummings_recollement}. 
\end{proof}

We derive the following consequence for triangular matrix rings which is due to Cummings \cite[Example 6.11]{cummings}.

\begin{cor}
\label{injective_generation_for_triangular_matrix_rings}
Let $B=\big(\begin{smallmatrix}
  A & _AM_C\\
  0 & C
\end{smallmatrix}\big)$ be a triangular matrix ring. 
\begin{itemize}
\item[(i)] If injectives generate for $A$ and $C$ then injectives generate for $B$.
     
\item[(ii)] If injectives generate for $B$ then injectives generate for $C$. 
\end{itemize}
\end{cor}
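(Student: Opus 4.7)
The plan is to view $B$ as a Morita context ring with zero lower-left bimodule: I would apply the formalism of Section~\ref{moritacontextrings} to the Morita datum $(A,M,C,0,0,0)$, in which both structure morphisms $\phi,\psi$ are automatically zero. This will supply \emph{two} recollements of abelian categories
\[
A\lMod \xrightarrow{\mathsf{Z}_A} B\lMod \xrightarrow{\mathsf{U}_C} C\lMod \qquad\text{and}\qquad C\lMod \xrightarrow{\mathsf{Z}_C} B\lMod \xrightarrow{\mathsf{U}_A} A\lMod,
\]
with $\mathsf{Z}_A(X)=(X,0,0)$ and $\mathsf{Z}_C(Y)=(0,Y,0)$, and with all six outer adjoints as described in Section~\ref{moritacontextrings}.

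I would first verify that both inclusions are homological embeddings. Writing $e_1,e_2$ for the diagonal idempotents, one has $Be_1\cong A$ as a right $A$-module and $Be_2\cong M\oplus C$ as a right $C$-module; hence the canonical maps $Be_i\otimes_{e_iBe_i}e_iB\to Be_iB$ are isomorphisms and all higher Tor vanish, so the ideals $Be_iB$ are stratifying. Next, I would splice the adjoint triples of the two recollements into a single ladder: the left adjoint $\Coker$ of $\mathsf{Z}_A$ furnished by the first recollement extends the adjoint chain of the projection $\mathsf{U}_A$ of the second, yielding $l$-height at least $2$; symmetrically, the right adjoint of $\mathsf{Z}_C$ furnished by the second recollement extends the adjoint chain of $\mathsf{U}_C$ of the first to $r$-height at least $2$. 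In either case the total ladder height is at least $3$.

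Part (i) then follows by applying Proposition~\ref{ladder}(iii) to either recollement: the ladder has height at least $3$, the embedding is homological, and the two outer categories are precisely $A\lMod$ and $C\lMod$. For part (ii) I would apply the appropriate case of Proposition~\ref{ladder} to the recollement in which $C\lMod$ sits on the side pointed to by the available ladder extension, i.e.\ Proposition~\ref{ladder}(i) on the recollement whose right-quotient is $C\lMod$ using the $l$-height $\geq 2$ bound, or Proposition~\ref{ladder}(ii) on the recollement whose embedded subcategory is $C\lMod$ using the $r$-height $\geq 2$ bound. The main obstacle will be the careful bookkeeping of the two intertwined adjoint chains and the confirmation of the homological-embedding hypothesis, which is precisely what lets Proposition~\ref{lifting_ladders} lift the abelian ladder to a derived ladder of recollements so that Cummings' Proposition~\ref{cummings_recollement} applies.
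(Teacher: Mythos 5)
Your construction of the ladder by splicing the two idempotent recollements is sound, and the stratifying-ideal verification that both embeddings are homological is correct; this is essentially the ``well-known'' input that the paper's one-line proof cites from \cite{recolsurvey} before invoking Proposition~\ref{ladder}. The gap is in part (ii), and it stems from working with left modules. For left modules the extra adjoints you actually produce attach to the \emph{wrong} sides: the recollement $(A\lMod,B\lMod,C\lMod)$ acquires $r$-height at least $2$ (a right adjoint of $\mathsf{r}^0=\mathsf{Z}_C$, coming from the other recollement), while $(C\lMod,B\lMod,A\lMod)$ acquires $l$-height at least $2$ (the left adjoint $\Coker$ of $\mathsf{l}^0=\mathsf{Z}_A$). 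With these bounds, Proposition~\ref{ladder}(i) and (ii) both conclude injective generation for $A$, not for $C$. The two applications you invoke for (ii) instead require the bounds you did not establish: an $l$-height of $2$ for the first recollement would need a left adjoint of $\mathsf{T}_C\colon Y\mapsto (M\otimes_CY,Y,1)$, and an $r$-height of $2$ for the second would need a right adjoint of $X\mapsto (X,\Hom_A(M,X),\ldots)$; neither exists for a general bimodule $M$ (each forces $M$ to be finitely generated projective on the appropriate side).

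The repair is to set everything up for right modules, which is in any case what Definition~\ref{injective_generation_defn}, Proposition~\ref{ladder} and Proposition~\ref{cummings_recollement} require: they concern $\Mod$ and $\mathsf{D}(-)$ of right modules, and injective generation is not known to pass to opposite rings, so the left-module version is not merely a cosmetic variant. For right $B$-modules $(X,Y,g\colon X\otimes_AM\to Y)$ the orientation reverses: in the recollement $(\Mod A,\Mod B,\Mod C)$ the functor $\mathsf{l}^0$ is $Y\mapsto(0,Y,0)$, which is itself the (homological) embedding of the companion recollement and therefore has a left adjoint. Hence this recollement has $l$-height at least $2$ and total height at least $3$, and Proposition~\ref{ladder}(iii) gives (i) while Proposition~\ref{ladder}(i) gives (ii) — which is exactly the paper's argument. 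Part (i) of your proposal survives essentially unchanged once transported to right modules, since it is symmetric in $A$ and $C$.
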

\begin{proof}
It is well-known, see for instance \cite{recolsurvey}, that there is a recollement of module categories $(\Mod A,\Mod B,\Mod C)$ that admits a ladder with $l$-height at least 2 such that the middle left functor is a homological embedding. Then the result follows from Proposition~\ref{ladder}.
\end{proof}

\begin{cor}
    Let $\Lambda$ be a ring and $I$ any two-sided ideal of $\Lambda$. Consider the Morita context ring $\Gamma=\big(\begin{smallmatrix}
  \Lambda & I\\
  \Lambda & \Lambda
\end{smallmatrix}\big)$ with multiplication induced by the action of $\Lambda$ on $I$. If injectives generate for $\Lambda/I$ and $\Lambda$ then injectives generate for $\Gamma$.  
\end{cor}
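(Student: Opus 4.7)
The plan is to apply Proposition~\ref{ladder}(iii) to a recollement of module categories induced by a suitable idempotent of $\Gamma$. Set $e=\big(\begin{smallmatrix}0 & 0\\0 & 1\end{smallmatrix}\big)\in\Gamma$. Direct matrix computations using the multiplication of the Morita context ring give $e\Gamma e\cong \Lambda$, $\Gamma e\Gamma=\big(\begin{smallmatrix}I & I\\ \Lambda & \Lambda\end{smallmatrix}\big)$, and hence $\Gamma/\Gamma e\Gamma\cong \Lambda/I$; moreover $e\Gamma\cong \Lambda^{\oplus 2}$ as a left $\Lambda$-module and $\Gamma e\cong I\oplus \Lambda$ as a right $\Lambda$-module. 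The idempotent $e$ therefore induces the standard recollement of abelian categories $(\Mod(\Lambda/I),\Mod\Gamma,\Mod \Lambda)$ whose middle-right functor is $(-)e$, with left adjoint $-\otimes_\Lambda e\Gamma$ and right adjoint $\Hom_\Lambda(\Gamma e,-)$.

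The first thing to verify would be that the middle-left inclusion $\Mod(\Lambda/I)\hookrightarrow\Mod\Gamma$ is a homological embedding. By a standard criterion this reduces to showing that $\Gamma e\Gamma$ is a stratifying ideal, that is, $\Tor^{\Lambda}_i(\Gamma e,e\Gamma)=0$ for $i\geq 1$ and the multiplication map $\Gamma e\otimes_\Lambda e\Gamma\to \Gamma e\Gamma$ is an isomorphism. The Tor-vanishing is immediate since $e\Gamma\cong \Lambda^{\oplus 2}$ is projective as a left $\Lambda$-module, and the multiplication map can be checked to be an isomorphism summand-by-summand: using the decompositions above, both source and target are isomorphic to $I\oplus I\oplus \Lambda\oplus \Lambda$, and the map acts as the natural multiplications $I\otimes_\Lambda \Lambda\to I$ and $\Lambda\otimes_\Lambda\Lambda\to \Lambda$ on the respective factors.

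Next I would establish a ladder of height at least $3$ for this recollement. The adjoint triple $(-\otimes_\Lambda e\Gamma,\,(-)e,\,\Hom_\Lambda(\Gamma e,-))$ already produces height $2$; to gain one further step upwards, one uses that $e\Gamma$ is finitely generated projective as a left $\Lambda$-module, so that $-\otimes_\Lambda e\Gamma$ is naturally isomorphic to $(-)^{\oplus 2}$, preserves all small limits, and therefore admits a left adjoint by the special adjoint functor theorem. Hence the $l$-height of the ladder is at least $2$, and the total height is at least $3$.

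With both the homological embedding and the ladder of height $3$ secured, Proposition~\ref{ladder}(iii) applies to our recollement and, since injectives generate for the two outer rings $\Lambda/I$ and $\Lambda$ by hypothesis, they must also generate for the middle ring $\Gamma$. I expect the most delicate verification in this programme to be the stratifying-ideal condition, and in particular the checking that $\Gamma e\otimes_\Lambda e\Gamma\to\Gamma e\Gamma$ is an isomorphism; the additional adjoint and the final invocation of Proposition~\ref{ladder}(iii) are then routine.
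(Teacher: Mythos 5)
Your proof is correct, but it takes a genuinely different route from the paper's. The paper simply cites \cite{ladder} for a ready-made recollement $(\Mod \big(\begin{smallmatrix}\Lambda/I & 0\\ \Lambda/I & \Lambda/I\end{smallmatrix}\big),\Mod\Gamma,\Mod\Lambda)$ whose left term is a lower triangular matrix ring over $\Lambda/I$ and which comes with a ladder of height at least $4$ and a homological middle-left functor; it then needs the earlier Corollary on triangular matrix rings to pass from injective generation for $\Lambda/I$ to injective generation for that $2\times 2$ triangular corner before invoking Proposition~\ref{ladder}(iii). You instead build everything from the idempotent $e=\big(\begin{smallmatrix}0&0\\0&1\end{smallmatrix}\big)$: your computations of $e\Gamma e$, $\Gamma e$, $e\Gamma$, $\Gamma e\Gamma$ and $\Gamma/\Gamma e\Gamma\cong\Lambda/I$ are all right, the Tor-vanishing and the summand-by-summand check that $\Gamma e\otimes_\Lambda e\Gamma\to\Gamma e\Gamma$ is an isomorphism do establish that $\Gamma e\Gamma$ is stratifying (hence that the inclusion is a homological embedding, by the standard criterion from \cite{homological}), and the projectivity of ${}_\Lambda e\Gamma$ gives the extra left adjoint needed for $l$-height $2$, hence total height $3$, which is exactly what Proposition~\ref{ladder}(iii) requires. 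Two cosmetic remarks: the existence of the left adjoint of $-\otimes_\Lambda e\Gamma$ is more transparently seen from the natural isomorphism $-\otimes_\Lambda e\Gamma\cong\Hom_\Lambda(\Hom_\Lambda(e\Gamma,\Lambda),-)$ for a finitely generated projective module, rather than via the adjoint functor theorem; and the identification of $-\otimes_\Lambda e\Gamma$ with $(-)^{\oplus 2}$ only holds after forgetting the $\Gamma$-structure, though this does not affect the limit-preservation argument. What your approach buys is self-containedness: it avoids importing the specific example from \cite{ladder} and the detour through the triangular matrix ring, at the cost of a slightly shorter ladder (height $3$ instead of $4$), which is still sufficient for part (iii).
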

\begin{proof}
By \cite{ladder} there is a recollement $(\Mod \big(\begin{smallmatrix}
  \Lambda/I & 0\\
  \Lambda/I & \Lambda/I
\end{smallmatrix}\big), \Mod \Gamma, \Mod \Lambda)$ that admits a ladder of height at least 4 such that the middle left functor is a homological embedding. Then the result follows from Proposition~\ref{ladder}.
\end{proof}

We end this section with the following theorem. 
  
\begin{thm}
\label{injectivegenerationformorita}
    Let $A$ and $B$ be two rings, $N$ a $A$-$B$-bimodule and $M$ a $B$-$A$-bimodule. Consider the Morita context ring $\Lambda=\big(\begin{smallmatrix}
  A & N\\
  M & B
\end{smallmatrix}\big)$  with $\phi=\psi=0$. 
\begin{itemize}
    \item[(i)]  Assume that $\pd_AN<\infty$. If injectives generate for $\Lambda$, then injectives generate for $A$. 
    \item[(ii)] Assume that $\pd_BM<\infty$. If injectives generate for $\Lambda$, then injectives generate for $B$. 
    \item[(iii)] Assume that $\pd_{\Lambda}\mathsf{Z}_A(A)<\infty$ and $\pd_{\Lambda}\mathsf{Z}_B(B)<\infty$. If injectives generate for $A$ and $B$, then injectives generate for $\Lambda$. 
\end{itemize}
\end{thm}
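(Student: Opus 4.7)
The plan is to combine Remark~\ref{bounded_cohomology} with Proposition~\ref{image_is_contained_in_localizing} applied to two kinds of adjunctions attached to $\Lambda$: the corner-ring adjunctions $(\mathsf{T}_A,\mathsf{U}_A)$ and $(\mathsf{T}_B,\mathsf{U}_B)$ arising from the idempotents $e_1,e_2\in\Lambda$, and the restriction-of-scalars functors $\mathsf{Z}_A,\mathsf{Z}_B$. The hypothesis $\phi=\psi=0$ enters through the identifications $\Lambda/\Lambda e_2\Lambda = A$ and $\Lambda/\Lambda e_1\Lambda = B$, making $\mathsf{Z}_A,\mathsf{Z}_B$ restriction of scalars along ring surjections with exact left adjoints $-\otimes_\Lambda A$ and $-\otimes_\Lambda B$; it also yields the decompositions $e_1\Lambda \iso A\oplus N$ as a left $A$-module and $e_2\Lambda\iso M\oplus B$ as a left $B$-module. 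These observations control every flat dimension that will appear.

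For (i) I would take the adjoint pair $(\mathsf{T}_A,\mathsf{U}_A)$ with $\mathsf{T}_A=-\otimes_A e_1\Lambda$ and $\mathsf{U}_A$ exact. The hypothesis $\pd_A N<\infty$ forces the flat dimension of $e_1\Lambda$ over $A$ to be finite via the decomposition above, hence $\mathsf{T}_A^{\mathbb{L}}$ preserves complexes bounded in cohomology. Remark~\ref{bounded_cohomology} then makes $\mathsf{U}_A$ send bounded complexes of injective $\Lambda$-modules to bounded complexes of injective $A$-modules, so $\mathsf{U}_A(\Inj\Lambda)\subseteq\mathsf{Loc}(\Inj A)$ by Proposition~\ref{basic_properties_of_localizing}(vi). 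Since by assumption $\Inj\Lambda$ generates $\mathsf{D}(\Lambda)$, Proposition~\ref{image_is_contained_in_localizing} gives $\mathsf{Im}(\mathsf{U}_A)\subseteq\mathsf{Loc}(\Inj A)$; evaluating at $\Lambda$ produces $A\oplus M\in\mathsf{Loc}(\Inj A)$ and closure under summands (Proposition~\ref{basic_properties_of_localizing}(v)) yields $A\in\mathsf{Loc}(\Inj A)$. Part (ii) is proved by exactly the symmetric argument using $(\mathsf{T}_B,\mathsf{U}_B)$ and $\pd_B M<\infty$.

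For (iii), the functors $\mathsf{Z}_A,\mathsf{Z}_B$ are exact, and the hypotheses $\pd_\Lambda\mathsf{Z}_A(A)<\infty$, $\pd_\Lambda\mathsf{Z}_B(B)<\infty$ guarantee that the derived left adjoints $-\otimes_\Lambda^{\mathbb{L}}A$ and $-\otimes_\Lambda^{\mathbb{L}}B$ preserve complexes bounded in cohomology. The same combination of Remark~\ref{bounded_cohomology}, Proposition~\ref{basic_properties_of_localizing}(vi) and Proposition~\ref{image_is_contained_in_localizing}, applied this time with $S=\Inj A,\Inj B$ (which generate $\mathsf{D}(A)$ and $\mathsf{D}(B)$ by hypothesis), then yields $\mathsf{Im}(\mathsf{Z}_A)\subseteq\mathsf{Loc}(\Inj\Lambda)$ and $\mathsf{Im}(\mathsf{Z}_B)\subseteq\mathsf{Loc}(\Inj\Lambda)$. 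In particular the four objects $\mathsf{Z}_A(A),\mathsf{Z}_A(M),\mathsf{Z}_B(N),\mathsf{Z}_B(B)$ all lie in $\mathsf{Loc}(\Inj\Lambda)$. I would finish by producing the short exact sequences of $\Lambda$-modules
\[
0\to \mathsf{Z}_B(N)\to e_1\Lambda\to \mathsf{Z}_A(A)\to 0,\qquad 0\to \mathsf{Z}_A(M)\to e_2\Lambda\to \mathsf{Z}_B(B)\to 0,
\]
whose exactness is precisely where the vanishing $\phi=\psi=0$ is used (this is what forces the displayed submodules to be closed under the remaining structure maps of the Morita context). Proposition~\ref{basic_properties_of_localizing}(i) then gives $e_1\Lambda,e_2\Lambda\in\mathsf{Loc}(\Inj\Lambda)$, and hence $\Lambda=e_1\Lambda\oplus e_2\Lambda\in\mathsf{Loc}(\Inj\Lambda)$, which is equivalent to injective generation for $\Lambda$ by Remark~\ref{generation_by_projectives}.

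The only delicate point is to match the side of each bimodule structure with the correct flat/projective-dimension hypothesis driving the boundedness statement in Remark~\ref{bounded_cohomology}; the rest is a uniform application of that remark together with Proposition~\ref{image_is_contained_in_localizing}, and is essentially formal once the correct adjunctions and short exact sequences are in place.
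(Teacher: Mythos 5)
Your proposal is correct and follows essentially the same route as the paper's proof: parts (i) and (ii) via the adjunction $(\mathbb{L}\mathsf{T}_A,\mathsf{U}_A)$ (resp.\ $(\mathbb{L}\mathsf{T}_B,\mathsf{U}_B)$) combined with Remark~\ref{bounded_cohomology} and Proposition~\ref{image_is_contained_in_localizing}, and part (iii) via the ring surjections $\Lambda\to A$, $\Lambda\to B$ together with the two short exact sequences splicing $\Lambda=e_1\Lambda\oplus e_2\Lambda$ out of objects in the images of the restriction functors. The only differences are cosmetic (you evaluate $\mathsf{U}_A$ at $\Lambda$ and pass to a summand where the paper evaluates at $(A,0,0,0)$, and your exact sequences are the right-module mirror of the paper's $0\to\mathsf{Z}_B(M)\to\mathsf{T}_A(A)\to\mathsf{Z}_A(A)\to 0$ and its companion).
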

\begin{proof}
(i) Consider the following adjoint pair
\begin{center}
    \begin{tikzcd}
\mathsf{D}(\Lambda) \arrow[rr, "\mathsf{U}_A"] &  & \mathsf{D}(A) \arrow[ll, "\mathbb{L}\mathsf{T}_A"', bend right]
\end{tikzcd}
\end{center}
Note that we work with right modules and the functor $\mathsf{T}_A\colon\Mod A\rightarrow \Mod\Lambda$ is given by $X\mapsto (X,X\otimes_AN,1)$. By \cite[Lemma 6.1]{morita} (adjusted for right modules) we have $\mathsf{U}_B\mathbb{L}_n\mathsf{T}_A(-)\cong \mathsf{Tor}_n^A(-,N)$ and $\mathsf{U}_A\mathbb{L}_n\mathsf{T}_A=0$.  Since $\pd_AN<\infty$, it follows that for every $A$-module $X$ we have that $\mathbb{L}_n\mathsf{T}_A(X)=0$ for $n$ large enough. We conclude by Remark \ref{bounded_cohomology} that $\mathsf{U}_A$ maps bounded complexes of injective modules to bounded complexes of injective modules and since injectives generate for $\Lambda$ and $\mathsf{U}_A$ preserves coproducts, by Proposition \ref{image_is_contained_in_localizing}, we get that the image $\mathsf{Im}(\mathsf{U}_A)$ is a subcategory of $\mathsf{Loc}_A(\Inj A)$. In particular, $\mathsf{U}_A(A,0,0,0)=A$ is in $\mathsf{Loc}_A(\Inj A)$, so by Remark \ref{generation_by_projectives} we conclude that injectives generate for $A$. 
 
(ii) We work with the following adjoint pair
\begin{center}
   \begin{tikzcd}
\mathsf{D}(\Lambda) \arrow[rr, "\mathsf{U}_B"] &  & \mathsf{D}(B) \arrow[ll, "\mathbb{L}\mathsf{T}_B"', bend right]
\end{tikzcd}
\end{center} 
and with similar arguments as in (i), the result follows. 

(iii) Consider the ring homomorphisms $f\colon\Lambda\lxr A$ and $g\colon\Lambda\lxr B$ given by 
\begin{center}
    $\begin{pmatrix}
	a & n \\
	m & b
\end{pmatrix}\longmapsto a$ \  and  \ $\begin{pmatrix}
	a & n \\
	m & b
\end{pmatrix}\longmapsto b$
\end{center}
respectively. The assumption that $\pd_{\Lambda}\mathsf{Z}_A(A)<\infty$ is equivalent to $\pd_{\Lambda}A<\infty$ where the $\Lambda$-module structure on $A$ is induced by $f$. Likewise, the assumption that $\pd_{\Lambda}\mathsf{Z}_B(B)<\infty$ is equivalent to $\pd_{\Lambda}B<\infty$, where the $\Lambda$-module structure on $B$ is induced by $g$. Let $(-\otimes_{\Lambda}A,i_A)$ be the adjoint pair induced by $f$ and $(-\otimes_{\Lambda}B,i_B)$ be the adjoint pair induced by $g$. Consider the derived functors of the above, as below 
\begin{center}
    \begin{tikzcd}
\mathsf{D}(A) \arrow[rr, "i_A"] &  & \mathsf{D}(\Lambda) \arrow[ll, "-\otimes_{\Lambda}^{\mathsf{L}}A"', bend right] &  & \mathsf{D}(B) \arrow[rr, "i_B"] &  & \mathsf{D}(\Lambda) \arrow[ll, "-\otimes_{\Lambda}^{\mathsf{L}}B"', bend right]
\end{tikzcd}
\end{center}
Since $\pd_{\Lambda}A<\infty$, it follows by Remark \ref{bounded_cohomology} that $i_A$ maps bounded complexes of injectives to bounded complexes of injectives. Likewise, since $\pd_{\Lambda}B<\infty$, it follows that $i_B$ maps bounded complexes of injectives to bounded complexes of injectives. Both $i_A$ and $i_B$ preserve coproducts and since injectives generate for $A$ and $B$, we derive from Proposition \ref{image_is_contained_in_localizing} that $\mathsf{Im}(i_A)$ and $\mathsf{Im}(i_B)$ are subcategories of $\mathsf{Loc}_{\Lambda}(\Inj\Lambda)$. Consider the following short exact sequences of $\Lambda$-modules 
\begin{center} 
  $0\rightarrow\mathsf{Z}_B(M)\rightarrow\mathsf{T}_A(A)\rightarrow\mathsf{Z}_A(A)\rightarrow 0$
\end{center}
and 
\begin{center}
    $0\rightarrow\mathsf{Z}_A(N)\rightarrow \mathsf{T}_B(B)\rightarrow\mathsf{Z}_B(B)\rightarrow 0$
\end{center}
We notice that $\mathsf{Z}_B(M)$ is contained in the image of $i_B$ and $\mathsf{Z}_A(A)$ is contained in the image of $i_A$. Therefore, by the first short exact sequence, $\mathsf{T}_A(A)$ is contained in $\mathsf{Loc}_{\Lambda}(\Inj\Lambda)$. Likewise, $\mathsf{Z}_A(N)$ is contained in the image of $i_A$ and $\mathsf{Z}_B(B)$ is contained in the image of $i_B$. Therefore, by the second short exact sequence, $\mathsf{T}_B(B)$ is contained in $\mathsf{Loc}_{\Lambda}(\Inj\Lambda)$. Lastly, there is an isomorphism $\Lambda\cong \mathsf{T}_A(A)\oplus \mathsf{T}_B(B)$ of $\Lambda$-modules, thus $\Lambda\in \mathsf{Loc}_{\Lambda}(\Inj\Lambda)$ and so by Remark \ref{generation_by_projectives} we conclude that injectives generate for $\Lambda$. 
\end{proof}

\section{Covering rings vs Morita context rings} 
\label{section:covering}

In this section we study covering rings of graded rings via Morita context rings. 

\subsection{Covering rings}
We recall the notion of the covering ring of a graded ring. This is not something new; it is based on covering algebras in the sense of \cite{gabriel} and \cite{green} and it is closely related to smash products in the sense of \cite{smash}. For covering algebras we also refer the reader to \cite{derived} and \cite{coverings}.

\begin{defn}
\label{covering_ring_defn}
    Let $R=\oplus_{\gamma\in\Gamma}R_{\gamma}$ be a graded ring over a finite abelian group $\Gamma$. View $\Gamma$ as an indexed set and consider $\hat{R}$ to be the set of $\Gamma\times\Gamma$ matrices $(r_{gh})$ with $r_{gh}\in R_{h-g}$. By the next lemma, $\Hat{R}$ is a ring that we call the \emph{covering ring} of $R$. 
\end{defn}

\begin{lem}
\label{coveringiswelldefined}
    Let $R$ be a graded ring over a finite abelian group $\Gamma$ and consider the set $\Hat{R}$ as defined above. Then $\Hat{R}$ is closed under matrix multiplication and this endows it with the structure of a ring. 
\end{lem}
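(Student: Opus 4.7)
The plan is to verify closure under the two ring operations directly, using only the grading condition $R_\gamma \cdot R_{\gamma'} \subseteq R_{\gamma+\gamma'}$, and to then note that the ring axioms follow from the standard matrix algebra computations.

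First I would check closure under addition. Given $(a_{gh}), (b_{gh}) \in \hat{R}$, the entrywise sum has $(a_{gh} + b_{gh}) \in R_{h-g}$ since $R_{h-g}$ is an abelian subgroup of $R$. So $\hat{R}$ is an abelian group under addition, with additive identity the zero matrix. This step is essentially formal.

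The core of the lemma is closure under multiplication. Given $A = (a_{gh}), B = (b_{gh}) \in \hat{R}$, the product entry is
\[
c_{gh} = \sum_{k \in \Gamma} a_{gk} b_{kh}.
\]
Since $\Gamma$ is finite, the sum is well defined (no convergence issues). By assumption $a_{gk} \in R_{k-g}$ and $b_{kh} \in R_{h-k}$, and the graded multiplication of $R$ gives $a_{gk} b_{kh} \in R_{(k-g) + (h-k)} = R_{h-g}$ for every $k$. Hence $c_{gh} \in R_{h-g}$, and $AB \in \hat{R}$. The multiplicative identity is the $\Gamma \times \Gamma$ matrix $E$ with $E_{gg} = 1 \in R_0 = R_{g-g}$ and $E_{gh} = 0$ for $g \neq h$; one checks $EA = AE = A$ by the usual computation.

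Finally, associativity of multiplication and the two distributive laws can be verified entry-by-entry exactly as for ordinary matrix rings, since the computations only use associativity and distributivity of the ambient ring $R$ together with the finiteness of $\Gamma$ to interchange finite sums. The one conceptual observation driving everything is the identity $(k-g) + (h-k) = h-g$, which matches the grading shifts along a composition of entries; I do not anticipate any real obstacle beyond recording this bookkeeping carefully.
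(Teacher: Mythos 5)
Your proof is correct and follows essentially the same route as the paper: the key step in both is the computation that the $(g,h)$ entry of a product is a sum of terms $a_{gk}b_{kh}\in R_{k-g}\cdot R_{h-k}\subseteq R_{h-g}$, with the remaining ring axioms dismissed as routine matrix bookkeeping. Your additional remarks on additive closure, the identity matrix, and finiteness of $\Gamma$ are fine but add nothing beyond what the paper treats as trivial.
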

\begin{proof}
Once we show that $\Hat{R}$ is closed under matrix multiplication, all the other properties will follow trivially. Let $(a_{gh}), (b_{gh})\in\Hat{R}$. Consider the matrix multiplication $(c_{gh})=(a_{gh})(b_{gh})$. Then for the $(g,h)$ entry of $(c_{gh})$ we have $c_{gh}=\sum_{h'\in \Gamma} a_{gh'}b_{h'h}$. Moreover, $a_{gh'}\in R_{h'-g}$ and $b_{h'h}\in R_{h-h'}$. Therefore, $a_{gh'}b_{h'h}\in R_{h'-g}\cdot R_{h-h'}\subseteq R_{h-g}$. We conclude that the $(g,h)$ entry of $(c_{gh})$ belongs in $R_{h-g}$, implying that $(c_{gh})\in\Hat{R}$. 
\end{proof}

The subsequent result is known, we briefly recall its proof for readers convenience. 

\begin{prop} \textnormal{(\!\!\cite[Theorem 2.1]{smash},\!\!\!\cite[Theorem 2.5]{coverings})}
\label{equivalence}
    Let $R$ be a graded ring over a finite abelian group $\Gamma$ and consider its covering ring $\Hat{R}$. The categories $\GrMod R$ and $\Mod \Hat{R}$ are equivalent. 
\end{prop}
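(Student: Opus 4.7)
The plan is to construct mutually quasi-inverse functors $F\colon \GrMod R \to \Mod\hat R$ and $G\colon \Mod\hat R \to \GrMod R$ using the matrix-unit structure of $\hat R$. Writing $e_{g,h}\in\hat R$ for the matrix with $1\in R_0$ at position $(g,h)$ when $h=g$ (so that $e_{g,g}$ makes sense as an idempotent), and more generally $r\cdot e_{g,h}$ for the matrix with $r\in R_{h-g}$ at position $(g,h)$ and zero elsewhere, I would note two basic facts that drive everything: the diagonal idempotents $e_\gamma:=e_{\gamma,\gamma}$ are orthogonal with $\sum_{\gamma\in\Gamma} e_\gamma = 1_{\hat R}$ (this uses finiteness of $\Gamma$), and every element of $\hat R$ decomposes uniquely as a finite sum $\sum_{g,h} r_{g,h}\cdot e_{g,h}$ with $r_{g,h}\in R_{h-g}$.

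First I would define $F$. Given a graded module $M=\bigoplus_\gamma M_\gamma$, set $F(M)=M$ as an abelian group, with right $\hat R$-action determined on matrix units by
\[
m\cdot (r\cdot e_{g,h}) \;=\; \begin{cases} m\,r \in M_h, & \text{if } m\in M_g,\\ 0, & \text{if } m\in M_{g'} \text{ with } g'\neq g,\end{cases}
\]
and extended bi-additively. Associativity of this action reduces to a bookkeeping check on matrix units: for $m\in M_{g'}$, both $(m\cdot(r\cdot e_{g,h}))\cdot(s\cdot e_{h',k})$ and $m\cdot((r\cdot e_{g,h})(s\cdot e_{h',k}))$ vanish unless $g'=g$ and $h=h'$, in which case they coincide with $(mr)s = m(rs)\in M_k$, using associativity of the action of $R$ on $M$ together with the graded multiplication $R_{h-g}\cdot R_{k-h}\subseteq R_{k-g}$. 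On morphisms, $F$ sends a graded homomorphism $f\colon M\to N$ to the same underlying map, which is $\hat R$-linear since it is $R$-linear and preserves the grading.

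Next I would define $G$. Given $N\in\Mod\hat R$, the orthogonal idempotent decomposition $1_{\hat R}=\sum_\gamma e_\gamma$ yields $N=\bigoplus_\gamma N e_\gamma$ as abelian groups. Set $G(N)_\gamma := N e_\gamma$, and define the right $R$-action on $G(N)=\bigoplus_\gamma Ne_\gamma$ by declaring, for $n\in Ne_g$ and $r\in R_\delta$, that $n\cdot r := n\cdot(r\cdot e_{g,g+\delta}) \in N e_{g+\delta}$, extended bi-additively to arbitrary $r\in R = \bigoplus_\delta R_\delta$. Compatibility of this action with the ring structure of $R$ again reduces to matrix-unit computations: for $r\in R_\delta$ and $s\in R_\epsilon$, the matrices $r\cdot e_{g,g+\delta}$ and $s\cdot e_{g+\delta,g+\delta+\epsilon}$ multiply in $\hat R$ to $rs\cdot e_{g,g+\delta+\epsilon}$, which is exactly what is needed so that $(nr)s=n(rs)$. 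On morphisms, $G$ restricts an $\hat R$-linear map to the summands $Ne_\gamma$.

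Finally I would exhibit the natural isomorphisms $GF\cong\iden_{\GrMod R}$ and $FG\cong \iden_{\Mod\hat R}$. The first is essentially tautological, because the homogeneous piece $M_\gamma$ is recovered as $F(M)\cdot e_\gamma$ by the defining formula of the $\hat R$-action, and the induced $R$-action on $\bigoplus_\gamma F(M)e_\gamma$ pulls back to the original action of $R$ on $M$. For the second, on underlying abelian groups $FG(N) = \bigoplus_\gamma N e_\gamma = N$, and the assembled $\hat R$-action on the right-hand side agrees with the original one because an arbitrary matrix $(r_{g,h})\in\hat R$ acts on $n = \sum_\gamma n e_\gamma$ via $\sum_{g,h} (n e_g)\cdot(r_{g,h}\cdot e_{g,h})$, which equals $n\cdot\sum_{g,h} r_{g,h}e_{g,h} = n\cdot(r_{g,h})$ by associativity in $N$. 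The only mild obstacle is purely notational bookkeeping around the matrix indexing; the essential ingredient is finiteness of $\Gamma$, used to ensure $\sum_\gamma e_\gamma$ is the unit of $\hat R$, so that the decomposition $N=\bigoplus_\gamma Ne_\gamma$ is exhaustive.
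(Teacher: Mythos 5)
Your proof is correct and follows essentially the same route as the paper: your functor $F$ is exactly the paper's functor $U$ (row vector times matrix action), and your $G$ is precisely the paper's construction of a preimage via the diagonal idempotents $e_\gamma$ in the essential-surjectivity step; you merely package the argument as an explicit quasi-inverse pair rather than as fully faithful plus essentially surjective. The matrix-unit bookkeeping you carry out is exactly what the paper leaves to the reader.
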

\begin{proof}
    Consider $U\colon\GrMod R\rightarrow \Mod\Hat{R}$ which maps a graded $R$-module $M=\oplus_{\gamma\in \Gamma}M_{\gamma}$ to the underlying group of $M$ and define the action by $\Hat{R}$ to be matrix multiplication by viewing elements of $M$ as row vectors $(m_{\gamma})$, i.e $(m_{\gamma})(r_{gh})=(m_{\gamma}')$ where $m_{\gamma}'=\sum_{h\in\Gamma}m_hr_{h \gamma}$. Moreover, a morphism of graded $R$-modules $f\colon\oplus_{\gamma\in\Gamma}M_{\gamma}\rightarrow \oplus_{\gamma\in\Gamma}N_{\gamma}$ is mapped to the underlying group homomorphism which is easily seen to respect the action defined above. We leave it to the reader to check that $f$ is a morphism of graded $R$-modules precisely when $U(f)$ is a morphism of $\Hat{R}$-modules. The latter amounts to the functor $U$ being fully faithful. Let $X$ be an $\Hat{R}$-module. Consider the graded $R$-module $M=\oplus_{\gamma\in\Gamma}M_{\gamma}$ with $M_{\gamma}=Xe_{\gamma}$, where $e_{\gamma}$ is the $\Gamma\times \Gamma$ matrix with $e_{\gamma \gamma}=1$ and $e_{gh}=0$ for all $(g,h)\neq (\gamma,\gamma)$. We leave it to the reader to check that $U(M)$ is isomorphic to $X$. The latter amounts to the functor $U$ being essentially surjective. By the above we conclude that $U$ is an equivalence of categories. 
\end{proof}

The situation so far is summed up to the following 
\begin{center}
\begin{tikzcd}
\Mod R \arrow[rr, "G"', bend right] &  & \GrMod R \arrow[ll, "F"', bend right] \arrow[rr, "U"', bend right] &  & \Mod \Hat{R} \arrow[ll, "V"', bend right]
\end{tikzcd}
\end{center}
where $F$ and $G$ are as in Section~\ref{section:injgengraded} (see subsection~\ref{comparingdef}), $U$ is the equivalence defined in Proposition~\ref{equivalence} and $V$ is agreed to be the inverse equivalence of $U$. 
\begin{cor}
\label{injective_generation_for_covering}
    Let $R$ be a graded ring over a finite abelian group $\Gamma$. Then injectives generate for $\hat{R}$ if and only if injectives generate for $R$. 
\end{cor}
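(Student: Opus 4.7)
The plan is to chain together three facts: the equivalence of abelian categories $U\colon \GrMod R \xrightarrow{\simeq} \Mod\hat R$ from Proposition~\ref{equivalence}, and the two comparison results Proposition~\ref{ungradedimpliesgraded} and Proposition~\ref{finitegrading} which together say that, for a graded ring over a finite abelian group, injective generation and graded injective generation coincide.

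First I would pass the category equivalence $U$ to the derived level. Since $U$ is an exact equivalence of abelian categories, it induces a triangle equivalence $\mathsf{D}(\GrMod R)\simeq \mathsf{D}(\hat R)$ which preserves (arbitrary) coproducts and hence sends localizing subcategories to localizing subcategories. Moreover, any equivalence of abelian categories preserves injective objects, so $U$ identifies $\GrInj R$ with $\Inj \hat R$. Consequently, $\mathsf{Loc}(\GrInj R) = \mathsf{D}(\GrMod R)$ if and only if $\mathsf{Loc}(\Inj \hat R) = \mathsf{D}(\hat R)$; in other words, graded injectives generate for $R$ if and only if injectives generate for $\hat R$.

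Second, since $\Gamma$ is finite, Propositions~\ref{ungradedimpliesgraded} and~\ref{finitegrading} combine to say that injectives generate for $R$ (viewed as an ungraded ring) if and only if graded injectives generate for $R$. Splicing this equivalence with the one from the previous paragraph yields the claim.

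I do not anticipate any real obstacle: the argument is essentially bookkeeping, and all the substantive work has already been done, namely the explicit description of $\hat R$ and the verification that $U$ is an equivalence (Proposition~\ref{equivalence}), together with the two-way comparison between graded and ungraded injective generation established in Section~\ref{section:injgengraded}. The finiteness of $\Gamma$ is used in exactly one place, namely to invoke Proposition~\ref{finitegrading}, which in turn relies on the existence of the forgetful functor $F$ as a right adjoint (Proposition~\ref{adjoint}).
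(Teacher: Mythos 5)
Your proposal is correct and follows exactly the paper's own argument: the paper likewise combines the equivalence of Proposition~\ref{equivalence} with Propositions~\ref{ungradedimpliesgraded} and~\ref{finitegrading}, using finiteness of $\Gamma$ only for the latter. The extra detail you supply on passing the equivalence to the derived level is routine and consistent with what the paper leaves implicit.
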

\begin{proof}
    Since $\Gamma$ is assumed to be finite, injectives generate for $R$ if and only if graded injectives generate for $R$ by Proposition~\ref{ungradedimpliesgraded} and by Proposition \ref{finitegrading}. By the equivalence of Proposition \ref{equivalence}, it follows that graded injectives generate for $R$ if and only if injectives generate for $\Hat{R}$. 
\end{proof}

\subsection{The covering ring as a Morita context ring}
The key-ingredient to a lot of what follows is viewing the covering ring of a graded ring $R$ over a finite group $\Gamma$ as a Morita context ring. This can be done in general for any group $\Gamma$, but for our purposes and for the sake of simplicity, we present the following in the case $\Gamma=\mathbb{Z}/n \mathbb{Z}$. 

\begin{lem}
\label{coveringismorita}
Let $R$ be a graded ring over $\mathbb{Z}/n\mathbb{Z}$. The covering ring $\Hat{R}$ can be viewed as a Morita context ring. 
\end{lem}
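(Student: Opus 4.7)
The plan is to exhibit the covering ring $\hat{R}$ as a block matrix ring, where the blocks are indexed by a partition of $\mathbb{Z}/n\mathbb{Z}$ into two subsets. The Morita context structure will then be forced on us by matrix multiplication, with the associativity axioms of Definition~\ref{defn_of_Morita} coming for free from associativity of matrix multiplication.

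First, I would fix any decomposition $\mathbb{Z}/n\mathbb{Z} = I_1 \sqcup I_2$ into two (nonempty) subsets; the simplest choice is $I_1 = \{0\}$ and $I_2 = \{1, \ldots, n-1\}$, which makes $A$ coincide with the initial subring $R_0$. For a subset $I \subseteq \mathbb{Z}/n\mathbb{Z}$, write $\hat{R}_{I,J}$ for the additive subgroup of $\hat{R}$ consisting of matrices $(r_{gh})$ supported on $I \times J$. The same argument as in Lemma~\ref{coveringiswelldefined} shows that $\hat{R}_{I_1,I_1} \cdot \hat{R}_{I_1,I_1} \subseteq \hat{R}_{I_1,I_1}$ and similarly for $I_2$, so I would set $A := \hat{R}_{I_1,I_1}$ and $B := \hat{R}_{I_2,I_2}$; both are rings (with the diagonal idempotents $e_{I_1} = \sum_{g \in I_1} e_g$ and $e_{I_2} = \sum_{g \in I_2} e_g$ as identities). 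I would then set $N := \hat{R}_{I_1,I_2}$ and $M := \hat{R}_{I_2,I_1}$; matrix multiplication gives $N$ the structure of an $A$-$B$-bimodule and $M$ the structure of a $B$-$A$-bimodule.

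Next I would define $\phi \colon M \otimes_A N \to B$ and $\psi \colon N \otimes_B M \to A$ by restriction of matrix multiplication (i.e.\ $\phi(m \otimes n) = m \cdot n$ and $\psi(n \otimes m) = n \cdot m$, where the product on the right is computed in $\hat{R}$ and automatically lands in the required block). One checks these are well defined on the tensor products because of $A$-balancedness and $B$-balancedness coming from associativity. The axioms $\phi(m \otimes n) m' = m \psi(n \otimes m')$ and $n \phi(m \otimes n') = \psi(n \otimes m) n'$ are then precisely $(mn)m' = m(nm')$ and $n(mn') = (nm)n'$ in $\hat{R}$, both instances of the associativity of matrix multiplication.

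Finally I would verify that the Morita context ring $\Lambda_{(\phi,\psi)} = \begin{psmallmatrix} A & N \\ M & B \end{psmallmatrix}$ constructed from this data is isomorphic to $\hat{R}$ as a ring. The isomorphism is the obvious block decomposition $(r_{gh}) \mapsto (e_{I_1} r e_{I_1}, e_{I_1} r e_{I_2}, e_{I_2} r e_{I_1}, e_{I_2} r e_{I_2})$ with inverse given by summing the four blocks in $\hat{R}$; compatibility with addition is clear, and compatibility with multiplication reduces to checking that the block components of a product $(r_{gh})(s_{gh})$ agree with the matrix multiplication rule in Definition~\ref{defn_of_Morita}, which again is the associativity/distributivity of matrix multiplication in $\hat{R}$. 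There is no real obstacle here; the content of the lemma is just the observation that block decomposition of matrices is a Morita context, but it is worth recording because subsequent sections exploit both the choice $I_1 = \{0\}$ (so $A = R_0$) and the resulting recollements from Section~\ref{moritacontextrings}.
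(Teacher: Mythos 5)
Your proposal is correct and follows essentially the same approach as the paper: both exhibit $\Hat{R}$ as a block matrix ring via a partition of the index set $\mathbb{Z}/n\mathbb{Z}$ into two subsets, with $\phi$ and $\psi$ induced by matrix multiplication and the Morita context axioms following from associativity. The only cosmetic difference is that the paper parametrizes the splits by $k$ (giving the consecutive partitions $\{0,\dots,k\}\sqcup\{k+1,\dots,n-1\}$) and writes out the four blocks explicitly, while you fix the split $\{0\}\sqcup\{1,\dots,n-1\}$ and note that any decomposition works.
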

\begin{proof}
      In this case the covering ring of $R$ is written below 
      \begin{center}
          $\Hat{R}=\begin{pmatrix}
		R_0 & R_1 & \cdots & R_{n-1} \\
		R_{n-1} & R_0 & \cdots & R_{n-2} \\ 
            \vdots & \vdots &  & \vdots \\ 
            R_1 & R_2 & \cdots & R_0
	\end{pmatrix}$
      \end{center}
      For any $k\in\{0,1,\dots,n-2\}$ we consider the following matrices: 
      \begin{center}
          $A=\begin{pmatrix}
		R_0 & R_1 & \cdots & R_{k} \\
		R_{n-1} & R_0 & \cdots & R_{k-1} \\ 
            \vdots & \vdots &  & \vdots \\ 
            R_{n-k} & R_{n-k+1} & \cdots & R_0
	\end{pmatrix}$
         $N=\begin{pmatrix}
		R_{k+1} & R_{k+2} & \cdots & R_{n-1} \\
		R_{k} & R_{k+1} & \cdots & R_{n-2} \\ 
            \vdots & \vdots &  & \vdots \\ 
            R_1 & R_2 & \cdots & R_{n-k-1}
	\end{pmatrix}$
      \end{center}
      \begin{center}
          $M=\begin{pmatrix}
		R_{n-k-1} & R_{n-k} & \cdots & R_{n-1} \\
		R_{n-k-2} & R_{n-k-1} & \cdots & R_{n-2} \\ 
            \vdots & \vdots &  & \vdots \\ 
            R_1 & R_2 & \cdots & R_{k+1}
	\end{pmatrix}$
         $B=\begin{pmatrix}
		R_0 & R_1 & \cdots & R_{n-k-2} \\
		R_{n-1} & R_0 & \cdots & R_{n-k-3} \\ 
            \vdots & \vdots &  & \vdots \\ 
            R_{k+2} & R_{k+3} & \cdots & R_0
	\end{pmatrix}$
      \end{center}
      Then an easy check shows that $A$ and $B$ are rings, $N$ is an $A$-$B$-bimodule and $M$ is a $B$-$A$-bimodule. Moreover, $\phi\colon M\otimes_AN\rightarrow B$ induced by matrix multiplication of an element of $M$ with an element of $N$ is a $B$-$B$-bimodule homomorphism. Similarly, define $\psi\colon N\otimes_BM\rightarrow A$ induced by matrix multiplicaion of an element of $N$ with an element of $M$ which is an $A$-$A$-bimodule homomorphism. The Morita context ring 
      $\big(\begin{smallmatrix}
  A & N\\
  M & B
\end{smallmatrix}\big)_{(\phi,\psi)}$ is isomorphic to $\Hat{R}$. 
\end{proof}

Let us look in detail the following example. 
\begin{exmp}
\label{basic_example}
    Let $R=\oplus_{n\in\mathbb{Z}/4\mathbb{Z}}R_n$ be a graded ring over $\mathbb{Z}/4\mathbb{Z}$. Consider the covering ring of $R$:
    \begin{center}
        $\Hat{R}=\begin{pmatrix}
		R_0 & R_1 & R_2 & R_3 \\
		R_3 & R_0 & R_1 & R_2 \\ 
            R_2 & R_3 & R_0 & R_1 \\ 
            R_1 & R_2 & R_3 & R_0
	\end{pmatrix}$
    \end{center}
    In view of Lemma \ref{coveringismorita}, we may view $\Hat{R}$ as a Morita context ring in the following 3 ways: 
    \begin{center}
$\begin{pmatrix}
  \begin{matrix}
  R_0
  \end{matrix}
  & \vline &  \begin{matrix}
  R_1 & R_2 & R_3
  \end{matrix}\\
    \hline
  \begin{matrix}
  R_3 \\ 
  R_2 \\
  R_1 
  \end{matrix} & \vline &
  \begin{matrix}
  R_0 & R_1 & R_2 \\
  R_3 & R_0 & R_1 \\ 
  R_2 & R_3 & R_0
  \end{matrix}
\end{pmatrix}
\begin{pmatrix}
  \begin{matrix}
  R_0 & R_1 \\ 
  R_3 & R_0 
  \end{matrix}
  & \vline &  \begin{matrix}
  R_2 & R_3 \\  
  R_1 & R_2 
  \end{matrix}\\
    \hline
  \begin{matrix}
  R_2 & R_3 \\ 
  R_1 & R_2 
  \end{matrix} & \vline &
  \begin{matrix}
  R_0 & R_1 \\ 
  R_3 & R_0
  \end{matrix}
\end{pmatrix}
\begin{pmatrix}

   \begin{matrix}
  R_0 & R_1 & R_2 \\
  R_3 & R_0 & R_1 \\ 
  R_2 & R_3 & R_0
  \end{matrix}
  & \vline &  \begin{matrix}
  R_3 \\
  R_2 \\ 
  R_1 
  \end{matrix}\\
    \hline
  \begin{matrix}
  R_1 & R_2 & R_3
  \end{matrix} & \vline &
  \begin{matrix}
  R_0 
  \end{matrix}
\end{pmatrix}
$
\end{center}
In any case, $A$ is the top left block, $M$ is the bottom left block, $N$ is the top right block and $B$ is the bottom right block. 
\end{exmp}

From now on, we will be particularly interested in graded rings over $\mathbb{Z}/2^n\mathbb{Z}$ with $R_i=0$ for $i\in \{2^{n-1}, \ldots, 2^n-1\}$. We will view its covering ring $\Hat{R}$ as a Morita context ring obtained by "splitting" it in half, i.e. $\Hat{R}= \big(\begin{smallmatrix}
  A & M\\
  M & A
\end{smallmatrix}\big)$ where $A$ and $M$ are given by $2^{n-1}\times 2^{n-1}$ matrices as below 
\[
   A=\begin{pmatrix}
		R_0 & R_{1} & \cdots & R_{2^{n-1}-1}\\
		R_{2^n-1}=0 & R_0 & \cdots & R_{2^{n-1}-2} \\ 
		\vdots & \vdots & & \vdots \\ 
		R_{2^{n-1}+1}=0 & R_{2^{n-1}+2}=0 & \cdots & R_0
	\end{pmatrix}
\]
and 
\[
        M=\begin{pmatrix}
		R_{2^{n-1}}=0 & R_{2^{n-1}+1}=0 & \cdots & R_{2^{n}-1}=0\\
		R_{2^{n-1}-1} & R_{2^{n-1}}=0 & \cdots & R_{2^{n}-2}=0 \\ 
		\vdots & \vdots & & \vdots \\ 
		R_{1} & R_{2} & \cdots & R_{2^{n-1}}=0
	\end{pmatrix}
\]
It is evident that $A$ is triangular. Moreover, the latter is a Morita context ring with zero bimodule homomorphisms, as shown in the following lemma. 

\begin{lem}
\label{lem:coveringzerozero}
Let $R=\oplus_{i\in\mathbb{Z}/2^n\mathbb{Z}}R_i$ be a graded ring over $\mathbb{Z}/2^n\mathbb{Z}$ for some $n\geq 1$. Assume that $R_i=0$ for all $i\in\{2^{n-1},\dots,2^n-1\}$. Then the covering ring $\Hat{R}$ is a Morita context ring with zero bimodule homomorphisms.
\end{lem}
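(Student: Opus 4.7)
The plan is a direct degree computation in the covering ring. Recall that the $(g,h)$-entry of $\Hat{R}$ is constrained to lie in $R_{h-g \bmod 2^n}$. I would begin by partitioning the index set $\{0,1,\ldots,2^n-1\}$ into the lower half $L=\{0,\ldots,2^{n-1}-1\}$ and the upper half $U=\{2^{n-1},\ldots,2^n-1\}$. Following the splitting given just before the lemma, the Morita context ring structure on $\Hat R$ has $A$ and $B$ corresponding to block positions $L\times L$ and $U\times U$ respectively, while $N$ sits in $L\times U$ and $M$ in $U\times L$. The hypothesis $R_i=0$ for $i\in U$ means that only homogeneous components indexed by $L$ can contribute anything nonzero.

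The bimodule homomorphisms $\phi\colon M\otimes_A N\to B$ and $\psi\colon N\otimes_B M\to A$ are both induced by ordinary matrix multiplication, so it suffices to show that for every $m\in M$ and $n\in N$ the matrix products $mn$ and $nm$ are identically zero. I would focus on $\phi$ since the argument for $\psi$ is completely symmetric.

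For $i,j\in U$, the relevant entry is $(mn)_{ij}=\sum_{k\in L}m_{ik}n_{kj}$. Here $m_{ik}$ lies in $R_a$ with $a=k-i+2^n$ (since $k<i$), and $n_{kj}$ lies in $R_b$ with $b=j-k$ (since $j>k$). For either factor to be possibly nonzero we need $a,b\in L$, i.e.\ $a,b\in\{1,\ldots,2^{n-1}-1\}$. Then $m_{ik}n_{kj}\in R_aR_b\subseteq R_{a+b}$ with $a+b=2^n+(j-i)$. The bound $a,b\leq 2^{n-1}-1$ gives $a+b\leq 2^n-2$, while the bound $j-i\geq -(2^{n-1}-1)$ (which holds since $i,j\in U$) gives $a+b\geq 2^{n-1}+1$; hence $a+b\in\{2^{n-1}+1,\ldots,2^n-2\}\subseteq U$, which forces $R_{a+b}=0$ and therefore $m_{ik}n_{kj}=0$.

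No serious obstacle is expected — the argument is really just careful bookkeeping modulo $2^n$. The only things to watch for are the reduction modulo $2^n$ when reading off the degree of an off-diagonal entry of $\Hat R$ (where the exponent can legitimately be negative), and making sure each block of the Morita context is consistently associated to the correct half $L$ or $U$ of the index set.
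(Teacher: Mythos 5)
Your proof is correct and follows essentially the same route as the paper: identify the degree of each entry of the off-diagonal blocks via the rule $r_{gh}\in R_{h-g}$, observe that a product of two possibly nonzero entries lands in $R_{2^n+(j-i)}$ with $2^{n-1}+1\leq 2^n+(j-i)\leq 2^n-2$, and conclude that this component vanishes by hypothesis. If anything, your version is slightly more careful than the paper's, since you first isolate the terms where both factors can be nonzero before bounding the total degree, which avoids any ambiguity about reduction modulo $2^n$.
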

\begin{proof}
    Consider the covering ring $\Hat{R}$ which we view as a Morita context ring $\big(\begin{smallmatrix}
  A & M\\
  M & A
\end{smallmatrix}\big)$. Let $M_a$ denote the $a$-th row of $M$, with $0\leq a\leq 2^{n-1}-1$ and $M_b$ denote the $b$-th column of $M$, with $0\leq b\leq 2^{n-1}-1$. Pictorially the situation is as below 
\[
M_a=\overbrace{\begin{pmatrix}
    R_{2^{n-1}-a} & R_{2^{n-1}-a+1} & \cdots & R_{2^{n-1}-1} & R_{2^{n-1}}=0 & \cdots & R_{2^n-a-1}=0
\end{pmatrix}}^{\text{$2^{n-1}$ entries}}
\]
\[
\left. 
M_b=\begin{pmatrix} R_{2^{n-1}+b}=0 \\ \vdots \\ R_{2^{n-1}}=0 \\ R_{2^{n-1}-1} \\ \vdots \\ R_{b+2} \\ R_{b+1} \end{pmatrix} 
\right\} {\text{$2^{n-1}$ entries}}
\]
Multiplying an element of $M_a$ with an element of $M_b$ gives an element of $R_{2^{n}-a+b}$ which is 0 since $2^{n}-a+b\geq 2^{n-1}$. 
\end{proof}

In the following proposition we collect some important properties of Morita context rings that arise from graded rings over $\mathbb{Z}/2^n\mathbb{Z}$ (compare statements (iii) and (iv) below with \cite[Corollary 6.6]{minamoto}).

\begin{prop}
\label{reduction}
Let $R=\oplus_{i\in\mathbb{Z}/2^n\mathbb{Z}}R_i$ be a graded ring over $\mathbb{Z}/2^n\mathbb{Z}$ such that $R_i=0$ for all $i\in\{2^{n-1},\dots,2^n-1\}$. Consider the covering ring $\Hat{R}$ with respect to the given grading and view it as a Morita context ring $\big(\begin{smallmatrix}
  A & M\\
  M & A
\end{smallmatrix}\big)_{(0,0)}$. The following statements hold: 
\begin{enumerate}
\item[\textnormal{(i)}] If injectives generate for $R_0$, then injectives generate for $A$.
\item[\textnormal{(ii)}] If injectives generate for $A$, then injectives generate for $R_0$.
\item[\textnormal{(iii)}] If $\pd_{R_0}R_i<\infty$ for all $i$, then $\pd_AM<\infty$. 
\item[\textnormal{(iv)}] If $\pd{R_i}_{R_0}<\infty$ for all $i$, then $\pd M_A<\infty$. 
\item[\textnormal{(v)}] If $R_i$ is a nilpotent $R_0$-bimodule for every $i$, then the $A$-bimodule $M$ is nilpotent. 
\end{enumerate}
\end{prop}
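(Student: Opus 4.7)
The plan is to combine the explicit block structure of the covering ring $\hat{R}$ from Lemma~\ref{lem:coveringzerozero} with the triangular matrix techniques of Lemma~\ref{projective_dimension_over_triangular_matrix_rings} and Corollary~\ref{injective_generation_for_triangular_matrix_rings}. Throughout, let $e_0,\dots,e_{2^{n-1}-1}$ denote the diagonal idempotents of $A$, so that $e_gAe_h=R_{h-g}$ for $g\leq h$ (and $0$ for $g>h$), while $e_gMe_h=R_{2^{n-1}+h-g}$, which is nonzero only for $g>h$.

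For statements (i) and (ii), the key observation is that $A$ is itself an upper triangular matrix ring of size $2^{n-1}$ with $R_0$ on the diagonal. Writing $A=\bigl(\begin{smallmatrix} A' & N \\ 0 & R_0 \end{smallmatrix}\bigr)$, where $A'$ is the upper-left $(2^{n-1}-1)\times(2^{n-1}-1)$ block (itself of the same type), statement (i) follows by induction on the size of $A$ using Corollary~\ref{injective_generation_for_triangular_matrix_rings}(i): the base case $A=R_0$ is the hypothesis, and the inductive step combines the inductive hypothesis for $A'$ with injective generation for $R_0$. Statement (ii) is a single application of Corollary~\ref{injective_generation_for_triangular_matrix_rings}(ii) to the same decomposition, which propagates injective generation from $A$ to the bottom-right corner $R_0$.

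For (iii), I decompose the left $A$-module $M$ as the direct sum of its columns, $M=\bigoplus_{b}Me_b$. The $a$-th component of $Me_b$ is $M_{ab}=R_{2^{n-1}-a+b}$, which is either zero or of the form $R_i$ for some $i\in\{1,\dots,2^{n-1}-1\}$, and by hypothesis each such $R_i$ has finite projective dimension over $R_0$. Writing $A=\bigl(\begin{smallmatrix} R_0 & N \\ 0 & A' \end{smallmatrix}\bigr)$, the left $R_0$-module $N=R_1\oplus\cdots\oplus R_{2^{n-1}-1}$ has finite projective dimension, and Lemma~\ref{projective_dimension_over_triangular_matrix_rings}(i) reduces $\pd_A Me_b<\infty$ to the finiteness of the projective dimension of the top $R_0$-component and of the remainder over the smaller triangular ring $A'$. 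Iterating down to $A=R_0$ gives $\pd_A Me_b<\infty$ for each $b$, hence $\pd_A M<\infty$. Statement (iv) is the symmetric argument, decomposing $M$ into its rows as a right $A$-module and invoking Lemma~\ref{projective_dimension_over_triangular_matrix_rings}(ii).

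For (v), the Peirce decomposition with respect to the $e_g$'s yields, for any $A$-bimodules $M_1,M_2$, an identification $e_g(M_1\otimes_A M_2)e_h=\bigoplus_k (e_gM_1e_k)\otimes_{R_0}(e_kM_2e_h)$ (the cross-relations coming from off-diagonal elements of $A$ act trivially on mismatched slots by the orthogonality $e_ke_l=\delta_{kl}e_k$). Iterating, $e_g(M^{\otimes_A s})e_h$ becomes a direct sum indexed by tuples $(k_1,\dots,k_{s-1})$; by the strict triangularity of $M$, a summand can be nonzero only when $g>k_1>\cdots>k_{s-1}>h$, which requires a strictly decreasing chain of $s+1$ elements in $\{0,\dots,2^{n-1}-1\}$. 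Since no such chain exists for $s\geq 2^{n-1}$, we obtain $M^{\otimes_A s}=0$ and hence the nilpotency of $M$. The main step that needs care is the Peirce decomposition of the tensor product, but once it is justified the triangular-chain argument closes the proof at once.
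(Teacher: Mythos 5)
Your proof is correct. For parts (i)--(ii) it coincides with the paper's argument (iterated application of Corollary~\ref{injective_generation_for_triangular_matrix_rings} to the upper triangular ring $A$). For (iii)--(iv) you take a genuinely different route: you peel off one diagonal corner of $A$ at a time and apply Lemma~\ref{projective_dimension_over_triangular_matrix_rings} column by column to $Me_b$, whereas the paper runs an iterated ``halving'' of the grading group, passing through $\mathbb{Z}/2^{n-k}\mathbb{Z}$-graded rings whose homogeneous components are $2^k\times 2^k$ blocks and invoking the same lemma at each stage. Both arguments rest on Lemma~\ref{projective_dimension_over_triangular_matrix_rings}; yours is more direct and does not use that the group has order a power of $2$, while the paper's halving scheme is the one it reuses later in Proposition~\ref{lifting_perfect_2}, which is presumably why it is set up that way here. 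For (v) the paper instead uses the diagonal ring homomorphism $R_0\to A$ and the induced surjections $M^{\otimes_{R_0}k}\twoheadrightarrow M^{\otimes_A k}$, so it genuinely invokes the nilpotency of the $R_i$; your positional argument proves the stronger statement that $M^{\otimes_A s}=0$ for $s\geq 2^{n-1}$ with no hypothesis on the $R_i$ at all, purely from the strict lower-triangularity of $M$.

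One imprecision in (v): the claimed Peirce identity $e_g(M_1\otimes_AM_2)e_h=\bigoplus_k(e_gM_1e_k)\otimes_{R_0}(e_kM_2e_h)$ is not an isomorphism in general. The relations $m_1a\otimes m_2=m_1\otimes am_2$ for $a\in e_kAe_l$ with $k\neq l$ do not vanish; they identify elements lying in different summands of the right-hand side. What is true is that the right-hand side surjects onto the left-hand side, equivalently that $M^{\otimes_{A_0}s}\twoheadrightarrow M^{\otimes_A s}$ for the diagonal subring $A_0=\bigoplus_ke_kAe_k\cong R_0^{2^{n-1}}$. Since you only need vanishing, this surjection is all that is required and your chain argument closes the proof unchanged; just replace ``$=$'' by ``is a quotient of''.
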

\begin{proof} 
(i) and (ii) follow from a successive application of Corollary \ref{injective_generation_for_triangular_matrix_rings}. 
 
(iii) We begin with a simple observation: let $S$ be any graded ring over $\mathbb{Z}/2^n\mathbb{Z}$. Then, the following is a graded ring over $\mathbb{Z}/2^{n-1}\mathbb{Z}$ 

\begin{center}
$\begin{pmatrix}   S_0 & S_1\\   S_{2^n-1} & S_0 \end{pmatrix}\oplus \begin{pmatrix}   S_2 & S_3\\   S_1 & S_2 \end{pmatrix}\oplus \begin{pmatrix}   S_4 & S_5\\   S_3 & S_4 \end{pmatrix}\oplus\cdots\oplus \begin{pmatrix}   S_{2^n-2} & S_{2^n-1}\\   S_{2^n-3} & S_{2^n-2} \end{pmatrix}$
\end{center}
Denote the above graded ring by $\Lambda$. If $\pd_{S_0}S_i<\infty$ for all $i$ and $S_{2^n-1}=0$, it follows by Lemma \ref{projective_dimension_over_triangular_matrix_rings} that $\pd_{\Lambda_0}\Lambda_i<\infty$ for all $i$. Starting with $R$, we may apply the above successivly and end up with $A\oplus M$, as shown below.

\begin{center}
\scalebox{1}{
\begin{tikzcd}
R_0\oplus R_1\oplus R_2\oplus \cdots \oplus R_{2^n-1} \arrow[d, "\text{induced }\mathbb{Z}/2^{n-1}\mathbb{Z}-\text{graded ring}"]                                                                                                                                                                                                                                                                                                                                                                                                                                                                                                                                \\
\begin{pmatrix}   R_0 & R_1\\   R_{2^n-1} & R_0 \end{pmatrix}\oplus \begin{pmatrix}   R_2 & R_3\\   R_1 & R_2 \end{pmatrix}\oplus \begin{pmatrix}   R_4 & R_5\\   R_3 & R_4 \end{pmatrix}\oplus\cdots\oplus \begin{pmatrix}   R_{2^n-2} & R_{2^n-1}\\   R_{2^n-3} & R_{2^n-2} \end{pmatrix} \arrow[d, "\text{induced }\mathbb{Z}/2^{n-2}\mathbb{Z}-\text{graded ring}"]                                                                                                                                                                                                                                                                                          \\
\begin{pmatrix}   R_0 & R_1 & R_2 & R_3\\   R_{2^n-1} & R_0 & R_2 & R_2 \\    R_{2^n-2} & R_{2^n-1} & R_0 & R_1 \\    R_{2^n-3} & R_{2^n-2} & R_{2^n-1} & R_0 \end{pmatrix}\oplus \cdots \oplus   \begin{pmatrix}   R_{2^n-4} & R_{2^n-3} & R_{2^n-2} & R_{2^n-1}\\   R_{2^n-5} & R_{2^n-4} & R_{2^n-3} & R_{2^n-2} \\    R_{2^n-6} & R_{2^n-5} & R_{2^n-4} & R_{2^n-3} \\    R_{2^n-7} & R_{2^n-6} & R_{2^n-5} & R_{2^n-4}  \end{pmatrix} \arrow[d, "\text{induced }\mathbb{Z}/2^{n-3}\mathbb{Z}-\text{graded ring}"] \\
\vdots \arrow[d, "\text{induced }\mathbb{Z}/2\mathbb{Z}-\text{graded ring}"]                                                                                                                                                                                                                                                                                                                                                                                                                                                                                                                                                                                     \\
\begin{pmatrix} 		R_0 & R_{1} & \cdots & R_{2^{n-1}-1}\\ 		R_{2^n-1} & R_0 & \cdots & R_{2^{n-1}-2} \\  		\vdots & \vdots & & \vdots \\  		R_{2^{n-1}+1} & R_{2^{n-1}+2} & \cdots & R_0 	\end{pmatrix}\oplus \begin{pmatrix} 		R_{2^{n-1}} & R_{2^{n-1}+1} & \cdots & R_{2^{n}-1}\\ 		R_{2^{n-1}-1} & R_{2^{n-1}} & \cdots & R_{2^{n}-2} \\  		\vdots & \vdots & & \vdots \\  		R_{1} & R_{2} & \cdots & R_{2^{n-1}} 	\end{pmatrix}                                                                                                                                                                                                                             
\end{tikzcd}}                                                               
\end{center}
Furthermore, by the assumption that $R_i=0$ for all $i\in\{2^{n-1},\cdots, 2^n-1\}$, it follows that at each step the graded components will have finite projective dimension over the initial subring. In particular, for the graded ring $A\oplus M$ appearing in the last step above, we infer that $\pd_AM<\infty$. (iv) This is similar to (iii). 

(v) Consider the ring homomorphism $R_0\rightarrow A$ given by $r\mapsto r\cdot 1_{2^{n-1}}$ where $1_{2^{n-1}}$ is the identity $(2^{n-1}\times2^{n-1})$ matrix. This gives rise to a surjective map $M\otimes_{R_0}M\rightarrow M\otimes_AM$ and inductively, a surjective map $M^{\otimes_{R_0}k}\rightarrow M^{\otimes_{A}k}$ for all $k\geq 2$. The $R_0$-bimodule $M$ is a direct sum of $R_i$'s. Therefore, by assumption, for $k$ large enough we have that $M^{\otimes_{R_0}k}=0$, thus also $M^{\otimes_{A}k}=0$. 
\end{proof}

\subsection{Strongly graded rings} 
We end this section with an application on strongly graded rings via the Morita context approach of coverings rings.

 \begin{defn}
    Let $R$ be a graded ring over an abelian group $\Gamma$. We say that $R$ is strongly graded if $R_{\gamma}\cdot R_{\gamma'}=R_{\gamma+\gamma'}$ for all $\gamma,\gamma'\in\Gamma$. 
\end{defn}
Assume now that $R$ is strongly graded over a finite group $\Gamma$ and consider the covering ring $\Hat{R}$. View $\Hat{R}$ as a Morita context ring $\big(\begin{smallmatrix}
  A & N\\
  M & B
\end{smallmatrix}\big)_{(\phi,\psi)}$ with $A=R_0$. 
We derive the following result which is due to Dade (compare it also with \cite[Theorem~2.12]{smash}).
 
\begin{prop} \textnormal{(\!\!\cite{dade})}
A graded ring $R$ over a finite group $\Gamma$ is strongly graded if and only if $\Mod \Hat{R}\simeq \Mod R_0$. 
\end{prop}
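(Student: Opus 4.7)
The plan is to realize $R_0$ as a corner ring of $\Hat{R}$ at a canonical idempotent, and then to translate the strongly graded condition into a fullness condition on that idempotent. I would let $e_\gamma \in \Hat{R}$ denote the $\Gamma\times\Gamma$ matrix having $1$ in the $(\gamma,\gamma)$ entry and $0$ elsewhere. These form a complete orthogonal set of idempotents with $\sum_{\gamma\in\Gamma} e_\gamma = 1_{\Hat{R}}$, and a direct inspection of the matrix structure gives $e_g\Hat{R}e_h = R_{h-g}$; in particular $e_0\Hat{R}e_0 \cong R_0$. The strategy is then to apply the classical Morita theorem for idempotents: the corner functor $\Mod\Hat{R}\to \Mod(e_0\Hat{R}e_0) = \Mod R_0$, $M \mapsto Me_0$, is an equivalence if and only if $e_0$ is a full idempotent, i.e.\ $\Hat{R}e_0\Hat{R}=\Hat{R}$.

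Next, I would carry out the key entry-wise computation translating the fullness of $e_0$ into the strongly graded condition. An element of $\Hat{R}e_0$ is supported in the $0$-th column with $(g,0)$-entry in $R_{-g}$, and an element of $e_0\Hat{R}$ is supported in the $0$-th row with $(0,h)$-entry in $R_h$; multiplying, the $(g,h)$-entry of any element of the two-sided ideal $\Hat{R}e_0\Hat{R}$ lies in (and ranges over) the subgroup $R_{-g}\cdot R_h$ of $R_{h-g}$. Hence $\Hat{R}e_0\Hat{R}=\Hat{R}$ if and only if $R_{-g}R_h = R_{h-g}$ for all $g,h\in\Gamma$, which, after substituting $\alpha=-g$ and $\beta=h$, reads exactly as $R_\alpha R_\beta = R_{\alpha+\beta}$ for all $\alpha,\beta\in\Gamma$, i.e.\ that $R$ is strongly graded.

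The forward implication, that strongly graded implies $\Mod\Hat{R}\simeq\Mod R_0$, follows immediately by combining the two preceding steps. The main obstacle is the converse, because the mere existence of an equivalence $\Mod\Hat{R}\simeq\Mod R_0$ does not a priori guarantee that it is the corner equivalence attached to $e_0$: it could in principle be realized by some other progenerator of $\Mod\Hat{R}$. The cleanest route I would take is to compose the assumed $\Mod\Hat{R}\simeq\Mod R_0$ with the equivalence $\GrMod R\simeq \Mod\Hat{R}$ from Proposition~\ref{equivalence} to obtain $\GrMod R\simeq\Mod R_0$, and then to invoke Dade's classical theorem, which asserts that the existence of such an equivalence (realized by the degree-zero functor $M\mapsto M_0$) forces $R$ to be strongly graded. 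A more self-contained alternative would be to exploit the decomposition $1 = \sum_{\gamma\in\Gamma} e_\gamma$ of $\Hat{R}$ and run a Morita-theoretic uniqueness argument: since $\End_{\Hat{R}}(e_0\Hat{R})\cong R_0$ already, comparing $e_0\Hat{R}$ with a progenerator realising the assumed equivalence forces $e_0\Hat{R}$ itself to be a progenerator, hence $\Hat{R}e_0\Hat{R}=\Hat{R}$.
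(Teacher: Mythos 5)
Your forward direction is correct and takes a genuinely different route from the paper. The paper presents $\Hat{R}$ as a Morita context ring $\big(\begin{smallmatrix} R_0 & N\\ M & B \end{smallmatrix}\big)_{(\phi,\psi)}$ and reads off the recollement $(\Mod B/\mathsf{Im}\phi,\ \Mod\Hat{R},\ \Mod R_0)$ from Section~3.1, so that the outer term vanishes, and the right-hand functor is an equivalence, precisely when $\phi$ is surjective; it then observes that surjectivity of $\phi$ is the strongly graded condition. You instead use the corner idempotent $e_0$ and the classical Morita theorem for full idempotents, reducing to the computation $\Hat{R}e_0\Hat{R}=\Hat{R}\iff R_{-g}R_h=R_{h-g}$ for all $g,h$. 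The entry-wise computation is the same in both proofs (your fullness of $e_0$ is the paper's surjectivity of $\phi$, up to bookkeeping), but your version is more elementary and self-contained, since it does not need the recollement machinery.

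For the converse you correctly identify the real subtlety — an abstract equivalence $\Mod\Hat{R}\simeq\Mod R_0$ need not be the corner equivalence at $e_0$ — and it is worth noting that the paper's own proof silently skips exactly this point (an abstract equivalence $\mathcal{B}\simeq\mathcal{C}$ does not force the left term of a recollement $(\mathcal{A},\mathcal{B},\mathcal{C})$ to vanish). Neither of your two proposed repairs closes the gap, however. Route (1) is essentially circular, since the proposition \emph{is} Dade's theorem, and moreover the composite $\GrMod R\simeq\Mod R_0$ you obtain is an arbitrary equivalence, whereas Dade's theorem concerns the degree-zero functor $M\mapsto M_0$. Route (2) is not a valid argument: a projective module with the correct endomorphism ring need not be a progenerator, and in fact the literal ``abstract equivalence'' converse is false in general — take $R=R_0$ trivially graded over $\mathbb{Z}/2\mathbb{Z}$ with $R_0$ a ring satisfying $R_0\cong R_0\times R_0$ (e.g.\ a countably infinite product of fields); then $\Hat{R}\cong R_0\times R_0$ gives $\Mod\Hat{R}\simeq\Mod R_0$ although $R_1R_1=0\neq R_0$. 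The statement should therefore be read as asserting that the \emph{canonical} functor (the corner functor $M\mapsto Me_0$, equivalently the degree-zero functor) is an equivalence, and under that reading your fullness computation already finishes both directions with no further argument.
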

\begin{proof}
From the Morita context ring $\Hat{R}\cong \big(\begin{smallmatrix}
  A & N\\
  M & B
\end{smallmatrix}\big)_{(\phi,\psi)}$ with $A=R_0$, there is a recollement of module categories (see subsection~3.1):
    \begin{center}
       \begin{tikzcd}
\Mod B/\mathsf{Im\phi} \arrow[rr] &  & \Mod\Hat{R} \arrow[rr] \arrow[ll, bend right] \arrow[ll, bend left] &  & \Mod R_0 \arrow[ll, bend left] \arrow[ll, bend right]
\end{tikzcd}
\end{center}
Consequently, $\Mod\Hat{R}\simeq \Mod R_0$ if and only if $\phi$ is surjective. We observe that the latter is true precisely when $R$ is strongly graded. Indeed, the covering ring $\Hat{R}$ of $R$ is given as below 
\begin{center}
    $\Hat{R}=\begin{pmatrix}
  \begin{matrix}
  R_0
  \end{matrix}
  & \vline &  \begin{matrix}
  R_{\gamma_1} & R_{\gamma_2} & \cdots & R_{\gamma_n}
  \end{matrix}\\
    \hline
  \begin{matrix}
  R_{\gamma_n} \\ 
  R_{\gamma_{n-1}} \\
  \vdots \\
  R_{\gamma_1} 
  \end{matrix} & \vline &
  \begin{matrix}
  R_0 & R_{\gamma_1} & \cdots & R_{\gamma_{n-1}} \\
  R_{\gamma_n} & R_0 & \cdots & R_{\gamma_{n-2}} \\ 
  \vdots & \vdots &  & \vdots \\ 
  R_{\gamma_2} & R_{\gamma_3} & \cdots & R_0
  \end{matrix}
\end{pmatrix}$
\end{center}
for some indexing of the group $\Gamma$. The homomorphism $\phi\colon M\otimes_{R_0}N\rightarrow B$ is induced by the multiplication of the ring $R$ and, by above, it is surjective if and only if $R$ is strongly graded.
\end{proof}

By the above we immediately get the following result. 
 
\begin{cor} \label{strongly_graded}
Let $R$ be a strongly graded ring over a finite abelian group. Then injectives generate for $R$ if and only if injectives generate for $R_0$. 
\end{cor}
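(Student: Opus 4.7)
The plan is to combine the immediately preceding proposition (the Dade-type equivalence) with the earlier Corollary~\ref{injective_generation_for_covering}, obtaining the result essentially for free.

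First I would invoke the preceding proposition: since $R$ is strongly graded over the finite abelian group $\Gamma$, the covering ring satisfies $\Mod \Hat{R} \simeq \Mod R_0$. Any equivalence of module categories sends injectives to injectives and preserves coproducts and the triangulated structure of the derived categories, so it descends to a triangle equivalence $\mathsf{D}(\Hat{R}) \simeq \mathsf{D}(R_0)$ that identifies $\Inj \Hat{R}$ with $\Inj R_0$. Consequently injectives generate for $\Hat{R}$ if and only if injectives generate for $R_0$.

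Next I would appeal to Corollary~\ref{injective_generation_for_covering}, which states that for any graded ring over a finite abelian group, injectives generate for $R$ if and only if injectives generate for $\Hat{R}$. Chaining the two equivalences yields injectives generate for $R$ if and only if injectives generate for $R_0$, which is the claim.

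There is no real obstacle here: the entire content is packaged into the two results just cited, and the proof amounts to a one-line combination. The only thing to be a little careful about is the observation that the Morita-type equivalence $\Mod \Hat{R}\simeq \Mod R_0$ coming from the preceding proposition is induced by the recollement structure and therefore genuinely preserves injectives (and generation), but this is immediate from the fact that any equivalence of abelian categories preserves injective objects.
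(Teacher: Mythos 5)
Your proposal is correct and matches the paper's argument: the paper also obtains the corollary "immediately" by combining the Dade-type equivalence $\Mod \Hat{R}\simeq \Mod R_0$ for strongly graded rings with Corollary~\ref{injective_generation_for_covering}, using that an equivalence of module categories preserves injective generation.
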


Note that the one implication of the above Corollary is \cite[Example 5.4]{cummings}.

\section{Perfect bimodules and tensor rings}
\label{section:perfectbimod}

In this section we prove Theorem~B as stated in the Introduction. This section is divided into three subsections. In the first one we recall the notion of perfect bimodules and study Morita context rings with such bimodules. In the second subsection we study tensor products of modules over Morita context rings and in the last subsection we prove the main result for tensor rings using also the covering theory developed in Section~\ref{section:covering}.

\subsection{Perfect Bimodules} 
We start with the notion of a perfect bimodule. This notion has been used by Chen and Lu \cite{perfect} in order to study Gorenstein projective modules for tensor algebras. Similar conditions have been studied before, see for instance \cite{trivial} and \cite{palmer}. For the purpose of studying injective generation on tensor rings, we actually need something less strong which we define below. 

\begin{defn}
\textnormal{(\!\!\cite[Definition 4.4]{perfect})}
\label{perfect}
Let $R$ be a ring. For a bimodule $_RM_R$ consider the following conditions: 
\begin{itemize}
\item[(i)] $\pd_RM<\infty$, 

\item[(ii)] $\pd M_R<\infty$, and
 
\item[(iii)] $\Tor_i^R(M,M^{\otimes j})=0$ for all $i,j\geq 1$.
\end{itemize}
If $M$ satisfies (i) and (iii), then it is called \emph{left perfect}. If $M$ is left perfect and satisfies (ii), then it is called \emph{perfect}.  
\end{defn}

In the following lemma we collect some useful properties of perfect bimodules.

\begin{lem}\textnormal{(\!\!\cite[Corollary 4.3, Lemma 4.5]{perfect})} 
\label{basic_properties_of_perfect} 
    Let $R$ be a ring and $M$ an $R$-bimodule. The following are equivalent: 
    \begin{itemize}
        \item[(i)] $\mathsf{Tor}_i^R(M,M^{\otimes j})=0$ for all $i,j\geq 1$. 
        \item[(ii)] $\mathsf{Tor}_i^R(M^{\otimes j},M)=0$ for all $i,j\geq 1$. 
        \item[(iii)] $\mathsf{Tor}_i^R(M^{\otimes s},M^{\otimes j})=0$ for all $i,s,j\geq 1$. 
    \end{itemize}
    Moreover, if any of the above conditions holds, then $\pd_RM^{\otimes i}\leq i\pd_RM$ and $\pd M^{\otimes i}_R\leq i\pd M_R$ for all $i\geq 1$. 
\end{lem}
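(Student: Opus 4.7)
My plan is to treat the equivalence of (i), (ii), (iii) and the projective-dimension bound separately.

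For the equivalence, the implications (iii)$\Rightarrow$(i) and (iii)$\Rightarrow$(ii) are immediate specializations to $s=1$ and $j=1$. By passing to the opposite ring $R^{\op}$, condition (ii) over $R$ becomes condition (i) over $R^{\op}$ (and (iii) is self-dual), so it suffices to prove (i)$\Rightarrow$(iii). I would argue this by induction on $s\geq 1$, the base case $s=1$ being (i) itself. For the inductive step, the key tool is the associativity of the derived tensor product. Since (i) provides $M\otimes_R^{\mathsf{L}} M^{\otimes k}\simeq M^{\otimes k+1}$ (concentrated in degree $0$) for every $k\geq 1$, and the inductive hypothesis on $s$ gives $M^{\otimes s}\otimes_R^{\mathsf{L}} M^{\otimes j}\simeq M^{\otimes s+j}$, we compute
\[
M^{\otimes s+1}\otimes_R^{\mathsf{L}} M^{\otimes j} \simeq M\otimes_R^{\mathsf{L}} (M^{\otimes s}\otimes_R^{\mathsf{L}} M^{\otimes j}) \simeq M\otimes_R^{\mathsf{L}} M^{\otimes s+j} \simeq M^{\otimes s+j+1},
\]
each term concentrated in degree $0$, which delivers the desired vanishing $\Tor_i^R(M^{\otimes s+1}, M^{\otimes j})=0$ for $i\geq 1$.

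For the projective-dimension bound, one may assume $d:=\pd_RM<\infty$ (else the inequality is vacuous) and induct on $i$, with $i=1$ tautological. Given the bound for $i-1$, I take a projective resolution $Q_\bullet\to M^{\otimes i-1}$ in $R\lMod$ of length at most $(i-1)d$. Since (i) (which holds by the already-established equivalence) gives $\Tor_k^R(M,M^{\otimes i-1})=0$ for $k\geq 1$, the complex $M\otimes_R Q_\bullet$ is exact and provides a length-$(i-1)d$ resolution of $M^{\otimes i}$ in $R\lMod$. Each $Q_k$ is a direct summand of a free left $R$-module, so $M\otimes_R Q_k$ is a direct summand of a direct sum of copies of $M$; in particular $\pd_R(M\otimes_R Q_k)\leq d$. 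A standard dimension-shift via long exact sequences of $\Ext$ (splicing the resolution with projective resolutions of each $M\otimes_R Q_k$) then bounds $\pd_R M^{\otimes i}$ by $(i-1)d+d=id$. The bound $\pd M^{\otimes i}_R\leq i\pd M_R$ follows by the symmetric argument using (ii) and a right-module resolution of $M^{\otimes i-1}$.

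I expect the most delicate point to be bookkeeping in the bimodule setting: each $M^{\otimes k}$ carries two $R$-actions, and one must track carefully which action is used for each tensor factor so that the associativity isomorphism above is legitimate and the left $R$-module structure on $M\otimes_R Q_\bullet$ is the intended one. None of this is conceptually hard, but it is the kind of thing that must be set up cleanly in order to make the two inductions go through without slippage.
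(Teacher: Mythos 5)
Your proposal is correct. Note that the paper itself gives no proof of this lemma: it is imported verbatim from Chen--Lu \cite[Corollary 4.3, Lemma 4.5]{perfect}, so there is no in-text argument to compare against. Your self-contained proof follows the standard route: the reduction of (ii) to (i) via $R^{\op}$ is legitimate (under $(-)^{\op}$ one has $(M\otimes_R N)^{\op}\cong N^{\op}\otimes_{R^{\op}}M^{\op}$, so $(M^{\otimes j})^{\op}\cong (M^{\op})^{\otimes j}$ and (iii) is indeed self-dual); the induction for (i)$\Rightarrow$(iii) via associativity of $\otimes_R^{\mathsf{L}}$ is sound, since for $A$ a right module, $B$ a bimodule and $C$ a left module the isomorphism $(A\otimes_R^{\mathsf{L}}B)\otimes_R^{\mathsf{L}}C\simeq A\otimes_R^{\mathsf{L}}(B\otimes_R^{\mathsf{L}}C)$ holds with no flatness hypothesis on $B$ (resolve $A$ and $C$ by K-flat complexes on the outside); and the dimension-shift for the projective-dimension bound is the usual one. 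The cited source argues at the level of tensoring honest projective resolutions and checking Tor-vanishing degree by degree rather than in the derived category, but that is a cosmetic difference; your version is, if anything, cleaner on the bookkeeping you rightly flag as the only delicate point.
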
 

As a direct consequence we have the following. 

\begin{cor} \label{corollary_for_perfect}
    Let $R$ be a ring and $M$ an $R$-bimodule. If $_RM_R$ is left perfect, then $_RM^{\otimes i}_R$ is left perfect for all $i\geq 1$.
\end{cor}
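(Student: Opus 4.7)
The plan is to unpack the definition of left perfectness for $M^{\otimes i}$ and verify the two required conditions directly from Lemma \ref{basic_properties_of_perfect}.

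By Definition \ref{perfect}, showing that $M^{\otimes i}$ is left perfect amounts to proving that (a) $\pd_R M^{\otimes i} < \infty$, and (b) $\Tor_s^R\bigl(M^{\otimes i}, (M^{\otimes i})^{\otimes j}\bigr) = 0$ for all $s, j \geq 1$. Since $M$ is assumed left perfect, Lemma \ref{basic_properties_of_perfect} immediately yields $\pd_R M^{\otimes i} \leq i \cdot \pd_R M < \infty$, which handles (a). For (b), the key identity is the associativity of the tensor product over $R$, which gives $(M^{\otimes i})^{\otimes j} \cong M^{\otimes ij}$ as $R$-bimodules. Substituting, condition (b) becomes $\Tor_s^R(M^{\otimes i}, M^{\otimes ij}) = 0$ for all $s, j \geq 1$, which is exactly the content of Lemma \ref{basic_properties_of_perfect}(iii) applied with the pair of tensor powers $(i, ij)$.

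Thus the proof really is a two-line application of the preceding lemma. There is no serious obstacle: the only point one needs to be a bit careful about is that the associativity isomorphism $(M^{\otimes i})^{\otimes j} \cong M^{\otimes ij}$ is an isomorphism of $R$-bimodules (not merely of abelian groups), so that it transports the Tor-vanishing from one expression to the other. Given the standard conventions on bimodule tensor products in place throughout the paper, this is immediate.
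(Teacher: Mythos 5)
Your proof is correct and is exactly the argument the paper intends: the corollary is stated there as a direct consequence of Lemma~\ref{basic_properties_of_perfect}, with the finite projective dimension coming from the bound $\pd_R M^{\otimes i}\leq i\,\pd_R M$ and the Tor-vanishing coming from condition (iii) of that lemma via the identification $(M^{\otimes i})^{\otimes j}\cong M^{\otimes ij}$. Nothing further is needed.
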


In the next result we obtain some useful homological properties of Morita context rings under the presence of a perfect bimodule.

\begin{prop}
\label{lemmataki}
Let $\Lambda_{(0,0)}=\big(\begin{smallmatrix}
  A & M\\
  M & A
\end{smallmatrix}\big)$ be a Morita context ring with zero bimodule homomorphisms. Assume that $M$ is nilpotent and left perfect. Then we have $\pd_{\Lambda}(M,0,0,0)<\infty$ and $\pd_{\Lambda}(0,M,0,0)<\infty$. In particular, it follows that ${\pd_{\Lambda}(A,0,0,0)<\infty}$ and $\pd_{\Lambda}(0,A,0,0)<\infty$. 
\end{prop}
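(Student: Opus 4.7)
The plan is to build an explicit bounded resolution of $(M,0,0,0)$ by $\Lambda$-modules of the form $\mathsf{T}_A(M^{\otimes j})$ and $\mathsf{T}_B(M^{\otimes j})$, and then to show that each such summand has finite $\Lambda$-projective dimension. The key structural input is that, because $\phi=\psi=0$, there are two canonical short exact sequences
\[
0 \to (0, M\otimes_A X, 0, 0) \to \mathsf{T}_A(X) \to (X,0,0,0) \to 0,
\]
\[
0 \to (M\otimes_A Y, 0, 0, 0) \to \mathsf{T}_B(Y) \to (0,Y,0,0) \to 0,
\]
for $X, Y \in A\lMod$, arising from the projections onto the first and second components of $\mathsf{T}_A(X) = (X, M\otimes_A X, \mathrm{id}, 0)$ and $\mathsf{T}_B(Y) = (M\otimes_A Y, Y, 0, \mathrm{id})$. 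Starting at $(M,0,0,0)$ and splicing these two sequences alternately, the successive kernels become $(0,M^{\otimes 2},0,0)$, $(M^{\otimes 3},0,0,0)$, $(0,M^{\otimes 4},0,0)$, and so on; by nilpotency some $M^{\otimes n}$ vanishes, the kernel becomes zero, and the process terminates after at most $n-1$ steps. The outcome is a finite exact sequence in which every intermediate term is $\mathsf{T}_A(M^{\otimes j})$ or $\mathsf{T}_B(M^{\otimes j})$ for some $j\ge 1$.

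The core technical step is then to verify that each $\mathsf{T}_A(M^{\otimes j})$ and $\mathsf{T}_B(M^{\otimes j})$ has finite $\Lambda$-projective dimension. From Lemma~\ref{basic_properties_of_perfect} the left perfect hypothesis supplies $\pd_A M^{\otimes j} \le j\,\pd_A M < \infty$ together with the vanishing $\Tor_i^A(M, M^{\otimes j})=0$ for all $i\ge 1$. Fixing a projective $A$-resolution $P_\bullet \to M^{\otimes j}$, the latter vanishing ensures that $M\otimes_A P_\bullet \to M\otimes_A M^{\otimes j}$ remains exact, so applying $\mathsf{T}_A$ componentwise produces an exact sequence of $\Lambda$-modules whose terms $\mathsf{T}_A(P_k)$ are projective (as $\mathsf{T}_A$ is left adjoint to the exact functor $\mathsf{U}_A$). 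This gives $\pd_\Lambda \mathsf{T}_A(M^{\otimes j}) \le \pd_A M^{\otimes j} < \infty$, and the argument for $\mathsf{T}_B$ is symmetric. A routine dimension-shift along the bounded resolution from the first paragraph then yields $\pd_\Lambda(M,0,0,0) < \infty$; the dual argument, starting with the second short exact sequence applied to $Y=M$, handles $(0,M,0,0)$.

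For the final assertion, the short exact sequences
\[
0 \to (0,M,0,0) \to \mathsf{T}_A(A) \to (A,0,0,0) \to 0
\]
and
\[
0 \to (M,0,0,0) \to \mathsf{T}_B(A) \to (0,A,0,0) \to 0
\]
reduce the claim to what has already been proved, since $\mathsf{T}_A(A)$ and $\mathsf{T}_B(A)$ are direct summands of ${}_\Lambda\Lambda$ and hence projective. The single delicate point in the whole argument is the exactness check in the second paragraph --- verifying that $M\otimes_A-$ preserves exactness of the projective resolution of $M^{\otimes j}$ --- which is precisely the content of the left perfect hypothesis; the nilpotency of $M$ enters only to bound the length of the iterated resolution of $(M,0,0,0)$.
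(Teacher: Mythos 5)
Your proposal is correct and follows essentially the same route as the paper: the same alternating splicing of the two canonical short exact sequences (whose middle terms are exactly the $\mathsf{T}_A(M^{\otimes j})$ and $\mathsf{T}_B(M^{\otimes j})$ appearing in the paper), the same use of nilpotency to terminate, and the same use of the Tor-vanishing to show that applying $\mathsf{T}_A$ or $\mathsf{T}_B$ to a finite projective resolution of $M^{\otimes j}$ stays exact. The only cosmetic difference is that you verify the exactness of $\mathsf{T}_A(P_\bullet)$ by hand in the second component, whereas the paper quotes the computation of $\mathbb{L}_n\mathsf{T}_A$ from \cite[Lemma 6.1]{morita}; these are the same argument.
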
 
\begin{proof}
Since $A=B$ we write $\mathsf{T}_1$ for $\mathsf{T}_A$ and $\mathsf{T}_2$ for $\mathsf{T}_B$. In order to prove that $\pd_{\Lambda}(M,0,0,0)<\infty$ we consider the following short exact sequences
    \begin{center}
        $0\rightarrow (0,M^{\otimes 2},0,0)\rightarrow (M,M^{\otimes 2},1,0)\rightarrow (M,0,0,0)\rightarrow 0$ 
    \end{center}
    \begin{center}
        $0\rightarrow (M^{\otimes 3},0,0,0)\rightarrow (M^{\otimes 3},M^{\otimes 2},0,1)\rightarrow (0,M^{\otimes 2},0,0)\rightarrow 0$
    \end{center}
    \begin{center}
        $\vdots$
    \end{center}
Similarly, in order to show that $\pd_{\Lambda}(0,M,0,0)<\infty$ we consider the following short exact sequences 
     \begin{center}
        $0\rightarrow (M^{\otimes 2},0,0,0)\rightarrow (M^{\otimes 2},M,0,1)\rightarrow (0,M,0,0)\rightarrow 0$ 
    \end{center}
    \begin{center}
        $0\rightarrow (0,M^{\otimes 3},0,0)\rightarrow (M^{\otimes 2},M^{\otimes 3},1,0)\rightarrow (M^{\otimes 2},0,0,0)\rightarrow 0$
    \end{center}
    \begin{center}
        $\vdots$
    \end{center}
In both cases and since $M$ is nilpotent, it suffices to prove that each of the middle terms in the above short exact sequences have finite projective dimension. Thus our aim is to show that $\pd_{\Lambda}\mathsf{T}_j(M^{\otimes i})<\infty$ for $j=1,2$ and for all $i\geq 1$. By Lemma \ref{basic_properties_of_perfect} we have $\pd_AM^{\otimes i}\leq i\pd_AM$ for all $i$ and therefore each $M^{\otimes i}$ has finite projective dimension. Consider a projective resolution of $M^{\otimes i}$ as below:
\[
0\rightarrow P_k\rightarrow \dots\rightarrow P_0\rightarrow M^{\otimes i}\rightarrow 0 
\]
By \cite[Lemma 6.1 (i),(ii)]{morita} we have that $\mathbb{L}_n\mathsf{T}_1(M^{\otimes i})\cong (0,\Tor_n^A(M,M^{\otimes i}),0,0)$ and $\mathbb{L}_n\mathsf{T}_2(M^{\otimes i})\cong (\Tor_n^A(M,M^{\otimes i}),0,0,0)$. Therefore, since $M$ is left perfect, $\mathbb{L}_n\mathsf{T}_1(M^{\otimes i})=0$ and $\mathbb{L}_n\mathsf{T}_2(M^{\otimes i})=0$ for all $n,i\geq 1$. Lastly, the functors $\mathsf{T}_1$ and $\mathsf{T}_2$ send projective modules to projective modules. Therefore, if we apply $\mathsf{T}_1$ and $\mathsf{T}_2$ to the above projective resolution of $M^{\otimes i}$, we get a finite projective resolution of $\mathsf{T}_1(M^{\otimes i})$ and $\mathsf{T}_2(M^{\otimes i})$ respectively. This completes the first part of the proof. For the second, we consider the following short exact sequences 
    \begin{center}
        $0\rightarrow (0,M,0,0)\rightarrow \mathsf{T}_1(A)\rightarrow (A,0,0,0)\rightarrow 0$
    \end{center}
    and 
    \begin{center}
        $0\rightarrow (M,0,0,0)\rightarrow \mathsf{T}_2(A)\rightarrow (0,A,0,0)\rightarrow 0$.
    \end{center}
Since the tuples $\mathsf{T}_1(A)$ and $\mathsf{T}_2(A)$ are projective $\Lambda_{(0,0)}$-modules,  we infer that $\pd_{\Lambda}(0,M,0,0)<\infty$ if and only if $\pd_{\Lambda}(A,0,0,0)<\infty$ and ${\pd_{\Lambda}(M,0,0,0)<\infty}$ if and only if $\pd_{\Lambda}(0,A,0,0)<\infty$. This concludes the proof. 
\end{proof}

\subsection{Tensor products.}
Let $\Lambda=\big(\begin{smallmatrix}
  A & N\\
  M & B
\end{smallmatrix}\big)$ be a Morita context ring. As explained in Section~\ref{moritacontextrings}, a right $\Lambda$-module can be written as a tuple $(X,Y,f,g)$ and a left $\Lambda$-module can be written as a tuple $(X',Y',f',g')$. In this case, we denote by $(X,Y,f,g)\otimes_{\Lambda}(X',Y',f',g')$ the  ordinary tensor product of the two (as a right and left modules respectively). We begin with the following result, whose proof can be found in \cite[Proposition 3.6.1]{kryov_tuganbaev}. 

\begin{prop} \label{computation_of_tensor_1}
    Let $\Lambda=\big(\begin{smallmatrix}
  A & N\\
  M & B
\end{smallmatrix}\big)$ be a Morita context ring, $(X,Y,f,g)$ a right $\Lambda$-module and $(X',Y',f',g')$ a left $\Lambda$-module. There is a functorial group isomorphism
\begin{center}
    $(X,Y,f,g)\otimes_{\Lambda} (X',Y',f',g')\cong (X\otimes_AX')\oplus (Y\otimes_BY')/H$
\end{center}
where $H$ is generated by elements of the form
\begin{center}
    $(g(y\otimes m)\otimes x'-x\otimes g'(n\otimes y'),f(x\otimes n)\otimes y'-y\otimes f'(m\otimes x'))$
\end{center}
for $x\in X,x'\in X',y\in Y,y'\in Y',n\in N$ and $m\in M$. 
\end{prop}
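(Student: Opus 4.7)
The plan is to unwind the definition of tensor product over $\Lambda$ directly, using the decomposition $\Lambda = A\oplus N\oplus M\oplus B$ coming from the idempotents $e_1=\bigl(\begin{smallmatrix}1&0\\0&0\end{smallmatrix}\bigr)$ and $e_2=\bigl(\begin{smallmatrix}0&0\\0&1\end{smallmatrix}\bigr)$. By definition,
\[
(X,Y,f,g)\otimes_{\Lambda}(X',Y',f',g')=\bigl((X\oplus Y)\otimes_{\mathbb{Z}}(X'\oplus Y')\bigr)/K,
\]
where $K$ is generated by all elements $(w\cdot\lambda)\otimes w'-w\otimes(\lambda\cdot w')$ for $w\in X\oplus Y$, $w'\in X'\oplus Y'$ and $\lambda\in\Lambda$. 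Since $\Lambda$ is additively the sum of its four blocks, it is enough to impose this relation for $\lambda\in A$, $\lambda\in B$, $\lambda\in N$ and $\lambda\in M$ separately.

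First I would use the idempotents $e_1,e_2$ to discard the off-diagonal summands. Writing $(x,y)=(x,y)e_1+(x,y)e_2=(x,0)+(0,y)$ and similarly on the right, the four cross-terms of $(X\oplus Y)\otimes_{\mathbb{Z}}(X'\oplus Y')$ split. Then the relations $e_1\otimes=\otimes e_1$ and $e_2\otimes=\otimes e_2$ instantly give $(x,0)\otimes(0,y')=(x,0)e_1\otimes(0,y')=(x,0)\otimes e_1(0,y')=0$, and similarly $(0,y)\otimes(x',0)=0$. Hence the tensor product is already concentrated on the two ``diagonal'' pieces $X\otimes_{\mathbb{Z}}X'$ and $Y\otimes_{\mathbb{Z}}Y'$.

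Next I would read off the remaining relations block by block. For $\lambda=a\in A$, the right action of $a$ on $(x,y)$ is $(xa,0)$ and the left action on $(x',y')$ is $(ax',0)$, so the relation on the $X\otimes X'$ summand is exactly $xa\otimes x'=x\otimes ax'$; this turns it into $X\otimes_{A}X'$. Symmetrically $\lambda=b\in B$ produces $Y\otimes_{B}Y'$. For $\lambda=n\in N$, using the matrix-multiplication rules recorded in Section \ref{moritacontextrings}, $(x,y)\cdot n=(0,f(x\otimes n))$ and $n\cdot(x',y')=(g'(n\otimes y'),0)$, which after the reductions of the previous paragraph translates to the identity
\[
f(x\otimes n)\otimes y'=x\otimes g'(n\otimes y')\quad\text{in}\quad (X\otimes_{A}X')\oplus(Y\otimes_{B}Y').
\]
Similarly $\lambda=m\in M$ produces $g(y\otimes m)\otimes x'=y\otimes f'(m\otimes x')$. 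Summing the two mixed relations gives precisely the generator of $H$ stated in the proposition, and the two individual relations are recovered by specializing $m=0$ or $n=0$, so the subgroup generated by these ``combined'' elements is the same as the subgroup generated by the two elementary relations.

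Finally I would verify that no further relations arise: the block decomposition of $\Lambda$ is exhaustive, so having imposed the four cases above one has divided by all of $K$. Functoriality is automatic, since a morphism $(a,b)\colon(X,Y,f,g)\to(X_1,Y_1,f_1,g_1)$ of right $\Lambda$-modules and a morphism of left modules induce compatible maps on each summand and preserve the relations defining $H$ by the commuting squares in the definition of $\mathcal{M}(\Lambda)$. The main subtlety to watch, rather than any real obstacle, is bookkeeping: the relations coming from $N$ and $M$ live ``across'' the two summands of the direct sum, and one has to be careful with which side of the tensor sign each piece sits on so that the generator of $H$ comes out with the exact signs displayed in the statement.
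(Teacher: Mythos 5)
Your argument is correct. Note, though, that the paper does not prove this proposition at all: it simply cites \cite[Proposition 3.6.1]{kryov_tuganbaev}, so there is no ``paper proof'' to match against. Your direct verification via the Peirce decomposition $\Lambda=A\oplus N\oplus M\oplus B$ with respect to $e_1,e_2$ is the standard self-contained route: the relations for $\lambda=e_1,e_2$ kill the cross-terms $X\otimes_{\mathbb Z}Y'$ and $Y\otimes_{\mathbb Z}X'$, the $A$- and $B$-block relations produce $X\otimes_AX'$ and $Y\otimes_BY'$, and the $N$- and $M$-block relations are exactly the two families obtained from the stated generator of $H$ by setting $m=0$ and $n=0$; since each combined generator is the sum of one element from each family, the two descriptions of $H$ agree. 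I checked the signs: the $N$-relation $(wn)\otimes w'=w\otimes(nw')$ gives $f(x\otimes n)\otimes y'=x\otimes g'(n\otimes y')$ and the $M$-relation gives $g(y\otimes m)\otimes x'=y\otimes f'(m\otimes x')$, which is precisely what vanishing of the displayed generator asserts. The only thing I would tighten in a final write-up is the last step: rather than saying ``no further relations arise,'' exhibit the two mutually inverse homomorphisms explicitly (the map $(X\otimes_AX')\oplus(Y\otimes_BY')\to W\otimes_\Lambda W'$ induced by inclusion of the diagonal pieces, and the map in the other direction defined on $W\otimes_{\mathbb Z}W'$ by projecting onto the diagonal components and checking it kills $K$); additivity of the relation in $\lambda$ then makes the block-by-block verification a complete proof rather than a plausibility argument. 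Functoriality is indeed immediate from the commuting squares defining morphisms in $\mathcal{M}(\Lambda)$.
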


With the help of Proposition \ref{computation_of_tensor_1} we prove several useful lemmata. 

\begin{lem} \label{computation_for_tensor_2}
    Let $A$ be a ring, $N$ an $A$-bimodule and consider the triangular matrix ring $\Lambda=\big(\begin{smallmatrix}
  A & N\\
  0 & A
\end{smallmatrix}\big)$. If $(X,Y,f)$ is a right $\Lambda$-module and $Z$ is a left $A$-module, then there is a functorial group isomorphism: 
\begin{center}
    $(X,Y,f)\otimes_{\Lambda}(N\otimes_A Z,Z,1)\cong Y\otimes_AZ$
\end{center}
\end{lem}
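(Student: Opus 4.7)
The plan is to apply Proposition~\ref{computation_of_tensor_1} directly to this setup and simplify. Since $\Lambda=\big(\begin{smallmatrix} A & N\\ 0 & A \end{smallmatrix}\big)$ is a Morita context ring with $B=A$, $M=0$, and $\phi=\psi=0$, the formula there gives
\[
(X,Y,f)\otimes_{\Lambda}(X',Y',g')\cong \bigl((X\otimes_A X')\oplus (Y\otimes_A Y')\bigr)/H,
\]
and the generators of $H$ simplify drastically: every term involving $M$ vanishes, leaving only elements of the form $(-x\otimes g'(n\otimes y'),\; f(x\otimes n)\otimes y')$ with $x\in X$, $n\in N$, $y'\in Y'$. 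Specializing to the left $\Lambda$-module $(X',Y',g')=(N\otimes_A Z, Z, 1)$ reduces $g'$ to the identity on $N\otimes_A Z$, so the relations take the concrete shape $(-x\otimes n\otimes z,\; f(x\otimes n)\otimes z)$.

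Next I would construct the candidate isomorphism explicitly. Define a homomorphism
\[
\Phi\colon (X\otimes_A N\otimes_A Z)\oplus (Y\otimes_A Z)\lxr Y\otimes_A Z
\]
by $\Phi(x\otimes n\otimes z,\; y\otimes z')=f(x\otimes n)\otimes z + y\otimes z'$, and verify that $\Phi$ annihilates the generators of $H$ so that it descends to a map $\overline{\Phi}$ from the quotient. For the inverse, use $\Psi\colon Y\otimes_A Z\to \bigl((X\otimes_A N\otimes_A Z)\oplus (Y\otimes_A Z)\bigr)/H$ given by $\Psi(y\otimes z)=(0,y\otimes z)+H$. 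The composition $\overline{\Phi}\comp\Psi$ is clearly the identity, and for $\Psi\comp\overline{\Phi}$ one uses the defining relations of $H$ to rewrite $(x\otimes n\otimes z,0)+H$ as $(0,f(x\otimes n)\otimes z)+H$, which is exactly $\Psi(f(x\otimes n)\otimes z)$.

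The proof is essentially a direct application of the preceding proposition; there is no real obstacle beyond careful bookkeeping of the generators of $H$ and checking that $\Phi$ is well defined as a map out of a tensor product (which amounts to standard middle-linearity verifications in each variable, using the bimodule structures on $X$, $N$, $Y$, and $Z$). Functoriality in $(X,Y,f)$ and in $Z$ is immediate from the functoriality statement in Proposition~\ref{computation_of_tensor_1} and the naturality of the above construction.
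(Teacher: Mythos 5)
Your proposal is correct and follows essentially the same route as the paper: apply Proposition~\ref{computation_of_tensor_1}, observe that the generators of $H$ collapse to $(-x\otimes n\otimes z,\,f(x\otimes n)\otimes z)$ since the lower-left bimodule is zero, and show that the map $(x\otimes n\otimes z,\,y\otimes z')\mapsto f(x\otimes n)\otimes z+y\otimes z'$ induces the desired isomorphism. The only cosmetic difference is that you exhibit an explicit inverse, whereas the paper simply notes the map is surjective with kernel exactly $H$.
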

\begin{proof}
By Proposition \ref{computation_of_tensor_1}, there is a group isomorphism
\[
(X,Y,f)\otimes_{\Lambda} (N\otimes_A Z,Z,1)\cong (X\otimes_A N\otimes_A Z)\oplus (Y\otimes_A Z)/H
\]
where $H$ is generated by elements of the form $(-x\otimes n\otimes z,f(x\otimes n)\otimes z)$. Define a group homomorphism 
\[
(X\otimes_AN\otimes_AZ)\oplus (Y\otimes_A Z)\rightarrow Y\otimes_A Z, \, (x\otimes n\otimes z,y\otimes z)\mapsto f(x\otimes n)\otimes z+y\otimes z 
\]
which is surjective with kernel $H$. 
\end{proof}

Let $\Lambda=\big(\begin{smallmatrix}
  A & N\\
  0 & A
\end{smallmatrix}\big)$ be as in the above lemma and consider the group $M=\big(\begin{smallmatrix}
  N^{\otimes 2} & N^{\otimes 3}\\
  N & N^{\otimes 2}
\end{smallmatrix}\big)$. The natural homomorphisms $N^{\otimes k}\otimes_A N^{\otimes l}\rightarrow N^{\otimes k+l}$ induce an action (left and right) of $\Lambda$ on $M$ which turn it into a $\Lambda$-bimodule. In the following lemmata we stick with the above notation.

\begin{lem} \label{computation_of_tensor}
    There is an isomorphism  
\begin{center}
    $_{\Lambda}M^{\otimes i}_{\Lambda}\cong \big(\begin{smallmatrix}
  N^{\otimes 2i} & N^{\otimes 2i+1}\\
  N^{\otimes 2i-1} & N^{\otimes 2i}
\end{smallmatrix}\big)$
\end{center}
of $\Lambda$-bimodules. 
\end{lem}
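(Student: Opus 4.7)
My plan is to proceed by induction on $i$, with the base case $i=1$ being immediate from the definition of $M$. For the inductive step, I want to decompose the tensor product $M\otimes_{\Lambda}M^{\otimes i}$ into four pieces and apply Lemma~\ref{computation_for_tensor_2} to each.

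First I would set up the decompositions arising from the idempotents $e_1=\big(\begin{smallmatrix}1 & 0\\ 0 & 0\end{smallmatrix}\big)$ and $e_2=\big(\begin{smallmatrix}0 & 0\\ 0 & 1\end{smallmatrix}\big)$. Viewing $M$ as a right $\Lambda$-module, the rows $Me_1$ and $Me_2$ give a decomposition into the two right $\Lambda$-modules $(N^{\otimes 2},N^{\otimes 3},1)$ and $(N,N^{\otimes 2},1)$, where the structure maps are the canonical isomorphisms $N^{\otimes k}\otimes_A N\xrightarrow{\sim}N^{\otimes k+1}$. By the inductive hypothesis $M^{\otimes i}\cong \big(\begin{smallmatrix} N^{\otimes 2i} & N^{\otimes 2i+1}\\ N^{\otimes 2i-1} & N^{\otimes 2i}\end{smallmatrix}\big)$, and viewing this as a left $\Lambda$-module, the columns $e_1M^{\otimes i}$ and $e_2M^{\otimes i}$ give a decomposition into two left $\Lambda$-modules that are exactly of the form $(N\otimes_A Z,Z,1)$: namely $(N\otimes_A N^{\otimes 2i-1},N^{\otimes 2i-1},1)$ and $(N\otimes_A N^{\otimes 2i},N^{\otimes 2i},1)$.

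Next I would apply Lemma~\ref{computation_for_tensor_2} to each of the four resulting summands. For instance, $(N^{\otimes 2},N^{\otimes 3},1)\otimes_{\Lambda}(N\otimes_A N^{\otimes 2i-1},N^{\otimes 2i-1},1)\cong N^{\otimes 3}\otimes_A N^{\otimes 2i-1}\cong N^{\otimes 2i+2}$, and analogously for the other three combinations one gets $N^{\otimes 2i+3}$, $N^{\otimes 2i+1}$, and $N^{\otimes 2i+2}$. Assembling these four pieces according to which idempotent they correspond to produces the matrix $\big(\begin{smallmatrix} N^{\otimes 2i+2} & N^{\otimes 2i+3}\\ N^{\otimes 2i+1} & N^{\otimes 2i+2}\end{smallmatrix}\big)$ as an abelian group.

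The main obstacle, and the step that needs the most care, is verifying that this identification is in fact a $\Lambda$-bimodule isomorphism, not merely a group one: Lemma~\ref{computation_for_tensor_2} is stated only at the level of abelian groups. To handle this, I would note that the whole identification is defined using only the canonical associativity isomorphisms $N^{\otimes k}\otimes_A N^{\otimes l}\xrightarrow{\sim}N^{\otimes k+l}$, so one can bypass the decomposition approach and instead define a natural candidate map $M\otimes_{\Lambda}M^{\otimes i}\to \big(\begin{smallmatrix} N^{\otimes 2i+2} & N^{\otimes 2i+3}\\ N^{\otimes 2i+1} & N^{\otimes 2i+2}\end{smallmatrix}\big)$ by "tensor-matrix multiplication" $(m\otimes m')\mapsto m\cdot m'$, where the product of a pair of entries $n\in N^{\otimes k}$ and $n'\in N^{\otimes l}$ uses the canonical isomorphism $N^{\otimes k}\otimes_A N^{\otimes l}\cong N^{\otimes k+l}$. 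This map is manifestly a $\Lambda$-bimodule homomorphism, and the well-definedness over $\Lambda$ (as opposed to over $A$) follows because both $e_2\Lambda e_1=0$ and the action of $e_1\Lambda e_2=N$ on adjacent factors is exactly the canonical multiplication used in the definition of the map. The decomposition computation above then shows that this natural map is bijective, completing the induction.
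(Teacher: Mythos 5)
Your argument is correct and takes essentially the same route as the paper: both reduce the computation to Lemma~\ref{computation_for_tensor_2} (the paper applies it once with $Y$ and $Z$ each a direct sum of two terms, you apply it four times after a Peirce decomposition into rows and columns) and then check that the resulting explicit ``matrix multiplication'' isomorphism respects the two $\Lambda$-actions. The only blemish is notational: the rows of $M$, which are the right $\Lambda$-submodules, are $e_1M$ and $e_2M$ rather than $Me_1$ and $Me_2$ (and dually the columns of $M^{\otimes i}$ are $M^{\otimes i}e_j$), but the modules you actually work with are the correct ones, so this does not affect the proof.
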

\begin{proof}
    We claim that for every $k,l\geq 1$ there is an isomorphism of $\Lambda$-$\Lambda$-bimodules as below 
\begin{align}
    \big(\begin{smallmatrix}
  N^{\otimes k} & N^{\otimes k+1}\\
  N^{\otimes k-1} & N^{\otimes k}
\end{smallmatrix}\big)\otimes_{\Lambda}\big(\begin{smallmatrix}
  N^{\otimes l} & N^{\otimes l+1}\\
  N^{\otimes l-1} & N^{\otimes l}
\end{smallmatrix}\big)\cong \big(\begin{smallmatrix}
  N^{\otimes k+l} & N^{\otimes k+l+1}\\
  N^{\otimes k+l-1} & N^{\otimes k+l}
\end{smallmatrix}\big)
\end{align}
Then, the result will follow inductively. Indeed, $\big(\begin{smallmatrix}
  N^{\otimes k} & N^{\otimes k+1}\\
  N^{\otimes k-1} & N^{\otimes k}
\end{smallmatrix}\big)$ as a right $\Lambda$-module is written as $(N^{\otimes k}\oplus N^{\otimes k-1},N^{\otimes k+1}\oplus N^{\otimes k},1)$ and $\big(\begin{smallmatrix}
  N^{\otimes l} & N^{\otimes l+1}\\
  N^{\otimes l-1} & N^{\otimes l}
\end{smallmatrix}\big)$ as a left $\Lambda$-module is written as $(N^{\otimes l}\oplus N^{\otimes l+1},N^{\otimes l-1}\oplus N^{\otimes l},1)$. Therefore, the group isomorphism of Lemma \ref{computation_for_tensor_2} is given as follows
\begin{center}
    $\big(\begin{smallmatrix}
  n_1 & n_2\\
  n_3 & n_4
\end{smallmatrix}\big)\otimes \big(\begin{smallmatrix}
  n_1' & n_2'\\
  n_3' & n_4'
\end{smallmatrix}\big)\mapsto \big(\begin{smallmatrix}
  n_2\otimes n_3' & n_2\otimes n_4'\\
  n_4\otimes n_3' & n_4\otimes n_4'
\end{smallmatrix}\big)$
\end{center}
which indeed shows that there is an isomorphism of groups as (5.1). It is easy to check that it respects the left and right action by $\Lambda$, which completes the proof. 
\end{proof}

In the spirit of \cite[Lemma~6.1]{morita}, see also \cite[Lemma~3.5]{Mao}, we prove the following.

\begin{lem} \label{Tor_1}
    Let $Z$ be a left $A$-module and assume that $\mathsf{Tor}_i^A(N,Z)=0$ for all $i\geq 1$. Moreover, let $(X,Y,f)$ be any right $\Lambda$-module. Then 
    \begin{center}
        $\mathsf{Tor}_n^{\Lambda}((X,Y,f),(N\otimes_A Z,Z,1))\cong \mathsf{Tor}_n^A(Y,Z)$
    \end{center}
    for all $n\geq 1$.
\end{lem}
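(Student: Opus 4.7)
The natural approach is to compute the Tor by building a particularly nice projective resolution of $(N\otimes_A Z, Z, 1)$ as a left $\Lambda$-module, rather than resolving $(X,Y,f)$.

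First I would observe that the indecomposable projective left $\Lambda$-modules are $(A,0,0)$ and $(N,A,1)$ (corresponding to the two columns of $\Lambda$). More generally, for any projective left $A$-module $P$, the tuple $(N\otimes_A P, P, 1)$ is a projective left $\Lambda$-module, since it is naturally a direct summand of a direct sum of copies of $(N,A,1)$. Now pick a projective resolution $P_\bullet \to Z$ of $Z$ as a left $A$-module. Applying the exact assignment $P \mapsto (N\otimes_A P, P, 1)$ termwise yields a complex $(N\otimes_A P_\bullet, P_\bullet, 1) \to (N\otimes_A Z, Z, 1)$ of projective left $\Lambda$-modules. The crucial point is exactness: in the second component $P_\bullet \to Z$ is exact by construction, while in the first component $N\otimes_A P_\bullet \to N\otimes_A Z$ is exact precisely because $\mathsf{Tor}_i^A(N,Z)=0$ for all $i\geq 1$. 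Hence $(N\otimes_A P_\bullet, P_\bullet, 1)$ is a projective resolution of $(N\otimes_A Z, Z, 1)$.

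Next I would apply $(X,Y,f)\otimes_{\Lambda}-$ to this resolution and invoke Lemma~\ref{computation_for_tensor_2}: for each $i$,
\[
(X,Y,f)\otimes_{\Lambda}(N\otimes_A P_i, P_i, 1) \cong Y\otimes_A P_i.
\]
Because the isomorphism in Lemma~\ref{computation_for_tensor_2} is functorial in the left $A$-module $Z$ (the proof exhibits an explicit natural surjection whose kernel is the displayed subgroup), these isomorphisms assemble into an isomorphism of complexes between $(X,Y,f)\otimes_{\Lambda}(N\otimes_A P_\bullet, P_\bullet, 1)$ and $Y\otimes_A P_\bullet$. Taking homology then gives
\[
\mathsf{Tor}_n^{\Lambda}\!\big((X,Y,f),(N\otimes_A Z, Z, 1)\big) \cong \mathsf{H}_n(Y\otimes_A P_\bullet) \cong \mathsf{Tor}_n^A(Y,Z)
\]
for every $n\geq 1$ (and indeed also for $n=0$, recovering Lemma~\ref{computation_for_tensor_2}).

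The only nontrivial step is the verification that the projective resolution above actually computes $\mathsf{Tor}^\Lambda$ — that is, that each $(N\otimes_A P_i, P_i, 1)$ is really projective over $\Lambda$ and that the augmented complex is exact in the category of left $\Lambda$-modules. The projectivity is immediate from the description of projectives over the triangular matrix ring, and exactness reduces to exactness componentwise, which is where the hypothesis $\mathsf{Tor}_i^A(N,Z)=0$ enters essentially. No balanced-functor argument or spectral sequence is needed; everything is forced by the explicit shape of projectives over the triangular matrix ring together with the Tor-vanishing hypothesis.
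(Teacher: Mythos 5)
Your proof is correct and follows essentially the same route as the paper: resolve $Z$ projectively over $A$, use the hypothesis $\mathsf{Tor}_i^A(N,Z)=0$ to see that applying $P\mapsto (N\otimes_A P,P,1)$ termwise yields a projective resolution of $(N\otimes_A Z,Z,1)$ over $\Lambda$, then apply $(X,Y,f)\otimes_\Lambda-$ and identify the resulting complex with $Y\otimes_A P_\bullet$ via the functorial isomorphism of Lemma~\ref{computation_for_tensor_2}. The only cosmetic difference is that you verify directly (via componentwise exactness and the description of projectives over the triangular matrix ring) that the lifted complex is a projective resolution, where the paper cites Lemma~6.1 of the Morita-context reference for the same fact.
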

\begin{proof}
    Consider a projective resolution of $Z$ 
    \begin{align}
        \cdots \rightarrow P_n\rightarrow \cdots\rightarrow P_0\rightarrow Z\rightarrow 0
    \end{align}
    Applying $Y\otimes_A-$ to (5.2) and deleting the first term, gives the following complex 
    \begin{align}
        \cdots\rightarrow Y\otimes_A P_n\rightarrow \cdots\rightarrow Y\otimes_A P_0
    \end{align}
    whose n-th homology computes $\mathsf{Tor}_n^A(Y,Z)$. On the other hand, by the assumption that $\mathsf{Tor}_i^A(N,Z)=0$ for all $i\geq 1$, it follows by \cite[Lemma 6.1]{morita} that applying the functor $\mathsf{T}_2$ to (5.2) gives a projective resolution of $\mathsf{T}_2(Z)$ as below, where $\mathsf{T}_2\colon\Mod A\rightarrow \Mod\Lambda$ denotes the functor given by $Y\mapsto (N\otimes_AY,Y,1)$ on objects. 
 \begin{align}
        \cdots\rightarrow \mathsf{T}_2(P_n)\rightarrow \cdots\rightarrow \mathsf{T}_2(P_0)\rightarrow \mathsf{T}_2(Z)\rightarrow 0
    \end{align}
    Therefore, if we apply $(X,Y,f)\otimes_{\Lambda}-$ to (5.4) and delete the first term, we get the following complex 
    \begin{align}
        \cdots\rightarrow (X,Y,f)\otimes_{\Lambda} \mathsf{T}_2(P_n)\rightarrow\cdots\rightarrow (X,Y,f)\otimes_{\Lambda} \mathsf{T}_2(P_0)
    \end{align}
    whose n-th homology computes $\mathsf{Tor}_n^{\Lambda}((X,Y,f),\mathsf{T}_2(Z))$. Consider the following commutative diagram
    \[
\begin{tikzcd}
\cdots \arrow[r] & Y\otimes_A P_n \arrow[rr, "Y\otimes f_n"]                                                                                    &  & Y\otimes_A P_{n-1} \arrow[r]                                                    & \cdots \\
\cdots \arrow[r] & {(X,Y,f)\otimes_{\Lambda}\!\mathsf{T}_2(P_n)} \arrow[rr, "{(X,Y,f)\otimes_{\Lambda}\!\mathsf{T}_2(P_n)}"] \arrow[u, "\cong"] &  & {(X,Y,f)\otimes_{\Lambda}\!\mathsf{T}_2(P_{n-1})} \arrow[u, "\cong "] \arrow[r] & \cdots
\end{tikzcd}
    \]
    where the vertical isomorphisms are given by Lemma \ref{computation_for_tensor_2}. We infer that the complexes (5.3) and (5.5) are isomorphic and the result follows. 
\end{proof}

Our last technical lemma is an application of the two latter.

\begin{lem}
\label{lifting_perfect}
Let $N$ be an $A$-bimodule and consider the $\Lambda$-bimodule $M=\big(\begin{smallmatrix}
  N^{\otimes 2} & N^{\otimes 3}\\
  N & N^{\otimes 2}
\end{smallmatrix}\big)$, where $\Lambda=\big(\begin{smallmatrix}
  A & N\\
  0 & A
\end{smallmatrix}\big)$. Assume that $\Tor_j^A(N^{\otimes s},N^{\otimes i})= 0$ for all $j,s,i\geq 1$. Then $\Tor_j^{\Lambda}(M^{\otimes s},M^{\otimes i})= 0$ for all $j,s,i\geq 1$.
\end{lem}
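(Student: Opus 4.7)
The plan is to combine the matrix description of $M^{\otimes i}$ from Lemma~\ref{computation_of_tensor} with the $\mathsf{Tor}$-computation of Lemma~\ref{Tor_1}, by decomposing $M^{\otimes s}$ and $M^{\otimes i}$ according to their rows and columns respectively.

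First, I would use Lemma~\ref{computation_of_tensor} to identify $M^{\otimes i}$ with the matrix $\big(\begin{smallmatrix} N^{\otimes 2i} & N^{\otimes 2i+1}\\ N^{\otimes 2i-1} & N^{\otimes 2i} \end{smallmatrix}\big)$. Viewed as a left $\Lambda$-module, such a matrix decomposes as a direct sum of its two columns, and by inspecting how $\Lambda = \big(\begin{smallmatrix} A & N\\ 0 & A \end{smallmatrix}\big)$ acts, each column has precisely the shape $(N\otimes_A Z,Z,1)$ appearing in Lemma~\ref{computation_for_tensor_2} and Lemma~\ref{Tor_1}: the first column is $(N\otimes_A N^{\otimes 2i-1}, N^{\otimes 2i-1},1)$ and the second is $(N\otimes_A N^{\otimes 2i},N^{\otimes 2i},1)$. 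Dually, viewed as a right $\Lambda$-module, $M^{\otimes s}$ decomposes as a direct sum of its two rows $(N^{\otimes 2s},N^{\otimes 2s+1},1)$ and $(N^{\otimes 2s-1},N^{\otimes 2s},1)$.

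Next, since the hypothesis $\mathsf{Tor}_j^A(N^{\otimes s},N^{\otimes i})=0$ for all $j,s,i\geq 1$ gives in particular $\mathsf{Tor}_j^A(N,N^{\otimes 2i-1})=0$ and $\mathsf{Tor}_j^A(N,N^{\otimes 2i})=0$, each column of $M^{\otimes i}$ satisfies the hypothesis of Lemma~\ref{Tor_1}. Applying that lemma with $(X,Y,f)$ equal in turn to each of the two rows of $M^{\otimes s}$, and $Z$ equal to $N^{\otimes 2i-1}$ or $N^{\otimes 2i}$, reduces the computation of $\mathsf{Tor}_j^\Lambda(M^{\otimes s},M^{\otimes i})$ (which is a direct sum of four pieces) to a direct sum of terms of the form $\mathsf{Tor}_j^A(N^{\otimes p},N^{\otimes q})$ with $p,q\in\{2s-1,2s,2s+1,2i-1,2i\}\subset\mathbb{Z}_{\geq 1}$. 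All these vanish for $j\geq 1$ by assumption.

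There is essentially no obstacle beyond bookkeeping: the two potentially subtle points are (i) checking that the left $\Lambda$-module structure on $M^{\otimes i}$ really splits off its columns in the form required by Lemma~\ref{Tor_1} (this is a direct verification from the matrix action of $\Lambda$), and (ii) confirming that Tor respects this column/row decomposition, which is immediate because Tor commutes with finite direct sums in each argument. The rest of the argument is a finite combinatorial sum of vanishing $\mathsf{Tor}^A$-groups.
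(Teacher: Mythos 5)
Your proof is correct and follows essentially the same route as the paper: both identify $M^{\otimes s}$ and $M^{\otimes i}$ via Lemma~\ref{computation_of_tensor} and then invoke Lemma~\ref{Tor_1} to reduce to $\mathsf{Tor}^A$ of tensor powers of $N$. The only difference is cosmetic — you split the modules into rows and columns and apply Lemma~\ref{Tor_1} four times, whereas the paper keeps the direct sums inside a single tuple $(X,Y,f)$ and $(N\otimes_A Z,Z,1)$ and applies it once.
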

\begin{proof}
By Lemma \ref{computation_of_tensor}, it follows that $M^{\otimes s}_{\Lambda}$ as a tuple is written as 
\[
(N^{\otimes 2s}\oplus N^{\otimes 2s-1},N^{\otimes 2s+1}\oplus N^{\otimes 2s},1)
\]
and $_{\Lambda}M^{\otimes i}$ as a tuple is written as 
\[
(N^{\otimes 2i}\oplus N^{\otimes 2i+1},N^{\otimes 2i-1}\oplus N^{\otimes 2i},1)
\]
Therefore, it follows by Lemma~\ref{Tor_1} that 
\[
{\Tor}_j^{\Lambda}(M^{\otimes s},M^{\otimes i})\cong {\Tor}_j^{A}(N^{\otimes 2s+1}\oplus N^{\otimes 2s}, N^{\otimes 2i-1}\oplus N^{\otimes 2i})  = 0
\]
and this concludes the proof.
\end{proof}

\subsection{Injective generation of tensor rings}

In this subsection, we apply the theory developed before to study injective generation of tensor rings with (left) perfect bimodules. This relies on the next result.

\begin{prop}
\label{lifting_perfect_2}
    Let $R$ be a ring and $M$ a nilpotent $R$-$R$-bimodule. Assume that $M^{\otimes k}=0$ and let $n$ be such that $2^{n-1}\geq k$. Consider the graded ring $R'=\oplus_{i\in\mathbb{Z}/2^n\mathbb{Z}}R_i'$ with $R_i'=M^{\otimes i}$ where $M^{\otimes 0}=R$ and the Morita context ring $\big(\begin{smallmatrix}
  A & M'\\
  M' & A
\end{smallmatrix}\big)$ associated to $R'$. If $M$ is left perfect as an $R$-$R$-bimodule, then $M'$ is left perfect as an $A$-$A$-bimodule.  
\end{prop}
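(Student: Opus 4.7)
The plan is to verify both conditions of Definition~\ref{perfect} for $M'$ as an $A$-bimodule, namely $\pd_AM'<\infty$ and $\Tor_i^A(M',M'^{\otimes j})=0$ for all $i,j\geq 1$. The projective dimension is immediate from Proposition~\ref{reduction}(iii): since $M$ is left perfect, Lemma~\ref{basic_properties_of_perfect} yields $\pd_RM^{\otimes i}\leq i\pd_RM<\infty$ for all $i$, so $\pd_RR'_i<\infty$ for every $i$, and Proposition~\ref{reduction}(iii) applied to the graded ring $R'$ then gives $\pd_AM'<\infty$.

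For the $\Tor$-vanishing I would iterate Lemma~\ref{lifting_perfect} along a tower of triangular matrix rings. Set $R_0=R$ and $N_0=M$, and for $k\geq 1$ define inductively the triangular matrix ring $R_k=\big(\begin{smallmatrix}R_{k-1} & N_{k-1}\\ 0 & R_{k-1}\end{smallmatrix}\big)$ together with the $R_k$-bimodule $N_k=\big(\begin{smallmatrix}N_{k-1}^{\otimes 2} & N_{k-1}^{\otimes 3}\\ N_{k-1} & N_{k-1}^{\otimes 2}\end{smallmatrix}\big)$, which is exactly the bimodule appearing in Lemma~\ref{lifting_perfect} applied to the pair $(R_{k-1},N_{k-1})$. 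A straightforward induction on $k$ then yields $\Tor_j^{R_k}(N_k^{\otimes s},N_k^{\otimes i})=0$ for all $j,s,i\geq 1$: the base case $k=0$ is the hypothesis that $M$ is left perfect combined with Lemma~\ref{basic_properties_of_perfect}, while the inductive step is a direct invocation of Lemma~\ref{lifting_perfect}.

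The remaining, and in my view most delicate, step is the identification $R_{n-1}\cong A$ as rings together with $N_{n-1}\cong M'$ as $R_{n-1}$-bimodules; once this is in place, the induction at $k=n-1$ transfers the $\Tor$-vanishing to the pair $(A,M')$ and the proof is complete. Unwinding the recursion, $R_{n-1}$ is the upper triangular $2^{n-1}\times 2^{n-1}$ matrix ring whose $(i,j)$-entry is $M^{\otimes(j-i)}$ for $j\geq i$, matching the explicit form of $A$; and a repeated application of Lemma~\ref{computation_of_tensor} shows that $N_{n-1}$ has $(i,j)$-entry $M^{\otimes(2^{n-1}+j-i)}$. Since $M^{\otimes k}=0$ with $k\leq 2^{n-1}$, the entries of $N_{n-1}$ with $j\geq i$ all vanish, and the remaining entries coincide with those of $M'$ as described by Lemma~\ref{lem:coveringzerozero}. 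Checking that the left and right $A$-actions match reduces to tracing the block-matrix multiplications back to the canonical pairings $M^{\otimes p}\otimes_RM^{\otimes q}\to M^{\otimes(p+q)}$; this is the technical bookkeeping that one has to execute carefully, but it is of the same flavour as the presentation of the covering ring as a Morita context ring already used in Section~\ref{section:covering}.
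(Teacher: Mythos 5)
Your proof is correct and follows essentially the same route as the paper: there, too, $\pd_AM'<\infty$ is obtained from Proposition~\ref{reduction}(iii), and the Tor-vanishing is proved by iterating Lemma~\ref{computation_of_tensor} and Lemma~\ref{lifting_perfect} through exactly your successive-doubling tower of triangular matrix rings (phrased in the paper as a chain of graded rings satisfying a ``Tor-vanishing condition''), terminating at the graded ring $A\oplus M'$. The identification of the final stage with the pair $(A,M')$ is likewise treated as bookkeeping in the paper.
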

\begin{proof}
Since $M$ is left perfect, it follows from Lemma \ref{basic_properties_of_perfect} that $\pd_RM^{\otimes i}<\infty$ for all $i$. Therefore, it follows from Proposition \ref{reduction} that $\pd_AM'<\infty$. Thus we have to show that $\Tor^A_j(M',M'^{\otimes i})= 0$ for all $i,j\geq 1$. For the purposes of this proof, we introduce the following notation. We say that a graded ring $R_0\oplus R_1\oplus R_2\oplus \cdots$ satisfies the Tor-vanishing condition if 
\[
\mathsf{Tor}_i^{R_0}(R_k^{\oplus s},R_l^{\otimes j})=0
\]
for all $i,s,j,k,l\geq 1$. By the assumption, Lemma \ref{computation_of_tensor} and Lemma \ref{lifting_perfect}, the following graded ring (which is $\mathbb{Z}/2^{n-1}\mathbb{Z}$-graded) satisfies the Tor-vanishing condition. 
\[
\begin{pmatrix}     R & M \\      0 & R  \end{pmatrix}\oplus \begin{pmatrix}     M^{\otimes 2} & M^{\otimes 3} \\      M & M^{\otimes 2}  \end{pmatrix} \oplus \begin{pmatrix}
    M^{\otimes 4} & M^{\otimes 5} \\ 
    M^{\otimes 3} & M^{\otimes 4}
\end{pmatrix} \oplus  \begin{pmatrix}
    M^{\otimes 6} & M^{\otimes 7} \\ 
    M^{\otimes 5} & M^{\otimes 6}
\end{pmatrix} \oplus  \cdots
\]
Then, by Lemma~\ref{computation_of_tensor} and Lemma~\ref{lifting_perfect}, we apply the above observation successively as shown below 

\begin{center}
\scalebox{0.95}{
\begin{tikzcd}
\begin{pmatrix}     R & M \\      0 & R  \end{pmatrix}\oplus \begin{pmatrix}     M^{\otimes 2} & M^{\otimes 3} \\      M & M^{\otimes 2}  \end{pmatrix} \oplus \begin{pmatrix}
    M^{\otimes 4} & M^{\otimes 5} \\ 
    M^{\otimes 3} & M^{\otimes 4}
\end{pmatrix} \oplus  \begin{pmatrix}
    M^{\otimes 6} & M^{\otimes 7} \\ 
    M^{\otimes 5} & M^{\otimes 6}
\end{pmatrix} \oplus  \cdots  \arrow[d, "\text{induced }\mathbb{Z}/2^{n-2}\mathbb{Z}-\text{graded ring}"]                                                                                                                                                                                                                                                                                                                                                                                                                                                                                                                                                        \\
\begin{pmatrix}     R & M & M^{\otimes 2} & M^{\otimes 3} \\      0 & R & M & M^{\otimes 2} \\      0 & 0 & R & M \\      0 & 0 & 0 & R \end{pmatrix}\oplus \begin{pmatrix}     M^{\otimes 4} & M^{\otimes 5} & M^{\otimes 6} & M^{\otimes 7} \\      M^{\otimes 3} & M^{\otimes 4} & M^{\otimes 5} & M^{\otimes 6} \\      M^{\otimes 2} & M^{\otimes 3} & M^{\otimes 4} & M^{\otimes 5} \\      M & M^{\otimes 2} & M^{\otimes 3} & M^{\otimes 4}  \end{pmatrix} \oplus \cdots \arrow[d, "\text{induced }\mathbb{Z}/2^{n-3}\mathbb{Z}-\text{graded ring}"]                                                                                                                                                                                                                                 \\
\vdots \arrow[d, "\text{induced }\mathbb{Z}/2\mathbb{Z}-\text{graded ring}"]                                                                                                                                                                                                                                                                                                                                                                                                                                                                                                                                                                                                                                                                                                                              \\
\begin{pmatrix}    R & M & M^{\otimes 2} & M^{\otimes 3} & \cdots & M^{\otimes 2^{n-1}-1} \\    0 & R & M & M^{\otimes 2} & \cdots & M^{\otimes 2^{n-1}-2} \\    0 & 0 & R &  M & \cdots & M^{\otimes 2^{n-1}-3} \\    0 & 0 & 0 & R & \cdots & M^{\otimes 2^{n-1}-4} \\     \vdots  & \vdots  & \vdots & \vdots & \ddots & \vdots  \\    0 & 0 & 0 & 0 & \cdots & R   \end{pmatrix}\oplus\begin{pmatrix}    0 & 0 & 0 & 0 & \cdots & 0 \\    \vdots  & \vdots  & \vdots & \vdots &  & \vdots  \\    M^{\otimes 4} & M^{\otimes 5} & M^{\otimes 6} &  M^{\otimes 7} & \cdots & 0 \\    M^{\otimes 3} & M^{\otimes 4} & M^{\otimes 5} &  M^{\otimes 6} & \cdots & 0 \\    M^{\otimes 2} & M^{\otimes 3} & M^{\otimes 4} & M^{\otimes 5} & \cdots & 0 \\     M & M^{\otimes 2} & M^{\otimes 3} & M^{\otimes 4} & \cdots & 0   \end{pmatrix}
\end{tikzcd}}
\end{center}
to conclude that at each step the graded ring satisfies the Tor-vanishing condition. Consequently, the graded ring $A\oplus M'$ (which occurres at the last step above) satisfies the Tor-vanishing condition, which completes the proof. 
\end{proof}

Let $R$ be a positively and finitely graded ring over $\mathbb{Z}$ and consider the ring homomorphism $R_0\rightarrow R_0\oplus R_1\oplus \cdots$ given by $x_0\mapsto (x_0,x_1,\dots)$. By \cite[Lemma 5.2]{cummings}, using the restriction functor $\Mod R\rightarrow \Mod R_0$, it follows that if $\pd_{R_0}R_i<\infty$ for all $i$ and injectives generate for $R$, then injectives generate for $R_0$. In order to show how the theory we have developed can be applied to injective generation, we give an alternative proof of the above using covering techniques.

\begin{prop} \label{general_easy_way}
    Let $R$ be a graded ring over $\mathbb{Z}/2^n\mathbb{Z}$ such that $R_i=0$ for all $i\in\{2^{n-1},\dots,2^n-1\}$. If $\pd_{R_0}R_i<\infty$ for all $i$ and injectives generate for $R$, then injectives generate for $R_0$. 
\end{prop}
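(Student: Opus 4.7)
The plan is to reduce the problem to the Morita context setting via the covering ring $\hat{R}$ and then to apply the results already established in Sections~\ref{moritacontextrings} and~\ref{section:covering}. The sequence of moves is: pass from $R$ to $\hat{R}$, split $\hat{R}$ as a Morita context ring, descend injective generation from $\hat{R}$ to the corner $A$, and then descend from $A$ to $R_0$.

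First, by Corollary~\ref{injective_generation_for_covering}, injectives generate for $R$ if and only if they generate for the covering ring $\hat{R}$. The assumption $R_i=0$ for $i\in\{2^{n-1},\dots,2^n-1\}$ puts us precisely in the situation of Lemma~\ref{lem:coveringzerozero}, so $\hat{R}$ is a Morita context ring of the form $\big(\begin{smallmatrix} A & M \\ M & A \end{smallmatrix}\big)_{(0,0)}$ with both bimodule homomorphisms zero.

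Next, the hypothesis $\pd_{R_0} R_i < \infty$ for all $i$ is exactly what is required by Proposition~\ref{reduction}(iii) to conclude $\pd_A M < \infty$. Since injectives generate for $\hat{R}$, Theorem~\ref{injectivegenerationformorita}(i), applied to the Morita context $\big(\begin{smallmatrix} A & M \\ M & A \end{smallmatrix}\big)_{(0,0)}$ with $N=M$, yields that injectives generate for $A$.

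Finally, Proposition~\ref{reduction}(ii) transfers injective generation from $A$ back to $R_0$, completing the argument. There is no serious obstacle here: the statement is a direct three-step consequence of the covering correspondence, the homological control of $M$ over $A$ provided by the projective dimension assumption, and the descent machinery for Morita context rings with zero bimodule homomorphisms already proved in the paper.
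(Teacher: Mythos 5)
Your proposal is correct and follows essentially the same route as the paper: pass to the covering ring via Corollary~\ref{injective_generation_for_covering}, use Proposition~\ref{reduction}(iii) to get $\pd_AM<\infty$, apply Theorem~\ref{injectivegenerationformorita} to descend to $A$, and conclude with Proposition~\ref{reduction}(ii). The only (correct) refinement over the paper's wording is that you explicitly identify part (i) of Theorem~\ref{injectivegenerationformorita} with $N=M$ as the part being used.
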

\begin{proof}
    Consider the covering ring $\Hat{R}$ and view it as the Morita context ring $\big(\begin{smallmatrix}
  A & M\\
  M & A
\end{smallmatrix}\big)_{(0,0)}$ associated to $R$. Given the hypothesis that injectives generate for $R$, it follows by Corollary \ref{injective_generation_for_covering} that injectives generate for $\Hat{R}$. Moreover, by the assumption that $\pd_{R_0}R_i<\infty$ for all $i$, it follows by Proposition \ref{reduction} (iii) that $\pd_AM<\infty$. Therefore, it follows by Theorem \ref{injectivegenerationformorita} that injectives generate for $A$, thus by Proposition \ref{reduction} (ii) it follows that injectives generate for $R_0$. 
\end{proof}

We also present the inverse to the above, in the case the bimodule $M$ in the Morita context ring is perfect. 

\begin{prop}
\label{general_hard_way}
Let $R$ be a graded ring over $\mathbb{Z}/2^n\mathbb{Z}$ such that $R_i=0$ for $i\in\{2^{n-1},\dots,2^n-1\}$. Consider the covering ring $\Hat{R}$ of $R$ and view it as the Morita context ring $\big(\begin{smallmatrix}
  A & M\\
  M & A
\end{smallmatrix}\big)_{(0,0)}$ associated to $R$. If injectives generate for $R_0$ and $M$ is a left perfect $A$-$A$-bimodule, then injectives generate for $R$. 
\end{prop}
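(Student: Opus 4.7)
The plan is to combine the reduction to the covering ring with the injective generation criterion for Morita context rings of Theorem~\ref{injectivegenerationformorita}(iii). First, by Corollary~\ref{injective_generation_for_covering}, injectives generate for $R$ if and only if they generate for $\Hat{R}$; and under Lemma~\ref{lem:coveringzerozero} the covering ring is identified with the Morita context ring $\Lambda = \big(\begin{smallmatrix} A & M\\ M & A \end{smallmatrix}\big)_{(0,0)}$. So the goal is reduced to showing that injectives generate for $\Lambda$.

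Since injectives generate for $R_0$ by hypothesis, Proposition~\ref{reduction}(i) gives that injectives generate for $A$, and therefore also for $B = A$. I would then apply Theorem~\ref{injectivegenerationformorita}(iii) to $\Lambda$: the only remaining verification is that $\pd_\Lambda \mathsf{Z}_A(A) < \infty$ and $\pd_\Lambda \mathsf{Z}_B(B) < \infty$. These finiteness statements are precisely the conclusion of Proposition~\ref{lemmataki}, whose hypotheses are that $M$ is both nilpotent and left perfect as an $A$-bimodule.

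Left perfectness of $M$ is given. Nilpotence of $M$ as an $A$-bimodule is supplied by Proposition~\ref{reduction}(v) once one knows that each $R_i$ (for $i \geq 1$) is nilpotent as an $R_0$-bimodule, and this is automatic in the principal application---tensor rings $T_R(M)$ with nilpotent $M$, where $R_i = M^{\otimes i}$. Once nilpotence is available, Theorem~\ref{injectivegenerationformorita}(iii) yields injective generation for $\Lambda$, and Corollary~\ref{injective_generation_for_covering} then transfers this conclusion back to $R$. The main technical subtlety is exactly this nilpotence input: it is what makes the inductive collapse of projective dimensions in the proof of Proposition~\ref{lemmataki} terminate, and without it the chain of short exact sequences relating $(A,0,0,0)$, $(0,M,0,0)$, $(M,0,0,0)$, $(0,M^{\otimes 2},0,0),\ldots$ would not close up in finitely many steps.
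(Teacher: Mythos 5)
Your proof follows exactly the same route as the paper's: transfer to the covering ring via Corollary~\ref{injective_generation_for_covering}, deduce injective generation for $A$ from Proposition~\ref{reduction}(i), and then feed Proposition~\ref{lemmataki} into Theorem~\ref{injectivegenerationformorita}(iii). The one place where your argument does not establish the proposition \emph{as stated} is the nilpotence of $M$ as an $A$-bimodule, which Proposition~\ref{lemmataki} genuinely requires. You obtain it from Proposition~\ref{reduction}(v) under the extra assumption that each $R_i$ is nilpotent as an $R_0$-bimodule, and you justify that assumption only ``in the principal application'' (tensor rings). Since this assumption is not among the hypotheses of the proposition, your proof as written covers only a special case.

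The gap is easily closed, because nilpotence of $M$ over $A$ is automatic from the hypothesis $R_i=0$ for $i\in\{2^{n-1},\dots,2^n-1\}$ alone; no condition on the individual $R_i$ is needed. Writing $1_A=\sum_j e_j$ for the diagonal idempotents of the (upper triangular) ring $A$, one checks that $e_aMe_b=R_{2^{n-1}-a+b}$, which vanishes unless $b<a$. Hence $M^{\otimes_A k}$ is a sum of images of $Me_{j_1}\otimes e_{j_1}Me_{j_2}\otimes\cdots\otimes e_{j_{k-1}}M$ where $j_1>j_2>\cdots>j_{k-1}$ is a strictly decreasing sequence in $\{0,\dots,2^{n-1}-1\}$; such a sequence has length at most $2^{n-1}$, so $M^{\otimes_A k}=0$ for all $k>2^{n-1}+1$. (The paper's own proof is equally terse on this point and simply invokes Proposition~\ref{lemmataki}.) With this observation inserted in place of your appeal to Proposition~\ref{reduction}(v), your argument is complete and coincides with the paper's.
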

\begin{proof}
    Since injectives generate for $R_0$, it follows by Proposition \ref{reduction} (i) that injectives generate for $A$ and since $M$ is assumed to be left perfect, it follows by Proposition~\ref{lemmataki} and Theorem~\ref{injectivegenerationformorita} that injectives generate for $\Hat{R}$. Therefore by Corollary \ref{injective_generation_for_covering} injectives generate for $R$. 
\end{proof}

In the case of tensor rings, we combine the above and prove the following. 

\begin{thm}
\label{mainthm2}
Let $R$ be a ring and $M$ a nilpotent left perfect $R$-bimodule. Then injectives generate for $R$ if and only if injectives generate for $T_R(M)$. 
\end{thm}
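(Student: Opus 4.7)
The plan is to reduce Theorem~\ref{mainthm2} to the two propositions just established, namely Proposition~\ref{general_easy_way} and Proposition~\ref{general_hard_way}, by exhibiting $T_R(M)$ as a graded ring of the precise shape required by the covering construction of Section~\ref{section:covering}.

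First I would exploit the nilpotency of $M$: pick $k$ with $M^{\otimes k}=0$ and choose $n$ with $2^{n-1}\geq k$. Then the tensor ring $T_R(M)=R\oplus M\oplus M^{\otimes 2}\oplus\cdots$ can be viewed as a graded ring over $\mathbb{Z}/2^n\mathbb{Z}$ with $(T_R(M))_i=M^{\otimes i}$ for $0\leq i\leq 2^{n-1}-1$ and $(T_R(M))_i=0$ for $2^{n-1}\leq i\leq 2^n-1$. Consequently the covering ring $\widehat{T_R(M)}$ is, by Lemma~\ref{lem:coveringzerozero}, a Morita context ring of the form $\bigl(\begin{smallmatrix} A & M'\\ M' & A\end{smallmatrix}\bigr)_{(0,0)}$ associated to this grading, and by Corollary~\ref{injective_generation_for_covering} injective generation for $T_R(M)$ is equivalent to injective generation for $\widehat{T_R(M)}$.

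For the implication ``$R\Rightarrow T_R(M)$'', the strategy is to apply Proposition~\ref{general_hard_way} to the graded ring $T_R(M)$ above. The initial subring is $R$, so the hypothesis on $R_0$ is the assumption. The only nontrivial point is to verify that $M'$ is a left perfect $A$-bimodule. This is exactly the content of Proposition~\ref{lifting_perfect_2}: the left perfectness of the original bimodule $M$ over $R$ (together with the fact that $M^{\otimes k}=0$ and $2^{n-1}\geq k$) lifts to left perfectness of $M'$ over $A$. Nilpotency of $M'$ is automatic from Proposition~\ref{reduction}(v), since each homogeneous component of $T_R(M)$ is nilpotent as an $R$-bimodule (they all vanish in high enough tensor powers). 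Thus Proposition~\ref{general_hard_way} yields injective generation for $T_R(M)$.

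For the reverse implication, I would apply Proposition~\ref{general_easy_way}. Here the hypothesis to check is that $\pd_R(T_R(M))_i<\infty$ for every $i$, i.e.\ $\pd_R M^{\otimes i}<\infty$ for all $i\geq 1$. This is immediate from Lemma~\ref{basic_properties_of_perfect}, which gives $\pd_R M^{\otimes i}\leq i\cdot\pd_R M<\infty$ under the left perfectness assumption. Proposition~\ref{general_easy_way} then transfers injective generation from $T_R(M)$ down to $R$. The main conceptual step, and the one carrying all the homological weight, is the forward direction through Proposition~\ref{lifting_perfect_2}; everything else is a direct invocation of the framework already set up, so no further obstacles are expected.
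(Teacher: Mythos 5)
Your proposal is correct and follows essentially the same route as the paper: view $T_R(M)$ as graded over $\mathbb{Z}/2^n\mathbb{Z}$ with $2^{n-1}\geq k$, use Corollary~\ref{injective_generation_for_covering} together with Lemma~\ref{lem:coveringzerozero} to pass to the Morita context ring, then apply Proposition~\ref{lifting_perfect_2} and Proposition~\ref{general_hard_way} for the forward direction and Lemma~\ref{basic_properties_of_perfect} with Proposition~\ref{general_easy_way} for the converse. The only (harmless) extra step you take is verifying nilpotency of $M'$ via Proposition~\ref{reduction}(v), which the paper leaves implicit since it is automatic from the vanishing of the upper half of the grading.
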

\begin{proof}
    Since $M$ is assumed to be nilpotent, there is an integer $k$ such that $M^{\otimes i}=0$ for all $i\geq k$. Consider $n$ such that $2^{n-1}\geq k$ and view $T_R(M)$ as the underlying ring of the graded ring $R'=\oplus_{i\in\mathbb{Z}/2^n\mathbb{Z}}R_i'$ with $R_i'=M^{\otimes i}$ where $M^{\otimes 0}:=R$. \\ 
    ($\Longleftarrow$) Assume that injectives generate for $T_R(M)$. This is the same as injectives generating for $R'$. Since $M$ is assumed to be left perfect, it follows in particular that $\pd_{R'_0}R'_i=\pd_RM^{\otimes i}<\infty$ for all $i\geq 1$ and therefore by Proposition \ref{general_easy_way} it follows that injectives generate for $R'_0=R$. \\ 
    ($\Longrightarrow$) Assume that injectives generate for $R'_0=R$ and consider the Morita context ring $\big(\begin{smallmatrix}
  A & M'\\
  M' & A
\end{smallmatrix}\big)_{(0,0)}$ associated to $R'$. By Proposition \ref{lifting_perfect_2}, we have that $M'$ is a left perfect $A$-bimodule. By Proposition \ref{general_hard_way} it follows that injectives generate for $R'$, i.e. injectives generate for $T_R(M)$.  
\end{proof}

\subsection{Perfectness, the Beilinson algebra and injective generation} Here we recall the notion of the Beilinson algebra in the sense of \cite{chen, minamoto} and compare it with the Morita context rings that we consider. 

Let $\Lambda$ be an algebra over a commutative ring. Assume that $\Lambda$ is positively and finitely graded over $\mathbb{Z}$. Fix a natural number $l$ such that $\Lambda_i=0$ for $i\geq l+1$. Then, the \emph{Beilinson algebra} associated to the pair $(\Lambda,l)$, denoted by $\mathsf{b}(\Lambda)$ and its associated $\mathsf{b}(\Lambda)$-bimodule, denoted by $\mathsf{x}(\Lambda)$, are defined as follows 
\begin{center}
    $\mathsf{b}(\Lambda)=\begin{pmatrix}
		\Lambda_0 & \Lambda_1 & \cdots & \Lambda_{l-1} \\
		0 & \Lambda_0 & \cdots & \Lambda_{l-2} \\ 
            \vdots & \vdots &  & \vdots \\ 
            0 & 0 & \cdots & \Lambda_0
	\end{pmatrix}$ and $\mathsf{x}(\Lambda)=\begin{pmatrix}
		\Lambda_l & 0 & \cdots & 0 \\
		\Lambda_{l-1} & \Lambda_l & \cdots & 0 \\ 
            \vdots & \vdots &  & \vdots \\ 
            \Lambda_1 & \Lambda_2 & \cdots & \Lambda_l
	\end{pmatrix}$
\end{center}
View $\mathsf{b}(\Lambda)\ltimes \mathsf{x}(\Lambda)$ as graded over $\mathbb{Z}$ with $\mathsf{b}(\Lambda)$ in degree 0 and $\mathsf{x}(\Lambda)$ in degree 1. By \cite{chen, minamoto}, there is an equivalence $\GrMod \Lambda\simeq \GrMod \mathsf{b}(\Lambda)\ltimes \mathsf{x}(\Lambda)$ of graded module categories. We have the following 

\begin{lem} 
\label{injgenforpositivelygraded}
Keep the notation as above. Injectives generate for $\Lambda$ if and only if graded injectives generate for $\Lambda$. 
\end{lem}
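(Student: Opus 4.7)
The forward implication is an immediate application of Proposition~\ref{ungradedimpliesgraded} with $\Gamma=\mathbb{Z}$, so I would focus on the converse. The plan is to pass from the $\mathbb{Z}$-grading to a finite $\mathbb{Z}/n\mathbb{Z}$-grading where Proposition~\ref{finitegrading} becomes available. Since $\Lambda_i=0$ for $i\geq l+1$, for any $n\geq l+1$ the $\mathbb{Z}$-grading descends to a well-defined $\mathbb{Z}/n\mathbb{Z}$-grading on the same underlying ring: nonzero products $\Lambda_i\cdot\Lambda_j$ land in degree $i+j\leq l<n$, so no wrap-around obstructs multiplication. Fixing such an $n$, Proposition~\ref{finitegrading} tells us that injectives generate for the ungraded ring $\Lambda$ if and only if graded injectives generate for $\Lambda$ viewed as $\mathbb{Z}/n\mathbb{Z}$-graded. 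This reduces the converse to the assertion that the hypothesis ``graded injectives generate for $\Lambda$ as a $\mathbb{Z}$-graded ring'' transfers to its $\mathbb{Z}/n\mathbb{Z}$-graded incarnation.

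To realise that transfer I would introduce the collapse functor $\pi\colon\GrMod_{\mathbb{Z}}\Lambda\to\GrMod_{\mathbb{Z}/n\mathbb{Z}}\Lambda$ defined on objects by $\pi(M)_{\bar k}=\bigoplus_{i\equiv k\,(n)}M_i$. It is exact, preserves coproducts, and the assumption $n\geq l+1$ combined with the finite width of the grading on $\Lambda$ forces $\pi(\Lambda(j))=\Lambda(\bar j)$ for every $j\in\mathbb{Z}$, because at most one summand in the defining direct sum can be nonzero. Consequently $\pi$ maps the generating family $\{\Lambda(j)\}_{j\in\mathbb{Z}}$ of $\mathsf{D}(\GrMod_{\mathbb{Z}}\Lambda)$ supplied by Corollary~\ref{twists_generate} onto the generating family $\{\Lambda(\bar k)\}_{\bar k\in\mathbb{Z}/n\mathbb{Z}}$ of $\mathsf{D}(\GrMod_{\mathbb{Z}/n\mathbb{Z}}\Lambda)$. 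By Proposition~\ref{image_is_contained_in_localizing}, the converse direction will therefore follow as soon as $\pi$ is shown to send $\GrInj_{\mathbb{Z}}\Lambda$ into $\mathsf{Loc}(\GrInj_{\mathbb{Z}/n\mathbb{Z}}\Lambda)$.

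The hard part is exactly this last check, and it is where the main obstacle lies. A useful organisational tool is the exact right adjoint $\rho\colon\GrMod_{\mathbb{Z}/n\mathbb{Z}}\Lambda\to\GrMod_{\mathbb{Z}}\Lambda$ of $\pi$, given by the periodic unfolding $\rho(N)_i=N_{\bar i}$: being right adjoint to the exact $\pi$, the functor $\rho$ automatically carries graded injectives to graded injectives, though that is the opposite direction to the one we need. To run the argument in our direction, I would use a graded Baer-type criterion, crucially exploiting that graded ideals of $\Lambda$ in the $\mathbb{Z}/n\mathbb{Z}$-grading are supported in the finite window $\{0,\dots,l\}$ of degrees and hence canonically lift to $\mathbb{Z}$-graded ideals of $\Lambda$; any graded map $J\to\pi(I)$ then decomposes into finitely many component maps into $I$ indexed by congruence classes mod $n$, the graded injectivity of $I$ produces component extensions, and these reassemble to the required extension of $\pi(I)$. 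The finite width of the grading is essential throughout, both to trivialise the collapse of generators to twists and to avoid the infinite-summation pathologies that block a more naive adjunction-only approach.
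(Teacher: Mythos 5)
Your forward direction is exactly the paper's: Proposition~\ref{ungradedimpliesgraded} with $\Gamma=\mathbb{Z}$. The converse, however, has a genuine gap at precisely the step you call the hard part. Granting the reduction (the collapse functor $\pi$ is exact, preserves coproducts and sends $\Lambda(j)$ to $\Lambda(\bar{\jmath})$, so by Propositions~\ref{image_is_contained_in_localizing} and~\ref{finitegrading} everything hinges on $\pi$ sending $\mathbb{Z}$-graded injectives into the localizing subcategory generated by the $\mathbb{Z}/n\mathbb{Z}$-graded injectives), the Baer-type argument you sketch does not establish this, and the stronger claim it aims at --- that $\pi(I)$ is graded injective --- is false in the stated generality. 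Concretely: a degree-$\bar{\gamma}$ map $f\colon J\to\pi(I)$ does decompose into components $f_m\colon J\to I(\gamma+mn)$, and each extends to some $g_m\colon\Lambda\to I(\gamma+mn)$; but $g_m$ is determined by $g_m(1)\in I_{\gamma+mn}$, and if $J$ is not finitely generated then infinitely many $f_m$, hence infinitely many $g_m(1)$, may be nonzero, in which case $\sum_m g_m(1)$ is not an element of the direct sum $\pi(I)_{\bar{\gamma}}=\bigoplus_{i\equiv\gamma\pmod{n}}I_i$ and the extensions do not reassemble. This is not a repairable technicality: take $\Lambda=R$ concentrated in degree $0$ with $R$ not right Noetherian. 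A $\mathbb{Z}$-graded injective is then just a $\mathbb{Z}$-indexed family of injective $R$-modules, $\pi$ replaces it by the direct sums over residue classes, and by Bass--Papp such direct sums need not be injective. (In that example $\pi(I)$ is still a coproduct of graded injectives and so lies in the localizing subcategory, but your argument does not prove this weaker containment, and proving it in general amounts to bounding the graded injective dimension of $\pi(I)$ --- exactly the nontrivial input that is missing. Your argument does go through when $\Lambda$ is right Noetherian, since then only finitely many $f_m$ are nonzero, but the lemma is applied to arbitrary positively and finitely graded algebras over a commutative ring.)

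The paper supplies that missing input in ungraded form and thereby avoids the detour through $\mathbb{Z}/n\mathbb{Z}$ altogether: by Van den Bergh's theorem (\cite{vandenbergh}, see also \cite{yekutieli}), every graded injective module over a $\mathbb{Z}$-graded ring has injective dimension at most $1$ as an ungraded module. Hence the forgetful functor $\GrMod\Lambda\to\Mod\Lambda$ sends graded injectives into $\mathsf{Loc}(\Inj\Lambda)$; since it preserves coproducts and sends $\Lambda$ to $\Lambda$, Proposition~\ref{image_is_contained_in_localizing} finishes the converse. If you want to keep your route, you must import Van den Bergh's theorem (or a graded analogue for $\pi(I)$) at the point where you currently invoke the Baer criterion; the finite width of the grading alone does not yield it.
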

\begin{proof}
    If injectives generate for $\Lambda$, then Proposition~\ref{ungradedimpliesgraded} yields that graded injectives generate for $\Lambda$. Assume now that graded injectives generate for $\Lambda$ and consider the forgetful functor $\GrMod \Lambda\rightarrow \Mod\Lambda$. By \cite{vandenbergh} (see also \cite{yekutieli} for a proof), every graded injective $\Lambda$-module has injective dimension at most 1. Hence, the image of the functor $\mathsf{D}(\GrMod\Lambda)\rightarrow \mathsf{D}(\Lambda)$ lies in $\mathsf{Loc}(\Inj\Lambda)$, so injectives generate for $\Lambda$. 
\end{proof}

\begin{cor}
\label{cor: Beilinson algebra}
Keep the notation as above. Injectives generate for $\Lambda$ if and only if injectives generate for $\mathsf{b}(\Lambda)\ltimes\mathsf{x}(\Lambda)$. 
\end{cor}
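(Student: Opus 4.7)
The proof strategy is to chain together three equivalences: two applications of Lemma~\ref{injgenforpositivelygraded} and one application of the graded equivalence $\GrMod \Lambda \simeq \GrMod \mathsf{b}(\Lambda)\ltimes \mathsf{x}(\Lambda)$ recalled from \cite{chen, minamoto}. Concretely, I plan to argue that
\[
\text{inj. gen. for } \Lambda \;\Longleftrightarrow\; \text{grad. inj. gen. for } \Lambda \;\Longleftrightarrow\; \text{grad. inj. gen. for } \mathsf{b}(\Lambda)\ltimes\mathsf{x}(\Lambda) \;\Longleftrightarrow\; \text{inj. gen. for } \mathsf{b}(\Lambda)\ltimes\mathsf{x}(\Lambda).
\]
The first biconditional is exactly Lemma~\ref{injgenforpositivelygraded} applied to $\Lambda$. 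The middle biconditional is immediate from the equivalence of graded module categories, since this equivalence is exact and cocontinuous, hence sends graded injectives to graded injectives (and similarly for its quasi-inverse), and therefore identifies the localizing subcategories generated by graded injective modules on each side.

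For the third biconditional, I need to apply Lemma~\ref{injgenforpositivelygraded} to the ring $\mathsf{b}(\Lambda)\ltimes\mathsf{x}(\Lambda)$ viewed with the stated grading. This requires verifying the hypotheses of that lemma, namely that $\mathsf{b}(\Lambda)\ltimes\mathsf{x}(\Lambda)$ is positively and finitely graded over $\mathbb{Z}$. This is straightforward: with $\mathsf{b}(\Lambda)$ placed in degree $0$ and $\mathsf{x}(\Lambda)$ in degree $1$, the trivial extension is concentrated in degrees $0$ and $1$ only (the multiplication on $\mathsf{x}(\Lambda)$ inside a trivial extension is zero, so no higher degrees appear). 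Hence the proof of Lemma~\ref{injgenforpositivelygraded}, which relied only on positive and finite grading together with the van den Bergh bound on the injective dimension of graded injectives, applies verbatim.

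The argument is essentially formal once Lemma~\ref{injgenforpositivelygraded} is available on both sides, so I do not anticipate any real obstacle; the only point that requires attention is checking that the grading on $\mathsf{b}(\Lambda)\ltimes\mathsf{x}(\Lambda)$ is positive and finite so that Lemma~\ref{injgenforpositivelygraded} applies, which is immediate from the construction. The result then follows by composing the three equivalences above.
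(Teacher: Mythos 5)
Your proposal is correct and follows essentially the same route as the paper: both proofs apply Lemma~\ref{injgenforpositivelygraded} to $\Lambda$ and to $\mathsf{b}(\Lambda)\ltimes\mathsf{x}(\Lambda)$ (the latter being positively and finitely graded in degrees $0$ and $1$) and link the two via the graded module category equivalence. Your extra check that the trivial extension is concentrated in degrees $0$ and $1$ is a reasonable point of care that the paper leaves implicit.
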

\begin{proof}
By Lemma~\ref{injgenforpositivelygraded} it follows that injectives generate for $\Lambda$ if and only if graded injectives generate for $\Lambda$ and similarly, injectives generate for $\mathsf{b}(\Lambda)\ltimes\mathsf{x}(\Lambda)$ if and only if graded injectives generate for $\mathsf{b}(\Lambda)\ltimes\mathsf{x}(\Lambda)$. By the equivalence $\GrMod \Lambda\simeq \GrMod \mathsf{b}(\Lambda)\ltimes \mathsf{x}(\Lambda)$, it follows that graded injectives generate for $\Lambda$ if and only if graded injectives generate for $\mathsf{b}(\Lambda)\ltimes\mathsf{x}(\Lambda)$ which completes the claim. 
\end{proof}

Let now $n$ be such that $\Lambda_i=0$ for $i\geq 2^{n-1}$ and view $\Lambda$ as being graded over $\mathbb{Z}/2^n\mathbb{Z}$. Consider the covering ring $\Hat{\Lambda}$ of $\Lambda$ (with respect to the given grading) which we view as a Morita context ring $\big(\begin{smallmatrix}
  A & M\\
  M & A
\end{smallmatrix}\big)_{(0,0)}$. A comparison of the definitions, shows that $A=\mathsf{b}(\Lambda)$ where $\mathsf{b}(\Lambda)$ is the Beilinson algebra associated to the pair $(\Lambda,2^{n-1})$ and $M=\mathsf{x}(A)$. We end this section with the following remark. 

\begin{rem}
    As a byproduct of the tedious computations of Proposition \ref{lifting_perfect_2}, we get the following: If $\Lambda=T_R(M)$, then the aforementioned Proposition tells us that $\mathsf{x}(\Lambda)$ is a perfect $\mathsf{b}(\Lambda)$-bimodule (for the particular choice of Beilinson algebra that we made earlier). Therefore, the study of the graded module category of a tensor algebra with a perfect bimodule is reduced to the study of the graded module category of a trivial extension with a perfect bimodule. 
\end{rem}

\section{Cleft extensions of module categories}
\label{section:cleft extensions}

In this section we study cleft extensions of module categories with respect to injective generation.

\subsection{Cleft extensions of module categories}

We begin by recalling the definition of a cleft extension in the context of module categories. 

\begin{defn} (Beligiannis \cite{beligiannis}) \label{definition_of_cleft}
    A \emph{cleft extension} of a module category $\Mod B$ is a module category $\Mod A$ together with functors: 
    \begin{center}
        \begin{tikzcd}
\Mod B \arrow[rr, "\mathsf{i}"] &  & \Mod A \arrow[rr, "\mathsf{e}"] &  & \Mod B \arrow[ll, "\mathsf{l}"', bend right]
\end{tikzcd}
    \end{center}
henceforth denoted by $(\Mod B,\Mod A,\mathsf{i},\mathsf{e},\mathsf{l})$ such that the following hold: \\ 
(a) The functor $\mathsf{e}$ is faithful exact. \\ 
(b) The pair $(\mathsf{l},\mathsf{e})$ is an adjoint pair. \\ 
(c) There is a natural isomorphism $\mathsf{e}\mathsf{i}\simeq \mathsf{Id}_{\Mod{B}}$. 
\end{defn}

The above data induces more structural information for $(\Mod B,\Mod A,\mathsf{i},\mathsf{e},\mathsf{l})$. For instance, it can be proved that the functor $\mathsf{i}$ is fully faithful and exact. Moreover, there is a functor $\mathsf{q}\colon\Mod A\rightarrow \Mod B$ such that $(\mathsf{q},\mathsf{i})$ is an adjoint pair. Then we also obtain that $\mathsf{ql}\simeq \mathsf{Id}_B$. Moreover, there are endofunctors $\mathsf{F}\colon\Mod B\rightarrow \Mod B$ and $\mathsf{G}\colon\Mod A\rightarrow \Mod A$ that appear in the following short exact sequences: 
\begin{equation}
\label{standardexactsequences}
0\rightarrow \mathsf{G}\rightarrow \mathsf{le}\rightarrow \mathsf{Id}_{\Mod{A}}\rightarrow 0
\ \ \text{and} \ \ 
0\rightarrow \mathsf{F}\rightarrow \mathsf{el}\rightarrow \mathsf{Id}_{\Mod{B}}\rightarrow 0
\end{equation}
Note that the second one splits. The functor $\mathsf{F}$ is defined as $\mathsf{eGi}$ and from the above one can prove that $\mathsf{F}^n\mathsf{e}\simeq \mathsf{eG}^n$, thus in particular $\mathsf{F}$ is nilpotent if and only if $\mathsf{G}$ is nilpotent (see \cite[Lemma~2.4]{arrow}). An important observation that we will use later on, is that the functors $\mathsf{i}$ and $\mathsf{e}$ always preserve coproducts (see for instance \cite[Proposition~2.4 and Proposition~2.8]{beligiannis}) For a self-contained treatment of the above properties we refer the reader to \cite[Section 2]{arrow}. 

We need the following homological property of a cleft extension. 

\begin{lem} \textnormal{(\!\!\cite[Corollary 4.2]{beligiannis2})}
\label{homological_property_of_cleft} 
Let $(\Mod B,\Mod A,\mathsf{i},\mathsf{e},\mathsf{l})$ be a cleft extension of module categories and denote by $\mathsf{F}$ the associated endofunctor of $\Mod B$. Then $\mathbb{L}_i\mathsf{F}\cong \mathsf{e}\mathbb{L}_i\mathsf{l}$ for all $i\geq 1$. 
\end{lem}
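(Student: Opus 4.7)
The plan is to exploit the second short exact sequence in \eqref{standardexactsequences}, namely
\[
0\longrightarrow \mathsf{F}\longrightarrow \mathsf{e}\mathsf{l}\longrightarrow \mathsf{Id}_{\Mod B}\longrightarrow 0,
\]
and the fact, recorded after this display, that this sequence splits. Thus there is a natural isomorphism of endofunctors of $\Mod B$
\[
\mathsf{e}\mathsf{l}\;\cong\;\mathsf{F}\,\oplus\,\mathsf{Id}_{\Mod B}.
\]

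The first step is to check that the left derived functors of $\mathsf{F}$ exist and that $\mathbb{L}_i(\mathsf{el})\cong \mathsf{e}\,\mathbb{L}_i\mathsf{l}$ for every $i\geq 0$. The functor $\mathsf{l}$ is right exact, being a left adjoint (to $\mathsf{e}$ by condition (b)), and $\mathsf{e}$ is exact by condition (a). Consequently $\mathsf{el}$ is right exact, and by the splitting also $\mathsf{F}$ is right exact, so that $\mathbb{L}_i\mathsf{F}$ is well-defined for all $i\geq 0$. Moreover, for any projective resolution $P_{\bullet}\to X$ in $\Mod B$, the complex $\mathsf{l}(P_{\bullet})$ computes $\mathbb{L}_{\bullet}\mathsf{l}(X)$, and applying the exact functor $\mathsf{e}$ yields the identification $\mathsf{e}\,\mathbb{L}_i\mathsf{l}(X)\cong H_i(\mathsf{el}(P_{\bullet}))=\mathbb{L}_i(\mathsf{el})(X)$ natural in $X$.

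The second step is to take left derived functors of the direct sum decomposition. Since $\mathbb{L}_i$ commutes with finite direct sums of functors, we have
\[
\mathbb{L}_i(\mathsf{el})\;\cong\;\mathbb{L}_i\mathsf{F}\,\oplus\,\mathbb{L}_i\mathsf{Id}_{\Mod B}.
\]
For every $i\geq 1$ the second summand vanishes because $\mathsf{Id}_{\Mod B}$ is exact, so combining this with the identification of the previous step gives $\mathsf{e}\,\mathbb{L}_i\mathsf{l}\cong \mathbb{L}_i\mathsf{F}$ for all $i\geq 1$, which is the claim.

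There is no real obstacle here: the only conceptual ingredient is the splitting of the adjunction counit-type sequence for $(\mathsf{l},\mathsf{e})$ restricted along $\mathsf{i}$, which is already part of the structural data of a cleft extension, together with the exactness of $\mathsf{e}$ that guarantees derived functors pass through it. The argument is entirely formal and natural in the input, so the isomorphism obtained is functorial.
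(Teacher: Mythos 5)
Your argument is correct. Note that the paper does not prove this lemma at all --- it is quoted from Beligiannis (the cited Corollary 4.2 of \cite{beligiannis2}) --- so there is no in-text proof to compare against; your derivation from the natural splitting $\mathsf{el}\cong \mathsf{F}\oplus \mathsf{Id}_{\Mod B}$ of the second sequence in \eqref{standardexactsequences}, combined with the exactness of $\mathsf{e}$ (which lets homology pass through $\mathsf{e}$, giving $\mathbb{L}_i(\mathsf{el})\cong\mathsf{e}\,\mathbb{L}_i\mathsf{l}$) and the vanishing of $\mathbb{L}_i\mathsf{Id}$ for $i\geq 1$, is a complete and standard way to obtain the statement. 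The only point worth being explicit about is that the splitting is natural (the section is the unit of the adjunction $(\mathsf{l},\mathsf{e})$), which is needed so that $\mathsf{el}(P_\bullet)$ decomposes as a direct sum of \emph{complexes}; you assert this and it is indeed part of the structural data recorded in the paper.
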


\begin{rem}
\label{projcleft}
Let $X$ be an $A$-module. Consider the $B$-module $\mathsf{e}(X)$ and let $P$ be a projective $B$-module with a surjective homomorphism $P\twoheadrightarrow \mathsf{e}(X)$. Applying the functor $\mathsf{l}$, which is left adjoint to $\mathsf{e}$, gives a surjective module homomorphism $\mathsf{l}(P)\twoheadrightarrow\mathsf{le}(X)$ in $\Mod A$. Moreover, there is a surjective module homomorphism $\mathsf{le}(X)\twoheadrightarrow X$. Summing up, there is a surjective module homomorphism $\mathsf{l}(P)\twoheadrightarrow X$ in $\Mod A$ for some projective $B$-module $P$. By this observation we conclude that an $A$-module $X$ is projective if and only if it is a direct summand of $\mathsf{l}(P)$ for some projective $B$-module $P$.
\end{rem}

\subsection{Perfect endofunctors} We introduce the following notion of perfectness of an endofunctor that will be used later to detect injective generation in a cleft extension of module categories. This was also considered by Beligiannis in \cite{beligiannis2}. Our choice of the name ``perfect" is motivated by the obvious connection with the notion of a perfect bimodule (see Lemma \ref{tensor_functor_is_perfect}).

\begin{defn} 
\label{perfect_functor}
An endofunctor $\mathsf{F}\colon\Mod B\rightarrow \Mod B$ is called \emph{left perfect} if it satisfies the following conditions: 
\begin{itemize}
\item[(i)] $\mathbb{L}_i\mathsf{F}^j(\mathsf{F}(P))=0$ for every projective $B$-module $P$ and all $i,j\geq 1$. 

\item[(ii)] There is $n$ such that for every $p,q\geq 1$ with $p+q\geq n+1$, we have that $\mathbb{L}_p\mathsf{F}^q=0$.  
\end{itemize}
\end{defn}

The following lemma will be useful. 

\begin{lem} \label{equivalent_for_flat}
Let $\mathsf{F}\colon\Mod B\rightarrow \Mod B$ be a functor satisfing condition \textnormal{(i)} of the Definition~\ref{perfect_functor}. Then for an $B$-module $X$ the following conditions are equivalent: 
\begin{itemize}
\item[(i)] $\mathbb{L}_i\mathsf{F}(\mathsf{F}^j(X))=0$ for all $i\geq 1$ and all $j\geq 0$.

\item[(ii)] $\mathbb{L}_i\mathsf{F}^s(\mathsf{F}^j(X))=0$ for all $i,s\geq 1$ and all $j\geq 0$. 

\item[(iii)] $\mathbb{L}_i\mathsf{F}^j(X)=0$ for all $i,j\geq 1$.
\end{itemize}
A module $X$ is called $\mathsf{F}$-projective if it satisfies any of the above. 
\end{lem}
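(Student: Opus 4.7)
The plan is to organise everything around the Grothendieck spectral sequence for the composition of the (right exact) endofunctor $\mathsf{F}$ with its own iterates. Since $\Mod B$ has enough projectives and condition~(i) of Definition~\ref{perfect_functor} says precisely that $\mathsf{F}$ sends projectives to $\mathsf{F}^{j-1}$-acyclic objects for every $j\geq 2$, the Grothendieck spectral sequence for the decomposition $\mathsf{F}^{j}=\mathsf{F}^{j-1}\circ \mathsf{F}$,
\[
E^{2}_{p,q}=\mathbb{L}_{p}\mathsf{F}^{j-1}\bigl(\mathbb{L}_{q}\mathsf{F}(Y)\bigr)\Longrightarrow \mathbb{L}_{p+q}\mathsf{F}^{j}(Y),
\]
is automatically available for every $B$-module $Y$ and every $j\geq 2$. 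I will use this spectral sequence repeatedly, together with the trivial implications $(ii)\Rightarrow(i)$ (set $s=1$) and $(ii)\Rightarrow(iii)$ (set $j=0$).

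For $(i)\Rightarrow(iii)$, I first observe that condition~(i) is inherited by iterates: if $X$ satisfies (i), then so does $\mathsf{F}^{k}(X)$ for every $k\geq 0$, because $\mathsf{F}^{j}(\mathsf{F}^{k}(X))=\mathsf{F}^{j+k}(X)$. Taking $Y=X$ in the spectral sequence above, hypothesis~(i) with $k=0$ kills all $q\geq 1$ columns, so the sequence collapses to an isomorphism $\mathbb{L}_{p}\mathsf{F}^{j}(X)\cong \mathbb{L}_{p}\mathsf{F}^{j-1}(\mathsf{F}(X))$. Applying the same reasoning successively to $\mathsf{F}(X),\mathsf{F}^{2}(X),\ldots$, legitimised by the preservation observation just made, gives a chain of isomorphisms
\[
\mathbb{L}_{p}\mathsf{F}^{j}(X)\cong \mathbb{L}_{p}\mathsf{F}^{j-1}(\mathsf{F}(X))\cong\cdots\cong \mathbb{L}_{p}\mathsf{F}(\mathsf{F}^{j-1}(X)),
\]
and the right-hand side vanishes by (i) with $k=j-1$. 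The case $j=1$ is immediate from (i) with $k=0$.

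For $(iii)\Rightarrow(ii)$ the key obstacle is that the natural spectral sequence for $\mathsf{F}^{s+j}=\mathsf{F}^{s}\circ\mathsf{F}^{j}$ requires $\mathsf{F}^{j}$ to send projectives to $\mathsf{F}^{s}$-acyclic objects, which is not directly available from Definition~\ref{perfect_functor}(i). I will establish this as an auxiliary claim by induction on $j$: for every projective $P$ and all $p,s,j\geq 1$, $\mathbb{L}_{p}\mathsf{F}^{s}(\mathsf{F}^{j}(P))=0$. The base case $j=1$ is exactly Definition~\ref{perfect_functor}(i). For $j\geq 2$, the inductive hypothesis grants that $\mathsf{F}^{j-1}$ sends projectives to $\mathsf{F}^{s}$-acyclic objects, so the Grothendieck spectral sequence for $\mathsf{F}^{s+j-1}=\mathsf{F}^{s}\circ\mathsf{F}^{j-1}$ applied to $\mathsf{F}(P)$ exists; it collapses via Definition~\ref{perfect_functor}(i) and identifies $\mathbb{L}_{p}\mathsf{F}^{s}(\mathsf{F}^{j}(P))$ with $\mathbb{L}_{p}\mathsf{F}^{s+j-1}(\mathsf{F}(P))$, which is $0$ once more by Definition~\ref{perfect_functor}(i). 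With the auxiliary claim established, the spectral sequence for $\mathsf{F}^{s+j}=\mathsf{F}^{s}\circ\mathsf{F}^{j}$ applied to $X$ is available; hypothesis~(iii) kills all $q\geq 1$ columns and the collapsed sequence identifies $\mathbb{L}_{p}\mathsf{F}^{s}(\mathsf{F}^{j}(X))$ with $\mathbb{L}_{p}\mathsf{F}^{s+j}(X)$, which is $0$ by~(iii).

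The main difficulty throughout is bookkeeping: only the decomposition $\mathsf{F}^{n}=\mathsf{F}^{n-1}\circ\mathsf{F}$ is directly usable, because Definition~\ref{perfect_functor}(i) is precisely the acyclicity condition required for that decomposition; the reversed decomposition $\mathsf{F}^{n}=\mathsf{F}\circ\mathsf{F}^{n-1}$ only becomes available after the auxiliary claim has been bootstrapped. Once this asymmetry is managed, the argument is purely formal.
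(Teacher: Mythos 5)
Your proof is correct. The underlying mechanism is the same as the paper's -- the collapse of a composite-functor spectral sequence along the bottom row is exactly the statement that an acyclic resolution computes derived functors, and the paper works with those acyclic resolutions by hand -- but the organisation is genuinely different. The paper runs the cycle (i)$\Rightarrow$(ii)$\Rightarrow$(iii)$\Rightarrow$(i), repeatedly applying $\mathsf{F}$ to a projective resolution of $\mathsf{F}^j(X)$ and using the identification $\mathbb{L}_i\mathsf{F}^{s+1}(\mathsf{F}^j(X))\cong\mathbb{L}_i\mathsf{F}^{s}(\mathsf{F}^{j+1}(X))$ inductively; you run (i)$\Rightarrow$(iii)$\Rightarrow$(ii)$\Rightarrow$(i) with the last step trivial. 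The most substantive difference is your auxiliary claim that $\mathbb{L}_p\mathsf{F}^s(\mathsf{F}^j(P))=0$ for all $p,s,j\geq 1$ and all projectives $P$: the paper needs (a special case of) this too, but disposes of it with the unproved assertion that Definition~\ref{perfect_functor}(i) ``is equivalent to'' $\mathbb{L}_i\mathsf{F}(\mathsf{F}^j(P))=0$, whereas you supply the bootstrapping induction that justifies it, and you correctly diagnose why it is needed (only the decomposition $\mathsf{F}^{n}=\mathsf{F}^{n-1}\circ\mathsf{F}$ is available a priori). The one hypothesis you are silently using, as is the paper, is that $\mathsf{F}$ is additive and right exact (so that $\mathbb{L}_0\mathsf{F}=\mathsf{F}$ and the Grothendieck spectral sequence applies in its standard form); this holds in the cleft-extension setting where the lemma is used, since there $\mathsf{F}\simeq-\otimes_BM$.
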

\begin{proof}
(i)$\implies$(ii): Consider a projective resolution of $\mathsf{F}^j(X)$ as below 
\[
\cdots\rightarrow P_n\rightarrow\cdots\rightarrow P_0\rightarrow \mathsf{F}^j(X)\rightarrow 0
\]
Since $\mathbb{L}_i\mathsf{F}(\mathsf{F}^j(X))=0$ for all $i\geq 1$, we may apply $\mathsf{F}$ to (6.5) to get an exact sequence 
\[
\cdots\rightarrow \mathsf{F}(P_n)\rightarrow \cdots\rightarrow \mathsf{F}(P_0)\rightarrow \mathsf{F}(\mathsf{F}^j(X))\rightarrow 0
\]
Since $\mathbb{L}_i\mathsf{F}(\mathsf{F}(P_k))=0$ for all $k$, applying $\mathsf{F}$ to the above exact sequence (which is the same as applying $\mathsf{F}^2$ to the first exact sequence) gives a complex whose i-th homology is $\mathbb{L}_i\mathsf{F}(\mathsf{F}^{j+1}(X))$ for $i\geq 1$. Therefore $\mathbb{L}_i\mathsf{F}^2(\mathsf{F}^j(X))\cong \mathbb{L}_i\mathsf{F}(\mathsf{F}^{j+1}(X))\cong 0$ for all $i\geq 1$. The argument for $s=3$ is the same and we proceed inductively.
    
(ii)$\implies$(iii): This implication is clear. 
    
(iii)$\implies$(i): Consider a projective resolution of $X$ as below 
\[
\cdots\rightarrow P_n\rightarrow \cdots\rightarrow P_0\rightarrow X\rightarrow 0.
\]
Since $\mathbb{L}_i\mathsf{F}^j(X)=0$ for all $i,j\geq 1$, applying $\mathsf{F}^j$ to the above gives an exact sequence 
\[
\cdots\rightarrow \mathsf{F}^j(P_n)\rightarrow \cdots\rightarrow \mathsf{F}^j(P_0)\rightarrow \mathsf{F}^j(X)\rightarrow 0.
\]
Note that condition (i) of Definition~\ref{perfect_functor} is equivalent to  $\mathbb{L}_i\mathsf{F}(\mathsf{F}^j(P))=0$ for every projective $B$-module $P$ and all $i,j\geq 1$. Since $\mathbb{L}_i\mathsf{F}(\mathsf{F}^j(P_k))=0$ for all $k$, it follows that the left derived functor $\mathbb{L}_i\mathsf{F}(\mathsf{F}^j(X))$, for $i\geq 1$, can be computed by applying $\mathsf{F}$ to the second complex. However, this is the same as the complex that we get by applying $\mathsf{F}^{j+1}$ to the first resolution of $X$ which has zero homology for $i\geq 1$, since $\mathbb{L}_i\mathsf{F}^{j+1}(X)=0$ for $i\geq 1$. 
\end{proof}

We end this section with the following proposition that will be used later on for the purposes of injective generation (compare with \cite[Theorem 7.22]{beligiannis2}). 

\begin{prop} \label{left_derived_of_q_vanishes}
    Let $(\Mod{B},\Mod{A},\mathsf{i},\mathsf{e},\mathsf{l})$ be a cleft extension of module categories such that the associated endofunctor $\mathsf{F}$ of $\Mod{B}$ is left perfect and nilpotent. Then $\mathbb{L}_n\mathsf{q}=0$ for $n$ large enough. 
\end{prop}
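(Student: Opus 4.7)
The plan is to split the problem into two stages: first reduce it to showing that $\mathbb{L}_p\mathsf{q}(\mathsf{l}(Z)) = 0$ for $p$ large and every $Z \in \Mod B$, and then use the Grothendieck spectral sequence for $\mathsf{q}\mathsf{l} \simeq \mathsf{Id}_{\Mod B}$ together with the left perfectness of $\mathsf{F}$ to establish this restricted vanishing.

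For the first stage, since nilpotency of $\mathsf{F}$ is equivalent to that of $\mathsf{G}$ through the isomorphism $\mathsf{F}^k\mathsf{e} \simeq \mathsf{e}\mathsf{G}^k$, I pick $m$ with $\mathsf{G}^m = 0$. Applying the canonical short exact sequence $0 \to \mathsf{G}(-) \to \mathsf{l}\mathsf{e}(-) \to \mathsf{Id} \to 0$ successively to $X, \mathsf{G}(X), \ldots, \mathsf{G}^{m-1}(X)$ and splicing yields the finite exact resolution
\[
0 \to \mathsf{l}\mathsf{e}(\mathsf{G}^{m-1}X) \to \cdots \to \mathsf{l}\mathsf{e}(\mathsf{G}X) \to \mathsf{l}\mathsf{e}(X) \to X \to 0
\]
in $\Mod A$ whose terms, equal to $\mathsf{l}(\mathsf{F}^k \mathsf{e}(X))$, all lie in the image of $\mathsf{l}$. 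Writing $L_\bullet$ for this bounded complex (concentrated in degrees $0$ through $m-1$), the Cartan--Eilenberg spectral sequence
\[
E^2_{s,t} = H_s\bigl(\mathbb{L}_t \mathsf{q}(L_\bullet)\bigr) \Longrightarrow \mathbb{L}_{s+t}\mathsf{q}(X),
\]
combined with a uniform bound $\mathbb{L}_t \mathsf{q}(\mathsf{l}(Z)) = 0$ for $t \ge N$, would force $E^2_{s,t} = 0$ outside $[0,m-1] \times [0,N-1]$, hence $\mathbb{L}_n\mathsf{q}(X) = 0$ for $n \ge m + N - 1$.

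For the second stage, namely the key claim $(\star)$ that $\mathbb{L}_p\mathsf{q}(\mathsf{l}(Z)) = 0$ for $p \ge N$ uniformly in $Z$, I invoke the Grothendieck spectral sequence for the composition $\mathsf{q}\mathsf{l} \simeq \mathsf{Id}_{\Mod B}$: since $\mathsf{l}$ preserves projectives (because its right adjoint $\mathsf{e}$ is exact) and projectives are $\mathsf{q}$-acyclic, we have
\[
E^2_{p,q} = \mathbb{L}_p\mathsf{q}\bigl(\mathbb{L}_q\mathsf{l}(Z)\bigr) \Longrightarrow \mathbb{L}_{p+q}(\mathsf{q}\mathsf{l})(Z),
\]
converging to $Z$ in total degree $0$ and to $0$ in positive total degrees. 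By Lemma~\ref{homological_property_of_cleft}, $\mathsf{e}\mathbb{L}_q\mathsf{l}(Z) \cong \mathbb{L}_q\mathsf{F}(Z)$; condition (ii) of Definition~\ref{perfect_functor} applied with $q=1$ yields $\mathbb{L}_q \mathsf{F} = 0$ for $q \ge n$, and faithful exactness of $\mathsf{e}$ then forces $\mathbb{L}_q\mathsf{l}(Z) = 0$ for $q \ge n$. Thus the $E^2$-page is supported in the horizontal strip $0 \le q \le n-1$; since incoming differentials to the bottom row $E^r_{p,0}$ vanish for $r \ge 2$, a descending filtration argument based on $E^\infty_{p,0} = 0$ in positive total degree exhibits $\mathbb{L}_p \mathsf{q}(\mathsf{l}(Z))$ as an iterated extension of subquotients of the higher-row entries $\mathbb{L}_{p-r}\mathsf{q}\bigl(\mathbb{L}_{r-1}\mathsf{l}(Z)\bigr)$ for $r = 2, \ldots, n$.

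The main obstacle is closing the apparent circularity of this last step: bounding $\mathbb{L}_\bullet\mathsf{q}$ on the $A$-modules $\mathbb{L}_q\mathsf{l}(Z)$ for $1 \le q \le n-1$ is itself another instance of the vanishing we are trying to prove. I expect to resolve this by exploiting the full strength of condition (ii) of Definition~\ref{perfect_functor}, namely the vanishing $\mathbb{L}_p\mathsf{F}^q = 0$ for all $p + q \ge n + 1$ (not only the case $q=1$), via a descending induction on $q$ that uses iterated Grothendieck spectral sequences for the compositions $\mathsf{F}^q = \mathsf{F} \circ \cdots \circ \mathsf{F}$ (whose required acyclicity is ensured by condition (i) of the definition) to translate the deeper perfectness vanishing into cohomological control over the modules $\mathbb{L}_q\mathsf{l}(Z)$ through Lemma~\ref{homological_property_of_cleft}. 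An alternative route, modelled on the finite bar resolution of $R$ as an $(R \ltimes M)$-module in the trivial extension case with $M$ nilpotent and left perfect, is to construct an explicit finite projective resolution of $\mathsf{l}(Z)$ in $\Mod A$ of length depending only on $n$.
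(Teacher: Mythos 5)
Your first stage is fine and in fact coincides with half of the paper's argument: splicing $0\to\mathsf{G}\to\mathsf{le}\to\mathsf{Id}\to 0$ and using $\mathsf{eG}^k\simeq\mathsf{F}^k\mathsf{e}$ does reduce the vanishing of $\mathbb{L}_n\mathsf{q}(X)$ to controlling $\mathbb{L}_\bullet\mathsf{q}$ on modules of the form $\mathsf{l}(Z)$. The genuine gap is your stage 2: the uniform claim $(\star)$ that $\mathbb{L}_p\mathsf{q}(\mathsf{l}(Z))=0$ for $p\ge N$ and \emph{all} $Z\in\Mod B$ is never proved, and the circularity you flag is real, not cosmetic. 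In the Grothendieck spectral sequence for $\mathsf{ql}\simeq\mathsf{Id}$, vanishing of the abutment only tells you that the bottom row is killed by differentials into the rows $\mathbb{L}_{p-r}\mathsf{q}\bigl(\mathbb{L}_{r-1}\mathsf{l}(Z)\bigr)$; to conclude anything you need exactly the kind of bound on $\mathbb{L}_\bullet\mathsf{q}$ of arbitrary $A$-modules that the proposition asserts, and condition (ii) of Definition~\ref{perfect_functor} speaks about the derived functors of the composites $\mathsf{F}^q$, not about $\mathbb{L}_p\mathsf{F}^j$ evaluated on the modules $\mathbb{L}_q\mathsf{F}(Z)$, so the ``descending induction via iterated Grothendieck spectral sequences'' is not an argument as it stands. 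Your fallback of producing a finite projective resolution of $\mathsf{l}(Z)$ of length depending only on $n$ is actually false in general: for a triangular matrix ring $A=\bigl(\begin{smallmatrix} B_1 & N\\ 0 & B_2\end{smallmatrix}\bigr)$ viewed as a trivial extension of $B=B_1\times B_2$ by a left perfect nilpotent bimodule, $\mathsf{l}(Z)=(0,Z,0)$ has infinite projective dimension whenever $Z$ has infinite projective dimension over $B_2$, even though the hypotheses of the proposition hold; so $(\star)$, while true, cannot be reached that way and is essentially equivalent to the proposition itself.

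The paper closes the loop by reversing the order of your two stages, and this is the missing idea. For arbitrary $X\in\Mod A$ one first takes a truncated projective resolution $0\to X'\to P_{n-2}\to\cdots\to P_0\to X\to 0$ in $\Mod A$; since $\mathsf{e}$ of a projective $A$-module is a summand of $Q\oplus\mathsf{F}(Q)$ with $Q$ projective over $B$, condition (i) makes these terms acyclic for all $\mathsf{F}^j$, and then dimension shifting together with condition (ii) shows that $\mathsf{e}(X')$ is $\mathsf{F}$-projective in the sense of Lemma~\ref{equivalent_for_flat}. Only then does one run your stage-1 filtration, but applied to $X'$: now every term $\mathsf{l}\mathsf{F}^k\mathsf{e}(X')$ is $\mathsf{q}$-acyclic in \emph{all} positive degrees (since $\mathsf{F}^k\mathsf{e}(X')$ is $\mathsf{l}$-acyclic by Lemma~\ref{homological_property_of_cleft}, applying $\mathsf{l}$ to a projective resolution gives a projective resolution, and $\mathsf{ql}\simeq\mathsf{Id}$), so with $\mathsf{G}^s=0$ one gets $\mathbb{L}_i\mathsf{q}(X')=0$ for $i$ beyond roughly $s$, and a final dimension shift back along the truncated resolution gives $\mathbb{L}_i\mathsf{q}(X)=0$ for $i\ge n+s-2$. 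In short: instead of a uniform statement about all $\mathsf{l}(Z)$, one only needs full $\mathsf{q}$-acyclicity of $\mathsf{l}(W)$ for $\mathsf{F}$-projective $W$, and the syzygy reduction provided by perfectness is what puts you in that situation; without this step your proof is incomplete.
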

\begin{proof}
    Consider an object $X$ of $\Mod{A}$ such that $\mathsf{e}(X)$ is $\mathsf{F}$-projective. We notice that $\mathbb{L}_i\mathsf{q}(\mathsf{lF}^k\mathsf{e}(X))=0$ for all $i\geq 1$ and $k\geq 0$. Indeed, if $\mathsf{e}(X)$ is $\mathsf{F}$-projective, then so is $\mathsf{F}^k\mathsf{e}(X)$ for every $k\geq 0$, by Lemma \ref{equivalent_for_flat}. Therefore, it follows by Lemma \ref{homological_property_of_cleft} that $\mathbb{L}_i\mathsf{l}(\mathsf{F}^k\mathsf{e}(X))=0$ for all $i\geq 1$, which tells us that in order to compute $\mathbb{L}_i\mathsf{q}(\mathsf{lF}^k\mathsf{e}(X))$, it is enough to begin with a projective resolution of $\mathsf{F}^k\mathsf{e}(X)$ and apply $\mathsf{ql}$. But $\mathsf{ql}\simeq \mathsf{Id}_{\Mod{B}}$, which completes the first claim. Consider the following sequence of short exact sequences 
    \begin{center}
        $0\rightarrow \mathsf{G}(X)\rightarrow \mathsf{le}(X)\rightarrow X\rightarrow 0$,  \  $0\rightarrow \mathsf{G}^2(X)\rightarrow \mathsf{lFe}(X)\rightarrow \mathsf{G}(X)\rightarrow 0$,  \  $\cdots$
    \end{center}
    from which we derive the following isomorphisms for $i\geq l$:
    \begin{align*}
        \mathbb{L}_i\mathsf{q}(X)\cong\mathbb{L}_{i-1}\mathsf{q(G}(X))\cong\mathbb{L}_{i-2}\mathsf{q(G}^2(X))\cong\cdots\cong\mathbb{L}_{i-l}\mathsf{q(G}^l(X))
    \end{align*}
     for all $l\geq 0$. Since $\mathsf{F}$ is assumed to be nilpotent, so is $\mathsf{G}$, so for $s$ such that $\mathsf{G}^s=0$ and for $i\geq s-1$ we get that $\mathbb{L}_i\mathsf{q}(X)\cong \mathbb{L}_{i-s+1}\mathsf{q(G}^{s-1}(X))\cong\mathbb{L}_{i-s+1}\mathsf{q(lF}^{s-1}\mathsf{e}(X))\cong 0$. This shows that the left derived functor of $\mathsf{q}$ vanishes in degree greater or equal to $s-1$ on every object $X\in\Mod{A}$ such that $\mathsf{e}(X)$ is $\mathsf{F}$-projective. Now for an arbitrary object $X$ of $\Mod{A}$, consider an exact sequence as below 
\[
0\rightarrow X'\rightarrow P_{n-2}\rightarrow \cdots\rightarrow P_0\rightarrow X\rightarrow 0
\]
where $n$ is such that $\mathbb{L}_p\mathsf{F}^q=0$ for all $p,q\geq 1$ with $p+q\geq n+1$. If $n=1$ we set $X'=X$. By the choice of $n$, it follows that $\mathsf{e}(X')$ is $\mathsf{F}$-projective. Therefore, $\mathbb{L}_i\mathsf{q}(X')=0$ for all $i\geq s-1$, so by the exact sequence and dimension shift, it follows that $\mathbb{L}_i\mathsf{q}(X)=0$ for all $i\geq n+s-2$. 
\end{proof}

\subsection{Cleft extensions of module categories are $\theta$-extensions} In this section we recall the construction of $\theta$-extensions of rings due to Marmaridis \cite{marmaridis} and a theorem of Beligiannis stating that cleft extensions of module categories occur precisely as $\theta$-extensions of rings. 

\begin{defn} \textnormal{(\!\!\cite{marmaridis})}
Let $R$ be a ring, $M$ an $R$-bimodule and $\theta\colon M\otimes_RM\rightarrow M$ an associative $R$-bimodule homomorphism, i.e a bimodule homomorphism such that $\theta(\theta\otimes \mathsf{Id}_M)=\theta(\mathsf{Id}_M\otimes\theta)$. Then the \emph{$\theta$-extension of $R$ by $M$}, denoted by $R\ltimes_{\theta}M$, is the ring with underlying group $R\oplus M$ and multiplication given as follows 
\[
(r,m)\cdot (r',m')=(rr',rm'+mr'+\theta(m\otimes m'))
\]
for all $r,r'\in R$ and $m,m'\in M$. 
\end{defn}

\begin{exmp}  
\label{examples_of_theta}
We list here some examples of $\theta$-extension of rings.
\begin{itemize}
\item[(i)] Given a ring $R$ and an $R$-bimodule $M$, consider $\theta\colon M\otimes_RM\rightarrow M$ to be the zero homomorphism. Then the $\theta$-extension $R\ltimes_{\theta}M$ is exactly the usual trivial extension $R\ltimes M$. 

\item[(ii)] Let $R$ be a ring $R$ and $M$ an $R$-bimodule. Consider the $R$-bimodule $M'=M\oplus M^{\otimes 2}\oplus\cdots$ and $\theta\colon M'\otimes_RM'\rightarrow M'$ the $R$-bimodule homomorphism induced by the natural homomorphisms $M^{\otimes k}\otimes_RM^{\otimes l}\rightarrow M^{\otimes k+l}$. Then the $\theta$-extension $R\ltimes_{\theta}M'$ is isomorphic to the tensor ring $T_R(M)$.
 
\item[(iii)] Let $\Lambda$ be a positively graded ring over the integers. Consider the {$\Lambda_0$-bimodule} $M:=\Lambda_1\oplus \Lambda_2\oplus \cdots$ and $\theta\colon M\otimes_{\Lambda_0}M\rightarrow M$ the $\Lambda_0$-bimodule homomorphism induced by multiplication in $\Lambda$. Then, there is an isomorphism of rings $\Lambda\cong \Lambda_{0}\ltimes_{\theta}M$.
\end{itemize}
\end{exmp}

Let $R\ltimes_{\theta}M$ be a $\theta$-extension. We have the following ring homomorphisms: $R\rightarrow R\ltimes_{\theta}M$ given by $r\mapsto (r,0)$ and $R\ltimes_{\theta}M\rightarrow R$ given by $(r,m)\mapsto r$. Then, we get the following diagram 
\[
\begin{tikzcd} 
\Mod R \arrow[rr, "\mathsf{Z}"] &  & \Mod R\ltimes_{\theta}M \arrow[rr, "\mathsf{U}"] \arrow[ll, "\mathsf{C}"', bend right] &  & \Mod R \arrow[ll, "\mathsf{T}"', bend right]
\end{tikzcd}
\]
where $\mathsf{U}$ and $\mathsf{Z}$ are restriction functors induced by the latter homomorphisms and the left adjoints are $\mathsf{T}=-\otimes_R R\ltimes_{\theta}M$ and $\mathsf{C}=-\otimes_{R\ltimes_{\theta}M}R$. Also, the associated endofunctor on $\Mod R$ is given by $-\otimes_{R}M$.

The above diagram is a cleft extension of module categories. In fact, every cleft extension occurs as a $\theta$-extension, like above. This is due to Beligiannis and we spell it out in the following proposition. 

\begin{prop} \textnormal{(\!\!\cite[Theorem 2.6, Proposition 3.3]{beligiannis})}
\label{equivalence_of_cleft}
Let $(\Mod B,\Mod A,\mathsf{i},\mathsf{e},\mathsf{l})$ be a cleft extension of module categories. Then, there exists a $B$-bimodule $M$, an associative $B$-bimodule homomorphism $\theta\colon M\otimes_{B}M\rightarrow M$ and an equivalence $\Mod A\rightarrow \Mod B\ltimes_{\theta}M$ making the following diagram commutative$\colon$
\[
\begin{tikzcd}
\Mod B \arrow[rr, "\mathsf{i}"] \arrow[dd, "\mathsf{Id}_{\Mod B}"] &  & \Mod A \arrow[rr, "\mathsf{e}"] \arrow[ll, "\mathsf{q}"', bend right] \arrow[dd, "\simeq"] &  & \Mod B \arrow[ll, "\mathsf{l}"', bend right] \arrow[dd, "\mathsf{Id}_{\Mod B}"] \\
                                                                   &  &                                                                                            &  &                                                                                 \\
\Mod B \arrow[rr, "\mathsf{Z}"]                                    &  & \Mod B\ltimes_{\theta}M \arrow[rr, "\mathsf{U}"] \arrow[ll, "\mathsf{C}"', bend right]     &  & \Mod B \arrow[ll, "\mathsf{T}"', bend right]                                   
\end{tikzcd}
\]
\end{prop}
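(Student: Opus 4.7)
The plan is to read off the $B$-bimodule $M$ and the multiplication $\theta$ from the functorial data of the cleft extension, exhibit the associated functor $\mathsf{F}$ as a tensor functor, and then identify $\Mod A$ with the standard description of modules over a $\theta$-extension.

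First, I would set $M := \mathsf{F}(B) = \mathsf{eGi}(B)$. The right $B$-action comes from $M\in\Mod B$; the left $B$-action is obtained via functoriality, since for each $b\in B$ left multiplication by $b$ is a right $B$-linear endomorphism of $B$, and $\mathsf{F}$ sends it to a right $B$-linear endomorphism of $M$. The compatibility of the two actions is automatic because $\mathsf{F}$ is additive and natural in the bimodule structure of its argument.

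Second, I would establish the representability $\mathsf{F}(X)\cong X\otimes_B M$ naturally in $X$. Since $\mathsf{l}$ is a left adjoint and $\mathsf{e}$ preserves colimits (as recorded in the excerpt via \cite[Propositions 2.4 and 2.8]{beligiannis}), the composite $\mathsf{el}$ preserves colimits; the splitting $\mathsf{el}\cong \mathsf{Id}_{\Mod B}\oplus \mathsf{F}$ from \eqref{standardexactsequences} then forces $\mathsf{F}$ to preserve colimits, so the Eilenberg--Watts theorem yields $\mathsf{F}\cong -\otimes_B M$ with $M=\mathsf{F}(B)$.

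Third, I would construct $\theta$ by producing a natural transformation $\mu\colon \mathsf{F}^2\Rightarrow \mathsf{F}$ and evaluating at $B$. Using $\mathsf{ei}\cong \mathsf{Id}_{\Mod B}$ one rewrites $\mathsf{F}^2\cong \mathsf{eG}^2\mathsf{i}$. A natural transformation $\mathsf{G}^2\Rightarrow \mathsf{G}$ arises from the monomorphism $\mathsf{G}\hookrightarrow \mathsf{le}$ and the counit $\varepsilon\colon \mathsf{le}\Rightarrow \mathsf{Id}_{\Mod A}$, namely
\[
\mathsf{G}^2\hookrightarrow \mathsf{G}\,\mathsf{le}\xrightarrow{\ \mathsf{G}\varepsilon\ }\mathsf{G}.
\]
Whiskering with $\mathsf{e}$ on the left and $\mathsf{i}$ on the right gives $\mu$, and $\theta:=\mu_B\colon M\otimes_B M\to M$ is the desired bimodule homomorphism. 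Associativity of $\theta$ follows from the triangle identities for $(\mathsf{l},\mathsf{e})$ applied to the two ways of composing three copies of $\mathsf{G}$; this is the bookkeeping-heavy step.

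Finally, I would construct the equivalence $\Mod A\simeq \Mod B\ltimes_{\theta}M$. A module over $B\ltimes_{\theta}M$ is the same data as a pair $(N,\alpha)$ with $N\in\Mod B$ and $\alpha\colon N\otimes_B M\to N$ satisfying $\alpha\circ(\alpha\otimes 1_M)=\alpha\circ(1_N\otimes \theta)$. Given $X\in\Mod A$, the pair $(\mathsf{e}(X),\alpha_X)$ with $\alpha_X$ induced by $\mathsf{F}\mathsf{e}(X)=\mathsf{eGie}(X)\to \mathsf{eG}(X)\to \mathsf{e}(X)$ (using the counit $\varepsilon$ and $\mathsf{ei}\cong\mathsf{Id}$) provides the assignment; faithful exactness of $\mathsf{e}$ together with fullness of $\mathsf{i}$ makes it an equivalence. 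Commutativity of the diagram is then forced by the adjunctions: $\mathsf{e}\leftrightarrow \mathsf{U}$, $\mathsf{i}\leftrightarrow \mathsf{Z}$, and uniqueness of left adjoints identifies $\mathsf{l}\leftrightarrow \mathsf{T}$ and $\mathsf{q}\leftrightarrow \mathsf{C}$.

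The main obstacle I anticipate is the verification that $\mu$ is genuinely associative and that the induced map $\alpha_X$ satisfies the $\theta$-action axiom; both amount to delicate diagram chases with units and counits of $(\mathsf{l},\mathsf{e})$ and $(\mathsf{q},\mathsf{i})$. Everything else reduces to applying the Eilenberg--Watts representability and standard properties of cleft extensions recalled earlier in the paper.
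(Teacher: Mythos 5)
The paper offers no proof of this statement: it is imported directly from Beligiannis (\cite[Theorem 2.6, Proposition 3.3]{beligiannis}), so there is nothing internal to compare against. Your overall strategy is the standard one and is viable in outline: $\mathsf{F}$ is a direct summand of $\mathsf{el}$, which preserves colimits since $\mathsf{l}$ is a left adjoint and $\mathsf{e}$ is exact and coproduct-preserving, so Eilenberg--Watts gives $\mathsf{F}\cong -\otimes_B M$ with $M=\mathsf{F}(B)$; the description of right $B\ltimes_{\theta}M$-modules as pairs $(N,\alpha)$ is also correct.

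There is, however, a genuine gap in your construction of the multiplication. The composite $\mathsf{G}^2\xrightarrow{\ \mathsf{G}\iota\ }\mathsf{G}\,\mathsf{le}\xrightarrow{\ \mathsf{G}\varepsilon\ }\mathsf{G}$ has components $\mathsf{G}(\varepsilon_X)\circ\mathsf{G}(\iota_X)=\mathsf{G}(\varepsilon_X\circ\iota_X)=\mathsf{G}(0)=0$, because $\mathsf{G}$ is by definition the kernel of $\varepsilon\colon\mathsf{le}\Rightarrow\mathsf{Id}_{\Mod A}$, so $\varepsilon\circ\iota=0$; the other possible whiskering, $\varepsilon_{\mathsf{G}(X)}\circ\iota_{\mathsf{G}(X)}$, vanishes for the same reason. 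Thus your recipe forces $\theta=0$ and would prove that every cleft extension is a trivial extension $B\ltimes M$, which is false: the tensor ring $T_R(M)$ with $M^{\otimes 2}\neq 0$ is a cleft extension of $\Mod R$ with nonzero $\theta$ (Example \ref{examples_of_theta}(ii)). The same defect affects your action map $\alpha_X$, since ``the counit restricted along the kernel inclusion'' is again zero. The multiplicative data is carried not by the first exact sequence in \eqref{standardexactsequences} but by the splitting of the second one: writing $\sigma\colon\mathsf{F}\Rightarrow\mathsf{el}$ for the (split) inclusion there, the correct action map is $\alpha_X=\mathsf{e}(\varepsilon_X)\circ\sigma_{\mathsf{e}(X)}\colon\mathsf{Fe}(X)\to\mathsf{ele}(X)\to\mathsf{e}(X)$, where $\mathsf{Fe}(X)$ enters $\mathsf{ele}(X)$ as the complement of the unit's image, not as the kernel of $\mathsf{e}(\varepsilon_X)$. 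The cleanest repair --- and Beligiannis's actual route --- is to note that $P=\mathsf{l}(B)$ is a small projective generator of $\Mod A$ with $\Hom_A(P,-)\cong\mathsf{e}$, so that $\Mod A\simeq\Mod \End_A(P)$, and then to compute $\End_A(P)\cong\Hom_B(B,\mathsf{el}(B))\cong B\oplus M$ and read off $\theta$ from composition of endomorphisms; associativity of $\theta$ and the module axiom for $\alpha_X$ then come for free, rather than requiring the diagram chases you defer.
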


\subsection{Cleft extensions and injective generation} 
In this subsection we investigate injective generation for cleft extensions of module categories using the machinery developed in the previous subsections. Let $(\Mod B,\Mod A,\mathsf{i},\mathsf{e},\mathsf{l})$ be a cleft extension of module categories. Consider the endofunctor $\mathsf{R}\colon\Mod A\rightarrow \Mod A$ that appears as the kernel of the unit map of the adjunction $(\mathsf{q},\mathsf{i})$:
\[
0\rightarrow \mathsf{R}\rightarrow \mathsf{Id}_{\Mod{A}}\rightarrow\mathsf{iq}\rightarrow 0
\] 

For the endofunctor $\mathsf{R}$ we need the following auxiliary result. 

\begin{lem} \label{F_nilpotent_implies_R_nilpotent}
    Let $(\Mod B,\Mod A,\mathsf{i},\mathsf{e},\mathsf{l})$ be a cleft extension of module categories. If the functor $\mathsf{F}$ is nilpotent, then so is $\mathsf{R}$. 
\end{lem}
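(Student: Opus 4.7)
The approach I would take is to reduce to the explicit case of a $\theta$-extension via Proposition~\ref{equivalence_of_cleft}, and then carry out a direct computation. That proposition provides an equivalence $\Mod A \simeq \Mod B\ltimes_{\theta}M$ for a suitable $B$-bimodule $M$ and associative multiplication $\theta\colon M\otimes_B M \to M$, and this equivalence intertwines the four structural functors $\mathsf{i}$, $\mathsf{e}$, $\mathsf{l}$, $\mathsf{q}$ of the cleft extension with the restriction/induction functors $\mathsf{Z}$, $\mathsf{U}$, $\mathsf{T}$, $\mathsf{C}$ of the $\theta$-extension. Since the endofunctors $\mathsf{F}$ and $\mathsf{R}$ are defined purely in terms of these four functors and their unit/counit maps (through the defining short exact sequences~\eqref{standardexactsequences} and the kernel of the unit of $(\mathsf{q},\mathsf{i})$), they correspond under the equivalence, and nilpotency is preserved. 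Thus it suffices to prove the statement assuming $A = B\ltimes_{\theta}M$.

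In that case, I would first identify $\mathsf{F}$ and $\mathsf{R}$ concretely. The decomposition $A = B\oplus M$ as $B$-bimodule induces a $B$-module decomposition $\mathsf{el}(Y) = Y\otimes_B A \simeq Y\oplus (Y\otimes_B M)$, the split surjection $\mathsf{el}\to\mathsf{Id}_{\Mod B}$ is projection onto the first summand, and hence $\mathsf{F}(Y)\simeq Y\otimes_B M$. Similarly $\mathsf{q}(X) \simeq X/XM$ and the unit $X\to\mathsf{iq}(X)$ is the natural quotient, so $\mathsf{R}(X) = XM$, where $XM$ denotes the submodule of $X$ generated by elements of the form $x\cdot(0,m)$.

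The final step is a comparison between $\mathsf{F}^n$ and $\mathsf{R}^n$. Iterated tensor products give $\mathsf{F}^n(Y) \simeq Y\otimes_B M^{\otimes n}$, so the hypothesis $\mathsf{F}^n = 0$ specialises at $Y=B$ to $M^{\otimes n} = 0$. A short induction on $k$, using the multiplication rule $(0,m_1)(0,m_2) = (0,\theta(m_1\otimes m_2))$ in $B\ltimes_{\theta}M$, then shows
\[
\mathsf{R}^k(X) = X\cdot \theta_k(M^{\otimes k}),
\]
where $\theta_k\colon M^{\otimes k}\to M$ is the iterated multiplication (well-defined by associativity of $\theta$). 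Substituting $k=n$ and using $M^{\otimes n}=0$ yields $\mathsf{R}^n = 0$.

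The main obstacle I anticipate is the bookkeeping in the reduction step: one must verify that, under the equivalence of Proposition~\ref{equivalence_of_cleft}, the abstractly defined $\mathsf{F}$ and $\mathsf{R}$ are naturally isomorphic to the concrete functors $Y\mapsto Y\otimes_B M$ and $X\mapsto XM$. This is essentially built into the commutativity of the diagram in that proposition, since the short exact sequences defining $\mathsf{F}$ and $\mathsf{R}$ are transported by the equivalence; nevertheless, making this correspondence fully explicit—especially the identification of the splittings—requires some care.
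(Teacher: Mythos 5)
Your proposal is correct and follows essentially the same route as the paper: reduce to a $\theta$-extension via Proposition~\ref{equivalence_of_cleft}, identify $\mathsf{F}$ with $-\otimes_B M$ so that nilpotency of $\mathsf{F}$ gives $M^{\otimes n}=0$, and deduce nilpotency of $\mathsf{R}$ from nilpotency of the iterated multiplication $\theta_k$. The only difference is that where the paper cites Beligiannis (Proposition~7.4(i)) for the equivalence between nilpotency of $\theta$ and of $\mathsf{R}'$, you verify it directly via the formula $\mathsf{R}^k(X)=X\cdot\theta_k(M^{\otimes k})$, which is a harmless (indeed, more self-contained) substitution.
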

\begin{proof} 
Under the equivalence $\mathsf{\Omega}$ of Proposition \ref{equivalence_of_cleft} we see that $\mathsf{F}'=\mathsf{\Omega F \Omega}^{-1}$ and $\mathsf{R}'=\mathsf{\Omega R\Omega}^{-1}$ where $\mathsf{F}'=-\otimes_RM$ and $\mathsf{R}'\colon\Mod B\ltimes_{\theta}M\rightarrow \Mod B\ltimes_{\theta}M$ is the endofunctor that appears as the kernel of the adjunction ($\mathsf{C},\mathsf{Z}$). Therefore the functor $\mathsf{F}$ is nilpotent if and only if $M$ is nilpotent and $\mathsf{R}$ is nilpotent if and only if $\mathsf{R}'$ is nilpotent. We then have the following implications (where we write ``nil." for nilpotent)
\[
\mathsf{F} \text{ nil.} \iff M \text{ nil.} \implies \theta \text{ nil.} \stackrel{(*)}{\iff} \mathsf{R}' \text{ nil.}\iff \mathsf{R} \text{ nil.}
\]
where $(*)$ follows from \cite[Proposition 7.4 (i)]{beligiannis} and the rest implications are clear. 
\end{proof}

By the above and in preparation for injective generation, we present the following. 

\begin{lem} \label{image_of_i}
    Let $(\Mod B,\Mod A,\mathsf{i},\mathsf{e},\mathsf{l})$ be a cleft extension of module categories. If the endofunctor $\mathsf{F}$ is nilpotent, then $\mathsf{Loc(Imi)}=\mathsf{D}(A)$. 
\end{lem}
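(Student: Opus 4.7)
The plan is to exploit the defining short exact sequence $0\to \mathsf{R}\to \mathsf{Id}_{\Mod A}\to \mathsf{iq}\to 0$ of endofunctors of $\Mod A$, combined with the nilpotency of $\mathsf{R}$, which is delivered for free by the hypothesis on $\mathsf{F}$ via Lemma~\ref{F_nilpotent_implies_R_nilpotent}. Since $A$ (as a right $A$-module) generates $\mathsf{D}(A)$ by Remark~\ref{generation_by_projectives}, it suffices to prove that every $A$-module lies in $\mathsf{Loc}(\mathsf{Im}\,\mathsf{i})$, after which the case $X=A$ will yield the claim.

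Fix an $A$-module $X$ and apply the short exact sequence above to $\mathsf{R}^k(X)$ for each $k\ge 0$, producing a short exact sequence
\[
0\longrightarrow \mathsf{R}^{k+1}(X)\longrightarrow \mathsf{R}^k(X)\longrightarrow \mathsf{iq}\bigl(\mathsf{R}^k(X)\bigr)\longrightarrow 0
\]
in $\Mod A$. The rightmost term of each such sequence visibly belongs to $\mathsf{Im}\,\mathsf{i}\subseteq \mathsf{Loc}(\mathsf{Im}\,\mathsf{i})$.

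Now let $n$ be chosen so that $\mathsf{R}^n=0$; such $n$ exists by Lemma~\ref{F_nilpotent_implies_R_nilpotent}. Then $\mathsf{R}^{n-1}(X)\cong \mathsf{iq}(\mathsf{R}^{n-1}(X))$ already lies in $\mathsf{Loc}(\mathsf{Im}\,\mathsf{i})$. Performing downward induction on $k$, and using that localizing subcategories satisfy the two-out-of-three property on short exact sequences of modules viewed as distinguished triangles (Proposition~\ref{basic_properties_of_localizing}(i)), the two outer terms of the displayed sequence force $\mathsf{R}^k(X)\in \mathsf{Loc}(\mathsf{Im}\,\mathsf{i})$ at each stage. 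Taking $k=0$ gives $X\in \mathsf{Loc}(\mathsf{Im}\,\mathsf{i})$. In particular $A\in \mathsf{Loc}(\mathsf{Im}\,\mathsf{i})$, and hence $\mathsf{D}(A)=\mathsf{Loc}(A)\subseteq \mathsf{Loc}(\mathsf{Im}\,\mathsf{i})$, as required.

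I do not foresee a genuine obstacle: the argument is a straightforward d\'evissage along the descending filtration $X\supseteq \mathsf{R}(X)\supseteq \mathsf{R}^2(X)\supseteq \cdots$, whose termination is exactly the content of Lemma~\ref{F_nilpotent_implies_R_nilpotent}. The only mild subtlety is bookkeeping: checking that the short exact sequences of $A$-modules built from $\mathsf{R}$ can legitimately be read as triangles in $\mathsf{D}(A)$, which is standard and already baked into Proposition~\ref{basic_properties_of_localizing}.
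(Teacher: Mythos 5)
Your argument is correct and is essentially identical to the paper's proof: the same short exact sequences $0\to \mathsf{R}^{k+1}(X)\to \mathsf{R}^k(X)\to \mathsf{iq}(\mathsf{R}^k(X))\to 0$ obtained from the unit sequence, the same appeal to Lemma~\ref{F_nilpotent_implies_R_nilpotent} for nilpotency of $\mathsf{R}$, and the same d\'evissage via the two-out-of-three property to conclude $X\in\mathsf{Loc}(\mathsf{Im}\,\mathsf{i})$ for every $A$-module $X$. The only cosmetic difference is that you route the final step through $\mathsf{Loc}(A)=\mathsf{D}(A)$, while the paper concludes directly from all modules lying in $\mathsf{Loc}(\mathsf{Im}\,\mathsf{i})$; both are fine.
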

\begin{proof}
    Let $X$ be an $A$-module and consider the following short exact sequences 
    \begin{center}
        $0\rightarrow \mathsf{R}(X)\rightarrow X\rightarrow \mathsf{iq}(X)\rightarrow 0$, \  $0\rightarrow \mathsf{R}^2(X)\rightarrow \mathsf{R}(X)\rightarrow \mathsf{iqR}(X)\rightarrow 0$, \  $\cdots$
    \end{center}
    The rightmost terms of the above short exact sequences are in $\mathsf{Imi}$. Since $\mathsf{F}$ is nilpotent, so is $\mathsf{R}$ by Lemma \ref{F_nilpotent_implies_R_nilpotent}. Therefore we conclude that $X\in\mathsf{Loc(Imi)}$ which completes the claim.
\end{proof}

We can now detect injective generation in a cleft extension. 

\begin{thm} 
\label{injective_generation_for_cleft}
Let $(\Mod B,\Mod A,\mathsf{i},\mathsf{e},\mathsf{l})$ be a cleft extension of module categories. If the endofunctor $\mathsf{F}$ is left perfect and nilpotent, then injectives generate for $B$ if and only if injectives generate for $A$. 
\end{thm}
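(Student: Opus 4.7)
The plan is to prove both implications by the same strategy: find a coproduct-preserving, exact functor between $\Mod A$ and $\Mod B$ whose derived left adjoint has bounded cohomological dimension. Then Remark \ref{bounded_cohomology} shows the functor sends injective modules to bounded complexes of injectives, Proposition \ref{basic_properties_of_localizing} (vi) puts these into the relevant localizing subcategory of injectives, and Proposition \ref{image_is_contained_in_localizing} propagates this to the whole image. The two candidate functors are $\mathsf{i}\colon \Mod B \to \Mod A$ for the direction $(\Rightarrow)$ and $\mathsf{e}\colon \Mod A\to \Mod B$ for the direction $(\Leftarrow)$; both are known to be exact and to preserve coproducts.

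For the direction $(\Rightarrow)$, I would work with the adjoint pair $(\mathsf{q},\mathsf{i})$. Proposition \ref{left_derived_of_q_vanishes} gives $\mathbb{L}_n\mathsf{q} = 0$ for $n$ large, which is precisely where the left perfectness and nilpotence of $\mathsf{F}$ are fully exploited. Hence $\mathbb{L}\mathsf{q}$ maps complexes bounded in cohomology to complexes bounded in cohomology, so Remark \ref{bounded_cohomology} applied to $(\mathbb{L}\mathsf{q},\mathsf{i})$ yields that $\mathsf{i}$ takes every injective $B$-module to a bounded complex of injective $A$-modules, which lies in $\mathsf{Loc}(\Inj A)$. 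Since injectives generate for $B$, Proposition \ref{image_is_contained_in_localizing} gives $\mathsf{Im}(\mathsf{i}) \subseteq \mathsf{Loc}(\Inj A)$. Finally, Lemma \ref{image_of_i} (whose proof uses the nilpotence of $\mathsf{F}$) asserts $\mathsf{Loc}(\mathsf{Im}\,\mathsf{i}) = \mathsf{D}(A)$, so $\mathsf{D}(A) \subseteq \mathsf{Loc}(\Inj A)$ and injectives generate for $A$.

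For the direction $(\Leftarrow)$, I would work with the adjoint pair $(\mathsf{l},\mathsf{e})$. The key point is to bound the cohomological dimension of $\mathbb{L}\mathsf{l}$: taking $q = 1$ in condition (ii) of Definition \ref{perfect_functor} gives $\mathbb{L}_p\mathsf{F} = 0$ for $p$ large, and Lemma \ref{homological_property_of_cleft} reads $\mathbb{L}_p\mathsf{F} \cong \mathsf{e}\,\mathbb{L}_p\mathsf{l}$, so the faithful exactness of $\mathsf{e}$ upgrades this to $\mathbb{L}_p\mathsf{l} = 0$ for $p$ large. Then Remark \ref{bounded_cohomology} applied to $(\mathbb{L}\mathsf{l},\mathsf{e})$ gives that $\mathsf{e}$ sends each injective $A$-module to a bounded complex of injective $B$-modules. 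Since injectives generate for $A$, Proposition \ref{image_is_contained_in_localizing} yields $\mathsf{Im}(\mathsf{e}) \subseteq \mathsf{Loc}(\Inj B)$. Using $\mathsf{ei}\simeq \mathsf{Id}_{\Mod B}$ from Definition \ref{definition_of_cleft}, we get $B \simeq \mathsf{e}\,\mathsf{i}(B) \in \mathsf{Loc}(\Inj B)$, and because $B$ generates $\mathsf{D}(B)$ by Remark \ref{generation_by_projectives}, we conclude $\mathsf{D}(B) = \mathsf{Loc}(B) \subseteq \mathsf{Loc}(\Inj B)$.

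The main obstacle is the homological input on the left adjoints: Proposition \ref{left_derived_of_q_vanishes} for the direction $(\Rightarrow)$ is the nontrivial ingredient (it is precisely the statement that bundles together the left perfectness and nilpotence of $\mathsf{F}$), while for the direction $(\Leftarrow)$ the comparable bound on $\mathbb{L}\mathsf{l}$ is a straightforward consequence of Lemma \ref{homological_property_of_cleft} and faithful exactness of $\mathsf{e}$. Once these two homological facts are in hand, everything else is a uniform application of Remark \ref{bounded_cohomology}, Proposition \ref{image_is_contained_in_localizing}, and either Lemma \ref{image_of_i} (for $(\Rightarrow)$) or the splitting identity $\mathsf{ei}\simeq \mathsf{Id}$ together with generation by $B$ (for $(\Leftarrow)$).
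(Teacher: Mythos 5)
Your proof is correct and follows essentially the same route as the paper: both directions use Proposition \ref{left_derived_of_q_vanishes} (resp.\ Lemma \ref{homological_property_of_cleft} plus condition (ii) of Definition \ref{perfect_functor} and faithfulness of $\mathsf{e}$) to bound the derived left adjoints, then Remark \ref{bounded_cohomology}, Proposition \ref{image_is_contained_in_localizing}, and Lemma \ref{image_of_i}. The only cosmetic difference is in the $(\Leftarrow)$ finish, where you invoke $\mathsf{ei}\simeq\mathsf{Id}$ and generation by $B$ instead of the essential surjectivity of $\mathsf{e}$, which is an equivalent observation.
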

\begin{proof}
    $(\Longleftarrow)$ Assume that injectives generate for $A$. Since $\mathsf{F}$ is assumed to be left perfect, it follows by Lemma \ref{homological_property_of_cleft} that $\mathbb{L}_n\mathsf{l}=0$ for $n$ large enough and therefore $\mathbb{L}\mathsf{l}\colon\mathsf{D}(B)\rightarrow\mathsf{D}(A)$ maps complexes bounded in cohomology to complexes bounded in cohomology. By Remark \ref{bounded_cohomology} and the fact that $\mathsf{e}\colon\mathsf{D}(A)\rightarrow\mathsf{D}(B)$ preserves coproducts, we conclude that $\mathsf{Loc}(\mathsf{Ime})\subseteq \mathsf{Loc}(\Inj B)$. Since $\mathsf{e}$ is essentially surjective (on the level of module categories), we have that $\mathsf{Loc}(\mathsf{Ime})=\mathsf{D}(B)$ which completes the claim. \\ 
    $(\Longrightarrow)$ Assume that injectives generate for $B$. Since $\mathsf{F}$ is assumed to be perfect and nilpotent, it follows by Proposition \ref{left_derived_of_q_vanishes} that $\mathbb{L}_n\mathsf{q}=0$ for $n$ large enough and therefore $\mathbb{L}\mathsf{q}\colon \mathsf{D}(A)\rightarrow \mathsf{D}(B)$ maps complexes bounded in cohomology to complexes boudned in cohomology.  By Remark \ref{bounded_cohomology} and the fact that $\mathsf{i}\colon\mathsf{D}(B)\rightarrow\mathsf{D}(A)$ preserves coproducts, we conclude that $\mathsf{Loc}(\mathsf{Imi})\subseteq\mathsf{Loc}(\Inj A)$. However, since $\mathsf{F}$ is assumed to be nilpotent, it follows by Lemma \ref{image_of_i} that $\mathsf{Loc}(\mathsf{Imi})=\mathsf{D}(A)$ which completes the claim. 
\end{proof}

In order to apply Theorem \ref{injective_generation_for_cleft} to $\theta$-extensions, we need the next observation.

\begin{lem} 
\label{tensor_functor_is_perfect}
Let $R$ be a ring and $M$ an $R$-bimodule. If $M$ is left perfect and nilpotent then the functor $-\otimes_RM\colon \Mod R\rightarrow \Mod R$ is left perfect and nilpotent. 
\end{lem}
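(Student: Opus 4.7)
The plan is to unpack the iterated functor $\mathsf{F}^j=-\otimes_R M^{\otimes j}$ and its left derived functors in terms of ordinary $\mathsf{Tor}$, and then read off the three requirements (nilpotency, plus (i) and (ii) of Definition~\ref{perfect_functor}) directly from the hypotheses on $M$. First I would observe that, by associativity of $\otimes_R$, the composite $\mathsf{F}^j$ coincides with the single functor $-\otimes_R M^{\otimes j}$, so its $i$-th left derived functor is just $\mathsf{Tor}_i^R(-,M^{\otimes j})$, computed from a projective resolution of the argument in $\Mod R$. Nilpotency of $\mathsf{F}$ is then immediate: if $M^{\otimes k}=0$, then $\mathsf{F}^k=-\otimes_R 0=0$.

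For condition (i) of Definition~\ref{perfect_functor}, I need $\mathsf{Tor}_i^R(P\otimes_R M, M^{\otimes j})=0$ for every projective right $R$-module $P$ and all $i,j\geq 1$. Writing $P$ as a direct summand of a free module $R^{(I)}$ gives $P\otimes_R M$ as a summand of $M^{(I)}$, so the Tor group is a direct summand of $\mathsf{Tor}_i^R(M,M^{\otimes j})^{(I)}$. This vanishes because $M$ is left perfect as an $R$-bimodule.

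For condition (ii), I would combine the nilpotency of $M$ with the bound $\pd_R M^{\otimes q}\leq q\cdot \pd_R M$ from Lemma~\ref{basic_properties_of_perfect}. Set $d=\pd_R M$ and let $k$ be such that $M^{\otimes k}=0$. For $q\geq k$ the functor $\mathsf{F}^q$ is identically zero, so $\mathbb{L}_p\mathsf{F}^q=0$ trivially; for $1\leq q\leq k-1$, we have $\mathbb{L}_p\mathsf{F}^q(X)=\mathsf{Tor}_p^R(X,M^{\otimes q})=0$ as soon as $p>qd$. A small bookkeeping check shows that the threshold $n=(k-1)(d+1)$ suffices: if $p+q\geq n+1$ and $1\leq q\leq k-1$, then $p\geq (k-1)(d+1)+1-q\geq (k-1)d+1>qd$, and the case $q\geq k$ is automatic.

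Since each step reduces either to a standard identification of a derived functor with $\mathsf{Tor}$, or to a direct Tor-vanishing consequence of the definition of a (left) perfect bimodule, there is no serious obstacle here. The only point that deserves a line of care is the identification of $\mathbb{L}_i\mathsf{F}^j$ with $\mathsf{Tor}_i^R(-,M^{\otimes j})$ as a functor on $\Mod R$ — this is unproblematic because $M^{\otimes j}$ is just a single $R$-bimodule, so a projective resolution of the argument tensored with $M^{\otimes j}$ computes both sides.
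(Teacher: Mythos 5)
Your proof is correct, and since the paper states this lemma without proof, your argument supplies exactly the intended verification: identifying $\mathbb{L}_i\mathsf{F}^j$ with $\mathsf{Tor}_i^R(-,M^{\otimes j})$, deducing condition (i) of Definition~\ref{perfect_functor} from the Tor-vanishing in the definition of a left perfect bimodule together with additivity of Tor, and condition (ii) from nilpotency combined with the bound $\pd_R M^{\otimes q}\leq q\,\pd_R M$ of Lemma~\ref{basic_properties_of_perfect}. The bookkeeping for the threshold $n=(k-1)(d+1)$ checks out, and $d=\pd_R M$ is finite by left perfectness, so there is no gap.
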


We have the following main result regarding injective generation of $\theta$-extensions, which is Theorem~C of the Introduction. 

\begin{cor} 
\label{mainthm3}
Let $R$ be a ring, $M$ an $R$-bimodule and $\theta\colon M\otimes_RM\rightarrow M$ an associative $R$-bimodule homomorphism. If $M$ is left perfect and nilpotent, then injectives generate for $R$ if and only if injectives generate for $R\ltimes_{\theta}M$. 
\end{cor}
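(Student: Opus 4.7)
The proof is a direct assembly of the machinery built up in Section~\ref{section:cleft extensions}. My plan is as follows.

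First, I would invoke the construction described right before Proposition~\ref{equivalence_of_cleft}: the two ring homomorphisms $R \to R\ltimes_{\theta}M \to R$ (inclusion and projection) give rise to functors
\[
\begin{tikzcd}
\Mod R \arrow[rr, "\mathsf{Z}"] &  & \Mod R\ltimes_{\theta}M \arrow[rr, "\mathsf{U}"] \arrow[ll, "\mathsf{C}"', bend right] &  & \Mod R \arrow[ll, "\mathsf{T}"', bend right]
\end{tikzcd}
\]
where $\mathsf{U}$ is faithful exact, $(\mathsf{T},\mathsf{U})$ is an adjoint pair, and $\mathsf{U}\mathsf{Z}\simeq \mathsf{Id}_{\Mod R}$. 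Thus $(\Mod R,\Mod R\ltimes_{\theta}M,\mathsf{Z},\mathsf{U},\mathsf{T})$ is a cleft extension of module categories, and (as noted in the text) its associated endofunctor on $\Mod R$ is $\mathsf{F}=-\otimes_R M$.

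Next, I would apply Lemma~\ref{tensor_functor_is_perfect} to conclude that, under the hypothesis that the bimodule $_RM_R$ is left perfect and nilpotent, the endofunctor $\mathsf{F}=-\otimes_R M$ is itself left perfect and nilpotent in the sense of Definition~\ref{perfect_functor}. (Nilpotence is immediate from $M^{\otimes k}=0$, since $\mathsf{F}^j(-)\cong -\otimes_R M^{\otimes j}$; condition (i) of Definition~\ref{perfect_functor} follows because $\mathsf{F}(P)=P\otimes_R M$ is a direct summand of a coproduct of copies of $M$, and $\Tor_i^R(M,M^{\otimes j})=0$ for $i,j\geq 1$ by the definition of left perfectness together with Lemma~\ref{basic_properties_of_perfect}; condition (ii) uses that $\pd_R M^{\otimes q}\le q\cdot\pd_R M<\infty$ by the same lemma, combined with the vanishing $M^{\otimes q}=0$ for $q$ large.)

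Finally, with $\mathsf{F}$ left perfect and nilpotent, Theorem~\ref{injective_generation_for_cleft} applies verbatim to our cleft extension and yields the desired equivalence: injectives generate for $R$ if and only if injectives generate for $R\ltimes_{\theta}M$. The only ``work" specific to this corollary is the bridge lemma (Lemma~\ref{tensor_functor_is_perfect}), which translates the bimodule-level perfectness and nilpotence hypotheses into the functor-level ones used in Theorem~\ref{injective_generation_for_cleft}; all remaining steps are purely formal. Hence the main obstacle is essentially already absorbed into the preceding theorem, and the corollary itself is an invocation of the general cleft-extension result.
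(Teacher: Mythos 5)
Your proposal is correct and follows exactly the paper's own route: identify $(\Mod R,\Mod R\ltimes_{\theta}M,\mathsf{Z},\mathsf{U},\mathsf{T})$ as a cleft extension with associated endofunctor $\mathsf{F}\simeq-\otimes_RM$, pass from the bimodule hypotheses to the functor-level ones via Lemma~\ref{tensor_functor_is_perfect}, and invoke Theorem~\ref{injective_generation_for_cleft}. Your parenthetical justification of Lemma~\ref{tensor_functor_is_perfect} (which the paper states without proof) is a sound bonus and checks out.
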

\begin{proof}
    As explained, we have a cleft extension diagram $(\Mod R,\Mod R\ltimes_{\theta}M,\mathsf{i},\mathsf{e},\mathsf{l})$. Necessarily $\mathsf{F}\simeq -\otimes_RM$ and since $M$ is assumed to be perfect and nilpotent, the functor $\mathsf{F}$ is perfect and nilpotent by Lemma \ref{tensor_functor_is_perfect}, therefore Theorem \ref{injective_generation_for_cleft} applies. 
\end{proof}

We conclude this subsection with the following two corollaries, which are special cases of the above. 

\begin{cor} 
\label{positively_graded}
Let $\Lambda$ be a positively graded ring over $\mathbb{Z}$. If each $\Lambda_i$ is nilpotent and left perfect as a $\Lambda_0$-bimodule, then injectives generate for $\Lambda_0$ if and only if injectives generate for $\Lambda$. 
\end{cor}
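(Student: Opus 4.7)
The plan is to realize $\Lambda$ as a $\theta$-extension of $\Lambda_0$ and then invoke Corollary~\ref{mainthm3}. By Example~\ref{examples_of_theta}(iii), there is a ring isomorphism $\Lambda \cong \Lambda_0 \ltimes_\theta M$, where $M := \bigoplus_{i\geq 1}\Lambda_i$ is regarded as a $\Lambda_0$-bimodule and $\theta\colon M\otimes_{\Lambda_0}M \to M$ is the associative $\Lambda_0$-bimodule homomorphism induced by multiplication in $\Lambda$. With this identification, the corollary reduces to checking that $M$, as a whole, is nilpotent and left perfect as a $\Lambda_0$-bimodule; once this is verified, Corollary~\ref{mainthm3} applies directly and gives the two implications.

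First I would set up the identification and record the relevant structural observation: a tensor power $M^{\otimes k}$ decomposes as $\bigoplus_{(i_1,\ldots,i_k)} \Lambda_{i_1}\otimes_{\Lambda_0}\cdots\otimes_{\Lambda_0}\Lambda_{i_k}$, with each summand being a homogeneous tensor product of the graded pieces. Next, I would deduce nilpotency of $M$ from the hypothesis that each $\Lambda_i$ is nilpotent: this is immediate when there are only finitely many nonzero $\Lambda_i$, and in general follows from combining the nilpotency of each piece with the bound imposed by the grading on which multi-indices $(i_1,\ldots,i_k)$ can contribute to a nonzero summand.

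Then I would verify that $M$ is left perfect. Finite projective dimension $\pd_{\Lambda_0} M < \infty$ follows from the assumption $\pd_{\Lambda_0}\Lambda_i < \infty$ for each $i$, using the compatibility of projective dimension with direct sums together with the nilpotency-derived bound on which $\Lambda_i$ are actually nonzero. The $\Tor$-vanishing condition $\Tor^{\Lambda_0}_p(M, M^{\otimes q}) = 0$ for all $p,q\geq 1$ reduces via the decomposition above to showing $\Tor^{\Lambda_0}_p(\Lambda_i, \Lambda_{j_1}\otimes_{\Lambda_0}\cdots\otimes_{\Lambda_0}\Lambda_{j_q})$ vanishes, which should follow from the left perfectness hypothesis on each piece by a straightforward induction (analogous in spirit to Lemma~\ref{basic_properties_of_perfect}).

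The main obstacle, and indeed the only non-routine part, is this last verification that the $\Tor$-vanishing and finite projective dimension properties for the individual $\Lambda_i$ really assemble into the corresponding properties for $M = \bigoplus_{i\geq 1}\Lambda_i$. Everything else is a mechanical appeal to Example~\ref{examples_of_theta}(iii) and Corollary~\ref{mainthm3}; once $M$ is known to be left perfect and nilpotent, the equivalence between injectives generating for $\Lambda_0$ and for $\Lambda \cong \Lambda_0\ltimes_\theta M$ follows at once.
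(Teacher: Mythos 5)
Your proposal follows the paper's proof of this corollary exactly: realize $\Lambda\cong\Lambda_0\ltimes_{\theta}M$ with $M=\bigoplus_{i\geq 1}\Lambda_i$ via Example~\ref{examples_of_theta}(iii), check that $M$ inherits nilpotency and left perfectness from the individual $\Lambda_i$, and invoke Corollary~\ref{mainthm3}. The assembly step you single out as the only non-routine part is treated just as briefly in the paper (which simply asserts that it is enough for each $\Lambda_i$ to be nilpotent and left perfect), so your argument matches the paper's in both strategy and level of detail.
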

\begin{proof}
 In this case $\Lambda\cong \Lambda_0\ltimes_{\theta}M$ where $M=\Lambda_1\oplus \Lambda_2\oplus\cdots$ and $\theta\colon M\otimes_{\Lambda_0}M\rightarrow M$ is given by multiplication. Then for $M$ to be nilpotent and left perfect, it is enough that every $\Lambda_i$ is nilpotent and left perfect, so the claim follows by Corollary~\ref{mainthm3}. 
\end{proof}

In the following Corollary we recover Theorem~\ref{mainthm2}.

\begin{cor}
Let $R$ be a ring and $M$ an $R$-bimodule. If $M$ is nilpotent and left perfect, then injectives generate for $R$ if and only if injctives generate for $T_R(M)$. 
\end{cor}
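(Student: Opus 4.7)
The plan is to realize $T_R(M)$ as a $\theta$-extension of $R$ and apply Corollary~\ref{mainthm3}. By Example~\ref{examples_of_theta}(ii), there is a ring isomorphism $T_R(M) \cong R \ltimes_{\theta} M'$, where $M' = M \oplus M^{\otimes 2} \oplus M^{\otimes 3} \oplus \cdots$ is an $R$-bimodule and $\theta \colon M' \otimes_R M' \to M'$ is the associative $R$-bimodule homomorphism induced by the natural maps $M^{\otimes k} \otimes_R M^{\otimes l} \to M^{\otimes (k+l)}$. Consequently, it suffices to verify that $M'$ is itself a nilpotent and left perfect $R$-bimodule; the desired equivalence will then follow immediately from Corollary~\ref{mainthm3} applied to $R \ltimes_{\theta} M'$.

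For nilpotency, since $M$ is nilpotent there exists $k \geq 1$ with $M^{\otimes k} = 0$, and so the direct sum defining $M'$ is in fact the finite sum $M' = \bigoplus_{\ell=1}^{k-1} M^{\otimes \ell}$. Then $M'^{\otimes n}$ decomposes as a finite direct sum of bimodules of the form $M^{\otimes (\ell_1 + \cdots + \ell_n)}$ with each $\ell_j \geq 1$, and all such tensor powers vanish as soon as $n \geq k$, so $M'$ is nilpotent.

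For left perfectness, I need to verify that $\pd_R M' < \infty$ and that $\Tor_i^R(M', M'^{\otimes j}) = 0$ for all $i, j \geq 1$. The first point follows from Lemma~\ref{basic_properties_of_perfect}, which, applied to the left perfect bimodule $M$, gives $\pd_R M^{\otimes \ell} \leq \ell \pd_R M < \infty$ for every $\ell \geq 1$; since $M'$ is a finite direct sum of such $M^{\otimes \ell}$, its projective dimension is finite. For the Tor-vanishing, decomposing $M'$ and $M'^{\otimes j}$ as finite direct sums of tensor powers of $M$ reduces the claim to showing $\Tor_i^R(M^{\otimes s}, M^{\otimes t}) = 0$ for all $i, s, t \geq 1$; but this is precisely the equivalent reformulation (iii) of left perfectness provided by Lemma~\ref{basic_properties_of_perfect}, which is available because $M$ is left perfect.

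The main (minor) obstacle is just the bookkeeping to pass from the Tor-vanishing hypothesis on $M$ to the analogous vanishing for $M'$, and this is handled cleanly by Lemma~\ref{basic_properties_of_perfect}. Once $M'$ has been shown to be nilpotent and left perfect, Corollary~\ref{mainthm3} directly yields the equivalence: injectives generate for $R$ if and only if injectives generate for $R \ltimes_{\theta} M' \cong T_R(M)$.
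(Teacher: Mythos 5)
Your proof is correct and follows essentially the same route as the paper: the paper deduces this corollary from Corollary~\ref{positively_graded} (applied to the positively graded ring $T_R(M)$), whose proof is exactly the reduction you carry out, namely writing $T_R(M)\cong R\ltimes_\theta M'$ with $M'=\bigoplus_{i\geq 1}M^{\otimes i}$ and invoking Corollary~\ref{mainthm3}. You merely inline that intermediate step and spell out (correctly, via Lemma~\ref{basic_properties_of_perfect} and compatibility of $\Tor$ with finite direct sums) why $M'$ inherits nilpotency and left perfectness from $M$, which the paper delegates to Corollary~\ref{corollary_for_perfect}.
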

\begin{proof}
By Corollary \ref{positively_graded}, it is enough that for every $i\geq 1$, the $R$-bimodule $M^{\otimes i}$ is left perfect and nilpotent, which is true by Corollary~\ref{corollary_for_perfect} (iii). 
\end{proof}

\section{Twisted tensor products of algebras}
\label{section:twisted tensor products} 

Let $A$ and $B$ be two finite dimensional algebras over a field $k$. By \cite[Proposition 3.2]{cummings}, if injectives generate for $A$ and $B$, then injectives generate for the tensor product algebra $A\otimes_k B$. The purpose of this section is to extend this result to twisted tensor products in the sense of Bergh and Oppermann \cite{twisted}.

\subsection{Injective generation for graded algebras}

Let $A$ be a finite dimensional algebra over a field $k$ that is also graded over an abelian group $\Gamma$. We first note that the opposite algebra $A^{op}$ is also graded with $A^{op}_{\gamma}:=A_{-\gamma}$. Moreover, if we denote by $\mathsf{D}$ the usual duality $\mathsf{Hom}_k(-,k)\colon\smod A\rightarrow \smod A^{op}$, then this induces a duality $\mathsf{D}\colon \Grmod A\rightarrow \Grmod A^{op}$ in the following way: for a graded $A$-module $X$, the $A^{op}$-module $\mathsf{D}(X)$ is a graded $A^{op}$-module with grading given by $\mathsf{D}(X)_{\gamma}:=\mathsf{D}(X_{-\gamma})$. This is proved in \cite{artinalgebras} in the case $\Gamma=\mathbb{Z}$ but the same arguments work in full generality (see also the discussion in \cite[Section 2]{yamaura}). 

For a finite dimensional algebra $A$ over a field, it is well-known that the injective modules are direct summands of a coproduct of copies of $\mathsf{D}(A)$. The analogous holds (and should be well-known to experts) in the graded case, but we could not find a reference in the literature. For this reason, we begin with the following. 

\begin{lem}
Let $A$ be a finite dimensional algebra that is also graded over an abelian group $\Gamma$. Then every graded injective module is a direct sum of finitely generated graded injective modules. 
\end{lem}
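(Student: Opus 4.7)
The plan is to invoke a graded version of Matlis' theorem for graded Noetherian rings. Because $A$ is finite dimensional over $k$, any ascending chain of graded $A$-submodules of a finitely generated graded $A$-module is a chain of $k$-subspaces in a finite dimensional vector space and hence stabilizes; thus $A$ is graded left Noetherian. The graded analogue of Matlis' decomposition theorem for graded Noetherian rings (see \cite[Chapter~II, \S 6]{methods}) then yields that every graded injective $A$-module decomposes as a direct sum of indecomposable graded injective modules. It therefore suffices to prove that every indecomposable graded injective $A$-module is finitely generated.

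Let $E$ be an indecomposable graded injective $A$-module. As in the ungraded Noetherian setting, its graded socle is nonzero and essential in $E$, and since the graded endomorphism ring of $E$ is local, this graded socle must be a graded simple module $S$; in particular $E$ is a graded injective envelope of $S$. Here I would use the duality $\mathsf{D}\colon \Grmod A\to \Grmod A^{\mathsf{op}}$ recalled just before the lemma. The graded simple $A^{\mathsf{op}}$-module $\mathsf{D}(S)$ admits a graded projective cover $\pi\colon P\twoheadrightarrow \mathsf{D}(S)$ with $P$ indecomposable and finitely generated; since $A^{\mathsf{op}}$ is finite dimensional, $P$ is finite dimensional over $k$. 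Applying $\mathsf{D}$ produces a graded embedding $S\hookrightarrow \mathsf{D}(P)$ into a finite dimensional graded injective $A$-module. Because $P$ is indecomposable with graded simple top $\mathsf{D}(S)$, the dual $\mathsf{D}(P)$ is indecomposable graded injective with graded simple socle $S$, so it is itself a graded injective envelope of $S$. By uniqueness of graded injective envelopes one concludes $E\cong \mathsf{D}(P)$, which is finite dimensional and therefore finitely generated.

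The main obstacle will be carefully assembling the graded analogues of the classical facts invoked above: Matlis decomposition in the graded Noetherian case, essentiality of the graded socle, locality of the graded endomorphism ring of an indecomposable graded injective, and the interaction of $\mathsf{D}$ with graded projective covers and graded injective envelopes. Each is a routine adaptation of its ungraded counterpart and is documented in \cite{methods}, but the final write-up should either cite these statements precisely or give short direct arguments in our setting so that the proof is self contained.
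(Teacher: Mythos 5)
Your argument is correct, and it reaches the same two milestones as the paper's proof --- a decomposition of a graded injective module governed by its graded socle, and finite generation of the graded injective envelope of a graded simple module via the duality $\mathsf{D}$ --- but it gets to each by a different route. For the decomposition, you invoke the Matlis--Gabriel theorem for the locally Noetherian Grothendieck category $\GrMod A$ as a black box; the paper instead proves exactly the decomposition it needs by hand: for a graded injective $M$ with $\mathsf{soc}^{gr}(M)=\oplus_j S_j$, the extension $M\to\oplus_j\mathsf{E}^{gr}(S_j)$ of the inclusion of the socle is an essential monomorphism between graded injectives, hence an isomorphism. For finite generation, you identify each indecomposable graded injective with $\mathsf{D}(P)$ for $P$ a graded projective cover of a graded simple $A^{op}$-module, which requires graded semiperfectness of $A$ (nilpotence of the graded radical and lifting of homogeneous idempotents); the paper avoids projective covers entirely by embedding $S$ into the finitely generated graded injective $\mathsf{D}(A)$, deducing that the ungraded envelope $\mathsf{E}(S)$ is finitely generated, and observing $\mathsf{E}^{gr}(S)\subseteq\mathsf{E}(S)$. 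Your route buys the sharper conclusion that the indecomposable graded injectives are exactly the duals of the indecomposable graded projectives over $A^{op}$ (essentially Corollary~\ref{structure_of_graded_injectives}), at the cost of two extra pieces of graded machinery that, as you acknowledge, must be precisely sourced or reproved; the paper's route is the more elementary and self-contained of the two.
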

\begin{proof}
Let $M$ be a graded $A$-module. Consider its graded socle, $\mathsf{soc}^{gr}(M)$ which, by definition, is the direct sum of all the graded simple submodules of $M$, say $S_j, j\in J$. We make some key observations.  

(i) Since $A$ is finite dimensional, it follows that $\mathsf{D}(A)=\mathsf{Hom}_k(A,k)$ is finitely generated. Further, $\mathsf{D}(A)$ is graded and injective, thus graded injective. 

(ii) Let $S$ be a graded simple $A$-module and denote by $\mathsf{E}^{gr}(S)$ its graded injective envelope and by $\mathsf{E}(S)$ its injective envelope. Then $\mathsf{E}^{gr}(S)$ is a submodule of $\mathsf{E}(S)$, which is finitely generated as $S$ can be embeded into the finitely generated module $\mathsf{Hom}_k(A,k)$. Therefore $\mathsf{E}^{gr}(S)$ is also finitely generated. 

Consider the graded module $I=\oplus_{j\in J} \mathsf{E}^{gr}(S_j)$, which is graded injective. The graded ring homomorphisms $\mathsf{soc}^{gr}(M)\rightarrow M$ and $\mathsf{soc}^{gr}(M)\rightarrow I$ can be completed to a commutative triangle as below
\begin{center}
	\begin{tikzcd}
		\mathsf{soc}^{gr}(M) \arrow[r] \arrow[d] & M \arrow[ld, dashed] \\
		I                               &                     
	\end{tikzcd}
\end{center}
The homomorphism $M\rightarrow I$ is an essential monomorphism (which follows from the fact that both $\mathsf{soc}^{gr}(M)\rightarrow M$ and $\mathsf{soc}^{gr}(M)\rightarrow I$ are essential monomorphisms) and so $I=\mathsf{E}^{gr}(M)$. If $M$ is injective, then $M=I$ from which the result follows. 
\end{proof}

By the above, together with the graded duality, we infer the following.

\begin{cor} \label{structure_of_graded_injectives}
     A graded $A$-module $I$ is graded injective if and only if it is a direct summand of a coproduct of copies of $\mathsf{D}(A)(\gamma)$ for various $\gamma\in\Gamma$. In particular a graded $A$-module is graded injective if and only if it is injective. 
\end{cor}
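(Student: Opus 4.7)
The plan is to exhibit $\mathsf{D}(A)$ together with its twists as a graded injective cogenerator of $\Grmod A$, and then to combine this with the preceding lemma (which decomposes any graded injective as a coproduct of graded injective envelopes of graded simples) to obtain the summand description.

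I would begin by noting that $\mathsf{D}(A)$ is injective as an ungraded $A$-module (a standard fact for finite dimensional $k$-algebras), so by the observation recalled earlier in the paper each twist $\mathsf{D}(A)(\gamma)$ is graded injective. For the forward implication, starting from a graded injective $I$, the preceding lemma yields a decomposition $I=\bigoplus_j \mathsf{E}^{gr}(S_j)$ with each $S_j$ graded simple. The key step is to embed every $S_j$ into some twist $\mathsf{D}(A)(\gamma_j)$. Given a nonzero homogeneous element $s\in (S_j)_{\gamma_0}$, I would pick a $k$-linear functional $\phi$ on $S_j$ supported in degree $\gamma_0$ with $\phi(s)\neq 0$, and define $\tilde\phi\colon S_j\to \mathsf{D}(A)(-\gamma_0)$ by $\tilde\phi(m)(a):=\phi(ma)$. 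A direct bookkeeping of degrees shows $\tilde\phi((S_j)_\gamma)\subseteq \mathsf{D}(A_{\gamma_0-\gamma})$, matching the degree-$\gamma$ component of $\mathsf{D}(A)(-\gamma_0)$, so $\tilde\phi$ is a graded $A$-linear map, and it is nonzero at $s$, hence injective by simplicity of $S_j$. Using the graded injectivity of $\mathsf{D}(A)(\gamma_j)$, the essential inclusion $S_j\hookrightarrow \mathsf{E}^{gr}(S_j)$ extends to a map $\mathsf{E}^{gr}(S_j)\to \mathsf{D}(A)(\gamma_j)$ whose kernel must intersect $S_j$ trivially and hence vanishes. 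This monomorphism splits because its source is graded injective, and assembling the splittings exhibits $I$ as a direct summand of $\bigoplus_j \mathsf{D}(A)(\gamma_j)$.

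For the converse direction and the ``in particular'' statement, I would invoke the fact that $A$ is right Noetherian (being finite dimensional), so coproducts and direct summands of (graded) injective modules are again (graded) injective; hence any summand of a coproduct of twists of $\mathsf{D}(A)$ is graded injective. Running the same Noetherian argument ungraded shows that a coproduct of copies of the ordinary injective module $\mathsf{D}(A)$ is ordinary injective, so every summand is ordinary injective; combined with the already-recorded fact that injective implies graded injective, this yields the final equivalence. The main delicacy I anticipate is organising the grading shifts and the $A$-module side conventions consistently during the construction of $\tilde\phi$; once this is in place the remaining steps are formal consequences of the universal property of the injective envelope and the Noetherian assumption on $A$.
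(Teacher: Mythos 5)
Your argument is correct, and the degree bookkeeping in the construction of $\tilde\phi$ does work out: with the paper's conventions $\mathsf{D}(X)_{\gamma}=\mathsf{D}(X_{-\gamma})$ and $M(\gamma)_{\gamma'}=M_{\gamma+\gamma'}$, one indeed has $\mathsf{D}(A)(-\gamma_0)_{\gamma}=\mathsf{D}(A_{\gamma_0-\gamma})$, so $\tilde\phi$ is a graded monomorphism out of the graded simple $S_j$, and the rest (extension over the essential inclusion $S_j\hookrightarrow\mathsf{E}^{gr}(S_j)$, injectivity of the extension via essentiality, splitting because the source is graded injective, and the Noetherian argument for the converse) is sound. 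Your route differs from the paper's in the forward implication: the paper disposes of it in one line by combining the preceding lemma with the graded duality $\mathsf{D}\colon\Grmod A\to\Grmod A^{\op}$, i.e.\ finitely generated graded injectives over $A$ are the duals of finitely generated graded projectives over $A^{\op}$, which are summands of finite sums of twists of $A^{\op}$; dualizing transports this directly to the summand description in terms of $\mathsf{D}(A)(\gamma)$. You instead exhibit the twists of $\mathsf{D}(A)$ as a graded injective cogenerator by an explicit functional construction and then split off each graded injective envelope by hand. The duality argument is shorter and makes the projective--injective symmetry transparent; your argument is more elementary, avoids knowing how $\mathsf{D}$ interacts with twists and projectives, and has the side benefit of recording explicitly that every graded simple embeds into some $\mathsf{D}(A)(\gamma)$. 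Both rest on the same preceding lemma, so the overall structure is compatible with the paper's.
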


\begin{cor} \label{comparison_for_graded_algebras}
    Graded injectives generate for $A$ if and only if injectives generate for $A$. 
\end{cor}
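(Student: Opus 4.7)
The plan is to deduce both implications from the tools of Section~\ref{section:injgengraded}, with Corollary~\ref{structure_of_graded_injectives} playing the role of the only new ingredient. The forward direction $(\Longrightarrow)$ is immediate from Proposition~\ref{ungradedimpliesgraded}, which holds over any abelian group $\Gamma$ with no finiteness assumption on $A$.

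For the converse $(\Longleftarrow)$, I would imitate the proof of Proposition~\ref{finitegrading}, but I must avoid invoking the full adjoint triple of Proposition~\ref{adjoint}, which is only available when $\Gamma$ is finite. The workaround is to use only the pair $(F,G)$: the forgetful functor $F\colon\GrMod A\rightarrow \Mod A$ is left adjoint to $G$, hence exact and coproduct-preserving, so it extends to a coproduct-preserving triangle functor $F\colon\mathsf{D}(\GrMod A)\rightarrow \mathsf{D}(A)$. The new observation needed is that $F$ sends graded injective modules to injective modules, which is exactly what the ``in particular'' clause of Corollary~\ref{structure_of_graded_injectives} supplies. From there the argument is formal: assuming that graded injectives generate for $A$, Proposition~\ref{image_is_contained_in_localizing} applied to $F$ yields $\mathsf{Im}(F)\subseteq \mathsf{Loc}(\Inj A)$; since $F(A)=A$ as an ungraded module, Remark~\ref{generation_by_projectives} gives $\mathsf{D}(A)=\mathsf{Loc}(A)\subseteq \mathsf{Loc}(\Inj A)$, as required.

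The conceptual content of the result lies entirely in Corollary~\ref{structure_of_graded_injectives}: in full generality graded injectivity is strictly stronger than injectivity (as recorded in item (v) of the preliminaries on graded modules), and what forces the two notions to coincide here is the combination of the finite-dimensionality of $A$ with the graded duality. Once that coincidence of injectivity notions is in hand, the derivation of Corollary~\ref{comparison_for_graded_algebras} is a direct application of the generation criterion and I expect no further obstacle.
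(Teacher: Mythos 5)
Your proposal is correct and follows the paper's own argument essentially verbatim: the easy implication is Proposition~\ref{ungradedimpliesgraded}, and the other is the proof of Proposition~\ref{finitegrading} with the adjunction argument replaced by Corollary~\ref{structure_of_graded_injectives} to see that the forgetful functor preserves injectivity. (Only note that your labels $(\Longrightarrow)$/$(\Longleftarrow)$ are swapped relative to the order of the statement, but both implications are covered.)
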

\begin{proof}
One implication is always true, even for rings (see Proposition \ref{ungradedimpliesgraded}). By Corollary \ref{structure_of_graded_injectives} it follows that the functor $\GrMod A\rightarrow \Mod A$ maps graded injective modules to injective modules, from which the inverse implication follows (verbatim to the proof of Proposition \ref{finitegrading}).
\end{proof}

\subsection{Injective generation for twisted tensor products}
Let us now recall the definition of twisted tensor products. Let $A$ and $B$ be finite dimensional algebras over a field $k$ that are also graded over abelian groups $\Gamma_1$ and $\Gamma_2$ respectively. Moreover, fix a bicharacter $t\colon\Gamma_1\times\Gamma_2\rightarrow k^{\times}$, i.e a homomorphism of abelian groups such that
\begin{itemize}
\item[(i)] $t(0,\gamma_2)=t(\gamma_1,0)=1$ 
\item[(ii)] $t(\gamma_1+\gamma_1',\gamma_2)=t(\gamma_1,\gamma_2)t(\gamma_1',\gamma_2)$ 
\item[(iii)]$t(\gamma_1,\gamma_2+\gamma_2')=t(\gamma_1,\gamma_2)t(\gamma_1,\gamma_2')$ 
\end{itemize}
Recall that for a graded algebra $A$ and a homogeneous element $a\in A$, we write $|a|$ for the degree of $a$.

\begin{defn} \textnormal{(\!\!\cite[Definition 2.2]{twisted})}
The \emph{twisted tensor product algebra} of $A$ and $B$, denoted by $A\otimes^tB$ is defined to be $A\otimes_k B$ as a vector space, graded over $\Gamma_1\times \Gamma_2$, with multiplication given by 
\begin{center}
	$(a\otimes b)\cdot (a'\otimes b')=t(|a'|,|b|)(aa'\otimes bb')$
\end{center}
for homogeneous $a,a'\in A$ and $b,b'\in B$. 
\end{defn}
Let $M$ be a right graded $A$-module and $N$ a right graded $B$-module. Consider $M\otimes_kN$ which we endow with the structure of a graded $A\otimes^tB$-module with action given by 
\begin{center}
$(m\otimes n)(a\otimes b)=t(|a|,|n|)(ma\otimes nb)$
\end{center}
for homogeneous $m\in M, n\in N,a\in A,b\in B$. The resulting $A\otimes^tB$-module is denoted by $M\otimes^tN$. Given $\gamma\in\Gamma_1$ and $\gamma'\in\Gamma_2$, we denote by $(M\otimes^tN)(\gamma,\gamma')$ the twist of $M\otimes^{t}N$ by $(\gamma,\gamma')\in\Gamma_1\times \Gamma_2$. We fix the above notation and recall the following elementary properties.
\begin{lem} 
\label{basic_properties_of_modules_over_twisted}
The following statements hold. 
\begin{itemize}
\item[(i)] Let $\{M_i \ | \ i\in I\}$ be a family of graded $A$-modules and $N$ a graded $B$-module. There is an isomorphism 
\[
(\oplus_i M_i)\otimes^tN\cong \oplus_i(M_i\otimes^tN)
\]
of graded $A\otimes^tB$-modules.  

\item[(ii)] There is an isomorphism
\[
\mathsf{Hom}_k(A\otimes^tB,k)\cong \mathsf{Hom}_k(A,k)\otimes^t\mathsf{Hom}_k(B,k)
\]
of graded ${A\otimes^tB}$-modules. 

\item[(iii)] Given a graded $A$-module $M$ and a graded $B$-module $N$, there is an isomorphism
\[
M(\gamma)\otimes^tN(\gamma')\cong (M\otimes^tN)(\gamma,\gamma')
\]
of graded $A\otimes^tB$-modules, for all $\gamma\in\Gamma_1$ and $\gamma'\in\Gamma_2$. 
\end{itemize}
\end{lem}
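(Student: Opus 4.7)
The overall strategy for all three parts is the same: exhibit a natural $k$-linear map on the underlying vector spaces, verify compatibility with the $\Gamma_1 \times \Gamma_2$-grading, and then check that it intertwines the twisted $A \otimes^t B$-actions, possibly after rescaling by a bicharacter factor. Every verification reduces to the bilinearity axioms of $t$, so no homological input is needed; the work is purely bookkeeping of the bicharacter.

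For part (i), I would take the canonical vector space isomorphism $(\oplus_i M_i) \otimes_k N \cong \oplus_i (M_i \otimes_k N)$. Grading compatibility is automatic since direct sums and tensor products decompose componentwise. The twisted action formula $(m \otimes n)(a \otimes b) = t(|a|, |n|)(ma \otimes nb)$ depends linearly on $m$, so it respects the direct sum in the first factor and the canonical isomorphism intertwines the two $A \otimes^t B$-actions.

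For part (iii), both sides have underlying vector space $M \otimes_k N$, and the bidegree-$(\sigma_1, \sigma_2)$ component of each is $M_{\gamma + \sigma_1} \otimes_k N_{\gamma' + \sigma_2}$, so the gradings already match. The two twisted actions differ, however, because the degree of a homogeneous element $n$ computed in $N(\gamma')$ differs from its degree in $N$ by $\gamma'$; expanding via bilinearity introduces a factor of $t(|a|, \gamma')$ in the action on $M(\gamma) \otimes^t N(\gamma')$ relative to the action on $(M \otimes^t N)(\gamma, \gamma')$. This discrepancy is absorbed by taking the isomorphism to be the rescaled map $m \otimes n \longmapsto t(|m|, \gamma') \cdot (m \otimes n)$ on homogeneous elements; the required compatibility then follows at once from $t(|m| + |a|, \gamma') = t(|m|, \gamma') \, t(|a|, \gamma')$.

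Part (ii) is the most delicate. I would start from the standard finite-dimensional duality $\mathsf{Hom}_k(A, k) \otimes_k \mathsf{Hom}_k(B, k) \xrightarrow{\sim} \mathsf{Hom}_k(A \otimes_k B, k)$ defined by $f \otimes g \longmapsto (a \otimes b \mapsto f(a) g(b))$, valid because $A$ and $B$ are finite dimensional. Grading compatibility uses the convention $\mathsf{D}(X)_\gamma = \mathsf{D}(X_{-\gamma})$ recalled at the start of the section. For the module structure, the right $A \otimes^t B$-action on $\mathsf{D}(A \otimes^t B)$ is dual to left multiplication and therefore picks up the bicharacter factor from $(a \otimes b)(a' \otimes b') = t(|a'|, |b|)(aa' \otimes bb')$, while the right $A \otimes^t B$-action on $\mathsf{D}(A) \otimes^t \mathsf{D}(B)$ picks up a different factor from the twisted tensor of modules. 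Balancing the two discrepancies requires rescaling the natural map by a suitable bicharacter factor depending on the bidegrees of $f$ and $g$, after which the bilinearity axioms close the argument. The main obstacle in the whole lemma is exactly this bookkeeping of bicharacter factors, but once the correct rescaling is identified the verification is forced by the defining properties of $t$.
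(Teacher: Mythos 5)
Your proposal is correct and follows the same strategy as the paper: take the canonical $k$-linear isomorphisms and check compatibility with the twisted action by bicharacter bookkeeping. In fact you are more careful than the paper at the two points where the naive maps do \emph{not} intertwine the actions. For (iii) your rescaled map $m\otimes n\mapsto t(|m|,\gamma')\,(m\otimes n)$ is exactly what is needed (one checks $c(ma,nb)=c(m,n)\,t(|a|,\gamma')$, which $c(m,n)=t(|m|,\gamma')$ satisfies); the paper's printed factor $t(|\gamma|,|\gamma'|)$ is a constant and appears to be a typo for this degree-dependent factor. For (ii) you correctly observe that the standard duality map must also be rescaled --- the paper asserts it respects the action as written, but comparing $(f\otimes g)(a\otimes b)=t(|a|,|g|)(fa\otimes gb)$ with the dual of left multiplication shows the two sides differ by a nontrivial degree-dependent factor --- however you stop short of exhibiting the factor. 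To close that step: for $f\in\mathsf{D}(A_\alpha)$ and $g\in\mathsf{D}(B_\beta)$ the map $f\otimes g\mapsto t(\alpha,\beta)^{-1}\,\bigl(a\otimes b\mapsto f(a)g(b)\bigr)$ works, since both sides of the intertwining identity reduce, using bilinearity of $t$ and the support constraints $|a|+|a'|=\alpha$, $|b|+|b'|=\beta$, to the factor $t(\alpha,|b|)\,t(|a|,|b|)^{-1}\,t(\alpha,\beta)^{-1}$.
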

\begin{proof}
(i) We have an isomorphism of groups 
\begin{center}
	$(\oplus_iM_i)\otimes^tN\cong \oplus_i(M_i\otimes^tN)$
\end{center}
and it is straightforward to check that it respects the action of $A\otimes^tB$. 
 
(ii) We have the following isomorphisms of groups as $A$ and $B$ are assumed to be finite dimensional.
\begin{align*}
    \mathsf{Hom}_k(A\otimes^tB,k)&=\mathsf{Hom}_k(A\otimes_kB,k) \\ 
                                 &\cong \mathsf{Hom}_k(A,k)\otimes \mathsf{Hom}_k(B,k) \\ 
                                 &\cong \mathsf{Hom}_k(A,k)\otimes^t\mathsf{Hom}_k(B,k)
\end{align*}
As in (i), the above isomorphisms of groups respect the action of ${A\otimes^tB}$. 
 
(iii) The isomorphism is given by $m\otimes n\mapsto t(|\gamma|,|\gamma'|)m\otimes n$ for homogeneous $m\in M$ and $n\in N$. 
\end{proof} 

\begin{cor} \label{twisted_tensor_product_of_injectives}
Let $M$ be injective as an $A$-module and $N$ be injective as a $B$-module. If $A$ and $B$ are graded over abelian groups $\Gamma_1$ and $\Gamma_2$ respectively, then $M\otimes ^tN$ is injective as an $A\otimes^tB$-module. 
\end{cor}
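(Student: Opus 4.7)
The plan is to combine the structural description of graded injectives obtained in Corollary~\ref{structure_of_graded_injectives} with the three compatibilities of the twisted tensor product recorded in Lemma~\ref{basic_properties_of_modules_over_twisted}. Since $A$ and $B$ are finite dimensional graded algebras, by Corollary~\ref{structure_of_graded_injectives} the notions of graded injective and injective coincide on both sides, and the same will be true over $A\otimes^{t}B$ once we verify that the latter is also a finite dimensional graded algebra (over $\Gamma_1\times\Gamma_2$), which is immediate from the definition of $\otimes^{t}$.

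First, I would apply Corollary~\ref{structure_of_graded_injectives} to $M$ and to $N$ separately: since $M$ is injective over $A$, it is a direct summand of a coproduct $\bigoplus_{i\in I}\mathsf{D}(A)(\gamma_i)$, and similarly $N$ is a direct summand of a coproduct $\bigoplus_{j\in J}\mathsf{D}(B)(\gamma'_j)$. Next, using Lemma~\ref{basic_properties_of_modules_over_twisted}(i) twice to distribute $\otimes^{t}$ across coproducts in each argument, together with Lemma~\ref{basic_properties_of_modules_over_twisted}(iii) to identify the twisted tensor product of twists with a twist of the twisted tensor product, one obtains that $M\otimes^{t}N$ is a direct summand of
\[
\bigoplus_{(i,j)\in I\times J}\bigl(\mathsf{D}(A)\otimes^{t}\mathsf{D}(B)\bigr)(\gamma_i,\gamma'_j).
\]

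Finally, Lemma~\ref{basic_properties_of_modules_over_twisted}(ii) gives an isomorphism $\mathsf{D}(A)\otimes^{t}\mathsf{D}(B)\cong \mathsf{D}(A\otimes^{t}B)$ of graded $A\otimes^{t}B$-modules, so $M\otimes^{t}N$ is a direct summand of a coproduct of twists of $\mathsf{D}(A\otimes^{t}B)$. By Corollary~\ref{structure_of_graded_injectives} applied to the graded algebra $A\otimes^{t}B$, such a module is graded injective, and consequently (by the same corollary, together with Corollary~\ref{comparison_for_graded_algebras}) it is injective as an $A\otimes^{t}B$-module.

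I do not anticipate a serious obstacle: each of the ingredients is already in place, and the argument is essentially a chaining of natural isomorphisms. The only point that requires a small amount of care is checking that the isomorphisms of Lemma~\ref{basic_properties_of_modules_over_twisted} really are isomorphisms of graded $A\otimes^{t}B$-modules and that direct summands interact well with the functor $-\otimes^{t}-$, which is clear since $\otimes^{t}$ is additive in each argument.
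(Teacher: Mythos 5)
Your argument is correct and is essentially the paper's own proof: the paper likewise reduces to the case $M=\mathsf{D}A$, $N=\mathsf{D}B$ by combining parts (i), (ii), (iii) of Lemma~\ref{basic_properties_of_modules_over_twisted} with the description of graded injectives as summands of coproducts of twists of $\mathsf{D}A$ (resp.\ $\mathsf{D}B$), and then invokes $\mathsf{D}A\otimes^t\mathsf{D}B\cong\mathsf{D}(A\otimes^tB)$. You have simply written out the chaining of isomorphisms that the paper leaves implicit.
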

\begin{proof}
By combining (i), (ii) and (iii) of Lemma \ref{basic_properties_of_modules_over_twisted}, the fact that a graded module over $A$ (respectively $B$) is graded injective if and only if it is a direct summand of a direct sum of copies of $\mathsf{D}A(\gamma)$ for all $\gamma\in \Gamma_1$ (respectively of copies $\mathsf{D}B(\gamma')$ for all $\gamma'\in \Gamma_2$) and the fact that $\mathsf{D}A(\gamma)\cong (\mathsf{D}A)(\gamma)$ for every $\gamma$ (and the analogous for $B$), it is enough to consider the case $M=\mathsf{D}A$ and $N=\mathsf{D}B$ in which case 
\begin{center}
	$\mathsf{D}A\otimes^t\mathsf{D}B\cong \mathsf{D}(A\otimes^tB)$
\end{center}
which completes the proof. 
\end{proof}

\begin{prop}
The twisted tensor product induces additive exact functors 
\begin{center}
	$-\otimes^tN\colon\GrMod A\rightarrow \GrMod(A\otimes^tB)$ 
\end{center}
and 
\begin{center}
	$M\otimes^t-\colon\GrMod B\rightarrow \GrMod(A\otimes^tB)$
\end{center}
\end{prop}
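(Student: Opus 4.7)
The plan is to verify the three required properties (well-definedness as a functor, additivity, exactness) for $-\otimes^tN$; the argument for $M\otimes^t-$ is symmetric. I will focus on the first functor.

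First I would define the functor on morphisms: given a graded $A$-module homomorphism $f\colon M\to M'$, set $(f\otimes^t\mathrm{id}_N)(m\otimes n):=f(m)\otimes n$ on homogeneous simple tensors and extend $k$-linearly. The $k$-linear map this produces is the same as the underlying $k$-linear map of $f\otimes_k\mathrm{id}_N$, so it is well defined. To verify that it is a graded $A\otimes^tB$-module homomorphism, I would check on homogeneous elements that
\[
(f\otimes^t\mathrm{id}_N)\bigl((m\otimes n)(a\otimes b)\bigr)=t(|a|,|n|)\bigl(f(ma)\otimes nb\bigr)=t(|a|,|n|)\bigl(f(m)a\otimes nb\bigr),
\]
which equals $(f(m)\otimes n)(a\otimes b)$, using that $f$ is a graded $A$-module homomorphism (so $f(ma)=f(m)a$ and $|f(m)|=|m|$); the grading condition $(f\otimes^t\mathrm{id}_N)((M\otimes^tN)_{(\gamma_1,\gamma_2)})\subseteq (M'\otimes^tN)_{(\gamma_1,\gamma_2)}$ is immediate since $f$ preserves the $\Gamma_1$-grading and $\mathrm{id}_N$ preserves the $\Gamma_2$-grading. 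Functoriality ($\mathrm{id}_M\otimes^t\mathrm{id}_N=\mathrm{id}_{M\otimes^tN}$ and compatibility with composition) follows directly from the formula.

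Next, additivity $(f+g)\otimes^t\mathrm{id}_N=f\otimes^t\mathrm{id}_N+g\otimes^t\mathrm{id}_N$ is immediate from the definition on simple tensors; alternatively, it follows from the fact already recorded in Lemma~\ref{basic_properties_of_modules_over_twisted}(i), which states that $-\otimes^tN$ commutes with direct sums.

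For exactness, I would take a short exact sequence $0\to M_1\xrightarrow{f}M_2\xrightarrow{g}M_3\to 0$ in $\GrMod A$, and observe that the underlying $k$-linear complex of $0\to M_1\otimes^tN\to M_2\otimes^tN\to M_3\otimes^tN\to 0$ coincides with the sequence obtained by applying the ordinary $-\otimes_kN$; since $k$ is a field this is exact, and the morphisms in the twisted tensor product are morphisms of graded $A\otimes^tB$-modules by the verification in the first step. The main (minor) obstacle is simply keeping track of the twisting factor $t(|a|,|n|)$ when checking compatibility with the $A\otimes^tB$-action, but since $f$ is $\Gamma_1$-graded this factor is preserved under $f\otimes^t\mathrm{id}_N$, so the verification is routine.
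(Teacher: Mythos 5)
Your proposal is correct and follows essentially the same route as the paper: define $f\otimes^t\mathrm{id}_N$ on simple tensors, verify compatibility with the twisted $A\otimes^tB$-action on homogeneous elements using that $f$ is a graded $A$-homomorphism, get additivity from the direct-sum compatibility (Lemma~\ref{basic_properties_of_modules_over_twisted}), and deduce exactness from the underlying vector-space tensor product over the field $k$. The only difference is that you spell out slightly more detail than the paper, which leaves several of these verifications to the reader.
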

Let us first explain the above and then proceed to sketch a proof. As the situation is similar for both functors, we shall only explain it for $-\otimes^tN$. First of all, a graded $A$-module $M$ is mapped to $M\otimes^tN$, which is a graded $A\otimes^tB$-module, as we explained early in this section. Furthermore, a homomorphism $f\colon M\rightarrow M'$ of graded $A$-modules is mapped to $f\otimes^tN$ which is given by $m\otimes n\mapsto f(m)\otimes n$ for homogeneous $m\in M$ and $n\in N$. This is indeed a homomorphism between the graded $A\otimes^tB$-modules $M\otimes^tN$ and $M'\otimes^tN$, as for homogeneous $a\in A$, $b\in B$, $m\in M$ and $n\in N$ we have 
\begin{align*}
    (f\otimes^tN)((m\otimes n)(a\otimes b))&=(f\otimes^tN)(t(|n|,|a|)(ma,nb))\\ 
    &=t(|n|,|a|)f(ma)\otimes nb \\ 
    &=(f(m)\otimes n)(a\otimes b) \\ 
    &=((f\otimes^tN)(m\otimes n))(a\otimes b)
\end{align*}

\begin{proof}
The fact that it respects identities and compositions is easily seen, by similar arguments as above. The fact that the functors are additive follows from Lemma \ref{basic_properties_of_modules_over_twisted}. Lastly, for exactness we may work only with the underlying graded vector spaces, which is well known. We leave the details to the reader. 
\end{proof}

From the above, given a right graded $A$-module $M$ and a right graded $B$-module $N$, we may consider triangulated functors 
\begin{center}
$F_N\colon\mathsf{D}(\GrMod A)\rightarrow \mathsf{D}(\GrMod(A\otimes^tB))$ 
\end{center}
and
\begin{center}
$G_M\colon \mathsf{D}(\GrMod B)\rightarrow \mathsf{D}(\GrMod(A\otimes^tB))$
\end{center}

\begin{prop}
\label{graded_injective_generation_for_twisted}
Let $A$ and $B$ be finite dimensional algebras over a field $k$ that are also graded over abelian groups $\Gamma_1$ and $\Gamma_2$ respectively. If graded injectives generate for $A$ and $B$, then graded injectives generate for $A\otimes^tB$.
\end{prop}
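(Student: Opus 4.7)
The plan is to combine two applications of Proposition~\ref{image_is_contained_in_localizing} with the key closure property from Corollary~\ref{twisted_tensor_product_of_injectives}, namely that the twisted tensor product of a graded injective $A$-module with a graded injective $B$-module is graded injective over $A\otimes^tB$. By Corollary~\ref{twists_generate}, the family $\{(A\otimes^tB)(\gamma,\gamma')\}_{(\gamma,\gamma')\in\Gamma_1\times\Gamma_2}$ generates $\mathsf{D}(\GrMod(A\otimes^tB))$, and by Lemma~\ref{basic_properties_of_modules_over_twisted}(iii) each such twist is isomorphic to $A(\gamma)\otimes^tB(\gamma')$. So it suffices to show that $A(\gamma)\otimes^tB(\gamma')\in\mathsf{Loc}(\GrInj(A\otimes^tB))$ for all $\gamma\in\Gamma_1$ and $\gamma'\in\Gamma_2$.

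For the first step, I would fix a graded injective $A$-module $I$ and consider the triangle functor $G_I=I\otimes^t-\colon \mathsf{D}(\GrMod B)\rightarrow\mathsf{D}(\GrMod(A\otimes^tB))$. Since $-\otimes^t-$ is exact and preserves arbitrary direct sums in each variable (Lemma~\ref{basic_properties_of_modules_over_twisted}(i) and the proposition preceding it), $G_I$ is a coproduct-preserving triangle functor. By Corollary~\ref{twisted_tensor_product_of_injectives}, $G_I$ sends graded injective $B$-modules into graded injective $A\otimes^tB$-modules. Since graded injectives generate for $B$, Proposition~\ref{image_is_contained_in_localizing} yields $\mathsf{Im}(G_I)\subseteq\mathsf{Loc}(\GrInj(A\otimes^tB))$; in particular $I\otimes^tB(\gamma')\in\mathsf{Loc}(\GrInj(A\otimes^tB))$ for every $\gamma'\in\Gamma_2$.

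For the second step, I would fix $\gamma'\in\Gamma_2$ and consider the triangle functor $F_{\gamma'}=-\otimes^tB(\gamma')\colon \mathsf{D}(\GrMod A)\rightarrow\mathsf{D}(\GrMod(A\otimes^tB))$, which is again triangulated and coproduct-preserving by the same considerations. The first step tells us precisely that $F_{\gamma'}$ sends every graded injective $A$-module into $\mathsf{Loc}(\GrInj(A\otimes^tB))$. Since graded injectives generate for $A$, a second application of Proposition~\ref{image_is_contained_in_localizing} gives $\mathsf{Im}(F_{\gamma'})\subseteq\mathsf{Loc}(\GrInj(A\otimes^tB))$, so in particular $A(\gamma)\otimes^tB(\gamma')\in\mathsf{Loc}(\GrInj(A\otimes^tB))$ for every $\gamma\in\Gamma_1$. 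Combined with the initial reduction this finishes the argument.

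I expect no serious obstacle here: the argument is essentially a two-variable ``Fubini-style'' reduction, and all the input ingredients are already in place. The required exactness and coproduct-preservation of the twisted tensor product come for free from tensoring over a field, and the passage from ``graded injectives generate in each factor'' to ``in the tensor product'' is made clean precisely because the closure property of Corollary~\ref{twisted_tensor_product_of_injectives} matches the hypothesis of Proposition~\ref{image_is_contained_in_localizing}. The only mild subtlety is to remember to use the twists $A(\gamma)$ and $B(\gamma')$ rather than $A$ and $B$ themselves, so that one can invoke Corollary~\ref{twists_generate} at the end.
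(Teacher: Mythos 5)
Your proof is correct and is essentially the paper's own argument: two successive applications of Proposition~\ref{image_is_contained_in_localizing} combined with Corollary~\ref{twisted_tensor_product_of_injectives}, landing on the generators $A(\gamma)\otimes^tB(\gamma')\cong(A\otimes^tB)(\gamma,\gamma')$. The only (immaterial) difference is the order of the two steps — the paper first applies the proposition to $-\otimes^t\mathsf{D}B(\gamma')$ and then to $A(\gamma)\otimes^t-$, whereas you run the mirror image.
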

\begin{proof}
Since graded injectives generate for $A$ and $B$, we have 
\begin{center}
	$\mathsf{D}(\GrMod A)=\mathsf{Loc}( \mathsf{D}A(\gamma),\gamma\in\Gamma_1)$
\end{center} 
and 
\begin{center}
	$\mathsf{D}(\GrMod B)=\mathsf{Loc}( \mathsf{D}B(\gamma'),\gamma'\in\Gamma_2)$
\end{center}
That is because, every injective object in, say, $\GrMod A$ is a direct summmand of a direct sum of copies of $\mathsf{D}A(\gamma)$, for various $\gamma\in\Gamma_1$ and thus $\mathsf{Loc}( \GrInj A)=\mathsf{Loc}( \mathsf{D}A(\gamma),\gamma\in\Gamma_1)$.
We note that $F_{\mathsf{D}B(\gamma')}$ satisfies the conditions of Proposition \ref{image_is_contained_in_localizing} as for every $S\in \GrInj A$ we have $F_{\mathsf{D}B(\gamma')}(S)=S\otimes^t\mathsf{D}B(\gamma')$ which is graded injective over $A\otimes^tB$. In particular, 
\begin{center}
	$F_{\mathsf{D}B(\gamma')}(A(\gamma))=A(\gamma)\otimes^t\mathsf{D}B(\gamma')=G_{A(\gamma)}(\mathsf{D}B(\gamma'))$
\end{center}
is in $\GrInj (A\otimes^tB)$. Therefore, $G_{A(\gamma)}$ satisfies the conditions of Proposition \ref{image_is_contained_in_localizing} and in particular we get that 
\begin{center}
	$G_{A(\gamma)}(B(\gamma'))=A(\gamma)\otimes^tB(\gamma')$
\end{center}
is in $\mathsf{Loc}( \GrInj (A\otimes^tB))$ for all $\gamma\in \Gamma_1$ and $\gamma'\in\Gamma_2$. Since $\mathsf{D}(\GrMod (A\otimes^tB))$ is generated by $\{(A\otimes^tB)(\gamma,\gamma') \ | \ (\gamma,\gamma')\in\Gamma_1\times\Gamma_2\}$ and $A(\gamma)\otimes^t B(\gamma')\cong (A\otimes^tB)(\gamma,\gamma')$, the result follows. 
\end{proof}

\begin{thm}
\label{injective_generation_for_twisted}
Let $A$ and $B$ be finite dimensional algebras over a field that are also graded over abelian groups. If injectives generate for $A$ and $B$, then injectives generate for $A\otimes^tB$. 
\end{thm}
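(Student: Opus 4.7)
The plan is to reduce this to the graded version, which has already been established as Proposition~\ref{graded_injective_generation_for_twisted}, using the comparison between graded and ungraded injective generation available for finite dimensional graded algebras.

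First I would observe that $A\otimes^tB$ is itself a finite dimensional algebra over $k$ (since $A$ and $B$ are) and is naturally graded over the abelian group $\Gamma_1\times\Gamma_2$. Therefore all three algebras $A$, $B$, and $A\otimes^tB$ fall under the scope of Corollary~\ref{comparison_for_graded_algebras}, which asserts that for finite dimensional graded algebras over a field, graded injective generation and injective generation are equivalent. This equivalence is the crucial bridge and rests on Corollary~\ref{structure_of_graded_injectives}, which says that in this setting every graded injective module is actually injective as an ungraded module (being a summand of a coproduct of twists of $\mathsf{D}A$).

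The proof is then a short chain: starting from the hypothesis that injectives generate for $A$ and for $B$, Corollary~\ref{comparison_for_graded_algebras} yields that graded injectives generate for $A$ (with respect to the $\Gamma_1$-grading) and graded injectives generate for $B$ (with respect to the $\Gamma_2$-grading). Proposition~\ref{graded_injective_generation_for_twisted} then gives graded injective generation for $A\otimes^tB$ (with respect to the $\Gamma_1\times\Gamma_2$-grading). Applying Corollary~\ref{comparison_for_graded_algebras} once more in the reverse direction, we conclude that injectives generate for $A\otimes^tB$.

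There is essentially no obstacle here: the substantive work has already been done in Proposition~\ref{graded_injective_generation_for_twisted}, where the generators $\{(A\otimes^t B)(\gamma,\gamma')\}$ were shown to belong to $\mathsf{Loc}(\GrInj(A\otimes^tB))$ by first sending graded injectives of $A$ through the functor $-\otimes^t \mathsf{D}B(\gamma')$ (which lands in graded injectives by Corollary~\ref{twisted_tensor_product_of_injectives}) and then applying the analogous argument to $B$. The only thing worth double-checking in the write-up is that the group $\Gamma_1\times\Gamma_2$ is indeed abelian (which is clear) so that Corollary~\ref{comparison_for_graded_algebras} applies to $A\otimes^tB$; no finiteness of the grading group is required in that corollary, so the argument goes through for arbitrary abelian $\Gamma_1$ and $\Gamma_2$ as stated.
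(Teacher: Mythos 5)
Your argument is exactly the paper's proof: the theorem is deduced by combining Corollary~\ref{comparison_for_graded_algebras} (applied to $A$, $B$, and $A\otimes^tB$ graded over $\Gamma_1$, $\Gamma_2$, and $\Gamma_1\times\Gamma_2$ respectively) with Proposition~\ref{graded_injective_generation_for_twisted}. The write-up is correct and merely spells out the chain of implications that the paper leaves implicit.
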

\begin{proof}
The result follows from Proposition~\ref{graded_injective_generation_for_twisted} and Corollary~\ref{comparison_for_graded_algebras}. 
\end{proof}

\subsection{What is not known}
We close this section with a discussion of several natural questions on injective generation both in the ungraded and in the graded setting. 
For a list of open problems on injective generation we refer the reader to Rickard's paper \cite[Section 8]{rickard}. Let us begin with the following question.

\begin{question1}
    Let $A$ be a ring and $M$ an $A$-bimodule. Does injective generation for $A$ imply injective generation for $\big(\begin{smallmatrix}
  A & M\\
  M & A
\end{smallmatrix}\big)_{(0,0)}$? 
\end{question1}

This is not clear from what we have proved in this paper. However, a positive answer to the above question would imply a positive answer to the following problem, which is the guiding problem in the graded theory of injective generation, as explained in the Introduction.

\begin{problem}
Let $R=\bigoplus_{i\in \mathbb{Z}} R_i$ be a positively and finitely graded ring over the integers. If injectives generate for $R_0$, does it follow that injectives generate for $R$?
\end{problem}

Let us explain why answering Question~1 will provide a positive answer to the above problem. Let $R=\bigoplus_{i\in \mathbb{Z}} R_i$ be a positively and finitely graded ring over $\mathbb{Z}$. We view $R$ as a graded ring over $\mathbb{Z}/2^n\mathbb{Z}$ where $n$ is so that $R_i=0$ for all $i\in\{2^{n-1},\dots,2^n-1\}$. Consider the covering ring $\Hat{R}$ with respect to the latter grading and view it as a Morita context ring $\big(\begin{smallmatrix}
  A & M\\
  M & A
\end{smallmatrix}\big)_{(0,0)}$ with zero bimodule homomorphisms, see Lemma~\ref{lem:coveringzerozero}. Then, injective generation for $R_0$ implies injective generation for $A$, which (given a positive answer to the above question) implies injective generation for $\Hat{R}$. The latter implies injective generation for $R$. 

As a particular case, we may assume that $M\otimes_A M=0$. So we can reformulate Question~1 as follows: 

\begin{question2}
Let $A$ be a ring and $M$ an $A$-bimodule such that $M\otimes_A M=0$. Does injective generation for $A$ imply injective generation for $\big(\begin{smallmatrix}
  A & M\\
  M & A
\end{smallmatrix}\big)_{(0,0)}$? 
\end{question2}

A positive answer to Question 2 would imply a positive answer to Question 1. Indeed, assume the setup of Question 1 and write $\big(\begin{smallmatrix}
  A & M\\
  M & A
\end{smallmatrix}\big)_{(0,0)}$ as a trivial extension $\Gamma\ltimes N$, where $\Gamma=A\times A$ and $M=A\oplus A$ (see \cite[Proposition~2.5]{morita}). Then, view $\Gamma\ltimes N$ as a graded ring over $\mathbb{Z}/4\mathbb{Z}$ and consider its covering ring as follows:
\begin{center}
    $\begin{pmatrix}
  \Gamma & N & 0 & 0  \\
  0 & \Gamma & N & 0 \\ 
  0 & 0 & \Gamma & N \\ 
  N & 0 & 0 & \Gamma
\end{pmatrix}=\begin{pmatrix}
    B & M' \\ 
    M' & B
\end{pmatrix}_{(0,0)}$
\end{center}
Injective generation for $A$ implies injective generation for $\Gamma$ which by Corollary~\ref{injective_generation_for_triangular_matrix_rings} implies injective generation for the triangular ring $B$. Furthermore, $M\otimes_A M=0$ yields that $M'\otimes_BM'=0$ and therefore (assuming that the answer to Question 2 is positive) injectives generate for the covering ring of $\Gamma\ltimes N$. It follows by Corollary \ref{injective_generation_for_covering} that injectives generate for $\Gamma\ltimes N=\big(\begin{smallmatrix}
  A & M\\
  M & A
\end{smallmatrix}\big)_{(0,0)}$.

By employing Proposition \ref{reduction}, we can adapt the above Problem with the assumptions $\pd {_{R_0}R_i}<\infty$ (or $\pd {R_i}_{R_0}<\infty$) and still reduce it to the analogous of Question~1 with the assumption that $\pd _AM<\infty$ (or $\pd M_A<\infty$). We can also restrict to finite dimensional algebras. So, let us state a final question. 

\begin{question3}
    Let $A$ be a finite dimensional algebra over a field. Let $M$ be a finitely generated $A$-bimodule such that $M\otimes_A M=0$. If $\gd A<\infty$, do injectives generate for $\big(\begin{smallmatrix}
  A & M\\
  M & A
\end{smallmatrix}\big)_{(0,0)}$? 
\end{question3}

Let $\Lambda$ be a finite dimensional algebra over a field that is also graded over the integers. Assume that the global dimension of $\Lambda_0$ is finite. Then a positive answer to the above question (and in view of Rickard's celebrated theorem \cite[Theorem~3.5]{rickard}) would imply that the (big) finitistic dimension of $\Lambda$ is finite.

\newpage

\end{document}